\title{Porosity in Conformal Dynamical Systems}
\address{University of Connecticut, Department of Mathematics}
\address{University of North Texas, Department of Mathematics}
\subjclass[2010]{28A75 (Primary), 28C10, 35R03 (Secondary)}
\author{Vasileios Chousionis}
\author{Mariusz Urba\'nski}
\thanks{V.C. was supported in part by the Simons Foundation Collaboration grant no.\  521845. M.U. was supported in part by the Simons Foundation Collaboration grant no.\  581668. 
}
\email{vasileios.chousionis@uconn.edu}
\email{urbanski@unt.edu}
\newcommand{\ve}{\varepsilon}
\newcommand{\f}{\phi}
\newcommand{\sg}{\sigma}
\newcommand{\R}{\mathbb{R}}
\newcommand{\Rn}{\mathbb{R}^n}
\newcommand{\N}{\mathbb{N}}
\newcommand{\C}{\mathbb{C}}
\newcommand{\Z}{\mathbb{Z}}
\renewcommand{\a}{\alpha}
\newcommand{\om}{{\omega}}
\newcommand{\cS}{\mathcal{S}}
\newcommand{\spt}{\operatorname{spt}}
\newcommand{\cH}{\mathcal{H}}
\newcommand{\G}{\mathbb{G}}
\newcommand{\cC}{\mathcal{C}}
\newcommand{\cf}{\mathcal{CF}}
\newcommand{\diam}{\operatorname{diam}}
\newcommand{\dist}{\operatorname{dist}}
\newcommand{\bd}{\operatorname{\overline{BD}}}
\newcommand{\Rea}{\operatorname{Re}}
\renewcommand{\f}{\phi}
\newcommand{\por}{\operatorname{por}}
\newcommand{\Fin}{\operatorname{Fin}}
\DeclareMathOperator{\Int}{Int}
\DeclareMathOperator{\Img}{Im}
\DeclareMathOperator{\Crit}{Crit}
\DeclarePairedDelimiter\floor{\lfloor}{\rfloor}
\def\om{\omega}
\def\du{\bigoplus}
\def\fg{{\mathfrak g}}
\def\fv{{\mathfrak v}}
\newcommand{\Heis}{{{\mathbb{H}}}}
\newcommand{\stm}{\setminus}
\newcommand{\ra}{\rightarrow}
\newcommand{\oc}{\hat{\C}}
\newcommand{\Sing}{{\rm Sing}}
\newcommand{\es}{\emptyset}
\newcommand{\bu}{\bigcup}
\newcommand{\sms}{\setminus}
\newcommand{\PS}{{\rm PS}}
\newcommand{\ov}{\overline}
\newcommand{\sbt}{\subset}
\newcommand{\fr}{\noindent}
\numberwithin{equation}{section}
\newtheorem{thm}{Theorem}[section]
\newtheorem{theorem}{Theorem}[section]
\newtheorem{lm}[thm]{Lemma}
\newtheorem{coro}[thm]{Corollary}
\theoremstyle{definition}
\newtheorem{propo}[thm]{Proposition}
\theoremstyle{definition}
\newtheorem{defn}[thm]{Definition}
\theoremstyle{definition}
\newtheorem{definition}[thm]{Definition}
\theoremstyle{definition}
\newtheorem{rem}[thm]{Remark}
\newtheorem{remark}[thm]{Remark}
\newtheorem{claim}{Claim}
\begin{document}
\begin{abstract} In this paper we study various aspects of porosities for conformal fractals. We first explore porosity in the general context of infinite graph directed Markov systems (GDMS), and we show that, under some natural assumptions, their limit sets are porous in large (in the sense of category and dimension) subsets, and they are mean porous almost everywhere. On the other hand, we prove that if the limit set of a GDMS is not porous then it is not porous almost everywhere. We also revisit porosity for finite graph directed Markov systems, and we provide checkable criteria which guarantee that limit sets have holes of relative size at every scale in a prescribed direction. 

We then narrow our focus to systems associated to complex continued fractions with arbitrary alphabet and we provide a novel characterization of porosity for their limit sets. Moreover, we introduce the notions of upper density and upper box dimension for subsets of Gaussian integers and we explore their connections to porosity. As applications we show that limit sets of complex continued fractions system whose alphabet is co-finite, or even a co-finite subset of the Gaussian primes, are not porous almost everywhere, while they are mean porous almost everywhere. 

We finally turn our attention to complex dynamics and we delve into porosity for Julia sets of meromorphic functions. We show that if the Julia set of a tame meromorphic function is not the whole complex plane then it is porous at a dense set of its points and it is almost everywhere mean porous with respect to natural ergodic measures. On the other hand, if the Julia set is not porous then it is not porous almost everywhere. In particular, if the function is elliptic we show that its Julia set is not porous at a dense set of its points.

\end{abstract}

\maketitle
\tableofcontents

\section{Introduction}
Let $(X,d)$ be a metric space. A set $E \subset X$ is called  {\em porous}  if there exists a positive constant $c>0$ such that every open ball centered at $E$ with radius $r \in (0, \diam(E))$ contains an open ball of radius $cr$, which does not intersect $E$. If this condition is satisfied for balls centered at a fixed point $x$, then the set $E$ is called {\em porous at} $x$.
If $(X,d)$ is a Euclidean space (or even  a separable space equipped with a doubling Borel measure), then the Lebesgue density theorem easily implies that porous sets have zero Lebesgue measure. Therefore, one could obtain quantitative information about the size and structure of singular sets by investigating  how ``porous" they are. For example one could explore if holes of relative size appear at all or at just a fixed percentage of scales, or how the holes are locally spread around. 

Indeed, various aspects of porosities have been introduced over the years in order to quantify the size and structure of exceptional sets in different contexts. A notion of porosity already appeared the work of Denjoy \cite{De} on trigonometric series in the 1920s. Since then, porosities have been studied widely, for example, in connection to geometric measure theory \cite{salli,mat,smibel, smibeletal,shmemp,kasu1,kasu2,ejj,za}, geometric function theory \cite{mavu, do, vai, kore, niem},  differentiability of Lipschitz maps \cite{preissza,preissza2,lpt,speight}, harmonic analysis \cite{dirpor, bruna, marpor}, fractal geometry and complex dynamics \cite{dirpor, urbpor, jjm, prro}.

In this paper we explore porosity for conformal dynamical systems. First, we perform a comprehensive study of various porosities in the context of \textit{conformal graph directed Markov systems (GDMS)}, and we pay special attention to systems generated by complex continued fractions. Our results apply to a very broad family of fractals, as the general framework of conformal GDMS encompasses a wide selection of geometric objects, including limit sets of Kleinian and complex hyperbolic Schottky groups, Apollonian circle packings, self-conformal and self-similar sets. We then  turn our attention to complex dynamics and we study various porosities for Julia sets of meromorphic functions. 

Graph directed systems with a finite alphabet consisting of similarities were introduced by Mauldin and Williams in \cite{MW}, see also \cite{edm}. Mauldin and the second named author developed an extensive theory of conformal GDMS with a countable alphabet in \cite{MUGDMS} stemming from \cite{MU1}. In the recent monograph \cite{CTU}, the authors  together with Tyson extended the theory of conformal GDMS in the setting of nilpotent stratified Lie groups (Carnot groups) equipped with a sub-Riemannian metric. We also refer to \cite{CLU, kes1, kes2, roy, polur} for recent advances on various aspects of GDMS.

We defer the formal definition of a GDMS to Section \ref{sec:prelim}, and we now only give a short heuristic description. A conformal GDMS $\cS$ is modeled on a directed multigraph $(E,V)$, where $E$ is a countable set of edges and $V$ is a finite set of vertices. Each vertex $v \in V$ corresponds to a compact set $X_v$ and each edge $e \in E$, which connects the vertices $t(e), i(e)$,  corresponds to a contracting conformal map $\f_{e}: X_{t(v)} \ra X_{i(v)}$. An incidence matrix $A:E \times E \ra \{0,1\}$ determines if a pair of these maps is allowed to be composed. The \textit{limit set} of $\cS$ is denoted by $J_\cS$, and is defined as the image of a natural projection from the symbol space of admissible words to $X:=\cup_{v\in V} X_v$.

If $\cS$ is a finite and irreducible conformal GDMS (see Section \ref{sec:prelim} for the exact definitions) whose limit set $J_{\cS}$ has zero Lebesgue measure then it is porous, see e.g. \cite[Theorem 4.6.4]{MUGDMS},  \cite[Theorem 2.5]{urbpor} or \cite[Theorem 2.6]{jjm}. Nevertheless, if the system $\cS$ is infinite the situation is very different.  As we shall see in the following, there are many examples of conformal GDMS whose limit sets have Lebesgue measure zero but they are not porous; for example the limit set associated to complex continued fractions. 

Although limit sets of finitely irreducible conformal GDMS  are very often not porous, we will prove that they are \textit{always} porous in large (in the sense of category and dimension) subsets. We record that \eqref{xx1} in the following theorem is a mild non-degeneracy condition which ensures that the limit set has Lebesgue measure zero.
\begin{thm}
\label{thmporfixintro}
Let $\cS=\{\f_e\}_{e \in E}$ be a finitely irreducible conformal GDMS which satisfies \eqref{xx1}. Then the following hold:
\begin{enumerate}[label=(\roman*)]
\item \label{porthm2} The limit set $J_{\cS}$ is porous at every fixed point of $\cS$, in particular $J_{\cS}$ is porous at a dense set of $J_{\cS}$.
\item \label{porthm3a} For every $\ve>0$ there exists some finite $F(\ve):=F \subset E$  with  
$$
\dim_{\cH} (J_{\cS_F})>\dim_{\cH} (J_{\cS})-\ve,
$$
such that $J_{\cS}$ is porous at every $x \in J_{\cS_F}$ with porosity constant only depending on $F$ and $\cS$.
\item \label{porthm3b} There exists some set $\tilde{J} \subset J_{\cS}$ such that $\dim_{\cH} (\tilde{J})=\dim_{\cH} (J_{\cS})$ and $J_{\cS}$ is porous at every $x \in \tilde{J}$.
\end{enumerate}
\end{thm}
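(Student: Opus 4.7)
The plan is to leverage the non-degeneracy condition \eqref{xx1}: it supplies a \emph{master hole}, an open ball $B_0 = B(y_0, r_0) \sbt \Int(X) \sms J_{\cS}$, which I intend to transport to every relevant scale by admissible compositions $\f_\omega$. Bounded distortion together with conformality of each $\f_e$ would then ensure that $\f_\omega(B_0)$ contains a Euclidean ball of radius comparable to $\|\f_\omega'\|_\infty r_0$, and this ball lies in $X \sms J_{\cS}$ whenever $\omega$ is admissible in $\cS$. Parts (i)--(iii) then become three instances of this transport principle, differing only in the admissible words one chooses.

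For part (i), given a fixed point $\xi$ of $\f_\omega$ for an admissible periodic word $\omega = e_1\cdots e_p$ with $A(e_p,e_1)=1$, I would set $\lambda := |\f_\omega'(\xi)| \in (0,1)$ and use finite irreducibility to pick a bounded admissible word $\rho$ such that $\omega^n \rho$ is admissible for every $n \ge 1$ and $\rho$ terminates at the vertex hosting $B_0$. Then $\f_{\omega^n \rho}(B_0)$ is a hole of radius $\gtrsim \lambda^n r_0$ sitting inside $B(\xi, C\lambda^n)$, with $C$ depending on $\omega,\rho$ and the distortion constant. For an arbitrary $r\in(0,\diam J_{\cS})$, I would pick $n$ with $C\lambda^{n+1} \le r < C\lambda^n$ and use the hole at level $n+1$; the resulting porosity constant depends only on $\omega,\rho,r_0$ and $\cS$. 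Density of fixed points in $J_{\cS}$ follows from finite irreducibility in the usual way (periodic admissible words are dense in the underlying subshift of finite type).

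For part (ii), I would first invoke the standard dimension-approximation result for infinite conformal GDMS (\cite[Theorem~4.3.6]{MUGDMS}) to obtain a finite irreducible $F \sbt E$ with $\dim_{\cH}(J_{\cS_F}) > \dim_{\cH}(J_\cS) - \ve$. For $x \in J_{\cS_F}$ coded by $\tau \in F^{\N}$ and a scale $r > 0$, I would choose $n = n(r)$ so that $\|\f_{\tau|_n}'\|_\infty \asymp r$, which is possible since contractions over the finite alphabet $F$ are uniformly comparable from step to step. Finite irreducibility of the full system $\cS$ then yields, uniformly in $n$, a bounded admissible word $\rho_n$ such that $\tau|_n \rho_n$ is admissible in $\cS$ and terminates at the vertex of $B_0$. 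The image $\f_{\tau|_n \rho_n}(B_0)$ would then provide a Euclidean ball of radius $\gtrsim r_0 \, r$ sitting inside $B(x, Cr) \sms J_{\cS}$, giving a porosity constant that depends only on $F$, on $r_0$ and on the finite-irreducibility data of $\cS$ --- exactly as required.

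Part (iii) I would handle by a diagonal argument: apply (ii) with $\ve = 1/k$, obtain finite $F_k \sbt E$ with $\dim_{\cH}(J_{\cS_{F_k}}) > \dim_{\cH}(J_\cS) - 1/k$, and set $\tilde J := \bigcup_{k \ge 1} J_{\cS_{F_k}}$. Countable stability of Hausdorff dimension then gives $\dim_{\cH}(\tilde J) = \sup_k \dim_{\cH}(J_{\cS_{F_k}}) = \dim_{\cH}(J_\cS)$, and (ii) delivers porosity of $J_\cS$ at every point of $\tilde J$ (with constant depending on whichever $F_k$ contains the point, which is permitted since (iii) only requires pointwise porosity). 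The main technical point I anticipate is the careful bookkeeping of admissibility in the directed multigraph: the connecting words $\rho$ and $\rho_n$ must be arranged to produce truly admissible concatenations terminating at the vertex of $B_0$, with uniformly bounded length and distortion. Finite irreducibility is what makes this possible, but tracking the dependencies of all constants (distortion, $\rho$-length, $F$-contractions) to confirm that the porosity constant in (ii) really is independent of the base point $x$ will require some attention.
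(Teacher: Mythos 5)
Your plan is conceptually aligned with the paper's: transport a ``hole'' guaranteed by \eqref{xx1} to all scales via the maps $\f_\omega$, controlling sizes by bounded distortion. There is, however, one genuine gap and one notable structural difference.

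The gap is in the sentence ``Bounded distortion together with conformality of each $\f_e$ would then ensure that $\f_\omega(B_0)$ contains a Euclidean ball of radius comparable to $\|\f_\omega'\|_\infty r_0$, \emph{and this ball lies in} $X\setminus J_{\cS}$ whenever $\omega$ is admissible.'' Bounded distortion and conformality control only the geometry of $\f_\omega(B_0)$; they say nothing about whether that image avoids $J_{\cS}$. The avoidance statement is a separate and nontrivial claim: a priori $\f_\omega(B_0)$ could meet $\f_\tau(X_{t(\tau)})$ for some $\tau\ne\omega$ of the same length and thereby contain limit points coded by $\tau$. Ruling this out requires the Open Set Condition together with the structure of the coding map $\pi$ --- precisely the content of the paper's Lemma~\ref{lemmax1}\ref{lemmax12}, whose proof shows that if $\f_\omega(y)=\pi(\tau)$ with $y\in B_{t(\omega)}\subset\Int X_{t(\omega)}$, then necessarily $\tau|_{|\omega|}=\omega$ (using OSC and $X_v=\overline{\Int X_v}$), and hence $y=\pi(\sigma^{|\omega|}\tau)\in J_{\cS}$, a contradiction. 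Your proposal never invokes OSC, so this step is genuinely missing; without it the transport principle has no teeth.

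The structural difference is that you propose a single master hole $B_0$ at one vertex $v_0$, then route to $v_0$ via connecting words $\rho$. This forces you to track the length and distortion of the $\rho$'s at every step of (i) and (ii), to arrange that each concatenation terminates at $v_0$, and to prove the $\f_{\omega\rho}(B_0)\cap J_\cS=\emptyset$ claim anew for each composite word. The paper avoids all of this by first upgrading \eqref{xx1} in Lemma~\ref{lemmax1}: using finite irreducibility, it produces a ball $B_v\subset\Int X_v$ at \emph{every} vertex $v$ with the property that $\f_\omega(B_{t(\omega)})\cap J_{\cS}=\emptyset$ for \emph{all} admissible $\omega$. With that lemma in hand, no connecting word is needed: for a fixed point of $\f_\omega$ one simply takes the largest $n$ with $\f_{\omega^n}(X_{t(\omega)})\subset B(x_\omega,r)$ and uses $\f_{\omega^n}(B_{t(\omega)})$ directly; for $x\in J_{\cS_F}$ one takes the largest $n$ with $\f_{\tau|_n}(X_{t(\tau_n)})\subset B(x,r)$ and uses $\f_{\tau|_n}(B_{t(\tau_n)})$, with the lower bound on the radius coming from $m_F=\min\{\|D\f_e\|_\infty:e\in F\}$. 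Your version is workable in principle, but it trades the single careful OSC argument in Lemma~\ref{lemmax1} for repeated connecting-word bookkeeping, and it still owes the OSC argument that you have left implicit.

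Parts (iii) in both treatments are the same diagonalization; your explicit appeal to countable stability of Hausdorff dimension matches the paper's use of $\tilde J=\bigcup\{J_F:F\text{ finite, irreducible}\}$ together with Theorem~\ref{721}.
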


Moreover, we show that under some natural assumptions limit sets of conformal GDMSs are porous on a fixed percentage of scales. This behavior is quantified by the notion of \textit{mean porosity} whose formal definition is deferred to Section \ref{sec:mean}. Koskela and Rohde introduced mean porosity in \cite{kore} in connection with dimension estimates and quasiconformal mappings, and it was further investigated by several authors, ex.  \cite{smibel, smibeletal, shmemp, niem}. We will show that limit sets of conformal GDMS are mean porous almost everywhere with respect to natural measures.
 \begin{thm}
\label{meanporthminto}
Let $\cS=\{\f_e\}_{e \in E}$ be a finitely irreducible conformal GDMS such that 
\eqref{xx1} holds. Let $\tilde{\mu}$ be any $\sigma$-invariant ergodic measure on $E_A^\N$ with 
finite Lyapunov exponent  $\chi_{\tilde{\mu}}(\sigma)$. Then   $J_{\cS}$ is mean porous at $\tilde{\mu} \circ \pi^{-1}$-a.e. $x\in J_{\cS}$ with porosity constants depending only on $\cS$ and $\tilde{\mu}$.
\end{thm}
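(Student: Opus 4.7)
The plan is to combine the uniform-subsystem porosity statement from Theorem \ref{thmporfixintro}(ii) with bounded distortion for conformal GDMS and the Birkhoff ergodic theorem, applied both to the indicator of a suitable cylinder and to the log-derivative cocycle. The idea is to exhibit a positive density of ``good'' instants $n$ at which the cylinder derivative scale $\|\phi_{\omega|_n}'\|$ produces a porous hole at $\pi(\omega)$, and then to convert this into positive density of porous scales on a truly geometric (dyadic) sequence using the finite Lyapunov exponent.

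First I would fix a small $\varepsilon>0$ and apply Theorem \ref{thmporfixintro}(ii) to obtain a finite subalphabet $F\subset E$ and a constant $c_F>0$ such that $J_\cS$ is porous at every $x\in J_{\cS_F}$ with porosity constant $c_F$. Enlarging $F$ if necessary, I would then choose $M\ge 1$ so that the cylinder
$$
A:=\{\omega\in E_A^\N:\omega_1,\dots,\omega_M\in F\}
$$
carries strictly positive $\tilde\mu$-mass; this is possible because $\tilde\mu$ is a probability measure on $E_A^\N$, so some finite sub-family of length-$M$ admissible cylinders already carries positive mass, which can be absorbed by inflating $F$. The key transfer principle I would then prove is: there are constants $c''>0$ and $K<\infty$ depending only on $\cS, F, M$ such that if $\sigma^n\omega\in A$, then $B(\pi(\omega),r_n)\setminus J_\cS$ contains a ball of radius $c'' r_n$, where $r_n:=\|\phi_{\omega|_{n+M}}'\|$. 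For $n=0$ this follows because $\pi(\omega)$ lies within Euclidean distance $\asymp \|\phi_{\omega|_M}'\|$ of the point $\phi_{\omega|_M}(y)\in J_{\cS_F}$ for any fixed $y\in J_{\cS_F}$, and porosity of $J_\cS$ at $\phi_{\omega|_M}(y)$ with constant $c_F$ produces a hole at scale $\asymp\|\phi_{\omega|_M}'\|$; for general $n$, apply $\phi_{\omega|_n}$, bounded distortion, and the strong open set condition.

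Finally I would invoke Birkhoff's ergodic theorem twice. The first application gives, for $\tilde\mu$-a.e.\ $\omega$,
$$
\lim_{N\to\infty}\frac{1}{N}\sum_{n=0}^{N-1}\1_A(\sigma^n\omega)=\tilde\mu(A)>0,
$$
so the set $G(\omega):=\{n:\sigma^n\omega\in A\}$ has positive lower density in $\N$. The second application, to the log-derivative cocycle, yields $-\frac1n\log r_n\to\chi_{\tilde\mu}(\sigma)\in(0,\infty)$. Fixing $\rho\in(0,1)$ and setting $k(n):=\lfloor -\log r_n/\log\rho^{-1}\rfloor$, the sequence $k(n)$ is asymptotically linear in $n$ with finite positive slope $\chi_{\tilde\mu}(\sigma)/\log\rho^{-1}$, so the image $\{k(n):n\in G(\omega)\}$ has positive lower density in $\N$. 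This is precisely the mean porosity condition at $\pi(\omega)$, with constants depending only on $\cS$ and $\tilde\mu$.

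The main obstacle is the transfer step: one must ensure that a porous hole inside $B(\pi(\sigma^n\omega),1)$ pulls back under $\phi_{\omega|_n}$ to a ball in $B(\pi(\omega),r_n)$ that is genuinely disjoint from the full limit set $J_\cS$, not merely from the cylinder piece $\phi_{\omega|_n}(J_\cS)$. This requires the strong open set condition, which is standard in conformal GDMS under \eqref{xx1}, together with a uniform choice of the porous hole inside the open set defining OSC across the finitely many starting configurations indexed by $F$. A secondary issue, that several indices $n\in G(\omega)$ may collapse to the same dyadic index $k(n)$ when consecutive derivative ratios are close to $1$, is immaterial: we only need positive lower density of porous dyadic scales, which the two Birkhoff averages above directly provide once $\chi_{\tilde\mu}(\sigma)<\infty$.
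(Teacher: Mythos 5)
Your strategy can be made to work, but the transfer step you flag as the main obstacle is a genuine gap as written, and filling it naturally leads back to the paper's simpler argument. Theorem~\ref{thmporfixintro}(ii), used as a black box, gives only that for $x\in J_{\cS_F}$ and small $r$ there is some ball $B(y,c_Fr)\subset B(x,r)\setminus J_{\cS}$; it does not guarantee that this ball lies inside $\Int X$. Disjointness from $J_{\cS}$ in the ambient $\R^n$ is not preserved under $\f_{\om|_n}$, because $J_{\cS}$ contains points outside $\f_{\om|_n}(X)$: if the hole protrudes from $X$, its image under $\f_{\om|_n}$ can very well meet $J_{\cS}$. What the pull-back actually requires is a ball $B\subset\Int X_{t(\om_n)}$ with $\f_\tau(B)\cap J_{\cS}=\emptyset$ for every admissible $\tau$, so that the open set condition forces $\f_{\om|_n}(B)\cap J_{\cS}\subset\f_{\om|_n}(B\cap J_{\cS})=\emptyset$. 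Your remark about a ``uniform choice of the porous hole inside the open set defining OSC'' pinpoints exactly this missing ingredient, but it is not a consequence of the cited theorem's conclusion; to extract it one has to re-open its proof, and in doing so one essentially rebuilds Lemma~\ref{lemmax1}(ii) of the paper.

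Once Lemma~\ref{lemmax1}(ii) is in hand a shorter route is available, and it is the one the paper takes. That lemma produces balls $B_v\subset\Int X_v$ with $\f_\om(B_{t(\om)})\cap J_{\cS}=\emptyset$ for \emph{every} $\om\in E_A^*$; together with the Egg Yolk Principle this yields, for every $\om\in E_A^\N$ and every $j\in\N$, a hole of fixed relative size (comparable to $r_0/K$) inside $B(\pi(\om),2\|D\f_{\om|_j}\|_\infty)$. The holes therefore exist at \emph{all} times $j$, not merely at those $n$ for which $\sigma^n\om$ returns to a distinguished cylinder $A$. This renders your first Birkhoff application (to $\1_A$) and the enlargement of $F$ to ensure $\tilde\mu(A)>0$ unnecessary; the only ergodic input in the paper's proof is the single Birkhoff application to the log-derivative cocycle, which identifies the asymptotics of $\|D\f_{\om|_j}\|_\infty$ and converts the geometric scales into dyadic scales of positive density. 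Your route, once the transfer step is fixed, still proves the theorem, but yields a density estimate degraded by the extra factor $\tilde\mu(A)$.
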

Using Theorem \ref{meanporthminto} we show that if, moreover, $\cS$ is strongly regular then $J_{\cS}$ is mean porous at $m_h$-a.e. $x\in J_{\cS}$, where $h:=\dim_{\cH}(J_{\cS})$ and $m_h$ is the $h$-\textit{conformal measure} of $\cS$, see Corollary \ref{meanporoconfhaus}. It is well known that the $h$-conformal measure captures the right amount of information for the limit set of a conformal GDMS. If the system $\cS$ is finite, $m_h$ is, up to multiplicative constants, equal to the Hausdorff and packing measures restricted on $J_{\cS}$, 
see e.g. \cite[Theorem 7.18]{CTU}. But if the system has infinitely many generators, then Hausdorff measure can vanish while packing measure can be infinite, and then (and only then) the above property may fail. See the end of Section \ref{sec:prelim} for a short introduction to conformal measures.

As mentioned earlier, limit sets of finite and irreducible GDMSs with zero Lebesgue measure are porous; that is for any point of the limit set there are nearby holes of radius proportional to their distance from that point. 
However, if one is interested on the spatial distribution of the holes, \textit{directed porosity} is the most appropriate kind of porosity to study. Given $v \in S^{n-1}$ we say that a set $E \subset \R^n$ is \textit{$v$-directed porous} at $x$ if the corresponding holes are centered in the line $\{x+t v, t \in \R\}$, see Section \ref{sec:dirpor} for the exact definition. 

As far as the authors know, directed (or directional) porosity first appears in the work of Preiss and Zaj\' i\v{c}ek \cite{preissza, preissza2} regarding differentiability of Lipschitz maps in Banach spaces. See also the book by Lindenstrauss, Preiss, and Tiser \cite{lpt} and Speight's Ph.D thesis \cite{speight} for further advances and related references. Directed porosity was also introduced independently in \cite{dirpor} (as the first named author was not aware of \cite{preissza} at the time)  in the context of fractal geometry and harmonic analysis. Actually, more general notions of directed porosity were considered in  \cite{dirpor}, where lines could be replaced by any $m$-dimensional plane in $\Rn$. Among other things, it was shown in \cite{dirpor} that if the limit set of a finite conformal IFS has Hausdorff dimension less or equal than $1$, then it is directed porous for all directions, see \cite[Corollary 1.3]{dirpor}. In this paper we provide an easily checkable sufficient condition for a finite and irreducible GDMS to be directed porous at all directions.
 \begin{thm}\label{dirporlimsetintro} Let $\cS=\{\f_{e}\}_{e \in E}$ be a finite and irreducible conformal GDMS such that
\begin{equation}
\label{lsawayfrombdryintro}
\dist(\partial X, J_{\cS} )>0.
\end{equation}
Then $J_{\cS}$ is $v$-directed porous for every $v \in S^{n-1}$.
 \end{thm}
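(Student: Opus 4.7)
Set $\delta = \dist(\partial X, J_\cS) > 0$. The plan is to first establish a directed hole at a fixed ``unit'' scale uniformly across all vertices and directions, then transfer this to arbitrary scales via the self-similarity of $\cS$, and finally correct for the non-affine behavior of the conformal maps.

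The first step is a single-scale lemma: there exists $c_1 = c_1(\cS) > 0$ such that for every vertex $u \in V$, every $y \in J_\cS \cap X_u$, and every $w \in S^{n-1}$, there is some $t \in \R$ with $B(y + tw, c_1) \subset X_u \setminus J_\cS$. The idea is to exploit the buffer provided by \eqref{lsawayfrombdryintro}: the ray $s \mapsto y + sw$ starts at $y$, which lies at distance $\geq \delta$ from $\partial X_u$, and eventually leaves $X_u$. Just before exiting, the ray lies in the safe layer $\{z \in X_u : \dist(z, \partial X_u) \leq \delta/2\}$, whose points are at distance $\geq \delta/2$ from $J_\cS$, and the geometric regularity of the sets $X_v$ assumed in a conformal GDMS (closure-of-interior with a cone condition) guarantees that this layer contains a ball of radius $c_1 \asymp \delta$ centered on the ray.

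For the propagation, fix $x \in J_\cS$, $v \in S^{n-1}$, and $r \in (0, \diam J_\cS)$. I would pick a prefix $\omega$ of a coding of $x$ with $\diam \phi_\omega(X_{t(\omega)}) \asymp r$, which exists because $\cS$ is finite with uniform contraction. Setting $\psi = \phi_\omega$, $u = t(\omega)$, and $y = \psi^{-1}(x) \in J_\cS \cap X_u$, conformality gives $D\psi(y) = \rho(y) R$ with $R \in O(n)$ and $\rho(y) \asymp r / \diam X_u$. Applying the single-scale lemma at $y$ in the pulled-back direction $v' = R^{-1} v$ produces $B(y + t v', c_1) \subset X_u \setminus J_\cS$ for some $t$. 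Under the open set condition combined with \eqref{lsawayfrombdryintro} one obtains $J_\cS \cap \psi(X_u) = \psi(J_\cS \cap X_u)$, so pushing the hole forward via $\psi$ and invoking bounded distortion yields a ball of radius $\asymp r$ lying in $\psi(X_u) \setminus J_\cS \subset B(x, r) \setminus J_\cS$, centered at $\psi(y + t v')$.

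The delicate final step, and the main obstacle, is that $\psi(y + t v')$ need not lie on the line $\{x + sv : s \in \R\}$; rather it lies on the image curve of this line under $\psi$. Koebe-type distortion estimates for conformal maps bound the perpendicular deviation of this curve from the tangent line at $x$, over the preimage length $|t| \leq c_1$, by a constant multiple of $c_1^2 \, \|D\psi(y)\| / \delta$ in the image scale, where the constant depends only on the bounded distortion of $\cS$. Choosing $c_1$ small enough compared to $\delta$ and this distortion constant, which is legitimate since $c_1$ is only required to satisfy the single-scale lemma, makes this deviation a definite fraction of the ball radius. An orthogonal projection of the ball center onto $\{x + sv\}$ then yields a concentric ball of radius $c\, r$ contained in $B(x, r) \setminus J_\cS$, with $c > 0$ depending only on $\cS$. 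This establishes $v$-directed porosity uniformly in $x$, $v$, and $r$.
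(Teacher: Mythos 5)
The plan shares the right ingredients with the paper's proof (pull back a scale-$r$ piece, locate a hole of uniform size using the $\delta$-buffer provided by \eqref{lsawayfrombdryintro}, push it forward with Koebe), but the propagation step has a genuine gap in the place you yourself flag as ``delicate.''

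Your single-scale lemma produces a hole $B(y+tw,c_1)\subset X_u\setminus J_\cS$ with \emph{no control on $t$}: the hole lives in the safe layer near $\partial X_u$, and since $y\in J_\cS$ sits at distance $\ge\delta$ from the boundary, the center $y+tw$ is typically at distance comparable to $\diam X_u$ from $y$, not at distance $\lesssim c_1$. Yet the final deviation estimate is invoked ``over the preimage length $|t|\le c_1$,'' which is not justified. Redoing the calculation with the correct range $|t|\lesssim\diam X_u$, the perpendicular deviation of the curve $s\mapsto\psi(y+sv')$ from its tangent line at $x$ is of order $\|D\psi(y)\|\,|t|^{1+\alpha}$ (Hölder distortion of the derivative), while the image hole radius is only $\asymp c_1\|D\psi(y)\|$. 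The deviation-to-radius ratio is therefore $\gtrsim(\diam X_u)^{1+\alpha}/c_1$, which \emph{blows up} as you shrink $c_1$. So the step ``choose $c_1$ small enough'' has the wrong sign and cannot be repaired by tuning $c_1$. The projection of the ball center onto the line $\{x+sv\}$ will in general fall outside the hole, and the argument collapses.

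The paper avoids this obstacle by a different choice of what to pull back. Rather than transporting the direction $v$ to a line in the preimage via the single derivative $D\psi(y)$ and then worrying about how much the image of that line bends, it takes a genuine line segment $I_v\subset\phi_{\om|_k}(X_{t(\om_k)})$ in direction $v$ emanating from $x$ in the image, and pulls it back to a smooth curve $\gamma_v=\phi_{\om|_k}^{-1}(I_v)$ inside $X_{t(\om_k)}$. This curve joins $\pi(\sigma^k\om)\in J_\cS$ to $\partial X_{t(\om_k)}$, so continuity of the boundary-distance function along $\gamma_v$ together with $\dist(J_\cS,\partial X)\geq R$ produces a point $z\in\gamma_v$ with $B(z,R/4)\subset X_{t(\om_k)}\setminus J_\cS$. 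Pushing $B(z,R/4)$ forward by the Egg Yolk Principle gives a ball of radius $\asymp r$ avoiding $J_\cS$, and — this is the decisive advantage — its center $\phi_{\om|_k}(z)$ is \emph{exactly} on $I_v$, hence on the line $x+l_v$, because $z$ was chosen on the pulled-back curve. There is no deviation to estimate at all. If you want to pursue your own line of attack, the fix is exactly this: choose the hole center on the preimage $\phi_{\om|_k}^{-1}(I_v)$ of the image segment, rather than on the tangent line $y+\R v'$.
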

Using Theorem \ref{dirporlimsetintro} we show that any finite and irreducible conformal GDMS which satisfies the well known Strong Separation Condition, is directed porous at all directions, see Theorem \ref{dirporssc}. We also consider systems where \eqref{lsawayfrombdryintro}  does not necessarily hold. In Theorem \ref{rotfreess} we show that if an IFS consists of rotation free similarities and there exist directions $v \in S^{1}$ such that the lines $\{x+t v, t \in \R\}$ miss the set of first iterations in the interior of the set $X$, then the limit set is $v$-directed porous.  

Theorems \ref{thmporfixintro},  \ref{meanporthminto} and \ref{dirporlimsetintro} all deal with various positive aspects of porosities for conformal GDMSs. On the opposite spectrum we also investigate non-porosity of infinite GDMSs. Among other things we prove that if the limit set of a finitely irreducible conformal GDMS is not porous at a single point of its closure then it is not porous in a set of full conformal measure.
\begin{thm}
\label{notporoaeconfint} 
Let $\cS=\{\f_e\}_{e \in E}$ be a  finitely irreducible conformal GDMS such that $J_{\cS}$ is not porous, or $J_{\cS}$ is not porous at some $\zeta \in \overline{J_{\cS}}$. Then   $J_{\cS}$ is not porous at $m_{h}$-a.e. $x\in J_{\cS}$. 
\end{thm}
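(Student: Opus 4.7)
The plan is to combine a distortion-based propagation of non-porosity with Birkhoff recurrence for the $\sigma$-invariant Gibbs measure associated with $m_h$. The first step I would establish is a \emph{pushforward principle}: if $\zeta \in X$ and the ball $B(\zeta,r)$ admits no ball of radius $\rho$ disjoint from $J_\cS$, then for every admissible word $\om \in E_A^*$ with $\zeta \in X_{t(\om)}$, the ball $B(\f_\om(\zeta),r\|\f_\om'\|/K)$ admits no ball of radius $K\rho\|\f_\om'\|$ disjoint from $J_\cS$, where $K$ is the bounded distortion constant of $\cS$. Indeed, any putative such avoiding ball, pulled back through $\f_\om^{-1}$, would---by bounded distortion applied to $\f_\om^{-1}$ and the containment $\f_\om(J_\cS)\subset J_\cS$---supply a $J_\cS$-avoiding ball of radius $\rho$ inside $B(\zeta,r)$, a contradiction. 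Consequently, if $\zeta\in\overline{J_\cS}$ is a non-porosity point of $J_\cS$, then so is $\f_\om(\zeta)$ for every admissible $\om$, with non-porosity constants changing only by a multiplicative factor depending on $\cS$.

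Next, I would invoke ergodicity: by finite irreducibility of $\cS$, the shift $\sigma:E_A^\N\to E_A^\N$ carries a unique ergodic $\sigma$-invariant probability measure $\tilde\mu_h$ whose projection $\tilde\mu_h\circ\pi^{-1}$ is equivalent to $m_h$ on $J_\cS$. Birkhoff's ergodic theorem then ensures that for any single cylinder $C\subset E_A^\N$ of positive $\tilde\mu_h$-measure, the orbit $\{\sigma^n\om\}_{n\geq 0}$ meets $C$ infinitely often for $\tilde\mu_h$-a.e.\ $\om$, and by taking countable intersections this holds simultaneously for any prescribed countable family of such cylinders.

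Combining the two: fix $\zeta \in \overline{J_\cS}$ non-porous for $J_\cS$ (when the hypothesis is only that $J_\cS$ is not porous as a set, first extract such a $\zeta$ by a compactness argument on $\overline{J_\cS}$) and a sequence $\tau^{(j)} \in E_A^\N$ with $\pi(\tau^{(j)}) \to \zeta$ (take $\tau^{(j)}\equiv\pi^{-1}(\zeta)$ if $\zeta\in J_\cS$). For each $m\in\N$, non-porosity of $\zeta$ at constant $1/(mK^2)$ yields a scale $r_m$ such that $B(\zeta,r_m)$ admits no $J_\cS$-avoiding ball of radius $r_m/(mK^2)$; pick $j(m),k(m)$ so that the cylinder $C_m := [\tau^{(j(m))}|_{k(m)}]$ satisfies $\pi(C_m) \subset B(\zeta, r_m/10)$, which is possible because $\diam(\f_{\tau^{(j)}|_k}(X))\to 0$ as $k\to\infty$. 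By the second step, for $\tilde\mu_h$-a.e.\ $\om$ and every $m$ there exists $n=n(\om,m)$ with $\sigma^n\om\in C_m$, so $\pi(\sigma^n\om)\in B(\zeta,r_m/10)$ and hence $B(\pi(\sigma^n\om),r_m/2)\subset B(\zeta,r_m)$ inherits the required density. Applying the pushforward principle to $\f_{\om|_n}$ then transports this density to $x := \pi(\om) = \f_{\om|_n}(\pi(\sigma^n\om))$, yielding non-porosity of $J_\cS$ at $x$ with constant $\asymp 1/m$. Since $m$ is arbitrary, $x$ is a non-porosity point of $J_\cS$, and the conclusion follows.

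The main obstacle I anticipate is the first step: the inclusion $\f_\om(J_\cS)\subset J_\cS$ is in general strict, which forces the avoiding-ball information to be transferred through $\f_\om^{-1}$ rather than $\f_\om$, and one must track distortion constants carefully. A secondary subtlety is the case $\zeta\in\overline{J_\cS}\setminus J_\cS$, where one must ensure that for each $m$ both $|\pi(\tau^{(j)})-\zeta|$ and the cylinder diameter $\diam(\f_{\tau^{(j)}|_k}(X))$ are small relative to $r_m$---which is harmless since $j(m),k(m)$ can be chosen independently for each $m$.
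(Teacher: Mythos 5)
Your overall strategy---propagating non-porosity along backward orbits via bounded distortion and invoking Birkhoff recurrence for the Gibbs measure $\tilde\mu_h$ (equivalent to $\tilde m_h$)---is precisely the approach taken in the paper (Theorem~\ref{notporoae} and Corollary~\ref{notporoaeconf}). Your ``pushforward principle'' is the paper's central computation: one pulls a putative $J_\cS$-avoiding ball inside $\f_{\om|_n}(B(\zeta,r))$ back through $\f_{\om|_n}^{-1}$, uses the Egg Yolk Principle and $\f_\om(J_\cS\cap X_{t(\om)})\subset J_\cS$, and loses a factor $K^2$ in the porosity constant. The cylinder-recurrence mechanism you describe is likewise what the paper does via the set $Y_\ve=\{\om:\sigma^n\om\in\pi^{-1}(B(\xi,(4K)^{-1}r))\text{ i.o.}\}$, followed by an intersection over a countable family of constants.

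There is, however, a genuine gap in how you dispose of the first alternative hypothesis. You write that if $J_\cS$ is not porous as a set, then ``a compactness argument on $\overline{J_\cS}$'' produces a single $\zeta$ at which $J_\cS$ is not porous. This does not follow. Non-porosity of the set $J_\cS$ means: for every $c\in(0,1)$ and every $r_0>0$ there exist $x\in J_\cS$ and $r\in(0,r_0)$ with $\por(J_\cS,x,r)<c$, and both $x$ and $r$ may depend on $c$. Extracting a limit $\zeta$ of a subsequence of such witnesses $x_n$ (with $c=1/n$) gives no control on $\por(J_\cS,\zeta,s)$ for small $s$, since the ``bad'' scales $r_n$ attached to $x_n$ may shrink to zero and need bear no relation to scales centered at $\zeta$. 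In general, a set can fail to be porous without having a single point of non-porosity; this is exactly why the theorem's hypothesis is stated as a disjunction, and why the paper treats the two cases separately, letting the witness $\xi_\ve$ vary with $\ve$. The repair is minor and consistent with the rest of your argument: for each $m$ pick $\zeta_m\in J_\cS$ and $r_m$ with $\por(J_\cS,\zeta_m,r_m)<1/(mK^2)$, choose the cylinder $C_m$ with $\pi(C_m)\subset B(\zeta_m,r_m/10)$, and run the rest of your argument unchanged; the countable Birkhoff intersection still yields a full-measure set.

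Two smaller remarks. First, in the pushforward principle you should insist that the avoiding ball be entirely contained in the reference ball (the standard porosity convention, and what $\por(E,x,r)$ encodes here), and then check $B(w,\rho)\subset B(\zeta,r)$ after pulling back; the distortion bookkeeping works out with the factor $K^2$, but this containment is not automatic from $w\in B(\zeta,r)$ alone. Second, the step ``$\f_\om(J_\cS\cap B(w,\rho))\subset J_\cS$'' uses that $B(w,\rho)\subset S_{t(\om)}$ and the pairwise disjointness of the sets $S_v$, which is exactly how the paper justifies that the recurrence point lies in the correct component $X_{t(\om_n)}$; it is worth making this explicit since $\zeta$ may lie in $\overline{J_\cS}\setminus J_\cS$.
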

Moreover for every $h \in (0,n)$ we construct an infinite IFS $\cS_h$ consisting of similarities in $\Rn$ such that $\dim_{\cH} (J_{\cS_h})=h$ and $J_{\cS_h}$ is not porous, and we use Theorem \ref{notporoaeconfint} to conclude that $J_{\cS_h}$ is not porous $m_h$-a.e. See Theorem \ref{hnotpor} for the construction.

In the last three sections we study porosity for some well-known families of dynamical systems. In Section \ref{sec:ccf} we perform a comprehensive study of porosity in the setting of \textit{complex continued fractions}.  The origins of complex continued fractions can be traced back to the works of the brothers Adolf Hurwitz \cite{ahur} and Julius Hurwitz \cite{jhur}. Since their pioneering contributions, complex continued fractions  have been studied widely from different viewpoints, see e.g. \cite[Chapter 5]{henbook} and the references therein. 

Recall that any irrational number in $[0,1]$ can be represented as a continued fraction
$$\cfrac{1}{e_1+\cfrac{1}{e_2+\cfrac{1}{e_3+\dots}}},$$
where $e_i \in \N$ for all $i \in \N$. 
\begin{figure} 
\centering
\includegraphics[scale=0.4]{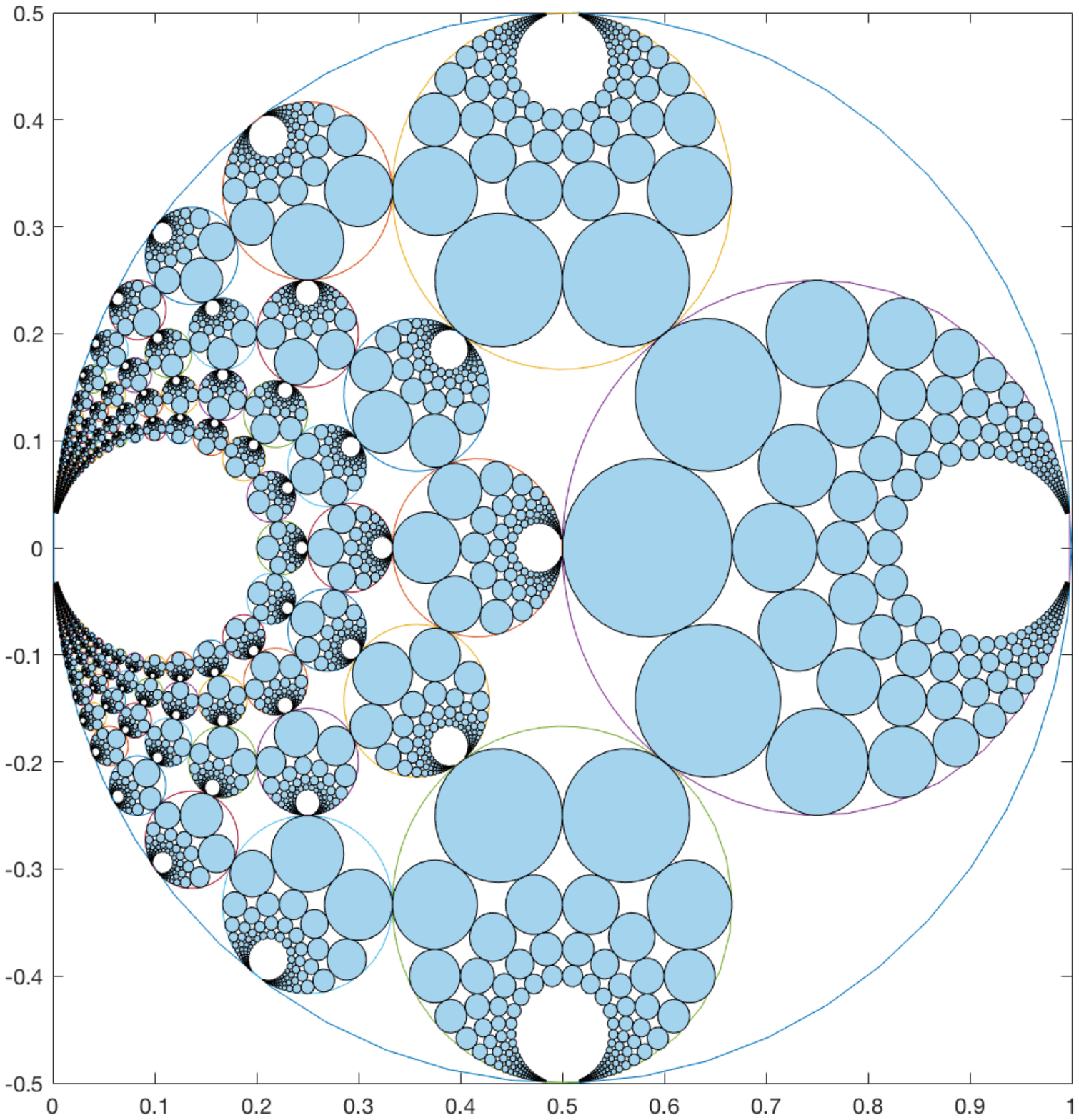}
\includegraphics[scale=0.4]{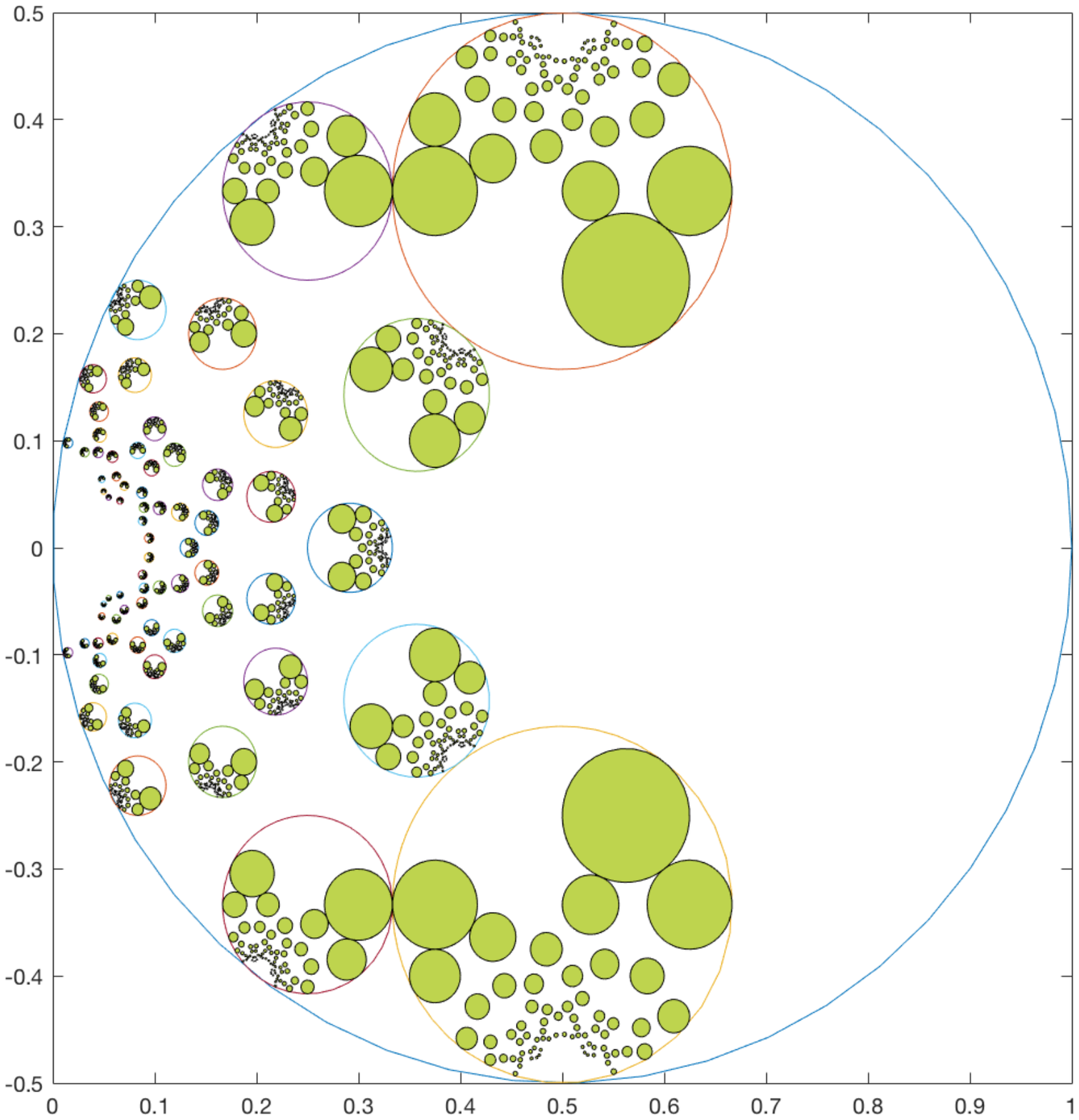}
\caption{Approximation of the limit sets  $\mathcal{CF}_E$ and of $\mathcal{CF}_{GP_{-\pi,\pi}}$ after two iterations.}
\label{gausprimes}
\end{figure}
It is a remarkable fact that continued fractions can be described by the infinite conformal IFS
$$\cf_\N:= \left\{\f_{n}:[0,1] \ra [0,1]: \f_n(x)=\frac{1}{n+x} \mbox{ for }n \in I \subset \N \right\}.$$
 As in the case of real continued fractions, complex continued fractions can be represented by the infinite conformal IFS $$\cf_E=\{\f_e: \bar{B}(1/2,1/2) \ra \bar{B}(1/2,1/2)\}_{e \in E}$$ where $$E=\{m+ni:(m,n) \in \N \times \Z \} \mbox{ and }\f_e(z)=\frac{1}{e+z}.$$
This system was first considered in \cite{MU1} and its geometric and dimensional properties were further studied by various authors, see e.g. \cite{CLU, urbpor, nuss, pri}. In particular it was shown in \cite{urbpor} that $\cf_E$ is not porous.

In this paper we consider porosity properties of $\cf_I$ when $I \subset E$ and we generalize the results from \cite{urbpor} in various ways. In the core of our approach lies Theorem \ref{poroccf}, which provides a novel characterization of porosity for limit sets of complex continued fractions with arbitrary alphabet. One interesting aspect of our characterization is that given any alphabet $I \subset E$ one can check if $J_I$ is porous by solely examining how $I$ is distributed. The proof of Theorem \ref{poroccf}, which is the most technical proof in our paper, is  quite delicate and rather long as we have to consider several cases. See also Remark \ref{remk:marporchar} on how Theorem \ref{poroccf} extends and streamlines earlier results from \cite{urbpor}.

Of special interest to us, are complex continued fraction systems whose alphabet $I$ is finite, co-finite, or even a radial subset of Gaussian primes of the form,
 $$GP_{a,b}=\{w \in E: w \mbox{ is a Gaussian prime and }\arg w \in [a,b)\},$$
where $-\pi/2\leq a<b \leq \pi/2$. We record that Gaussian primes are a very intriguing subset of $\Z[i]$ with many related important open questions, see e.g. \cite{moat}. 

\begin{thm}
\label{poroccfintro} Let $I\subset E$ and denote the limit set of $\mathcal{CF}_I$ by $J_{I}:=J_{\mathcal{CF}_I}$. Moreover let $m_{h_I}$ be the $h_I$-conformal measure of $\mathcal{CF}_I$, where $h_I=\dim_{\cH}(J_{I})$.
\begin{enumerate}[label=(\roman*)]
\item \label{poroccfintro1} If $I$ is finite then $J_{I}$ is directed porous for all $v \in S^{1}$.
\item \label{poroccfintro2} If $I$ is co-finite then:
\begin{enumerate}
\item \label{np2} $J_{I}$ is $m_{h_I}$-a.e. not porous, 
\item $J_{I}$ is $m_{h_I}$-a.e. mean porous with porosity constant only depending on $I$.
\end{enumerate}
\item \label{poroccfintro3} If $I$ is a co-finite subset of $GP_{a,b}$ for some $-\pi/2\leq a<b \leq \pi/2$ then:
\begin{enumerate}
\item \label{np3} $J_{I}$ is $m_{h_I}$-a.e. not porous, 
\item $J_{I}$ is $m_{h_I}$-a.e. mean porous with porosity constant only depending on $I$.
\end{enumerate}
\end{enumerate}
\end{thm}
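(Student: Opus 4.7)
The plan is to derive each item from results stated earlier. For part \ref{poroccfintro1}, since $\cf_I$ is a finite (and trivially irreducible) conformal IFS on $X=\bar{B}(1/2,1/2)$, it suffices to verify the hypothesis \eqref{lsawayfrombdryintro} of Theorem \ref{dirporlimsetintro}, namely $\dist(\partial X, J_I)>0$. I would argue that $J_I\cap\partial X=\emptyset$: any point $x\in J_I\cap\partial X$ would be of the form $\f_e(y)$ with $e\in I$, $y\in J_I\subset X$, and a short M\"obius computation (using $\f_e(0)=1/e$ together with the fact that $|1/e-1/2|=1/2$ precisely when $\operatorname{Re} e=1$) shows that $\f_e(X)\cap\partial X\subseteq\{1/e\}$ for every $e\in E$, which forces $y=0$. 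But $\f_e(z)=0$ has no solution in $X$, so $0\notin J_I$; compactness of $J_I$ (which uses the finiteness of $I$) and closedness of $\partial X$ then yield $\dist(\partial X, J_I)>0$, and Theorem \ref{dirporlimsetintro} applies, giving directed porosity in every direction.

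For the non-porosity statements \ref{np2} and \ref{np3}, I would first invoke the characterization Theorem \ref{poroccf} to exhibit some $\zeta\in\overline{J_I}$ at which $J_I$ is not porous, and then apply Theorem \ref{notporoaeconfint} to upgrade this to $m_{h_I}$-a.e.\ non-porosity. The natural candidate for $\zeta$ in both cases is $0$: the fixed points of the generators of $\cf_I$ are exactly the numbers $1/e$, $e\in I$, and since $\arg(1/e)=-\arg e$, these fixed points accumulate on $0\in\overline{J_I}$ with density in every angular direction of the right half-plane. This prevents holes of any definite relative size near $0$, which is precisely what Theorem \ref{poroccf} translates into failure of porosity. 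For $I$ co-finite in $E$ this angular density is immediate; for $I$ co-finite in $GP_{a,b}$ it follows from the Hecke prime number theorem, which guarantees equidistribution of Gaussian primes in every non-trivial angular sector and hence ensures that they remain dense there after the removal of finitely many elements.

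For the two mean-porosity statements I would apply Corollary \ref{meanporoconfhaus}. The hypotheses to check are that $\cf_I$ is finitely irreducible (immediate from the full incidence matrix), that the nondegeneracy condition \eqref{xx1} holds (the limit sets considered have empty interior), and that $\cf_I$ is strongly regular. Strong regularity for $I$ co-finite in $E$ is inherited from the strong regularity of $\cf_E$, since removing finitely many generators does not alter the pressure function near the critical exponent; for $I$ co-finite in $GP_{a,b}$ the same reduction works once strong regularity of $\cf_{GP_{a,b}}$ is established through the pressure and dimension estimates developed earlier in Section \ref{sec:ccf}. The main obstacle throughout is the technical characterization Theorem \ref{poroccf}, and in the Gaussian-prime setting also the quantitative equidistribution of primes in angular sectors that feeds into it; once these ingredients are in place, combining Theorems \ref{poroccf}, \ref{notporoaeconfint}, \ref{meanporthminto} and Corollary \ref{meanporoconfhaus} yields the result.
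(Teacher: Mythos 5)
Your part~\ref{poroccfintro1} is exactly the paper's argument (show $\f_e(X)\cap\partial X$ is a single boundary point attained only at $z=0$, then $0\notin J_I$, then apply Theorem~\ref{dirporlimsetintro}). One small inaccuracy: the numbers $1/e=\f_e(0)$ are \emph{not} the fixed points of $\f_e$ (those are the roots of $z^2+ez-1=0$); they are the images of $0$, which nonetheless lie in $\overline{J_I}$ and accumulate at $0$, so that part of the heuristic survives. For the mean-porosity clauses \ref{poroccfintro2}(b) and \ref{poroccfintro3}(b) your route agrees with the paper's, except that what actually passes from the ambient system to a co-finite subsystem is \emph{co-finite} regularity (via \cite[Lemma 3.10]{CLU}, since the tail of $Z_1(t)$ is unchanged), from which strong regularity then follows by \eqref{cofimpliesreg}; strong regularity itself is not obviously preserved under deleting generators, since the pressure decreases.

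For \ref{poroccfintro2}(a) the paper does not exhibit a point $\zeta$ at all: it proves Proposition~\ref{densle1} (``porous $\Rightarrow\ \overline{\rho}_E(I)<1$'') directly from Theorem~\ref{poroccf1v2}, notes that co-finite sets have density $1$, and then invokes Corollary~\ref{notporoaeconf} with the alternative ``$J_\cS$ is not porous.'' Your detour through a specific $\zeta=0$ is therefore superfluous, though not wrong, since Theorem~\ref{notporoaeconfint} already covers global non-porosity.

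The genuine gap is in \ref{poroccfintro3}(a). Hecke's prime number theorem (\eqref{heckepntang}) is an asymptotic \emph{count}, $\pi_{a,b}(x)\sim \tfrac{2}{\pi}(b-a)\tfrac{x}{\log x}$; it does not assert any local equidistribution of Gaussian primes at small relative scales, and in fact the density of Gaussian primes tends to $0$, so for any fixed $\theta$ there will be arbitrarily large prime-free regions. Consequently ``primes remain dense and so Theorem~\ref{poroccf} fails'' is not a valid deduction. Note also that Proposition~\ref{densle1} is unavailable here because $\overline{\rho}_E(GP_{a,b})=0$. The paper's actual bridge is the introduction of the \emph{upper box dimension} for subsets of $\Z^2$ (Definition~\ref{bddefn}) together with Theorem~\ref{bdporous}: if $I$ is porous then $\bd(I)<2$. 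The counting from Hecke's theorem yields $\bd(I)=2$ for any co-finite $I\subset GP_{a,b}$ (the $\log$ factor disappears after taking $\log/\log R$), and this, not ``equidistribution,'' is what forces non-porosity. Without Theorem~\ref{bdporous} (whose proof is a nontrivial stopping-time decomposition of squares) your plan has no mechanism to get from Hecke's estimate to failure of the condition in Theorem~\ref{poroccf}; that theorem is the key new ingredient your outline omits.
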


Theorem \ref{poroccfintro}  highlights the differences between  finite and infinite continued fractions systems regarding their porosity properties. If the alphabet of a continued fractions system is finite, then it is porous in all directions. On the other hand, if the alphabet is infinite, even if it is relatively sparse as a radial subset of the Gaussian primes, then the limit set is not porous almost everywhere. However, continued fractions whose alphabets are co-finite subsets of $E$ or co-finite subsets of radial sectors of Gaussian primes are a.e. porous on a fixed percentage of scales.

The proof of Theorem \ref{poroccfintro} employs several new ideas. In particular, we introduce \textit{upper densities} and \textit{upper box dimensions} (see Defintions \ref{dens} and \ref{bddefn} respectively) for subsets of Gaussian integers and we explore their connections to porosity. In Proposition \ref{densle1} we show that if the limit set of a complex continued fractions system is porous, then its alphabet has upper density strictly less than $1$. Since co-finite subsets of $E$ have upper density equal to $1$, we thus deduce that their limit sets are \textit{not} porous. We then employ Theorem \ref{notporoaeconfint} in order to obtain Theorem \ref{poroccfintro} \ref{np2}. Moreover in Theorem \ref{bdporous} we prove that if the limit set of a complex continued fractions system is porous, then its alphabet has upper box dimension strictly less than $2$. Using Hecke's Prime Number theorem we then show that co-finite subsets of radial sectors of Gaussian primes have upper box dimension equal to $2$, hence they define limit sets which are \textit{not} porous. Thus, Theorem \ref{poroccfintro} \ref{np3} again follows from Theorem \ref{notporoaeconfint}.

We finally turn our attention to complex dynamics and we conclude our paper with two fairly short sections which explore porosity for Julia sets of meromorphic, especially transcendental, functions and then, more specifically, of elliptic functions. We first investigate various porosity properties of tame meromorphic functions. We record that tameness is a mild hypothesis which is satisfied by many natural classes of maps, see Remark \ref{tamerem} for more details. 

\begin{thm}
\label{merocombined} Let $f:\C\to\oc$ be a tame meromorphic function such that $J(f)\ne\C$, where $J(f)$ denotes the Julia set of $f$. Moreover, let $\mu$ be a Borel probability $f$-invariant ergodic measure on $J(f)$ with full topological support.
\begin{enumerate}[label=(\roman*)]
\item The Julia set $J(f)$ is porous at a dense set of its points.
\item If $\mu$ has finite Lyapunov exponent  
$\chi_{\mu}(f):=\int_{J(f)}\log|f'|\,d\mu$, then  $J(f)$ is mean porous at $\mu$-a.e. $x\in J(f)$ with porosity constants only depending on $f$.
\item If $J(f)$ is not porous in $\C$ then $J(f)$ is not porous at $\mu$-a.e. $x \in J(f)$.
\end{enumerate}
\end{thm}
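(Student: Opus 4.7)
The plan is to reduce each of the three assertions to the corresponding GDMS result already proved in the paper: (i) to Theorem~\ref{thmporfixintro}, (ii) to Theorem~\ref{meanporthminto}, and (iii) to Theorem~\ref{notporoaeconfint}. The bridge is the standard construction, available for tame meromorphic $f$, of an infinite conformal GDMS $\cS_f$ built from well-chosen inverse branches of iterates of $f$ on compact sets bounded away from the post-singular set and from $\infty$. Tameness is precisely what guarantees uniform Koebe distortion on those inverse branches, and what ensures that $J_{\cS_f}\subset J(f)$ carries full measure for the natural ergodic measures on $J(f)$. Since porosity is local and scale-invariant, it transfers between $J(f)$ and $J_{\cS_f}$ at the points where the two sets are locally indistinguishable.

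For~(i) I would prefer a direct dynamical argument. Repelling periodic points of $f$ are dense in $J(f)$ for tame $f$. If $p$ has period $k$ and $\lambda=(f^{k})'(p)$, $|\lambda|>1$, Koenigs linearization gives a conformal $\psi$ on a neighborhood $U$ of $p$ with $\psi\circ f^{k}=\lambda\psi$. Since $J(f)\ne\C$, tameness forces $F(f)$ to accumulate at $p$ (pull any Fatou point back along the inverse branch of $f^{k}$ fixing $p$), so there is a ball $B(w,\rho)\subset U\cap F(f)$. Iterating the contracting inverse branch with bounded distortion on $U$ then produces, at each sufficiently small scale $r$, a ball of radius $\asymp r$ contained in $B(p,r)\setminus J(f)$. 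Hence $J(f)$ is porous at every repelling periodic point, and (i) follows from density.

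For~(ii), condition~\eqref{xx1} for $\cS_f$ is immediate from $J(f)\ne\C$. The given ergodic $\mu$ of finite Lyapunov exponent lifts via the natural coding $\pi\colon E_{A}^{\N}\to J_{\cS_f}$ to a $\sigma$-invariant ergodic $\tilde\mu$ on $E_{A}^{\N}$ whose Lyapunov exponent is a finite multiple of $\chi_{\mu}(f)$, hence finite. Theorem~\ref{meanporthminto} applied to $\cS_f$ and $\tilde\mu$ gives mean porosity of $J_{\cS_f}$ at $\tilde\mu\circ\pi^{-1}$-a.e.\ point; since $\tilde\mu\circ\pi^{-1}=\mu$ modulo $\mu$-null sets (full topological support), and mean porosity of $J_{\cS_f}$ at a point is inherited by $J(f)$ at the same point once the local ambient identification of the two sets is arranged, (ii) follows. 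For~(iii), if $J(f)$ fails porosity at some $\zeta\in\overline{J(f)}=J(f)$, one picks a version of $\cS_f$ whose base family is arranged around $\zeta$ and for which $\zeta\in\overline{J_{\cS_f}}$; combining non-porosity of $J(f)$ with local density of $J_{\cS_f}$ in $J(f)$ (so that any would-be hole for $J_{\cS_f}$ at scale $r$ near $\zeta$ forces, after Koebe rescaling, a hole for $J(f)$ of comparable relative size) transfers non-porosity to $J_{\cS_f}$ at a point of its closure. Theorem~\ref{notporoaeconfint} then yields non-porosity $m_{h}$-a.e., and the standard comparability of $m_{h}$ with $\mu$ on the tame meromorphic side upgrades this to $\mu$-a.e.

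The main obstacle lies in part~(iii): unlike the pointwise porosity in (i) and the positive mean porosity in (ii), here one must transfer non-porosity \emph{downward} from $J(f)$ to the smaller set $J_{\cS_f}$, which is not automatic because a hole for a smaller set is easier to come by. Resolving this requires careful design of the base family of $\cS_f$ around the putative non-porosity point, combined with Koebe-controlled zooming so that local density of $J_{\cS_f}$ within $J(f)$ forces non-porosity to descend. The comparability of $m_{h}$ with the given ergodic $\mu$ is the other delicate ingredient; it is supplied by the Denker--Urba\'nski conformal measure theory for tame meromorphic dynamics.
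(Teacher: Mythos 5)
Your overall strategy---reduce to the GDMS results via the nice-set construction---is the same as the paper's, and your part~(i) is a pleasant direct variant (the paper instead applies Theorem~\ref{thmporfix}(i) to the IFS $\cS_V$ coming from a nice set, using Lemma~\ref{l220190618}; your Koenigs-linearization argument proves the same thing locally, and it does cover part~(iii) of Theorem~\ref{nonporoellipintro} too). For~(ii) your outline omits two points that the paper's proof of Theorem~\ref{t120190618} must and does address: one has to show $\mu(\partial V)=0$ and that $\mu$-a.e.\ point of $J_V$ has a unique code, before the conditional measure $\mu_{J_V}$ can legitimately be lifted to a $\sigma$-invariant ergodic $\tilde\mu$ on the coding space; and after applying Theorem~\ref{meanporthm} to $\cS_V$ one only gets mean porosity $\mu$-a.e.\ on $J(f)\cap V$, so Lemma~\ref{porconfinv} together with ergodicity of $\mu$ is still needed to spread the conclusion from $V$ to $\mu$-a.e.\ point of $J(f)$. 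These are more than cosmetic, but your sketch is morally right.

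Part~(iii) has a genuine gap. Your plan is to invoke Theorem~\ref{notporoaeconfint}, obtaining non-porosity $m_h$-a.e.\ with respect to the \emph{IFS conformal measure} $m_h$ of $\cS_V$, and then to upgrade this to $\mu$-a.e.\ via a ``standard comparability of $m_h$ with the given ergodic $\mu$.'' No such comparability exists: $\mu$ is an arbitrary $f$-invariant ergodic probability measure with full topological support, and nothing in the hypotheses ties it to the conformal measure of the first-return IFS; the two can easily be mutually singular (think of the measure of maximal entropy versus a geometric conformal measure). The correct route---and the one the paper indicates---is to bypass $m_h$ entirely and use Theorem~\ref{notporoae}, which is stated for an \emph{arbitrary} shift-invariant ergodic measure with full topological support, applied to the same lift $\tilde\mu$ of $\mu_{J_V}$ constructed in part~(ii); then spread via ergodicity and Lemma~\ref{porconfinv} exactly as in~(ii). (Equivalently, one can just rerun the proof of Theorem~\ref{notporoae} directly for $f$ using Koebe distortion of inverse branches of $f^n$ along orbits returning near a non-porosity point.) There is also a secondary issue in your transfer of non-porosity to $J_V$: you propose ``a version of $\cS_f$ arranged around $\zeta$,'' but nice sets can only be built around points of $J(f)\setminus\overline{\PS(f)}$, and a non-porosity point $\zeta$ may well lie in $\overline{\PS(f)}$. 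The fix is to pull $\zeta$ back: a non-critical preimage $w\in f^{-n}(\zeta)$ lying in a previously fixed nice set $V$ is again a non-porosity point of $J(f)$ by conformal invariance, and since $\overline{J_V}=\overline V\cap J(f)$, the closed set $\overline{J_V}$ is then not porous at $w$, which is what Theorem~\ref{notporoae} requires.

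\end{document}
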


The proof of Theorem \ref{merocombined} is based on the porosity results on conformal iterated function systems obtained in Section \ref{sec:porogdms}. Remarkably, such applications are possible due to a powerful tool of complex dynamics known as \textit{nice set}. Roughly speaking, each sufficiently ``good'' meromorphic function admits a set, commonly named as a nice set, such that the holomorphic inverse branches of the first return map it induces, form a conformal iterated function system satisfying the open set condition. We carefully define nice sets in Section~\ref{pmf} and we show how they lead to conformal iterated function systems. More details about these sets, their properties, and their constructions can be found for example in \cite{Riv07, PR1,Dob11, SkU,KU1}. 

Recall that a meromorphic function $f: \C \ra \hat{\C}$ is called {\it elliptic} if it is doubly periodic. Iteration of elliptic functions have been studied widely in complex dynamics. A comprehensive and systematic account of iteration of elliptic functions can be found in the forthcoming book \cite{KU1}; we also refer the reader to the following articles: \cite{KU2,KU3,HK,MayerU3}. In addition to the results on general meromorphic function discussed above, the following theorem establishes several, more refined, porosity properties of Julia sets of elliptic functions. We would also like to emphasize that in this case we impose no tameness assumptions. 

\begin{thm}
\label{nonporoellipintro}
Let $f:\C \ra \hat{\C}$ be a non-constant elliptic function. Then:
\begin{enumerate}[label=(\roman*)]
\item The Julia set $J(f)$ is not porous at a dense set of its points, in particular it is not porous at any point of the set 
$$
P_f:=\bigcup_{n=1}^{\infty} f^{-n}(\infty).
$$

\item For every $b \in P_f$ and for all $\kappa \in (0,1)$ there exists $R(b,\kappa)>0$ such that
$$
\por(J(f),b,r)\le \kappa
$$
for all $r\in(0,R(b,\kappa))$.
\item[(iii)] If in addition $J(f)\ne\C$, then  $J(f)$ is porous at a dense set of its points; the repelling periodic points of $f$.
\end{enumerate}
\end{thm}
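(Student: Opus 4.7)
The plan is to establish part~(ii) as the technical heart of the theorem, deduce~(i) from it, and treat~(iii) via the machinery underlying Theorem~\ref{merocombined}.

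For~(ii), fix $b\in P_f$ and let $N\ge 1$ be the smallest integer with $f^N(b)=\infty$, so that $f^N$ has a pole of some order $q\ge 1$ at $b$. The starting observation is that $J(f)$ is invariant under the period lattice $\Lambda$ of $f$ and is non-empty, so the quantity
\[
\delta:=\sup_{z\in\C}\dist(z,J(f))
\]
is finite, being bounded by the diameter of a fundamental domain of $\Lambda$. Consequently every open Euclidean disk in $\C$ of radius exceeding $\delta$ meets $J(f)$, so the inradius of every connected component of the Fatou set $F(f)$ is at most~$\delta$. Note also that $b\in J(f)$, as preimages of poles under iteration always lie in the Julia set of any transcendental meromorphic function.

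Fix $\kappa\in(0,1)$ and suppose, towards a contradiction, that $B(y,\kappa r)\subset B(b,r)\setminus J(f)$. Since $b\in J(f)$ the ball misses $b$ and hence $|y-b|\ge\kappa r$. The idea is to push this ball forward by $f^N$ and exploit the strong expansion at the pole: writing $f^N(z)=c(z-b)^{-q}(1+O(z-b))$ near $b$ and introducing the M\"obius change of coordinates $\tilde u:=1/(z-b)$, the ball is sent to a round disk $B(\tilde u_0,s)$ with $|\tilde u_0|$ and $s$ of order $1/r$, on which $f^N$ takes the model form $c\tilde u^q$ up to lower-order terms. To cope with the $q$-fold branching of this model we pass to a concentric sub-disk of radius $s':=\min\{s,|\tilde u_0|\sin(\pi/q)/2\}$, on which $\tilde u\mapsto c\tilde u^q$ is injective. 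Combining the Koebe distortion theorem with the estimate $|(c\tilde u^q)'|\asymp q|c||\tilde u_0|^{q-1}$ shows that $f^N(B(y,\kappa r))$ contains a Euclidean disk of radius at least $C(\kappa,q,c)/r^{q}$. Because $B(y,\kappa r)\subset F(f)$ and $f^N$ is holomorphic on this ball, the image lies in $F(f)$ as well, and the inradius bound forces $C(\kappa,q,c)/r^{q}\le\delta$. Solving for $r$ produces the threshold $R(b,\kappa):=(C(\kappa,q,c)/\delta)^{1/q}$, below which no such hole can exist, and this proves $\por(J(f),b,r)\le\kappa$ for every $r\in(0,R(b,\kappa))$.

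Part~(i) now follows immediately: if $\por(J(f),b,r)\le\kappa$ for every $\kappa\in(0,1)$ and all small~$r$, then $J(f)$ is not porous at~$b$; and since the set $P_f$ of preimages of $\infty$ under all iterates is dense in the Julia set of any transcendental meromorphic function, the non-porosity points of $J(f)$ form a dense subset. For~(iii), elliptic functions with $J(f)\ne\C$ fall into the tame class treated in Section~\ref{pmf}, so Theorem~\ref{merocombined}\,(i) already yields porosity of $J(f)$ at a dense subset of its points. To identify this subset with the repelling periodic points, we appeal to the nice-set construction underlying the proof of Theorem~\ref{merocombined}: every repelling periodic point $z_0$ can be realized as a fixed point of some generator of the conformal GDMS associated to a nice set containing~$z_0$, and Theorem~\ref{thmporfixintro}\,(i) then produces porosity of the GDMS limit set at~$z_0$, which transfers to porosity of $J(f)$ at~$z_0$ by the local nature of the nice-set construction.

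The main obstacle will be the geometric estimate in~(ii) when $q>1$: a direct application of the Koebe distortion theorem on the entire ball $B(y,\kappa r)$ is prevented by the $q$-fold branching of $f^N$ at $b$, so one must first restrict to a sub-disk on which the local model $\tilde u\mapsto c\tilde u^q$ is injective and only then invoke distortion, while keeping careful track of how the final image size depends on $r$, $\kappa$, and~$q$.
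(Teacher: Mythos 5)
Your argument for part~(ii) is sound but genuinely different from the paper's. You push a hypothetical hole $B(y,\kappa r)\subset B(b,r)\setminus J(f)$ forward through $f^N$ and derive a contradiction from the uniform inradius bound $\delta$ on Fatou components (which, as you correctly note, follows from $\Lambda$-invariance of $J(f)$). The paper instead proves the dual statement directly: for every $z\in B(b,r)\setminus\{b\}$, the image $f^n(z)$ lands far outside $B(0,R_3)\supset f^n(\Crit(f^n))$, so there is a single holomorphic inverse branch $f^{-n}_z$ on $B(f^n(z),2R_2)$; since every $R_2$-ball meets $J(f)$, pulling one such Julia point back gives a point of $J(f)$ in $B(z,\kappa r)$. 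The advantage of the paper's route is that the $q$-fold branching of $f^N$ at $b$ never arises --- one only ever applies an inverse branch on a disk far from the pole --- whereas your route must push a whole disk near $b$ forward and therefore needs the ``restrict to a sub-disk on which $\tilde u\mapsto\tilde u^q$ is injective'' workaround. Your workaround is plausible, but it adds a layer of quantitative bookkeeping that the paper avoids entirely; you would also need to note that for small $r$ the ball $B(y,\kappa r)$ avoids all critical points of $f^N$ (these lie at bounded distance from $b$ by periodicity), not only the pole. Part~(i) is deduced from~(ii) exactly as in the paper.

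Part~(iii) has a real gap. You deduce it by invoking Theorem~\ref{merocombined}, but that theorem requires $f$ to be \emph{tame}, and the statement you are asked to prove imposes no tameness hypothesis --- the paper emphasizes this point explicitly. It is not true that every elliptic function with $J(f)\ne\C$ is tame: tameness asks that $J(f)\setminus\overline{\PS(f)}\ne\emptyset$, and an elliptic function can have its post-singular set dense in $J(f)$. Consequently the nice-set construction of Section~\ref{pmf} is not available in general, and the step in your proof that appeals to it does not go through. The paper's actual argument for~(iii) is a direct local one, in the spirit of Theorem~\ref{thmporfix}~(i): at a repelling periodic point $z_0$ of period $p$ one has a contracting holomorphic inverse branch $g=(f^p)^{-1}$ fixing $z_0$, and since $J(f)\ne\C$ there is a small ball $B\subset\Int(X)\setminus J(f)$ near $z_0$; iterating $g$ and comparing $\|Dg^n(z_0)\|$ with $\diam(g^n(B))$ and with the radius of the ambient ball exactly as in the GDMS fixed-point argument yields a definite porosity constant at $z_0$. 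This bypasses nice sets and tameness entirely.
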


The paper is organized as follows. In Section \ref{sec:prelim} we  introduce all the relevant concepts related to graph directed Markov systems and their thermodynamic formalism.  Section \ref{sec:porogdms} consists of four subsections, each of them devoted to a different aspect of porosity for general  graph directed Markov systems. In Subsection \ref{porosity} we explore (classical) porosity and we prove Theorem \ref{thmporfixintro}. Subsection \ref{sec:mean} deals with mean porosity and contains, among other results, the proof of Theorem \ref{meanporthminto}. In Subsection \ref{sec:dirpor} we study directed porosity for finite systems and we prove Theorem \ref{dirporlimsetintro}. In Subsection \ref{sec:nonpor} we investigate how non-porous are the limit sets of general graph directed Markov systems, and we prove Theorem \ref{notporoaeconfint}. In Section \ref{sec:ccf} we narrow our focus to complex continued fractions and we prove Theorem \ref{poroccfintro}. Sections \ref{pmf} and \ref{sec:elliptic} delve into porosity for Julia sets of meromorphic functions. In Section \ref{pmf} we deal with tame meromorphic functions and we prove Theorem \ref{merocombined}. Section \ref{sec:elliptic} concerns elliptic functions and contains the proof of Theorem \ref{nonporoellipintro}.

\section{Preliminaries}
\label{sec:prelim}
A {\it graph directed Markov system} (GDMS) \index{GDMS}
$$
\cS= \big\{ V,E,A, t,i, \{X_v\}_{v \in V}, \{\f_e\}_{e\in E} \big\}
$$
consists of
\begin{itemize}
\item a directed multigraph $(E,V)$ with a countable set of edges $E$, frequently referred to as the {\em alphabet} of $\cS$, and a finite set of vertices $V$,

\item an incidence matrix $A:E \times E \ra \{0,1\}$,

\item two functions $i,t:E\ra V$ such that $t(a)=i(b)$ whenever $A_{ab}=1$,

\item a family of non-empty compact metric spaces $\{X_v\}_{v\in V}$,


\item  a family of injective contractions $$ \{\phi_e:X_{t(e)}\to X_{i(e)}\}_{e\in E}$$ such that every $\phi_e,\, e\in E,$ has Lipschitz constant no larger than $s$ for some $s \in (0,1)$.
\end{itemize}

 We will always assume that the alphabet $E$ contains at least two elements and for every $v \in V$ there exist $e,e' \in E$ such that $t(e)=v$ and $i(e')=v$. We will frequently use the simpler notation $\cS=\{\f_e\}_{e \in E}$ for a GDMS. If a GDMS has finite alphabet it will be called {\it finite}\index{GDMS!finite}. 

We now introduce some standard notation from symbolic dynamics. For every $\om \in E^*:=\bigcup_{n=0}^\infty E^n$, we denote by $|\om|$  the unique integer
$n \geq 0$ such that $\om \in E^n$, and we call $|\om|$ the {\em length} of
$\om$. We also set $E^0=\{\emptyset\}$. If $\om \in
E^\mathbb{N}$ and $n \geq 1$, we define
$$
\om |_n:=\om_1\ldots \om_n\in E^n.
$$
For $\tau \in E^*$ and $\om \in E^* \cup E^\mathbb{N}$, we let
$$\tau\om:=(\tau_1,\dots,\tau_{|\tau|},\om_1,\dots).$$
Given $\om,\tau\in E^{\mathbb N}$, we denote the longest
initial block common to both $\om$ and $\tau$ by $\omega\wedge\tau  \in
E^{\mathbb N}\cup E^*$. We also denote by
$$
\sg: E^\mathbb{N} \ra E^\mathbb{N}
$$
the \textit{shift map},  which  is given by the
formula
$$
\sg\left( (\om_n)^\infty_{n=1}  \right) =  \left( (\om_{n+1})^\infty_{n=1}  \right).
$$

Given a matrix $A:E \times E \to \{0,1\}$ we denote
$$
E^\mathbb{N}_A
:=\{\om \in E^\mathbb{N}:  \,\, A_{\om_i\om_{i+1}}=1  \mbox{ for
  all }\,  i \in \N
\}.
$$
Elements \index{A-admissible matrices@$A$-admissible matrices} of $E^\mathbb{N}_A$ are called {\it $A$-admissible} (infinite) words. We also set
$$
E^n_A
:=\{w \in E^\mathbb{N}:  \,\, A_{\om_i\om_{i+1}}=1  \,\, \mbox{for
  all}\,\,  1\leq i \leq
n-1\}, \quad n \in \N,
$$
and
$$
E^*_A:=\bigcup_{n=0}^\infty E^n_A.
$$
The elements of $E_A^\ast$ are called $A$-admissible (finite) words.  A matrix $A:E \times E \ra \{0,1\}$ is called {\it finitely irreducible} if there exists a finite set $\Lambda \subset E_A^*$ such that for all $i,j\in E$ there exists $\om\in \Lambda$ for which $i\om j\in E_A^*$. If the associated matrix of a GDMS is finitely irreducible, we will call the GDMS finitely irreducible as well. For
every  $\om \in E^*_A$, we let
$$
[\om]:=\{\tau \in E^\mathbb{N}_A:\,\, \tau_{|_{|\om|}}=\om \}.
$$



Let $\cS= \big\{ V,E,A, t,i, \{X_v\}_{v \in V}, \{\f_e\}_{e\in E} \big\}$ be a GDMS. For $\om \in E^*_A$ we define the map coded by $\om$:
\begin{equation}\label{phi-om}
\phi_\om=\phi_{\om_1}\circ\cdots\circ\phi_{\om_n}:X_{t(\om_n)}\to
X_{i(\om_1)} \qquad \mbox{if $\om\in E^n_A$.}
\end{equation}
Slightly abusing notation we will let $t(\om) = t(\om_n)$ and $i(\om)=i(\om_1)$ for
$\om$ as in \eqref{phi-om}.

For $\om \in E^{\mathbb N}_A$, the sets
$\{\f_{\om|_n}(X_{t(\om_n)})\}_{n=1}^\infty$ form a decreasing (in the sense of inclusion)
sequence of non-empty compact sets and therefore they have nonempty intersection.
Moreover
$$
\diam(\f_{\om|_n}(X_{t(\om_n)})) \le s^n\diam(X_{t(\om_n)})\le s^n\max\{\diam(X_v):v\in V\}
$$
for every $n\in\N$, hence
$$
\pi(\om):=\bigcap_{n\in  \N}\f_{\om|_n}(X_{t(\om_n)})
$$
is a singleton. Thus we can now define the coding map \index{coding map}
\begin{equation}
\label{picoding}
\pi:E^{\mathbb N}_A\to \du_{v\in V}X_v:=X,
\end{equation}
the latter being a disjoint union of the sets $X_v$, $v\in V$.
The set
$$
J=J_\cS:=\pi(E^{\mathbb N}_A)
$$
will be called the {\it limit set} \index{limit set} (or {\it attractor}) \index{attractor} of the GDMS $\cS$.

For  $\alpha > 0$, we define the metrics $d_\alpha$ on
$E_A^{\mathbb N}$ by setting
\begin{equation}\label{d-alpha}
d_\alpha(\om,\tau) ={\rm e}^{-\alpha|\om\wedge\tau|}.
\end{equation}
It is easy to see that all the metrics $d_{\alpha} $induce the same topology. It is also well known, see \cite[Proposition 4.2]{CTU}, that the coding map $\pi:E^{\mathbb N}_A\to \du_{v\in V}X_v$ is H\"older continuous, when $E^{\mathbb N}_A$ is
equipped with any of the metrics $d_\alpha$ as in \eqref{d-alpha} and
$\du_{v\in V}X_v$ is equipped with the direct sum metric.

In this paper we will focus on conformal GDMS. Let $U \subset \R^n$ be open and connected. Recall that a $C^1$ diffeomorphism $\f: U \ra \R^n$ is \textit{conformal} if its derivative at every point of $U$ is a similarity map. By $D \f(z) : \R^n \ra \R^n$ we denote the derivative of $\f$ evaluated at the point $z$ and by we denote by  $\|D \f (z)\|$ its norm, which in the conformal case coincides with the  similarity ratio. 
Note that for
\begin{itemize}
\item  $n = 1$ the map $\f$ is conformal if and only if it is a $C^1$-diffeomorphism, 
\item $n=2$ the map $\f$ is conformal if and only if it is either holomorphic or antiholomorphic, 
\item $n \geq 3$ the map $\f$ is conformal if and only if it is a  M\"obius transformation.
\end{itemize}
\begin{defn}
\label{gdmsdef}\label{Carnot-conformal-GDMS}
A graph directed Markov system is called {\it conformal} if the following conditions are satisfied.
\begin{itemize}
\item[(i)] For every vertex $v\in V$, $X_v$ is a compact connected
subset of a fixed Euclidean space $\R^n$, $n\ge 1$, and $X_v=\overline{\Int(X_v)}$.
\item[(ii)] ({\it Open Set Condition} or {\it OSC}). \index{open set condition} For all $a,b\in
E$, $a\ne b$,
$$
\phi_a(\Int(X_{t(a)})) \cap \phi_b(\Int(X_{t(b)}))= \emptyset.
$$
\item[(iii)] For every vertex $v\in V$ there exists an open connected
set $W_v \supset X_v$ such that for every $e\in E$ with $t(e)=v$, the map
$\f_e$ extends to a $C^1$ conformal diffeomorphism of $W_v$ into $W_{i(e)}$.
\end{itemize}
\end{defn}
\begin{remark}
\label{ifsosc}
When $V$ is a singleton and for every $e_1,e_2 \in E$, $A_{e_1e_2}=1$ if and only if $t(e_1)=i(e_2)$, the GDMS is called an {\it iterated function system} (IFS). 
\end{remark}

\begin{remark}
\label{ssc} In several instances we will use a stronger separation condition than OSC. We will say that a conformal GDMS $\cS=\{\f_e\}_{e \in E}$ satisfies the {\em Strong Separation Condition}, or {\it SSC}, if 
$$\f_a ( X_{t(a)}) \cap \f_b ( X_{t(b)})=\emptyset$$
for all $a,b \in E$ such that $a \neq b$.
\end{remark}

We will denote the Euclidean metric by $d$ or $|\cdot|$. For each $v \in V$, we select a compact set $S_v$ such that
$X_v \subset \Int(S_v) \subset S_v \subset W_v$. Moreover the sets $S_v, v \in V,$ are chosen to be pairwise disjoint. We denote
\begin{equation}
\label{itaS}
\eta_{\cS}=\min \{\dist(X_v, \partial S_v):v \in V\}.
\end{equation}

We will always assume that a conformal GDMS satisfies the \textit{Bounded Distortion Property}. This means that there exist constants $\alpha>0$ and $K \geq 1$ so that $$
\biggl|\frac{\|D\f_\om(p)\|}{\|D\f_\om(q)\|}-1\biggr|\le Kd^\alpha(p,q)
$$
and
\begin{equation}
\label{bdp}
K^{-1}\le\frac{\|D\f_\om(p)\|}{\|D\f_\om(q)\|}\le K
\end{equation}
for every $\om\in E_A^*$ and every pair of points $p,q\in S_{t(\om)}$, where $d$ is the Euclidean metric on $\R^n$. If $n \geq 2$ conformal maps automatically satisfy the Bounded Distortion Property (with $\alpha=1$), see e.g. \cite{MUGDMS}, nevertheless it is not automatically satisfied for conformal GDMS in $\R$.

For $\om \in E^*_A$ we set
$$\|D \f_\om\|_\infty := \|D \f_\om\|_{S_{t(\om)}}.$$
Note that \eqref{bdp} and the Leibniz rule easily imply
that if $\om \in E_A^\ast$ and $\om=\tau \upsilon$ for some $\tau, \upsilon \in E_A^\ast$, then
\begin{equation}\label{quasi-multiplicativity1}
K^{-1} \|D\f_{\tau}\|_\infty \, \|D\f_{\upsilon}\|_\infty \le
\|D\f_\om\|_\infty \le \|D\f_{\tau}\|_\infty \,
\|D\f_{\upsilon}\|_\infty.
\end{equation}
Moreover, 
for every $\om \in E^*_A$, and every $p,q \in B(X_{t(\om)}, \eta_{\cS}/2)$,
\begin{equation}
\label{boundbyderlip} 
d(\f_{\om}(p), \f_{\om}(q)) \leq  \|D\f_\om\|_\infty d(p,q).
\end{equation}
In particular for every $\om \in E^*_A$
\begin{equation}
\label{boundbyder} 
\diam(\f_\om(X_{t(\om)}))\leq   \|D\f_\om\|_\infty \diam(X_{t(\om)}).
\end{equation}
 The following lemma, which we'll use repeatedly, provide information on the distortion of the iterations $\f_{\om}$. It's proof can be found  in \cite[Section 4.1]{MUGDMS}.
\begin{lm}[Egg Yolk Principle]\label{eyp} Let $S=\{\phi_e\}_{e\in E}$ be a  conformal GDMS. Then for all finite words $\om\in E_A^*$, all $p \in X_{t(\om)}$ and all
$0<r<\eta_{\cS}$,
\begin{equation}\label{4.1.8}
B(\f_\om(p),K^{-1}\|D\f_\om\|_\infty r) \subset
\f_\om(B(p,r)) \subset
B(\phi_\om(p),\|D\phi_\om\|_\infty r).
\end{equation}
\end{lm}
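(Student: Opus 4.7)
The plan is to prove the two inclusions separately. The right-hand inclusion is essentially immediate from the Lipschitz-type estimate (\ref{boundbyderlip}), while the left-hand inclusion requires a matching lower bound on the distortion of $\f_\om$, which I would combine with a connectedness argument.

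For the upper inclusion, I would fix any $q\in B(p,r)$. Because $p\in X_{t(\om)}$ and $r<\eta_{\cS}$, the segment of interest lies inside $S_{t(\om)}$, where (\ref{bdp}) and (\ref{boundbyderlip}) are available, giving
$$
d(\f_\om(p),\f_\om(q))\le \|D\f_\om\|_\infty\, d(p,q) < \|D\f_\om\|_\infty\, r,
$$
which is exactly $\f_\om(q)\in B(\f_\om(p),\|D\f_\om\|_\infty r)$.

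For the lower inclusion, the key step is to show that $\f_\om$ maps the closed sphere $\partial B(p,r)$ entirely outside the ball $B(\f_\om(p),K^{-1}\|D\f_\om\|_\infty r)$. Since $\overline{B(p,r)}\subset\Int(S_{t(\om)})\subset W_{t(\om)}$, the map $\f_\om$ is a conformal diffeomorphism on a neighborhood of $\overline{B(p,r)}$, and hence admits an inverse $\psi:=\f_\om^{-1}$ on $\f_\om(\overline{B(p,r)})$. By the bounded distortion property (\ref{bdp}), $\|D\f_\om(z)\|\ge K^{-1}\|D\f_\om\|_\infty$ for every $z\in S_{t(\om)}$, so the chain rule yields
$$
\|D\psi(y)\| = \|D\f_\om(\psi(y))\|^{-1} \le K\,\|D\f_\om\|_\infty^{-1} \quad\text{for all } y\in \f_\om(\overline{B(p,r)}).
$$
Applying the analogue of (\ref{boundbyderlip}) to the conformal map $\psi$ on this image (its domain is the image under a conformal diffeomorphism of a Euclidean ball, so the same path-integration argument that underlies (\ref{boundbyderlip}) produces the Lipschitz bound with constant $\|D\psi\|_\infty$), one obtains, for every $q\in\partial B(p,r)$,
$$
r = d(p,q) = d(\psi(\f_\om(p)),\psi(\f_\om(q))) \le K\,\|D\f_\om\|_\infty^{-1}\, d(\f_\om(p),\f_\om(q)),
$$
whence $d(\f_\om(p),\f_\om(q))\ge K^{-1}\|D\f_\om\|_\infty r$, as desired.

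Finally, I would close the argument topologically. Since $\f_\om$ is a homeomorphism on a neighborhood of $\overline{B(p,r)}$, the set $\f_\om(B(p,r))$ is open with topological boundary $\f_\om(\partial B(p,r))$, and it contains $\f_\om(p)$. The boundary estimate above shows that $\f_\om(\partial B(p,r))$ is disjoint from the connected ball $B(\f_\om(p),K^{-1}\|D\f_\om\|_\infty r)$; since this ball contains $\f_\om(p)$ and hence meets $\f_\om(B(p,r))$, connectedness forces the whole ball to be contained in $\f_\om(B(p,r))$. The main obstacle here is the careful justification of the Lipschitz bound for $\psi$ on a domain that is not a priori convex — this is where conformality of $\f_\om$, the smallness $r<\eta_{\cS}$ (ensuring everything stays inside the conformality neighborhood $W_{t(\om)}$), and the image-of-a-ball geometry play the decisive role.
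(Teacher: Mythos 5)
The overall plan---the upper inclusion by a Lipschitz estimate, the lower inclusion via a derivative bound for the inverse branch $\psi=\f_\om^{-1}$ combined with a connectedness argument---is the right one, and your topological closing step at the end is correct. The genuine gap is in the derivation of the boundary estimate. The path-integration argument gives, for any path $\gamma$ joining $a$ to $b$ \emph{inside the domain of $\psi$}, the bound $|\psi(a)-\psi(b)|\le\|D\psi\|_\infty\,\ell(\gamma)$; to get $|\psi(a)-\psi(b)|\le\|D\psi\|_\infty\,|a-b|$ one needs the straight segment $[\f_\om(p),\f_\om(q)]$ to lie in $\f_\om(\overline{B(p,r)})$. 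For $n=2$ this can fail: holomorphic images of disks need not be convex (for $\f(z)=e^z$ on a disk of radius close to $\pi$ about $0$, a chord of the image can pass near $0$, which is not in the range of $e^z$). You flag this as ``the main obstacle'' but do not actually resolve it; appealing to ``image-of-a-ball geometry'' does not supply the missing convexity. (For $n\ge3$ the maps are M\"obius and images of balls are convex, so your argument closes; for $n=1$ images of intervals are intervals, so there is no issue either.)

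The fix is short but is a real missing step. If $\sigma(t)=\f_\om(p)+t\bigl(\f_\om(q)-\f_\om(p)\bigr)$ exits $\f_\om(\overline{B(p,r)})$, set $t^*=\max\{t\in[0,1]:\sigma([0,t])\subset\f_\om(\overline{B(p,r)})\}$. By maximality $\sigma(t^*)$ cannot lie in the open set $\f_\om(B(p,r))$, so $\sigma(t^*)=\f_\om(q^*)$ with $q^*\in\partial B(p,r)$. The truncated segment $\sigma([0,t^*])$ does lie in the domain of $\psi$, and applying your derivative bound along it yields $r=|q^*-p|\le K\|D\f_\om\|_\infty^{-1}|\sigma(t^*)-\f_\om(p)|\le K\|D\f_\om\|_\infty^{-1}|\f_\om(q)-\f_\om(p)|$, which is the boundary estimate you wanted, and the rest of your argument then goes through. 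Two minor remarks: for the upper inclusion you invoke \eqref{boundbyderlip}, which is stated for points of $B(X_{t(\om)},\eta_{\cS}/2)$, whereas your $q$ lies only in $B(X_{t(\om)},\eta_{\cS})$; it is cleaner to integrate $\|D\f_\om\|\le\|D\f_\om\|_\infty$ directly along $[p,q]\subset B(p,r)\subset S_{t(\om)}$. And note that the paper itself gives no proof of this lemma, citing instead MUGDMS, Section~4.1, so there is no in-paper argument to compare against.
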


We also need to recall some well known facts from thermodynamic formalism. Let $\mathcal{S}=\{\f_e\}_{e\in E}$  be a finitely irreducible  conformal GDMS. For $t\ge 0$ and $n \in \N$ let
\begin{equation}\label{zn}
Z_n(\cS,t):=Z_{n}(t) := \sum_{\om\in E^n_A} \|D\phi_\om\|^t_\infty.
\end{equation}
By \eqref{quasi-multiplicativity1} we easily  see that
\begin{equation}
\label{zmn}
Z_{m+n}(t)\le Z_m(t)Z_n(t),
\end{equation}
and consequently the
sequence $(\log Z_n(t))_{n=1}^\infty$ is subadditive. Thus, the limit
$$
\lim_{n \to  \infty}  \frac{ \log Z_n(t)}{n}
$$
exists and equals $\inf_{n \in \N} (\log Z_n(t) / n)$. The value of
the limit is denoted by $P(t)$ (or if we want to be more precise by  $P_\mathcal{S}(t)$) and it is called the {\em topological pressure} of the system $\mathcal{S}$ evaluated at the parameter $t$. We also define two special parameters related to topological pressure; we let
$$\theta(\cS):=\theta=\inf\{t \geq 0: P(t)<\infty\}\quad \mbox { and }\quad h(\cS):=h=\inf\{t \geq 0: P(t)\leq 0\}.$$
The parameter $h(\cS)$ is known as \textit{Bowen's parameter}. 

It is well known that $t \mapsto P(t)$  is decreasing on $[0,+\infty)$ with $\lim _{t \ra +\infty} P(t)= -\infty$, and it is convex and continuous on $\overline{\{t \geq 0: P(t)<\infty\}}.$ Moreover
\begin{equation}
\label{thetaz}
\theta({\cS}):=\theta=\inf\{t \geq 0: P(t)<\infty\}=\inf\{t \geq 0: Z_1(t)<\infty\},
\end{equation}
and for $t \geq 0$
\begin{equation}
\label{presz1}
P(t)<+\infty \mbox{ if and only if }Z_1(t)<+\infty.
\end{equation}
For proofs of these facts see e.g. \cite[Proposition 7.5]{CTU} and \cite[Lemma 3.10]{CLU}.

\begin{defn}
\label{regulardef}
A finitely irreducible conformal GDMS $\cS$ is:
\begin{enumerate}[label=(\roman*)]
\item {\it regular}  if $P(h)=0$,
\item {\it strongly regular} if there exists $t \geq 0$ such that $0< P(t) <+\infty$.
\item {\it co-finitely regular} if $P(\theta)=+\infty$.
\end{enumerate}
\end{defn}
It is well known, see e.g. \cite[Proposition 7.8]{CTU} that
\begin{equation}
\label{cofimpliesreg}
\mbox{co-finitely regular} \Longrightarrow \mbox{strongly regular} \Longrightarrow \mbox{regular}.
\end{equation}

Topological pressure plays a key role in the dimension theory of conformal dynamical systems:
\begin{thm}
\label{721}
If $\mathcal{S}$ is a finitely irreducible conformal GDMS, then
$$
h(S)= \dim_{\mathcal{H}}(J_\mathcal{S})
= \sup \{\dim_\cH(J_F):  \, F \subset E \, \mbox{finite} \, \}.
$$
\end{thm}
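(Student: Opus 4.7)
The plan is to combine Bowen's formula for finite irreducible conformal GDMS (which is classical) with a monotone approximation argument relating the pressure and Bowen's parameter of $\cS$ to those of finite subsystems.

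First, I would recall the finite case: if $F\subset E$ is finite and $\cS_F$ is finitely irreducible, then classical Bowen's formula gives $h(\cS_F)=\dim_{\cH}(J_{\cS_F})$. The upper bound uses the natural cover $\{\f_\om(X_{t(\om)}):\om\in E_{A|F}^n\}$ of $J_{\cS_F}$; by \eqref{boundbyder}, $\cH^t(J_{\cS_F})\le M^t Z_n^F(t)$, where $M=\max_v\diam(X_v)$. For $t>h(\cS_F)$, monotonicity/continuity of pressure gives $P_F(t)<0$, whence $Z_n^F(t)\to 0$ and $\cH^t(J_{\cS_F})=0$. The reverse inequality uses existence of the $h(\cS_F)$-conformal measure supported on $J_{\cS_F}$ together with a Frostman-type argument that exploits the Bounded Distortion Property \eqref{bdp} and the Egg Yolk Principle (Lemma~\ref{eyp}) to produce a lower bound on $\cH^{h(\cS_F)}(J_{\cS_F})$.

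Next, I would prove the upper bound $\dim_{\cH}(J_\cS)\le h(\cS)$ in the infinite case by exactly the same covering argument, which carries over without change: cover $J_\cS$ by $\{\f_\om(X_{t(\om)}):\om\in E_A^n\}$, estimate via \eqref{boundbyder} and bound
\[
\cH^t(J_\cS)\le M^t\sum_{\om\in E_A^n}\|D\f_\om\|_\infty^t = M^t Z_n(t).
\]
For any $t>h(\cS)$ we have $P(t)<0$, hence $Z_n(t)\to 0$, giving $\cH^t(J_\cS)=0$ and thus $\dim_{\cH}(J_\cS)\le h(\cS)$. Note that the mesh of the cover tends to $0$ uniformly because of the uniform contraction constant $s\in(0,1)$ in the definition of a GDMS.

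Finally, I would close the circle by showing $h(\cS)\le \sup\{h(\cS_F):F\subset E\text{ finite}\}$. Fix a finite witness set $\Lambda\subset E_A^*$ of irreducibility of $A$. For any finite subset $F\subset E$ containing all edges appearing in $\Lambda$ (together with all vertices of $V$), the restricted system $\cS_F$ is itself finitely irreducible, and such $F$ form a cofinal family exhausting $E$. For each such $F$ and each $n$,
\[
Z_n^F(t)=\sum_{\om\in F^n\cap E_A^n}\|D\f_\om\|_\infty^t \;\nearrow\; \sum_{\om\in E_A^n}\|D\f_\om\|_\infty^t = Z_n(t)
\]
as $F\nearrow E$, by monotone convergence. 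Consequently $P_F(t)\nearrow P(t)$ for every $t$ in the domain of finiteness, and combined with the characterization $h(\cdot)=\inf\{t\ge 0:P_{\cdot}(t)\le 0\}$, one obtains $h(\cS)=\sup_F h(\cS_F)$. Chaining the inequalities,
\[
\sup_F\dim_{\cH}(J_F)=\sup_F h(\cS_F)=h(\cS)\ge \dim_{\cH}(J_\cS)\ge \sup_F\dim_{\cH}(J_F),
\]
the first by the finite Bowen's formula, the second by the upper bound above, and the last trivially from $J_F\subset J_\cS$. Equality throughout is forced.

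The main obstacle is the final inequality $h(\cS)\le\sup_F h(\cS_F)$: one must verify that the finite exhausting subsystems can be chosen to remain finitely irreducible (so that the finite Bowen's formula applies), and that the monotone passage from $P_F(t)$ to $P(t)$ is compatible with the definition of $h$ via a cut-off at $P=0$. Both are handled by the cofinal family of finite $F\supset \Lambda$ described above together with the subadditivity \eqref{zmn}, which ensures that $\tfrac1n\log Z_n^F(t)\to P_F(t)$ in a way that is well-behaved under the limit $F\nearrow E$.
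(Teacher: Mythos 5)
The overall architecture is right: the covering argument for $\dim_{\cH}(J_\cS)\le h(\cS)$, the finite-case Bowen formula, and the closing chain of inequalities are all sound. The gap lies in the pivotal claim that $P_F(t)\nearrow P(t)$ as $F\nearrow E$. From $Z_n^F(t)\nearrow Z_n(t)$ for each fixed $n$ you only obtain the trivial direction $\sup_F P_F(t)\le P(t)$: since $P_F(t)=\inf_n\frac1n\log Z_n^F(t)$, the claim asks you to interchange $\sup_F$ with $\inf_n$, and the general inequality $\inf\sup\ge\sup\inf$ goes the wrong way. Subadditivity \eqref{zmn} only tells you that $\frac1n\log Z_n^F(t)$ decreases to $P_F(t)$ for each fixed $F$; it gives no uniformity over $F$, which is exactly what your appeal to the convergence being well-behaved under $F\nearrow E$ would need.

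The missing ingredient is a quasi-super-multiplicativity estimate that uses the finite irreducibility in an essential way. For $F\supset\Lambda$, given $\om\in F^m\cap E_A^m$ and $\tau\in F^n\cap E_A^n$, pick $\gamma\in\Lambda$ with $\om\gamma\tau$ admissible; then \eqref{quasi-multiplicativity1} gives $\|D\f_{\om\gamma\tau}\|_\infty^t\ge K^{-2t}\bigl(\min_{\gamma\in\Lambda}\|D\f_\gamma\|_\infty^t\bigr)\|D\f_\om\|_\infty^t\|D\f_\tau\|_\infty^t$. Summing over $(\om,\tau)$ and sorting by bridge length $|\gamma|\le p:=\max_{\gamma\in\Lambda}|\gamma|$ yields a constant $c(t,\Lambda)>0$, independent of $F$, such that $\max_{0\le k\le p}Z_{m+n+k}^F(t)\ge c(t,\Lambda)\,Z_m^F(t)\,Z_n^F(t)$. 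This is what furnishes the uniform-in-$F$ lower bound $P_F(t)\ge\frac1n\log Z_n^F(t)-C(t,\Lambda)/n$ that, combined with $Z_n^F(t)\nearrow Z_n(t)$, lets you conclude $\sup_F P_F(t)\ge P(t)$. Without this ingredient, the inequality $h(\cS)\le\sup_F h(\cS_F)$, and hence the whole chain, is not established.
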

For the proof see \cite[Theorem 7.19]{CTU} or \cite[Theorem 4.2.13]{MUGDMS}. 

We close this section with a discussion regarding conformal measures. If $\cS=\{\f_e\}_{e \in E}$ is a finitely irreducible conformal GDMS  we define
$$\Fin(\cS):=\left\{t:\sum_{e\in E}||D\phi_e||_\infty^t<+\infty\right\}.$$
Moreover for $t \in \Fin(\cS)$ we define the \textit{Perron-Frobenius operator} with respect to $\cS$ and $t$ as
\begin{equation}\label{1j89}
\mathcal{L}_t g(\om)= \sum_{i:\, A_{i \om_1}=1}
g(i \om)\| D\phi_i(\pi (\om))\|^t \quad \mbox{for $g \in C_b(E^\mathbb{N}_A)$ and $\om
  \in E^\mathbb{N}_A$,}
\end{equation}
where $C_b(E^\mathbb{N}_A)$ is the Banach space of all real-valued bounded continuous functions on $E_A^\N$ endowed with the supremum norm $||\cdot||_\infty$. Note that $\mathcal{L}_t: C_b(E^\mathbb{N}_A) \ra C_b(E^\mathbb{N}_A)$. As usual we denote by $\mathcal{L}_t^*: C_b^*(E^\mathbb{N}_A)\to C_b^*(E^\mathbb{N}_A)$ the dual operator of $\mathcal{L}_t$. We will use the following theorem repeatedly. Its proof can be found in \cite[Theorem 7.4]{CTU}.
\begin{theorem} \label{thm-conformal-invariant}
Let $\mathcal{S}=\{\f_e\}_{e\in E}$  be a finitely irreducible conformal GDMS and let $t\in\Fin(\cS)$. 
\begin{enumerate}[label=(\roman*)]
\item \label{eiggibstci} There exists a unique eigenmeasure $\tilde m_t$ of the conjugate Perron-Frobenius operator $\mathcal{L}_t^*$ and the corresponding eigenvalue is  $e^{P(t)}$.
\item The eigenmeasure $\tilde m_t$ is a Gibbs state for the potential $$\om \mapsto t \log\|D\phi_{\om_1}(\pi(\sg(\om))\|:=t \zeta (\om).$$

\item The potential $t\zeta:E_A^\N\to\R$ has a unique shift-invariant Gibbs state $\tilde\mu_t$ which is ergodic and globally equivalent to $\tilde m_t$.
\end{enumerate}
\end{theorem}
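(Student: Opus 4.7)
The plan is to prove Theorem \ref{thm-conformal-invariant} by following the classical Perron-Frobenius / Ruelle approach adapted to the infinite alphabet setting, where the main subtlety is the loss of compactness of $E_A^\N$ and the need to use the summability hypothesis $t\in\Fin(\cS)$ (equivalently, $Z_1(t)<\infty$, cf.\ \eqref{presz1}) at every step. I would set $\lambda_t := e^{P(t)}$ throughout and write $\zeta_t(\om):=t\log\|D\f_{\om_1}(\pi(\sg\om))\|$ for the potential.

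First, for part (i), I would produce the eigenmeasure of $\mathcal{L}_t^*$ by an approximation-and-limit argument. For each finite $F\subset E$ with $(F,A|_F)$ irreducible, the restricted subsystem is a classical subshift of finite type on a compact space, so the standard Ruelle--Perron--Frobenius theorem yields a probability eigenmeasure $\tilde m_{t,F}$ of the corresponding $\mathcal{L}_{t,F}^*$ with eigenvalue $e^{P_F(t)}$. Viewing each $\tilde m_{t,F}$ as a Borel probability measure on $E_A^\N$ (supported on $F_A^\N$), I would exhaust $E$ by an increasing sequence $F_n\nearrow E$ and extract a weak-$\ast$ subsequential limit $\tilde m_t$. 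Using \eqref{quasi-multiplicativity1} together with $Z_1(t)<\infty$, the mass at ``tails'' beyond $F_n$ is uniformly controllable, so $\tilde m_t$ is a probability measure and the identity $\mathcal{L}_t^* \tilde m_t=\lambda\,\tilde m_t$ passes to the limit. Computing $\tilde m_t(\mathcal{L}_t^n\1)=\lambda^n$ and comparing with the asymptotics of $Z_n(t)$ forces $\lambda=e^{P(t)}$.

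For part (ii), I would establish the two-sided Gibbs estimate
$$
C^{-1}\le \frac{\tilde m_t([\om])}{e^{-|\om|P(t)}\,\|D\f_\om\|_\infty^t}\le C
\qquad\text{for all }\om\in E_A^*,
$$
by a direct calculation: since $\mathcal{L}_t^{|\om|}\1_{[\om]}(\tau)$ equals (up to the admissibility factor) $\|D\f_\om(\pi(\om\tau))\|^t$, the bounded distortion property \eqref{bdp} and finite irreducibility yield the two-sided bound after integrating against $\tilde m_t$ and using $\int \mathcal{L}_t^n f\,d\tilde m_t=\lambda^n\int f\,d\tilde m_t$. This Gibbs property in turn forces uniqueness of $\tilde m_t$: any other eigenmeasure $\tilde m'$ would be mutually absolutely continuous with $\tilde m_t$ with uniformly bounded Radon--Nikodym derivative, and a standard mixing argument then forces them to coincide. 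For part (iii), the shift-invariant Gibbs state $\tilde\mu_t$ is obtained by producing a continuous, strictly positive function $\rho_t$ with $\mathcal{L}_t\rho_t=\lambda\rho_t$ (via an iterative scheme $\rho_t=\lim_{N\to\infty}\frac1N\sum_{n=1}^N \lambda^{-n}\mathcal{L}_t^n\1$, whose convergence and positivity rest again on \eqref{bdp} and finite irreducibility) and then setting $\tilde\mu_t:=\rho_t\,\tilde m_t$. Ergodicity follows from the topological mixing of $(E_A^\N,\sg)$ together with the Gibbs inequality, and uniqueness among shift-invariant Gibbs states is then standard.

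The main obstacle, which is present at every step, is the non-compactness of $E_A^\N$: weak-$\ast$ limits can lose mass, the operator $\mathcal{L}_t$ is not a priori compact or even bounded on natural function spaces, and the Gibbs cylinder estimate must be derived without appealing to the classical finite-alphabet spectral theory. Controlling all of this uniformly in the truncations requires the combined use of the summability $t\in\Fin(\cS)$, the quasi-multiplicativity \eqref{quasi-multiplicativity1}, the bounded distortion property, and finite irreducibility in a tightly coordinated way; this is precisely why the theorem is restricted to $t\in\Fin(\cS)$ and to finitely irreducible systems.
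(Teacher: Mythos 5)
The paper does not actually prove Theorem~\ref{thm-conformal-invariant}; it imports it verbatim from \cite[Theorem~7.4]{CTU} (which in turn traces back to the thermodynamic formalism of \cite{MUGDMS}). So there is no in-paper argument to compare against. Your sketch is a recognizable outline of the standard Ruelle--Perron--Frobenius approach for countable-alphabet subshifts, which is indeed the framework of the cited monographs, and you have correctly identified the key ingredients: $Z_1(t)<\infty$, bounded distortion \eqref{bdp}, quasi-multiplicativity \eqref{quasi-multiplicativity1}, and finite irreducibility.

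Two places where your outline is materially thinner than a real proof. First, in (i), passing $\mathcal{L}_{t,F_n}^*\tilde m_{t,F_n}=e^{P_{F_n}(t)}\tilde m_{t,F_n}$ to the limit is not a formality: $\mathcal{L}_t$ involves an infinite sum, so besides tightness of $\{\tilde m_{t,F_n}\}$ (which you gesture at) you must show that $\mathcal{L}_{t,F_n}g\to\mathcal{L}_t g$ uniformly for $g\in C_b(E_A^\N)$ and that $P_{F_n}(t)\to P(t)$; both of these are where $t\in\Fin(\cS)$ enters quantitatively, and neither is stated in your sketch. Second, and more seriously, in (iii) the convergence of $\frac{1}{N}\sum_{n=1}^N\lambda^{-n}\mathcal{L}_t^n\1$ is not free. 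On the non-compact space $E_A^\N$ the operator $\mathcal{L}_t$ has no spectral gap on $C_b(E_A^\N)$ and there is no Cesàro ergodic theorem you can simply invoke. The actual mechanism is that the Gibbs estimate of (ii) together with bounded distortion shows the family $\{\lambda^{-n}\mathcal{L}_t^n\1\}_{n\ge1}$ is uniformly bounded away from $0$ and $\infty$ and uniformly H\"older (hence equicontinuous on cylinders), so an Arzel\`a--Ascoli/diagonal argument produces a \emph{subsequential} limit $\rho_t$ fixed by $\lambda^{-1}\mathcal{L}_t$; only then do uniqueness and ergodicity of $\tilde\mu_t=\rho_t\tilde m_t$ follow. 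You would need to supply that equicontinuity step explicitly; as written, ``whose convergence and positivity rest on \eqref{bdp} and finite irreducibility'' is asserting the hard part rather than proving it.
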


For all $t \in \Fin(\cS)$ we will denote
\begin{equation}\label{mutdef}
m_t := \tilde m_t\circ \pi^{-1} \  \  {\rm and  }  \  \  \  \mu_t := \tilde \mu_t\circ \pi^{-1}.
\end{equation}
We record that the measures $\tilde \mu_t, \tilde m_t$ are probability measures, since they are Gibbs states, hence the measures $m_t, \mu_t$ are probability measures as well. Note also that $h=h(\cS) \in \Fin (\cS)$, and as it turns out  the measure $m_h$, which we will call the \textit{$h$-conformal measure} of $\cS$, is particularly important for the geometry of $J_{\cS}$. This is evidenced by the following theorem, which is straightforward to prove. 

\begin{theorem}\label{t120190729}
Let $\mathcal{S}=\{\f_e\}_{e\in E}$ be a finitely irreducible conformal GDMS.
If either $\mathcal{H}^h |_{J_{\cS}}$, the $h$-dimensional Hausdorff measure restricted to $J_{\cS}$, is positive or $\mathcal{P}^h |_{J_{\cS}}$, the $h$-dime- nsional packing measure restricted to $J_{\cS}$, is finite, 
then the system $\mathcal{S}$ is regular, and in the former case $m_h=\mathcal{H}^h(J_{\cS})^{-1}\mathcal{H}^h |_{J_{\cS}}$ while in the latter case $m_h=\mathcal{P}^h(J_{\cS})^{-1}\mathcal{P}^h |_{J_{\cS}}$
\end{theorem}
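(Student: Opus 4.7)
The plan is to first prove that $\cS$ is regular, i.e.\ $P(h)=0$, so that Theorem \ref{thm-conformal-invariant} (applied at $t=h$, with eigenvalue $e^{P(h)}=1$) makes $m_h=\tilde m_h\circ\pi^{-1}$ an $h$-conformal Borel probability measure on $J_\cS$, unique with these properties. I will then verify that under either hypothesis the appropriately normalized Hausdorff or packing measure is also such an $h$-conformal probability measure, so uniqueness forces it to coincide with $m_h$.

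For regularity I first establish $h\in\Fin(\cS)$. By finite irreducibility, choose for each vertex $v\in V$ a word $\tau_v\in E_A^*$ with $i(\tau_v)=v$. Since $V$ is finite and $J_\cS=\bigcup_v(J_\cS\cap X_v)$, the assumed positivity (resp.\ finiteness) transfers uniformly to every slice $J_\cS\cap X_v$: one applies the Egg Yolk Principle \eqref{4.1.8} and Bounded Distortion \eqref{bdp} to the connecting words $\tau_v$. Then, for every $e\in E$, another application of \eqref{4.1.8}--\eqref{bdp} to $\f_e$ yields a uniform constant $c>0$ such that
\[
\mathcal{H}^h\!\bigl(J_\cS\cap\f_e(X_{t(e)})\bigr)\;\ge\;c\,\|D\f_e\|_\infty^h
\]
in the Hausdorff case (and a dual upper bound $\mathcal{P}^h(J_\cS\cap\f_e(X_{t(e)}))\le C\,\|D\f_e\|_\infty^h$ in the packing case). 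OSC-disjointness sums these estimates to $\sum_{e\in E}\|D\f_e\|_\infty^h<\infty$, i.e.\ $h\in\Fin(\cS)$, and in particular $P(h)<\infty$ by \eqref{presz1}. Continuity of $P$ on $\overline{\{t:P(t)<\infty\}}$ together with $h=\inf\{t:P(t)\le 0\}$ gives $P(h)\le 0$; to rule out strict inequality, note that $P(h)<0$ would force $Z_n(h)\to 0$ geometrically, so the canonical cover $\{\f_\om(X_{t(\om)})\}_{\om\in E_A^n}$ of $J_\cS$ --- whose mesh tends to $0$ --- combined with \eqref{boundbyder} would force $\mathcal{H}^h(J_\cS)=0$ and dually $\mathcal{P}^h(J_\cS)=+\infty$, each contradicting the respective hypothesis. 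Hence $P(h)=0$ and $\cS$ is regular.

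With regularity in hand, the same covering estimate (now with $Z_n(h)\asymp 1$) also yields $\mathcal{H}^h(J_\cS)<\infty$ in the Hausdorff case, while a standard Besicovitch-type argument yields $\mathcal{P}^h(J_\cS)>0$ in the packing case. Hence the normalized measure $\nu:=\mathcal{H}^h(J_\cS)^{-1}\mathcal{H}^h|_{J_\cS}$ (Hausdorff case) or $\nu:=\mathcal{P}^h(J_\cS)^{-1}\mathcal{P}^h|_{J_\cS}$ (packing case) is a Borel probability measure on $J_\cS$. Since each $\f_e$ is a conformal $C^1$-diffeomorphism, a standard change-of-variables formula for the $h$-dimensional Hausdorff/packing measure (verified first on cylinders $\f_\om(X_{t(\om)})$ using \eqref{bdp} and then extended to all Borel sets by approximation as $|\om|\to\infty$) shows that $\nu$ satisfies the $h$-conformality identity $\nu(\f_e(A))=\int_A\|D\f_e\|^h\,d\nu$ for every Borel $A\subset X_{t(e)}$ and every $e\in E$, together with $\nu(\f_a(X_{t(a)})\cap\f_b(X_{t(b)}))=0$ for $a\ne b$ by OSC. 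Thus any Borel lift of $\nu$ to $E_A^{\mathbb N}$ is an eigenmeasure of $\mathcal{L}_h^*$ with eigenvalue $1$, and the uniqueness clause of Theorem \ref{thm-conformal-invariant}\ref{eiggibstci} forces $\nu=m_h$.

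The main technical obstacle is the uniform lower bound $\mathcal{H}^h(J_\cS\cap\f_e(X_{t(e)}))\gtrsim\|D\f_e\|_\infty^h$ and its packing-measure counterpart: one must transfer positivity/finiteness from $J_\cS$ down to each first-level cylinder without any a priori regularity of $\cS$. Finite irreducibility is essential here, as it provides uniformly bounded connecting words $\tau_v$ that let every slice $J_\cS\cap X_v$ inherit the assumed positivity/finiteness from a single fixed vertex; Bounded Distortion and the Egg Yolk Principle then deliver the correct scaling. Everything else, including the change-of-variables step, is routine given the bounded-distortion geometry of conformal maps.
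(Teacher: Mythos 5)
The paper itself gives no proof (it declares the result ``straightforward to prove''), so I cannot compare your argument against a written proof; I can only evaluate it on its own terms. Your overall plan---establish $P(h)=0$, then show the normalized geometric measure is an $h$-conformal probability measure and invoke uniqueness---is a sound one, but the execution has a genuine gap plus an unnecessary detour. First, the whole paragraph devoted to showing $h\in\Fin(\cS)$ is not needed: the paper already records $h(\cS)\in\Fin(\cS)$, and indeed it is automatic, since $h=\inf\{t:P(t)\le0\}$ together with continuity of $P$ on $\overline{\{P<\infty\}}$ gives $P(h)\le 0<\infty$, whence $Z_1(h)<\infty$ by \eqref{presz1}. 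Moreover, as written, your argument there does not close: in the Hausdorff case you sum the lower bound $\mathcal{H}^h(J_\cS\cap\f_e(X_{t(e)}))\ge c\|D\f_e\|_\infty^h$ and would need $\mathcal{H}^h(J_\cS)<\infty$ (which is a conclusion, not a hypothesis), while in the packing case the upper bound $\mathcal{P}^h(\cdot)\le C\|D\f_e\|_\infty^h$ goes the wrong way for bounding $Z_1(h)$. A lower packing bound was what you wanted there.

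The substantive gap is the claim that $P(h)<0$ ``dually'' forces $\mathcal{P}^h(J_\cS)=+\infty$. The Hausdorff side really is a covering argument ($\sum_{\om\in E_A^n}\diam(\f_\om(X_{t(\om)}))^h\lesssim Z_n(h)\to0$), but there is no dual packing argument of the same shape: the natural packings by balls inside level-$n$ cylinders give $\sum r_i^h\lesssim Z_n(h)\to 0$, which is the \emph{wrong} direction. To get $\mathcal{P}^h(J_\cS)=\infty$ from $P(h)<0$ one must pass through the measure $\tilde m_h$ (which exists because $h\in\Fin(\cS)$): the Gibbs estimate gives $\tilde m_h([\om])\asymp e^{-P(h)|\om|}\|D\f_\om\|_\infty^h$ with $e^{-P(h)|\om|}\to\infty$, and one then shows that $\liminf_{r\to0}m_h(B(x,r))/r^h=0$ on a set of positive $m_h$-measure---which requires examining intermediate scales between consecutive cylinders and is not at all a formal dual of the covering bound---before applying the lower-density characterization of packing measure. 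This is the step you would need to supply. A smaller point: to transport the normalized $\mathcal{H}^h$ or $\mathcal{P}^h$ measure $\nu$ to an eigenmeasure of $\mathcal{L}_h^*$ on $E_A^\N$ and invoke the uniqueness in Theorem~\ref{thm-conformal-invariant}\ref{eiggibstci}, you must first know that $\nu$ gives zero mass to the overlap set $\bigcup_{a\ne b}\f_a(X_{t(a)})\cap\f_b(X_{t(b)})\cap J_\cS$ so that the coding $\pi$ is $\nu$-a.e.\ injective; that is not automatic from OSC and should be justified (for instance by observing that such points have $\ge2$ disjoint images under $\f_a^{-1},\f_b^{-1}$ inside $J_\cS$, and using the change-of-variables identity you already set up).
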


See \cite[Chapters 6,7,8]{CTU} or \cite[Chapter 4]{MUGDMS} for more information on Gibbs states and conformal measures. 

\section[Porosities and GDMS]{Porosities and conformal graph directed Markov systems}
\label{sec:porogdms}
In this section we study various aspects of porosities for general graph directed Markov systems. We note that the results obtained in Subsections \ref{porosity}, \ref{sec:mean} and \ref{sec:nonpor} hold for Carnot conformal GDMS as well, see \cite{CTU} for more information on Carnot GDMS.
\subsection{Porosity}
\label{porosity}
We start with the formal definition of porosity. If $(X,d)$ is a metric space and $E \subset X$, $x \in X$ and $r \in (0,\diam(E))$ we let
\begin{equation}
\label{porexr}
\por (E,x,r)=\sup \{c \geq 0: B(y,c r) \subset B(x,r)\stm E \mbox{ for some }y\in X\}.
\end{equation}
\begin{defn}
Let $(X,d)$ be a metric space and let $E \subset X$ be a bounded set. 
Given $c\in (0,1)$ and $x \in X$, we say that $E \subset X$ is:
\begin{enumerate}[label=(\roman*)]
\item   {\em$c$-porous at} $x$ if there exists some $r_0>0$ such that $\por (E,x,r) \geq c $ for every $r \in (0, r_0)$, 
\item {\em porous at} $x$ if there exists some $c \in (0,1)$ such that $E$ is {\em$c$-porous at} $x$,
\item {\em $c$-porous} if there exists some $r_0>0$ such that $\por (E,x,r) \geq c$ for every $x \in E$ and $r \in (0, r_0
)$,
\item {\em porous} if it is $c$-porous for some $c\in (0,1)$.
\end{enumerate}
\end{defn}
Before proving our first auxiliary lemma we need to introduce some extra notation. Recall first that $X=\cup_{v \in V} X_v$. For $n \in \N$ and $v \in V$ let
\begin{equation}
\label{xn}
X^n:=\bigcup_{\om \in E_A^n} \f_\om(X_t(\om)),
\end{equation}
and
\begin{equation}
\label{xnv}
X^n_v:=X^n \cap X_v =\bigcup_{\om \in E_A^n:i(\om)=v} \f_\om(X_t(\om)).
\end{equation}

\begin{lm}
\label{lemmax1}
Let $\cS=\{\f_e\}_{e \in E}$ be a finitely irreducible conformal GDMS such that 
\begin{equation}
\label{xx1}
X \stm \overline{ \cup_{e \in E} \f_e (X_{t(e)})} \neq \emptyset.
\end{equation}
Then 
\begin{enumerate}[label=(\roman*)]
\item \label{lemmax11}  For every $v \in V$ there exists some $m_v \in \N$ such that
\begin{equation}
\label{xv}
\Int X_{v} \stm \overline{X_{v}^{m_v}} \neq \emptyset.
\end{equation}
\item \label{lemmax12} There exists a family of open balls $\{B_v\}_{v \in V}, B_v \subset \Int X_v,$ such that for every $\om \in E_A^\ast$,
\begin{equation}
\label{bset1}\f_\om (B_{t(\om)}) \cap J_{\cS}=\emptyset.\end{equation}
\end{enumerate}
\end{lm}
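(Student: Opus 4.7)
My plan is to derive both parts from a single construction. The hypothesis \eqref{xx1} furnishes a point $p \in X \setminus \overline{\bigcup_e \f_e(X_{t(e)})}$; since each $X_v$ satisfies $X_v = \overline{\Int X_v}$ and the $X_v$'s form a disjoint decomposition of $X$, I can shift $p$ slightly to locate $p_0 \in \Int X_{v_0}$ and $r_0 > 0$ with $B(p_0, r_0) \subset \Int X_{v_0}$ and $B(p_0, r_0) \cap \overline{X^1} = \emptyset$ for some $v_0 \in V$. This already handles (i) for $v_0$ with $m_{v_0} = 1$.

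For the remaining vertices, finite irreducibility provides, for each $v \in V$, a word $\tau_v \in E_A^*$ with $i(\tau_v) = v$ and $t(\tau_v) = v_0$ (taking $\tau_{v_0} = \emptyset$); I set $k_v = |\tau_v|$. My candidate open set is $U_v := \f_{\tau_v}(B(p_0, r_0)) \subset \Int X_v$, which is a genuine open set since $\f_{\tau_v}$ is a conformal diffeomorphism on $W_{t(\tau_v)} \supset X_{v_0}$. The key claim is that $U_v \cap X_v^{k_v+1} = \emptyset$; because $U_v$ is open, this at once upgrades to $U_v \cap \overline{X_v^{k_v+1}} = \emptyset$, proving (i) with $m_v = k_v + 1$ and circumventing the fact that $X_v^{k_v+1}$ may fail to be closed when $E$ is infinite.

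To establish the claim, I write each $\om \in E_A^{k_v+1}$ with $i(\om) = v$ as $\om = \tau e$ with $|\tau| = k_v$ and $i(\tau) = v$. If $\tau = \tau_v$, then $t(\tau_v) = v_0 = i(e)$, so $\f_\om(X_{t(\om)}) = \f_{\tau_v}(\f_e(X_{t(e)})) \subset \f_{\tau_v}(X_{v_0}^1)$; since $B(p_0, r_0) \cap X^1 = \emptyset$ and $\f_{\tau_v}$ is injective, $U_v \cap \f_\om(X_{t(\om)}) = \emptyset$. If $\tau \neq \tau_v$, then a straightforward induction on OSC for same-length admissible words yields $\f_\tau(\Int X_{t(\tau)}) \cap \f_{\tau_v}(\Int X_{v_0}) = \emptyset$; combining this with $\f_\tau(X_{t(\tau)}) = \overline{\f_\tau(\Int X_{t(\tau)})}$ and the fact that $U_v \subset \f_{\tau_v}(\Int X_{v_0})$ is an open neighborhood of each of its points rules out any intersection.

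For (ii), I choose $B_v$ to be any open ball contained in $U_v$. For an arbitrary admissible $\om$ with $t(\om) = v$, the concatenation $\rho := \om\tau_v$ is admissible with $t(\rho) = v_0$, and $\f_\om(B_v) \subset \f_\rho(B(p_0, r_0))$; hence it suffices to show $\f_\rho(B(p_0, r_0)) \cap J_\cS = \emptyset$ for every admissible $\rho$ with $t(\rho) = v_0$. A putative $q = \pi(\tau)$ in the intersection forces either $\tau|_{|\rho|} = \rho$, yielding $\pi(\sg^{|\rho|}\tau) \in J_\cS \cap B(p_0, r_0) \subset \overline{X^1} \cap B(p_0, r_0) = \emptyset$, or $\tau|_{|\rho|} \neq \rho$, which reproduces the OSC-plus-openness contradiction of the previous paragraph. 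The delicate point throughout is ensuring that disjointness from the \emph{set} $X_v^{m_v}$ promotes to disjointness from its \emph{closure}, and that OSC is applied to the correct pair of interiors; both issues are absorbed by the single observation that the candidate neighborhoods I construct are genuinely open.
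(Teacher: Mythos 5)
Your proof of part (i) is sound and essentially parallels the paper's: your $\tau_v$ is the paper's word $\om'=e\om e_0$, your claim $U_v\cap X_v^{k_v+1}=\emptyset$ is the paper's inclusion \eqref{posdif}, and the observation that openness of $U_v$ upgrades this to disjointness from $\overline{X_v^{k_v+1}}$ is exactly right.

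Part (ii), however, has a genuine gap: you assert that $\rho:=\om\tau_v$ is admissible for every admissible $\om$ with $t(\om)=v$, but this is false for a general GDMS. Admissibility at the junction requires $A_{\om_{|\om|},(\tau_v)_1}=1$, and the incidence matrix $A$ is constrained only by the one-way implication $A_{ab}=1\Rightarrow t(a)=i(b)$; the equality $t(\om)=v=i(\tau_v)$ alone does not place $\om\tau_v$ in $E_A^*$. (For an IFS the claim would hold, since there $A_{e_1e_2}=1$ if and only if $t(e_1)=i(e_2)$, cf.\ Remark \ref{ifsosc}, but the lemma concerns arbitrary finitely irreducible GDMS.) Your reduction to proving $\f_\rho(B(p_0,r_0))\cap J_\cS=\emptyset$ for \emph{admissible} $\rho$ therefore fails to cover those $\om$ with $\om\tau_v\notin E_A^*$. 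The gap is repairable: your case analysis actually works for every \emph{composable} word $\rho$ (one with $t(\rho_j)=i(\rho_{j+1})$ for all $j$), since the OSC and the resulting disjointness of interiors of images under distinct same-length composable words never invoke $A$-admissibility, and the case $\tau|_{|\rho|}=\rho$ is vacuous when $\rho\notin E_A^*$. But as written the proof rests on a property of GDMS that does not hold. The paper avoids concatenation altogether by first showing $B_v\cap J_\cS=\emptyset$ directly from part (i) — if $y=\pi(\tau)\in B_v\cap J_\cS$ then $i(\tau_1)=v$ because the $X_v$ are disjoint, so $y\in X_v^{m_v}$, contradicting $B_v\cap\overline{X_v^{m_v}}=\emptyset$ — and then applies the OSC-plus-openness step only to the admissible word $\om$ itself, reducing $\f_\om(y)\in J_\cS$ to $y\in J_\cS\cap B_{t(\om)}$.
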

\begin{proof} Throughout the proof we are going to use repeatedly the fact that the maps
$$\f_e :W_{t(e)} \ra \f_{e}( W_{t(e)}) \subset W_{i(e)}$$
are homeomorphisms for every $e \in E$. The proof of \ref{lemmax11} is similar to the proof of \cite[Theorem 8.26]{CTU}, nevertheless we include the details. First note that \eqref{xx1} implies that 
$$\Int X \stm \overline{X^1} \neq \emptyset.$$ 
Since $\Int X=\cup_{v \in V} \Int X_v$, it follows that there exists some $v_0 \in V$  such that
\begin{equation}
\label{xvo}
\Int X_{v_0} \stm \overline{X_{v_0}^1} \neq \emptyset.
\end{equation}

Now let $v \in V$ and  $e, e_0 \in E$ such that $i(e)=v$ and $t(e_0)=v_0$. Since $\cS$ is finitely irreducible there exists some $\om \in E_A^\ast$, with $|\om| \leq m_0$ for some $m_0 \in \N$ depending only on $\cS$, such that $\om':=e\om e_0 \in E_A^\ast$. Therefore, 
\begin{equation}
\label{poswprime}
\f_{\om'}(\Int X_{v_0}) \stm \f_{\om'}(\overline{X^1_{v_0}}) \neq \emptyset.
\end{equation}
Now if $|\om'|=k$ notice that
\begin{equation}
\label{posdif}
\overline{X^{k+1}_v} \cap \f_{\om'}(\Int X_{v_0}) \subset \f_{\om'}( \overline{X^1_{v_0}}).
\end{equation}
To prove \eqref{posdif}, first observe that
\begin{equation*}
\begin{split}
X_{v}^{k+1} &\subset \f_{\om'} \left( \bigcup_{a \in E:i(a)=v_0} \f_a (X_{t(a)})\right) \cup \bigcup_{\tau \in I_v} \f_\tau (X_{t(\tau)})\\
&=\f_{\om'}(X^1_{v_0})\cup \bigcup_{\tau \in I_v} \f_\tau (X_{t(\tau)}),
\end{split}
\end{equation*}
where $I_v=\{ \tau \in E_A^{k+1}:i(\tau)=v \text{ and }\tau|_k \neq \om'\}$. Therefore
\begin{equation}
\label{xvint1}
\overline{X_{v}^{k+1}} \subset \f_{\om'}(\overline{X^1_{v_0}})\cup \overline{\bigcup_{\tau \in I_v} \f_\tau (X_{t(\tau)})}.
\end{equation}
Note that for every $\tau \in I_v$ by the open set condition
$$\f_\tau (X_{t(\tau)}) \cap \f_{\om'}(\Int X_{v_0}) \subset \f_{\tau|_k} (X_{t(\tau_k)}) \cap \f_{\om'}(\Int X_{v_0})= \emptyset,$$
hence 
$$\bigcup_{\tau \in I_v} \f_\tau (X_{t(\tau)}) \cap \f_{\om'}(\Int X_{v_0})= \emptyset.$$
Since $\f_{\om'}(\Int X_{v_0})$ is open, we deduce that
\begin{equation}
\label{xvint2}\overline{\bigcup_{\tau \in I_v} \f_\tau (X_{t(\tau)})} \cap \f_{\om'}(\Int X_{v_0})= \emptyset.
\end{equation}
Therefore  \eqref{posdif} follows by \eqref{xvint1} and \eqref{xvint2}. 
Recall that
$$\f_{\om'}(\Int(X_{v_0}))=\f_{\om'}(\Int(X_{t(e_0)}))=\f_{\om'}(\Int(X_{t(\om')}))=\f_{\om'}(\Int(X_{t(e_0)}))\subset \Int X_{i(e)}=\Int X_v.$$
Hence,
\begin{equation*}
\begin{split}
\Int X_v \setminus \overline{X_v^{k+1}} &\supset \f_{\om'} (\Int X_{v_0}) \setminus \overline{X_v^{k+1}} = \f_{\om'} (\Int X_{v_0}) \cap (\f_{\om'} (\Int X_{v_0}) \setminus \overline{X_v^{k+1}})\\
&\overset{\eqref{posdif}}{\supset} \f_{\om'} (\Int X_{v_0}) \setminus \f_{\om'}(\overline{X^1_{v_0}}) \overset{\eqref{poswprime}}{\neq} \emptyset,
\end{split}
\end{equation*}
and this implies \eqref{xv} because $k=|\om'| \leq m_0+2$. The proof of \ref{lemmax11} is complete

We will now prove \ref{lemmax12}. By \eqref{xv}, for all $v \in V$, there exist $z_v \in X_v$ and $r_v>0$ such that
\begin{equation}
\label{bsubsetofint}
B_v:=B(z_v,r_v) \subset \Int X_{v} \stm \overline{X_{v}^{m_v}}.
\end{equation}

Observe first that 
\begin{equation}
\label{bset2}
B_v \cap J_{\cS}=\emptyset
\end{equation}
for all $v \in V$. Because if not, there exists some $y \in B_v \cap J_{\cS}$  and since $y \in J_{\cS}$ there exists some $\tau \in E_A^\N$ such that  
$$y= \pi (\tau)= \bigcap_{n=1}^\infty \f_{\tau|_n} (X_{t(\tau_n)}).$$
In that case $y \in \f_{\tau|_{m_v}} (X_{t(\tau_{m_v})}) \subset X_v^{m_v}$, which contradicts \eqref{bsubsetofint}. Hence \eqref{bset2} follows. 
Suppose now that there exists some $\om \in E_A^\ast$ and some $y \in B_{t(\om)}$ such that $\f_\om (y) \in J_{\cS}$. 
So there exists some $\tau \in E_A^\N$, such that
\begin{equation}
\label{fomy}
\f_\om (y)=\pi (\tau)= \bigcap_{n=1}^\infty \f_{\tau|_n} (X_{t(\tau_n)}).
\end{equation}

We will show that $\om=\tau|_{\om}$. By contradiction assume that this is not the case. Let $n_0 < |\om|$  be the smallest natural number such that $\om_{n_0} \neq \tau_{n_0}$. Then by the open set condition,
$$\f_{\om_{n_0}} (\Int (X_{t(\om_{n_0})})) \cap \f_{\tau_{n_0}} (X_{t(\tau_{n_0})})=\emptyset,$$
and consequently
$$\f_{\om|_{n_0}} (\Int (X_{t(\om_{n_0})})) \cap \f_{\tau|_{n_0}} (X_{t(\tau_{n_0})})=\emptyset.$$
Note also that 
$$\f_{\om_{n_0+1,\dots,|w|}}(\Int (X_{t(\om_{|\om|})}))=\Int \f_{\om_{n_0+1,\dots,|w|}} (X_{t(\om_{|\om|})})) \subset \Int (X_{t(\om_{n_0})}),$$
and
$$
\f_{\tau_{n_0+1,\dots,|w|}} (X_{t(\tau_{|\om|})}) \subset X_{t(\tau_{n_0})}.
$$
Therefore
\begin{equation}
\label{ballj}\f_{\om} (\Int (X_{t(\om_{|\om|})})) \cap \f_{\tau|_{|\om|}} (X_{t(\tau_{|\om|})})=\emptyset.
\end{equation}
Recalling \eqref{fomy} and the fact that $y \in B_{t(\om)} \subset \Int X_{t(\om)}$ we deduce that $$\f_{\om}(y) \in \f_{\om}( \Int (X_{t(\om_{|\om|})})) \cap \f_{\tau|_{|\om|}} (X_{t(\tau_{|\om|})}),$$ which contradicts \eqref{ballj}. 

Therefore  $\om=\tau|_{|\om|}$, and
$$\f_{\om} (y)= \f_{\tau|_{|\om|}} (\pi (\sigma^{|\om|}(\tau)))=\f_{\om} (\pi (\sigma^{|\om|}(\tau))).$$
Hence
$$y=\pi (\sigma^{|\om|}(\tau)) \in J_{\cS},$$
and this contradicts \eqref{bset2}. So we have established \eqref{bset1} and the proof of \ref{lemmax12} is complete.
\end{proof}

Under the mild assumption \eqref{xx1}, limit sets of finitely irreducible graph directed Markov systems are porous in large (in the sense of category and dimension) subsets. This is the content of Theorem \ref{thmporfixintro}, which we restate and prove in the following.  
\begin{thm}
\label{thmporfix}
Let $\cS=\{\f_e\}_{e \in E}$ be a finitely irreducible conformal GDMS which satisfies \eqref{xx1}. Then the following hold:
\begin{enumerate}[label=(\roman*)]
\item \label{porthm2} The limit set $J_{\cS}$ is porous at every fixed point of $\cS$, in particular $J_{\cS}$ is porous at a dense set of $J_{\cS}$. 
\item \label{porthm3a} For every $\ve>0$ there exists some finite $F(\ve):=F \subset E$  with  $\dim_{\cH} (J_{\cS_F})>\dim_{\cH} (J_{\cS})-\ve$, such that $J_{\cS}$ is porous at every $x \in J_{\cS_F}$ with porosity constant only depending on $F$ and $\cS$.
\item \label{porthm3b} There exists some set $\tilde{J} \subset J_{\cS}$ such that $\dim_{\cH} (\tilde{J})=\dim_{\cH} (J_{\cS})$ and $J_{\cS}$ is porous at every $x \in \tilde{J}$.
\end{enumerate}
\end{thm}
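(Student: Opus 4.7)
The plan is to base everything on Lemma \ref{lemmax1}, specifically on the family of open balls $\{B_v = B(z_v,r_v)\}_{v\in V}$ with $\f_\om(B_{t(\om)}) \cap J_{\cS} = \emptyset$ for every $\om \in E_A^*$. The idea is that every point $x = \pi(\tau) \in J_{\cS}$ naturally sits inside the image $\f_{\tau|_n}(X_{t(\tau_n)})$, and the forbidden ball $\f_{\tau|_n}(B_{t(\tau_n)})$ is also contained there; by the Egg Yolk Principle (Lemma \ref{eyp}) this image contains a ball of radius comparable to $\|D\f_{\tau|_n}\|_\infty r_{t(\tau_n)}$, which is a hole in $J_{\cS}$ sitting near $x$. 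The obstruction to turning this into porosity is that as $n$ grows, the ratio $\|D\f_{\tau|_{n+1}}\|_\infty / \|D\f_{\tau|_n}\|_\infty$ can be arbitrarily small if the alphabet is infinite, which destroys scale control for a general $x$. This is precisely why (ii) needs a finite subsystem.

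For \ref{porthm2}, let $p$ be a fixed point of $\cS$, i.e.\ $p = \pi(\tau^\infty)$ for some $\tau \in E_A^*$ with $i(\tau)=t(\tau) =: v_0$, so $p$ is the unique fixed point of $\f_\tau : X_{v_0} \to X_{v_0}$. Applying Lemma \ref{lemmax1}\ref{lemmax12} and Lemma \ref{eyp} to $\f_{\tau^n}$, the set $\f_{\tau^n}(B_{v_0})$ is disjoint from $J_{\cS}$ and contains a ball of radius $K^{-1}\|D\f_{\tau^n}\|_\infty r_{v_0}$; its center and $p$ both lie in $\f_{\tau^n}(X_{v_0})$, which has diameter at most $\|D\f_{\tau^n}\|_\infty \diam(X_{v_0})$. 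Because the ratios $\|D\f_{\tau^{n+1}}\|_\infty / \|D\f_{\tau^n}\|_\infty$ are bounded uniformly away from $0$ (by $K^{-1}\|D\f_\tau\|_\infty > 0$, using \eqref{quasi-multiplicativity1}), for every sufficiently small $r$ we can pick $n$ with $\|D\f_{\tau^n}\|_\infty$ comparable to $r$; this yields a hole of relative size $\gtrsim K^{-1} r_{v_0} / (K^{-1}r_{v_0} + \diam X_{v_0})$ inside $B(p,r)$, giving porosity at $p$ with a constant depending only on $\tau$ and $\cS$. Density of fixed points in $J_{\cS}$ follows from density of periodic sequences in $E_A^\N$ and continuity of $\pi$.

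For \ref{porthm3a}, use Theorem \ref{721} to pick a finite $F \subset E$ with $\dim_{\cH}(J_{\cS_F}) > h - \ve$; necessarily $\cS_F$ is finitely irreducible up to enlargement (if needed we may enlarge $F$ by finitely many connecting words from the irreducibility set $\Lambda$, which only changes the dimension continuously). Set $s_F := \min_{e \in F} \|D\f_e\|_\infty > 0$, $r_* := \min_v r_v$ and $D_* := \max_v \diam X_v$. For any $x = \pi(\tau) \in J_{\cS_F}$ and any $r > 0$ small, \eqref{quasi-multiplicativity1} gives $\|D\f_{\tau|_{n+1}}\|_\infty \geq K^{-1} s_F \|D\f_{\tau|_n}\|_\infty$, so one can choose $n = n(x,r)$ with
\[
\frac{K^{-1}s_F\, r}{K^{-1}r_* + D_*} \;\leq\; \|D\f_{\tau|_n}\|_\infty \;\leq\; \frac{r}{K^{-1}r_* + D_*}.
\]
The hole $\f_{\tau|_n}(B_{t(\tau_n)})$ is disjoint from $J_{\cS}$ (Lemma \ref{lemmax1}), contains a ball of radius $K^{-1}\|D\f_{\tau|_n}\|_\infty r_*$, and its center lies within $\|D\f_{\tau|_n}\|_\infty D_*$ of $x$ — so by the choice of $n$ this ball fits inside $B(x,r)$ and has relative radius at least $c_F := K^{-2} s_F r_* / (K^{-1} r_* + D_*)$, a constant depending only on $F$ and $\cS$. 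This proves \ref{porthm3a}.

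For \ref{porthm3b}, apply \ref{porthm3a} with $\ve = 1/k$ to obtain finite $F_k \subset E$ and set $\tilde{J} := \bigcup_{k \in \N} J_{\cS_{F_k}}$. Then $\dim_{\cH}(\tilde{J}) \geq \sup_k \dim_{\cH}(J_{\cS_{F_k}}) \geq h$, while $\tilde{J} \subset J_{\cS}$ forces $\dim_{\cH}(\tilde{J}) = h$. By \ref{porthm3a}, $J_{\cS}$ is porous at every $x \in \tilde{J}$ (with constant depending on which $F_k$ captures $x$, but this is fine since porosity at a point only requires some positive constant). The main technical delicacy, handled in the estimates above, is the interplay between bounded distortion and finiteness of $F$ to keep consecutive derivative ratios bounded below, which is exactly what fails for the full infinite system and reflects the non-porosity phenomena treated in Theorem \ref{notporoaeconfint}.
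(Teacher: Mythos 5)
Your proposal is correct and follows essentially the same route as the paper: it relies on the same key construction (the forbidden balls $B_v$ from Lemma~\ref{lemmax1}, pushed forward by $\f_{\tau|_n}$ and enlarged via the Egg Yolk Principle) and on the same stopping-time idea, choosing $n$ so that $\|D\f_{\tau|_n}\|_\infty$ is comparable to $r$ — which is what the paper's ``largest $n$ with $\f_{\tau|_n}(X_{t(\tau_n)})\subset B(x,r)$'' produces after \eqref{quasi-multiplicativity1} and \eqref{boundbyder}. The only cosmetic differences are the explicit order in which you pick and enlarge $F$ in \ref{porthm3a}, and building $\tilde J$ from a countable sequence $F_k$ rather than from all finite irreducible subsystems in \ref{porthm3b}; both yield the paper's conclusions with the same constants up to harmless factors.
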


\begin{proof} 
Let $\{B(z_v,r_v)\}_{v \in V}$ as in Lemma \ref{lemmax1}. Throughout the proof, without loss of generality, we will assume that $r_0:=\min_{v \in V} {r_v}<\eta_{\cS}$.

Let $x_\om$ be a fixed point of $\cS$ corresponding to some $\om \in E_A^\ast$, that is
$$\{x_{\om}\}= \bigcap_{n=1}^\infty \f_{\om^n} (X_{t(\om)}).$$ 
Note that if $\om \in E_A^\ast$ then there exists at least one $\om' \in E_A^\ast$ such that $\om'|_{|\om|}=\om$ and $\om' \om' \in E_A^\ast$. Indeed by the finite irreducibility of $\cS$ there exists some $\rho \in E_A^\ast$ such that $\om \rho \om \in E_A^\ast$. Hence, if  $\om'=\om \rho$ then $\om' \om' \in E_A^\ast$. Therefore the fixed points of $\cS$ form a dense subset of $J_{\cS}$.

By the Leibniz rule for every $n \in \N$,
\begin{equation}
\label{leibfixed}
\|D \f_{\om^n}(x_\om)\|=\|D \f_{\om}(x_\om)\|^n,
\end{equation}
 and 
\begin{equation}
\label{bdpfixed}
\|D \f_{\om^n}\|_{\infty} \overset{\eqref{leibfixed} \wedge \eqref{bdp}}{\leq} K \,\|D \f_{\om}(x_\om)\|^n.
\end{equation}

Let $r<\min \{\diam (X_v):v \in V\}$ and let $n \in \N$ be the largest natural number such that,
\begin{equation}
\label{firstninc}\f_{\om^n} (X_{t(\om)}) \subset B(x_\om,r).
\end{equation}
Therefore, there exists some $z \in \f_{\om^{n-1}} (X_{t(\om)}) \stm B(x_\om,r)$. Hence, $d(x_\om,z)\geq r$ and since $x_\om \in  \f_{\om^{n-1}}(X_{t(\om)})$ we also have that 
\begin{equation}
\label{prebdpn-1}
\diam(\f_{\om^{n-1}}(X_{t(\om)})) \geq r.
\end{equation}
Moreover, assuming without loss of generality that $\diam(X) \leq 1$,
\begin{equation}
\begin{split}
\label{bdpn-1}
\diam(\f_{\om^{n-1}}(X_{t(\om)})) &\overset{\eqref{boundbyder}}{\leq}   \|D \f_{\om^{n-1}}\|_{\infty} \overset{\eqref{bdpfixed}}{\leq}  K \|D \f_{\om} (x_{\om})\|^{n-1} \\
&= \frac{K}{ \|D \f_{\om} (x_{\om})\|} {\|D \f_{\om} (x_{\om})\|^{n}}.
\end{split}
\end{equation}
By Lemma \ref{eyp} \ref{lemmax12},
\begin{equation}
\label{firstpor1}
\begin{split}
\f_{\om^n} (B_{t(\om)}) &\overset{\eqref{4.1.8}}{\supset} B(\f_{\om^n}(z_{t(\om)}), K^{-1} \|D \f_{\om^n}\|_\infty r_{t(\om)}) \\
&\supset B(\f_{\om^n}(z_{t(\om)}), K^{-1} \|D \f_{\om^n} (x_\om)\| r_{t(\om)}) \\
& \overset{\eqref{leibfixed}}{=} B(\f_{\om^n}(z_{t(\om)}), K^{-1} \|D \f_{\om}(x_\om)\|^n r_{t(\om)}) \\
&:=B^r.
\end{split}
\end{equation}
Therefore 
\begin{equation}
\label{porosecthm}
B^r \subset \f_{\om^n} (B_{t(\om)}) \subset \f_{\om^n} (X_{t(\om)}) \overset{\eqref{firstninc}}{\subset} B(x_{\om},r),
\end{equation}
and,
\begin{equation}
\label{firstpor2}J_{\cS} \cap B^r\overset{\eqref{bset1} \wedge \eqref{firstpor1}}{=}\emptyset.
\end{equation}
Moreover, 
\begin{equation}
\label{firstpor3}\frac{\mbox{radius}\,( B^r)}{r} \overset{\eqref{prebdpn-1} \wedge \eqref{bdpn-1} \wedge \eqref{firstpor1}}{\geq} \frac{K^{-1} \|D \f_{\om}(x_\om)\|^n  r_{t(\om)}}{\frac{ K}{ \|D \f_{\om} (x_{\om})\|} {\|D \f_{\om} (x_{\om})\|^{n}}} \geq \frac{r_0  \|D \f_{\om} (x_{\om})\|}{ K^2}.
\end{equation}
Therefore, \eqref{porosecthm}, \eqref{firstpor2} and \eqref{firstpor3} imply that $J_{\cS}$ is $\frac{r_0  \|D \f_{\om} (x_{\om})\|}{ K^2}$-porous at $x_{\om}$. The proof of \ref{porthm2} is complete.

We now move to the proof of  \ref{porthm3a}. Let $\Lambda \subset E$  be a set witnessing finite irreducibility for $E$. Let $F$ be a finite set such that $\Lambda \subset F \subset E$, and let
$$m_{F}=\min \{ \|D \f_e\|_{\infty}: e \in F\}.$$
Let $\om \in F^\N_A$ and $x=\pi(\om)$. Let $r<\min \{\diam (X_v):v \in V\}$ and let $n \in \N$ be the largest natural number such that  
\begin{equation}
\label{firstpor4}
\f_{\om|_n} (X_{t(\om_n)}) \subset B(x,r).
\end{equation}
By Lemma \ref{eyp} we have that 
\begin{equation}
\label{bdpfomj}
\f_{\om|_n} (B_{t(\om_n)}) \supset B( \f_{\om|_n}(z_{t(\om_n)}), K^{-1} \|D \f_{\om|_n}\|_\infty r_{t_{\om_n}}):=B^r_F.
\end{equation}
By \eqref{bset1},
$$J_{\cS} \cap B_F^r=\emptyset.$$
Arguing as in \eqref{prebdpn-1} and \eqref{bdpn-1} we also get that
$$r \leq \diam(\f_{\om|_{n-1}}(X_{t(\om_{n-1})})) \leq \frac{  K}{m_F} \|D \f_{\om|_n} \|_\infty.$$
Therefore, 
\begin{equation}
\label{firstpor5}\frac{\mbox{radius}\,( B_F^r)}{r} \geq \frac{K^{-1} \|D \f_{\om|_n} \|_\infty  r_{t(\om)}}{\frac{ K}{m_F} \|D \f_{\om|_n} \|_\infty} \geq \frac{r_0  m_F}{ \   K^2}.
\end{equation}
Hence \eqref{firstpor4}, \eqref{bdpfomj} and \eqref{firstpor5} imply that $J_{\cS}$ is $\frac{r_0  m_F}{ \   K^2}$-porous at every $x \in \pi( F^{\N})=J_{\cS_F}$. Now \ref{porthm3a} follows by Theorem \ref{721}.

Finally \ref{porthm3b} follows easily from \ref{porthm3a}. Take
$$\tilde{J}= \cup \{J_F : F \mbox{ is finite and irreducible}\}.$$
We then have that $\tilde{J} \subset J_{\cS}$ and by \ref{porthm3a} $J_{\cS}$ is porous at every point of $\tilde{J}$. Moreover by Theorem \ref{721}, $\dim_{\cH} (J_{\cS})=\dim_{\cH} (\tilde{J})$. The proof is complete.
\end{proof}

\subsection{Mean porosity}
\label{sec:mean}


In this section we will investigate \textit{mean porosity} for graph directed Markov systems. Recall \eqref{porexr} for the formal definition of mean porosity which we state below. 
\begin{defn} 
\label{meanpordfn} Let $(X,d)$ be a metric space. 
The set $E \subset X$ is {\em$(\a,p)$-mean porous at $x$} if
$$
\begin{aligned}
S\underline A(E,x;\a)
:&=\sup_{s\in(0,1/2]}\sup_{\beta > \alpha}\left\{\liminf_{i \ra +\infty} \frac{\sharp \big\{j\in [1,i] \cap \N: \por(E,x,s2^{-j}) > \beta\big\}}{i}\right\} \\
&=\sup_{s\in(0,+\infty)}\sup_{\beta > \alpha}\left\{\liminf_{i \ra +\infty} \frac{\sharp \big\{j\in [1,i] \cap \N: \por(E,x,s2^{-j}) > \beta\big\}}{i}\right\}
> p.
\end{aligned}
$$
Denote also, for future reference,
$$
\underline A(E,x;\beta,s)
:=\liminf_{i \ra +\infty} \frac{\sharp \big\{j \in [1,i] \cap \N: \por(E,x,s2^{-j})> \beta\big\}}{i}.
$$
\end{defn}

Obviously, all the quantities and concepts introduced in the above definition are invariant under isometries of the ambient metric space $(X,d)$. It is also fairly obvious that these are invariant under all similarity self-maps of $X$.

\begin{lm}
\label{porconfinv} Let $X \subset \Rn$ for some $n \in \N$. Let $\xi \in X$ and suppose that $W$ is an open connected neighborhood of $\xi$ and $\f: W \ra \Rn$ is a $C^1$ conformal diffeomorphism from $W$ onto $\f(W)$. If $X$ is $(\a, p)$-mean porous at $\xi$ in $\Rn$ and $Y \subset \Rn$ is any set such that $\f(\xi) \in Y$ and $Y \cap \f(W) \subset \f(X \cap W)$ then $Y$ is $(\a,p)$-mean porous at $\f(\xi)$.
\end{lm}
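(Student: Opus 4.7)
The plan is to transport the quantitative hole structure of $X$ near $\xi$ over to $Y$ near $\f(\xi)$, losing only a multiplicative distortion factor that becomes arbitrarily close to $1$ at small scales. The two ingredients are (a) the bounded distortion of a $C^1$ conformal diffeomorphism at a fixed point, and (b) the flexibility in Definition~\ref{meanpordfn} afforded by the suprema over $s>0$ and $\beta>\a$.

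First I would set $\lambda := \|D\f(\xi)\| > 0$, which is positive because $\f$ is a diffeomorphism. Since $D\f(\xi)$ is a similarity of ratio $\lambda$ and $D\f$ is continuous, for every $\ve \in (0, 1/2)$ one can choose $r_0 > 0$ with $B(\xi, r_0)\subset W$ such that
$$
(1-\ve)\lambda\,|z_1 - z_2| \;\leq\; |\f(z_1) - \f(z_2)| \;\leq\; (1+\ve)\lambda\,|z_1 - z_2|
$$
for all $z_1, z_2 \in B(\xi, r_0)$. Combining this bi-Lipschitz bound with invariance of domain (a standard bounded distortion argument for $C^1$ conformal maps), one obtains, for every $y \in B(\xi, r_0/4)$ and every $\rho > 0$ with $B(y, \rho) \subset B(\xi, r_0/2)$,
$$
B\bigl(\f(y),(1-\ve)\lambda \rho\bigr) \;\subset\; \f\bigl(B(y, \rho)\bigr) \;\subset\; B\bigl(\f(y),(1+\ve)\lambda \rho\bigr),
$$
and, in particular, $\f(B(\xi, r)) \subset B(\f(\xi),(1+\ve)\lambda r)$ for all $r \leq r_0/2$.

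Next, unpacking the mean porosity hypothesis, I fix $s_0 > 0$ and $\beta_0 > \a$ with $\underline A(X, \xi; \beta_0, s_0) > p$, and set $\tilde s := (1+\ve)\lambda s_0$ and $\tilde\beta := \tfrac{1-\ve}{1+\ve}\beta_0$; choosing $\ve$ small enough guarantees $\tilde\beta > \a$. Pick $j_0 \in \N$ so that $s_0 2^{-j_0} \leq r_0/4$. For every $j \geq j_0$ satisfying $\por(X, \xi, s_0 2^{-j}) > \beta_0$, select $y \in \Rn$ with $B(y, \beta_0 s_0 2^{-j}) \subset B(\xi, s_0 2^{-j}) \setminus X$. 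The distortion estimate then yields
$$
\f\bigl(B(y, \beta_0 s_0 2^{-j})\bigr) \;\subset\; B\bigl(\f(\xi), \tilde s\, 2^{-j}\bigr)
\quad\text{and}\quad
\f\bigl(B(y, \beta_0 s_0 2^{-j})\bigr) \;\supset\; B\bigl(\f(y), \tilde\beta\,\tilde s\, 2^{-j}\bigr),
$$
while the disjointness $B(y, \beta_0 s_0 2^{-j}) \cap X = \emptyset$ together with the injectivity of $\f$ gives $\f(B(y, \beta_0 s_0 2^{-j})) \cap \f(X \cap W) = \emptyset$; the hypothesis $Y \cap \f(W) \subset \f(X \cap W)$ then upgrades this to disjointness from $Y$. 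Hence $\por(Y, \f(\xi), \tilde s\, 2^{-j}) > \tilde\beta$.

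Finally, since this implication holds for every $j \geq j_0$ and $j_0$ is a fixed constant,
$$
\sharp\{j \in [1,i]: \por(Y, \f(\xi), \tilde s\, 2^{-j}) > \tilde\beta\} \;\geq\; \sharp\{j \in [1,i]: \por(X, \xi, s_0 2^{-j}) > \beta_0\} - j_0.
$$
Dividing by $i$ and passing to $\liminf$ preserves the strict inequality with $p$, so $S\underline A(Y, \f(\xi); \a) \geq \underline A(Y, \f(\xi); \tilde\beta, \tilde s) > p$. The only mild obstacle is that the transported scale $\tilde s\, 2^{-j}$ is not equal to $s_0 2^{-j}$; this poses no problem because the definition of mean porosity takes the supremum over all $s > 0$, absorbing the multiplicative factor $(1+\ve)\lambda$.
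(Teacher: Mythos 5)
Your proof is correct and follows essentially the same route as the paper's: fix a slightly-better-than-$\alpha$ pair $(\beta_0, s_0)$, use the $C^1$ conformal distortion of $\f$ near $\xi$ to transport the holes of $X$ in $B(\xi, s_0 2^{-j})$ to holes of $Y$ in $B(\f(\xi), \tilde s\, 2^{-j})$ with porosity constant degraded by a factor tending to $1$, note that the finitely many small indices $j < j_0$ do not affect the $\liminf$, and absorb the rescaled $\tilde s$ using the supremum over all $s>0$ in Definition~\ref{meanpordfn}. The only cosmetic difference is that you phrase the distortion as $(1\pm\ve)\lambda$ while the paper uses $(1+\ve)^{\pm 1}|\f'(\xi)|$, which is immaterial.
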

\begin{proof} Since $W$ is an open set containing $\xi$ and since $\f(W)$ is an open set containing $\f(\xi)$, there exists $i_1 \geq 0$ such that 
$$B(\xi, 2^{-j}) \subset W \mbox{ and }B(\f(\xi),2^{-j}) \subset \f(W)$$
for all integers $j \geq i_1$. Now suppose that $s \in (0,1/2 ]$ and $ \beta > \alpha$ are such that $\underline A (X, \xi; \beta, s)>p$. Fix $\ve>0$ so small that $(1+\ve)^{-2} \beta > \a$. Since $\f:W \ra \f(W)$ is a $C^1$ conformal diffeomorphism from $W$ onto $\f(W)$ (in particular $\f'(\xi) \neq 0$)  then there exists $i_2 \geq i_1$ such that
\begin{equation}
\label{porconfinv1}
B(\f(a), (1+\ve)^{-1} |\f'(\xi)|r) \subset \f (B(a,r)) \subset B(\f(a), (1+\ve) |\f'(\xi)|r)
\end{equation}
and
\begin{equation}
\label{porconfinv2}
B(\f^{-1}(b), (1+\ve)^{-1} |\f'(\xi)|^{-1}r) \subset \f^{-1} (B(b,r)) \subset B(\f^{-1}(b), (1+\ve) |\f'(\xi)|^{-1}r)
\end{equation}
for every $a \in B(\xi, 2^{-i_2}), b \in B(\f(\xi),2^{-i_2})$ and $r \in (0, 2^{-i_2})$. Take then $j \geq i_2$ such that $\por(X, \xi, s 2^{-j})>\beta$. Then there exists $z \in \Rn$ such that $B(z, \beta s 2^{-j}) \subset B(\xi, s 2^{-j}) \stm X$. Since $Y \cap \f(W) \subset \f(X \cap W)$ we have that
\begin{equation}
\label{porconfinv3}
\begin{split}
\f(B(z, \beta s 2^{-j})) &\subset \f(B(\xi, s 2^{-j}) \stm X) \subset \f(B(\xi, s 2^{-j})) \stm Y  \\
&\overset{\eqref{porconfinv1}}{\subset} B(\f(\xi), (1+\ve) |\f'(\xi)|s 2^{-j}) \stm Y.
\end{split}
\end{equation}
Moreover,
\begin{equation}
\label{porconfinv4}\f(B(z, \beta s 2^{-j}))\overset{\eqref{porconfinv1}}{\supset} B(\f(z), (1+\ve)^{-1} |\f'(\xi)| \beta s 2^{-j}).\end{equation}
Therefore,
\begin{equation}
\label{porconfinv5}B(\f(z), (1+\ve)^{-1} |\f'(\xi)| \beta s 2^{-j}) \overset{\eqref{porconfinv3} \wedge \eqref{porconfinv4}} {\subset}B(\f(\xi), (1+\ve) |\f'(\xi)|s 2^{-j}) \stm Y,
\end{equation}
and consequently
$$\por(Y, \f(\xi),(1+\ve) |\f'(\xi)|s 2^{-j}) \geq (1+\ve)^{-2} \beta.$$
Therefore,
$$\underline{A}(Y, \f(\xi);(1+\ve)^{-2}\beta, (1+\ve) |\f'(\xi)|s) \geq \underline{A}(X,\xi;\beta,s)>p.$$
Therefore
$$S\underline{A}(Y, \f(\xi);\a) \geq \underline{A}(Y, \f(\xi);(1+\ve)^{-2}\beta, (1+\ve) |\f'(\xi)|s) >p,$$
and the proof is complete.
\end{proof}

\medskip We  are now ready to prove Theorem \ref{meanporthminto} which we restate in a more precise manner. We also record that if $\cS=\{\f_e\}_{e \in E}$ is a finitely irreducible conformal GDMS and $\mu$ is a Borel probability shift-invariant ergodic measure on $E_A^\N$,  the {\it characteristic Lyapunov exponent} with respect to
$\mu$ and $\sigma$ is defined as
$$
\chi_\mu(\sg)=-\int_{E_A^\N}\log\|D\phi_{\om_1}(\pi(\sg(\om))\| d\mu (\om).
$$
Note that $\chi_\mu(\sg)>0$. 

\begin{thm}
\label{meanporthm} 
Let $\cS=\{\f_e\}_{e \in E}$ be a finitely irreducible conformal GDMS which satisfies
\eqref{xx1}. Let $\tilde{\mu}$ be any Borel probability $\sigma$-invariant ergodic measure on $E_A^\N$ with 
finite Lyapunov exponent  $\chi_{\tilde{\mu}}(\sigma)$. Then there exists some $\a_{\cS} \in (0,1/2)$ such that 
$$
\underline A\big(J_{\cS},x;2\a_{\cS},1\big)
\ge \frac{\log 2}{\chi_{\tilde{\mu}} (\sigma)}.
$$
for $\tilde{\mu} \circ \pi^{-1}$-a.e. $x\in J_{\cS}$. In consequence,
$$
S\underline A\big(J_{\cS},x;\a_{\cS}\big)
\ge \frac{\log 2}{\chi_{\tilde{\mu}} (\sigma)}
$$
for $\tilde{\mu} \circ \pi^{-1}$-a.e. $x\in J_{\cS}$ and the set $J_{\cS}$ is $(\a_{\cS},p)$--mean porous for every 
$$
p<\frac{\log 2}{\chi_{\tilde{\mu}} (\sigma)}
$$
at $\tilde{\mu} \circ \pi^{-1}$-a.e. $x\in J_{\cS}$.
\end{thm}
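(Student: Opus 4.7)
The plan is to combine Lemma~\ref{lemmax1} with Birkhoff's ergodic theorem applied to the log-derivative cocycle. First, I would fix the balls $\{B_v = B(z_v, r_v)\}_{v \in V}$ supplied by Lemma~\ref{lemmax1}, shrinking them if necessary so that $r_v < \eta_{\cS}$ for every $v$, and set $r_0 := \min_{v \in V} r_v > 0$ and $D := \max_{v \in V}\diam X_v$. For every $\om \in E_A^\N$ and every $n \in \N$, the inclusion $\pi(\om) \in \f_{\om|_n}(X_{t(\om_n)})$ together with \eqref{boundbyder} gives $\f_{\om|_n}(X_{t(\om_n)}) \subset \overline{B(\pi(\om), a_n(\om))}$, where $a_n(\om) := D\,\|D\f_{\om|_n}\|_\infty$. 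On the other hand, the Egg Yolk Principle (Lemma~\ref{eyp}) together with \eqref{bset1} yields an open ball of radius at least $K^{-1} r_0 \|D\f_{\om|_n}\|_\infty = (r_0/(KD))\,a_n(\om)$ contained in $\f_{\om|_n}(B_{t(\om_n)}) \subset \overline{B(\pi(\om), a_n(\om))} \setminus J_{\cS}$. Taking $\a_{\cS} := r_0/(8KD) \in (0, 1/2)$, this produces
\[
\por(J_{\cS}, \pi(\om), r) > 2\a_{\cS} \qquad \text{for every } r \in [a_n(\om),\, 2a_n(\om)).
\]

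Next, I would apply Birkhoff's ergodic theorem to the $\tilde{\mu}$-integrable function $\tau \mapsto \log\|D\f_{\tau_1}(\pi(\sigma\tau))\|$, whose integral equals $-\chi_{\tilde{\mu}}(\sigma)$, and combine it with the chain rule and bounded distortion~\eqref{bdp} to deduce that for $\tilde{\mu}$-a.e.\ $\om$,
\[
\lim_{n\to\infty} \frac{-\log a_n(\om)}{n} \;=\; \chi_{\tilde{\mu}}(\sigma).
\]
Fixing such a generic $\om$ and setting $j(n) := \lfloor -\log_2 a_n(\om) \rfloor$, one has $2^{-j(n)} \in [a_n(\om),\,2a_n(\om))$, and therefore $\por(J_{\cS}, \pi(\om), 2^{-j(n)}) > 2\a_{\cS}$ for every sufficiently large $n$, while the above limit translates to $j(n)/n \to \chi_{\tilde{\mu}}(\sigma)/\log 2$.

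The main obstacle I anticipate is the combinatorial step of passing from the count of indices $n$ with $j(n) \leq i$ to the count of distinct integers $j \in [1,i]$ for which $\por(J_{\cS}, \pi(\om), 2^{-j}) > 2\a_{\cS}$. The sequence $(j(n))$ is non-decreasing and, by the previous paragraph, the number of $n$'s with $j(n) \leq i$ grows like $i \log 2/\chi_{\tilde{\mu}}(\sigma)$. The idea is that in the relevant regime $\chi_{\tilde{\mu}}(\sigma) \geq \log 2$, the positive increments $\ell_n := -\log(a_n/a_{n-1})$ have Birkhoff average $\chi_{\tilde{\mu}}(\sigma) \geq \log 2$, so only a vanishing proportion of consecutive pairs $n, n+1$ can produce the same value of $j$; this forces asymptotic injectivity of the map $n \mapsto j(n)$ and hence
\[
\underline A\bigl(J_{\cS}, \pi(\om);\, 2\a_{\cS},\, 1\bigr) \;\geq\; \frac{\log 2}{\chi_{\tilde{\mu}}(\sigma)}
\]
for $\tilde{\mu}$-a.e.\ $\om$. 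The consequences for $S\underline A$ and for $(\a_{\cS}, p)$-mean porosity with $p < \log 2/\chi_{\tilde{\mu}}(\sigma)$ are then immediate from Definition~\ref{meanpordfn}.
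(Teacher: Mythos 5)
Your proposal follows exactly the paper's strategy: obtain void balls inside $\f_{\om|_n}(B_{t(\om_n)})$ from Lemma~\ref{lemmax1} and the Egg Yolk Principle (Lemma~\ref{eyp}), control their position relative to $\pi(\om)$ using bounded distortion, and convert the Birkhoff limit $\lim_n\bigl(-\log\|D\f_{\om|_n}\|_\infty\bigr)/n = \chi_{\tilde\mu}(\sigma)$ into a lower bound for $\underline A$. The geometric and ergodic-theoretic parts of your argument are correct and match the paper's proof in substance (the paper works with $w_j=\f_{\om|_j}(z_{t(\om_j)})$, $\a=\min\{r_0/(2K),1/2\}$, and defines $n_j$ via $2\|D\f_{\om|_j}\|_\infty=2^{-n_j}$).

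The gap is the combinatorial step that you yourself flag as ``the main obstacle,'' and the heuristic you offer to close it fails. You claim that a Birkhoff average of the increments $\ell_n$ at least $\log 2$ forces only a vanishing proportion of consecutive pairs $n,n+1$ to satisfy $j(n)=j(n+1)$, hence asymptotic injectivity of $n\mapsto j(n)$. This is false. If the $\log_2$-increments $X$ take the values $\tfrac12$ or $\tfrac52$ each with probability $\tfrac12$, the mean is $\tfrac32$, so $\chi_{\tilde\mu}(\sigma)=\tfrac32\log 2>\log 2$; yet half the increments are $\tfrac12<1$, collisions occur with positive frequency, and a renewal computation shows the asymptotic density of distinct integers hit by $j(\cdot)$ is $\E[\min(X,1)]/\E[X]=(3/4)/(3/2)=\tfrac12$, strictly smaller than $\log 2/\chi_{\tilde\mu}(\sigma)=\tfrac23$. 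A bound on the \emph{mean} increment controls neither the frequency of sub-unit increments nor the resulting collision rate under the floor map, so your deduction does not reach $\underline A\bigl(J_\cS,\pi(\om);2\a_{\cS},1\bigr)\ge\log 2/\chi_{\tilde\mu}(\sigma)$. You have, however, correctly isolated the delicate point: the paper's display \eqref{meanpor9} makes the analogous passage --- from $\por(J_\cS,\pi(\om),2^{-\lfloor n_j\rfloor})\ge\a/2$ for all large $j$ together with $j/\lfloor n_j\rfloor\to\log 2/\chi$ directly to the density estimate --- without any further justification, and it implicitly requires $j\mapsto\lfloor n_j\rfloor$ to be eventually injective (which holds, for instance, if $\sup_e\|D\f_e\|_\infty\le\tfrac12$, so that every increment $n_{j+1}-n_j\ge 1$, but not in general). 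So this is a step that genuinely needs more care than either your write-up or the paper's presentation supplies.
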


\begin{proof}
Let $\{B(z_v,r_v)\}_{v \in V}$  as  in Lemma \ref{lemmax1} and recall \eqref{bsubsetofint}. Let $\om \in E_A^\N$ and set
$$w_j=\f_{\om|_j}(z_{t(\om_j)}).$$
By Lemma \ref{lemmax1} \ref{lemmax12} we know that 
\begin{equation}
\label{meanpor3}\f_{\om|_j} (B_{t(\om_j)}) \cap J_{\cS} = \emptyset\end{equation}
for all $j \in \N$. Without loss of generality assume again that $r_0=\min_{v \in V} {r_v}<\eta_{\cS}$. By Lemma \ref{eyp},
\begin{equation}
\label{meanpor4}
\begin{split}\f_{\om|_j} (B_{t(\om_j)}) &\supset B( \f_{\om|_j}(z_{t(\om_j)}), K^{-1} \|D \f_{\om|_j}\|_\infty r_{t(\om_j)}) \\
&\supset B( w_j, K^{-1} \|D \f_{\om|_j}\|_\infty r_0).
\end{split}
\end{equation}
Hence, for all $j \in \N$,
\begin{equation}
\label{meanpor5}\dist(w_j, J_{\cS}) \overset{\eqref{meanpor3} \wedge \eqref{meanpor4} }{\geq} K^{-1} \|D \f_{\om|_j}\|_\infty r_0 .
\end{equation}
Moreover, since without loss of generality we can assume that $\diam(\cup_{v \in V} X_v) \leq 1$, we have that
\begin{equation}
\label{meanpor6}
d(w_j, \pi (\om))=d(\f_{\om|_j}(z_{t(\om_j)}), \f_{\om|_j}(\pi(\sigma^j(\om)))) \overset{\eqref{boundbyderlip}}{\leq}  \|D \f_{\om|_j}\|_\infty .
\end{equation}
Let
$$\a:=\min \left\{ \frac{r_0}{2  \,K }, \frac{1}{2} \right\}.$$
Then 
$$B(w_j, 2  \a \|D \f_{\om|_j}\|_\infty) \cap J_{\cS}\overset{(\ref{meanpor3}) \wedge (\ref{meanpor4})}{=}\emptyset$$
and 
$$B(w_j, 2  \a \|D \f_{\om|_j}\|_\infty) \overset{\eqref{meanpor6}}{\subset} B(\pi(\om), 2  \|D \f_{\om|_j}\|_\infty).$$
Therefore, for all $j \in \N$,
\begin{equation}
\label{meanpor7}
B(w_j, 2  \a \|D \f_{\om|_j}\|_\infty) \subset B(\pi(\om), 2  \|D \f_{\om|_j}\|_\infty) \stm J_{\cS}.
\end{equation}
Let $(n_j)_{j \in \N}, n_j \in \R,$ such that
$$2 \|D \f_{\om|_j}\|_\infty=2^{-n_j},$$
that is
$$n_j=-\frac{\log (2  \|D \f_{\om|_j}\|_\infty)}{\log 2}.$$
Since $\lim_{j \ra \infty}\|D \f_{\om|_j}\|_\infty =0$, see e.g. \cite[Lemma 4.18]{CTU}, there exists some some $j_0 \in \N$ such that $n_j >0$ for all $j \geq j_0$, and  $n_j \ra \infty$. Moreover, $n_j$ is increasing by \eqref{quasi-multiplicativity1}. Notice that for $j\geq j_0$, \eqref{meanpor7} implies that
$$
\por (J_{\cS},\pi(\om), 2^{-n_j}) \geq \alpha.
$$
It then follows that for all $j \geq j_0$
$$
\por (J_{\cS},\pi(\om), 2^{-{\left \lfloor{n_j}\right \rfloor}}) \geq \alpha/2,
$$
where ${\left \lfloor{x}\right \rfloor}$ denotes the greatest integer less than or equal to $x$. Therefore
\begin{equation}
\begin{split}
\label{meanpor9}
\underline A\big(J_{\cS},\pi(\om);\a/3,1)&\ge 
\liminf_{i \ra +\infty} \frac{\sharp \{j \in [1,i] \cap \N: \por(J_{\cS},\pi(\om),2^{-j}) \geq \a/2\}}{i} \\
&\geq \liminf_{j \ra +\infty} \frac{j}{{\left \lfloor{n_j}\right \rfloor} }.
\end{split}
\end{equation}

Since $\tilde{\mu}$ is $\sigma$-invariant and ergodic, Birkhoff's ergodic theorem implies that for $\tilde{\mu}$-a.e. $\om \in E_A^\N$,
\begin{equation}
\label{mporobet}-\lim_{n \ra +\infty} \frac{1}{n} \sum_{k=0}^{n-1} \zeta  \circ \sigma^k (\om)=-\int_{E_A^\N} \zeta d \tilde{\mu}=\chi_{\tilde{\mu}} (\sigma),
\end{equation}
where $\zeta= \log \|D \f_{\om_1}(\pi(\sigma(\om)))\|$. Notice that for $\om \in E^\N_A$,
\begin{equation*}\begin{split}
\sum_{k=0}^{n-1} - \zeta \circ \sg^k(\om)&=-\sum_{k=0}^{n-1} \log \|D \f_{\om_{k+1}}(\pi(\sg^{k+1}(\om)))\|\\
&=-\log\left(\|D \f_{\om_{1}}(\pi(\sg^{1}(\om)))\|\,\|D \f_{\om_{2}}(\pi(\sg^{2}(\om)))\|\cdots \|D \f_{\om_{n}}(\pi(\sg^{n}(\om)))\| \right).
\end{split}\end{equation*}
By the Leibniz rule and the fact that  $$\pi(\sg^m(\om))= \f_{\om_{m+1}}\circ \dots \circ \f_{\om_n}(\pi(\sg^n(\om)))\mbox{ for }1 \leq m \leq n,$$ we deduce that
\begin{equation}
\label{logleib}
\sum_{k=0}^{n-1} - \zeta \circ \sg^k(\om)=-\log \|D \f_{\om|_n}(\pi(\sg^n(\om)))\|.
\end{equation}
By \eqref{bdp} we have that for every $j \in \N$,
\begin{equation}
\label{kmder}K^{-1}\|D \f_{\om|_j}\|_\infty\leq \|D \f_{\om|_j}(\pi(\sg^j(\om)))\| \leq \|D \f_{\om|_j}\|_\infty.
\end{equation}
Therefore for $\tilde{\mu}$-a.e. $\om \in E_A^\N$,
\begin{equation}
\label{meanporo10}
\chi_{\tilde{\mu}} (\sigma)\overset{\eqref{mporobet} \wedge \eqref{logleib}}{=}\lim_{j \ra +\infty} -\frac{ \log \|D \f_{\om|_j}(\pi(\sg^j(\om)))\|}{j}\overset{\eqref{kmder}}{=}\lim_{j \ra +\infty} -\frac{ \log \|D \f_{\om|_j}\|_\infty}{j}.
\end{equation}
Hence
\begin{equation*}
\begin{split}
\lim_{j \ra +\infty} \frac{n_j}{j} &=\lim_{j \ra +\infty} \frac{-\log (2  \|D \f_{\om|_j}\|_\infty)}{j \log 2} \\
&=\frac{1}{ \log 2} \lim_{j \ra +\infty} \frac{- \log \|D \f_{\om|_j}\|_\infty}{j}\overset{\eqref{meanporo10}}{=}\frac{\chi_{\tilde{\mu}} (\sigma)}{\log 2}.
\end{split}
\end{equation*}
Moreover since by our assumption $\chi_{\tilde{\mu}} (\sigma) \in (0,+\infty)$,
\begin{equation}
\label{meanporo11}\liminf_{j \ra +\infty} \frac{j}{{\left \lfloor{n_j}\right \rfloor} } \geq \lim_{j \ra +\infty} \frac{j}{n_j}=\frac{\log 2}{\chi_{\tilde{\mu}} (\sigma)}.
\end{equation}
Combining \eqref{meanpor9} and \eqref{meanporo11} we deduce that 
$$
\underline A\big(J_{\cS},\pi(\om);\a/3,1)\ge \frac{\log 2}{\chi_{\tilde{\mu}} (\sigma)}.
$$
The proof is complete by choosing $\a_{\cS}:=\a/6$.
\end{proof}
We will now present a corollary of Theorem \ref{meanporthm} which asserts that the limit set of a strongly regular finitely irreducible GDMS is almost everywhere mean porous with respect to its conformal measure.

\begin{coro}
\label{meanporoconfhaus}
Let $\cS=\{\f_e\}_{e \in E}$ be a strongly regular finitely irreducible conformal GDMS which satisfies \eqref{xx1}.
Then 
\begin{enumerate}[label=(\roman*)]
\item\label{mpch1}
$$
S\underline A\big(J_{\cS},x;\a_{\cS}\big)
\ge\underline A\big(J_{\cS},x;2\a_{\cS},1\big)
\ge \frac{\log 2}{\chi_{\tilde{\mu}_h} (\sigma)}
$$
for $m_h$-a.e. $x\in J_{\cS}$, and the set $J_{\cS}$ is $(\a_{\cS},p)$--mean porous for every 
$$
p<\frac{\log 2}{\chi_{\tilde{\mu}_h} (\sigma)}
$$
at $m_h$-a.e. $x\in J_{\cS}$.

\item\label{mpch2}
$$
S\underline A\big(J_{\cS},x;\a_{\cS}\big)
\ge\underline A\big(J_{\cS},x;2\a_{\cS},1\big)
\ge \frac{\log 2}{\chi_{\tilde{\mu}_h} (\sigma)}
$$
for $\cH^h$-a.e. $x\in J_{\cS}$, and the set $J_{\cS}$ is $(\a_{\cS},p)$--mean porous for every 
$$
p<\frac{\log 2}{\chi_{\tilde{\mu}_h} (\sigma)}
$$
at $\cH^h$-a.e. $x\in J_{\cS}$,
\end{enumerate}
where $\a_{\cS}$ is as in Theorem \ref{meanporthm}, $h=\dim_{\cH}(J_{\cS})$,  $m_{h}$ is the $h$-conformal measure of $\cS$, and $\tilde{\mu}_h$ is the unique shift invariant ergodic Gibbs state globally equivalent to $\tilde{m}_h$. \end{coro}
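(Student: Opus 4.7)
The plan is to deduce the corollary directly from Theorem \ref{meanporthm} applied to the specific measure $\tilde{\mu}_h$, together with the standard measure-theoretic identifications between $m_h$, $\tilde{m}_h \circ \pi^{-1}$, and (when positive) the restricted Hausdorff measure $\cH^h|_{J_{\cS}}$.

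First, I would verify that $\tilde{\mu}_h$ satisfies the hypotheses of Theorem \ref{meanporthm}. By Theorem \ref{thm-conformal-invariant}, $\tilde{\mu}_h$ is a $\sg$-invariant ergodic Borel probability measure on $E_A^\N$, and it is globally equivalent to $\tilde{m}_h$. The only nontrivial hypothesis to confirm is that the Lyapunov exponent $\chi_{\tilde{\mu}_h}(\sg)$ is finite. This is where the strong regularity assumption on $\cS$ is used: strong regularity gives the existence of some $t$ with $0 < P(t) < +\infty$, and combined with $P(h) = 0$ and the convexity/monotonicity of the pressure function (together with the Gibbs property of $\tilde{\mu}_h$ and the summability $\sum_{e\in E}\|D\f_e\|_\infty^h < +\infty$ encoded in $h\in\Fin(\cS)$), this yields the integrability $\int \log\|D\f_{\om_1}(\pi(\sg\om))\|\, d\tilde{\mu}_h(\om) > -\infty$, i.e.\ $\chi_{\tilde{\mu}_h}(\sg) < +\infty$.

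Having verified the hypotheses, Theorem \ref{meanporthm} yields, for $\tilde{\mu}_h \circ \pi^{-1}$-a.e.\ $x \in J_{\cS}$,
$$
S\underline A\big(J_{\cS},x;\a_{\cS}\big) \ge \underline A\big(J_{\cS},x;2\a_{\cS},1\big) \ge \frac{\log 2}{\chi_{\tilde{\mu}_h}(\sg)}.
$$
For part \ref{mpch1}, since $\tilde{\mu}_h$ is globally equivalent to $\tilde{m}_h$ on $E_A^\N$, the pushforward measures $\tilde{\mu}_h \circ \pi^{-1}$ and $m_h = \tilde{m}_h \circ \pi^{-1}$ have exactly the same null sets on $J_{\cS}$, so the above a.e.\ conclusion transfers verbatim to $m_h$-a.e.\ $x$, and the mean porosity statement follows immediately from the definition of $S\underline A$.

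For part \ref{mpch2}, I would split into two cases. If $\cH^h(J_{\cS}) = 0$, the $\cH^h$-a.e.\ statement is vacuous. Otherwise, Theorem \ref{t120190729} applies and gives $m_h = \cH^h(J_{\cS})^{-1} \cH^h|_{J_{\cS}}$; in particular $m_h$ and $\cH^h|_{J_{\cS}}$ have exactly the same null sets on $J_{\cS}$, so part \ref{mpch2} is immediate from part \ref{mpch1}. The only genuinely technical point in the whole argument is the verification of $\chi_{\tilde{\mu}_h}(\sg) < +\infty$; everything else amounts to translating an a.e.\ statement along equivalences of measures.
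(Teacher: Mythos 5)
Your overall approach matches the paper's: verify that $\chi_{\tilde{\mu}_h}(\sg)$ is finite so Theorem \ref{meanporthm} applies to $\tilde{\mu}_h$, then transfer the a.e.\ statement from $\tilde{\mu}_h\circ\pi^{-1}$ to $m_h$ by global equivalence of $\tilde{\mu}_h$ and $\tilde{m}_h$, and then to $\cH^h$. For part \ref{mpch2} you use Theorem \ref{t120190729} and a case split on whether $\cH^h(J_{\cS})$ is zero, while the paper cites the absolute continuity $\cH^h\lfloor_{J_{\cS}}\ll m_h$ (\cite[Theorem 10.1]{CTU}) in one step; both routes are fine.

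The one place where your sketch is too loose is the verification of $\chi_{\tilde{\mu}_h}(\sg)<+\infty$. You invoke ``the summability $\sum_{e\in E}\|D\f_e\|_\infty^h<+\infty$ encoded in $h\in\Fin(\cS)$,'' but that alone cannot do the job: using the Gibbs estimate $\tilde{\mu}_h([e])\asymp\|D\f_e\|_\infty^h$, the Lyapunov exponent is comparable to $\sum_{e\in E}\big(-\log\|D\f_e\|_\infty\big)\|D\f_e\|_\infty^h$, and the logarithmic factor can make this diverge even when $\sum_e\|D\f_e\|_\infty^h$ converges. What strong regularity actually buys is summability at an exponent strictly below $h$: since $P$ is continuous and decreasing where finite and there is some $t$ with $0<P(t)<+\infty$, one gets $\eta>0$ with $Z_1(h-\eta)=\sum_e\|D\f_e\|_\infty^{h-\eta}<+\infty$. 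Then the elementary bound $-\log\|D\f_e\|_\infty\le\|D\f_e\|_\infty^{-\eta}$ for all but finitely many $e$ gives $\sum_e\big(-\log\|D\f_e\|_\infty\big)\|D\f_e\|_\infty^h\le\sum_e\|D\f_e\|_\infty^{h-\eta}<+\infty$, and hence $\chi_{\tilde{\mu}_h}(\sg)<+\infty$. You point at the right ingredients (pressure monotonicity, Gibbs property), but this comparison at exponent $h-\eta$ is the actual mechanism and should be stated; as written, the step is asserted rather than proved and the role of $h\in\Fin(\cS)$ is misattributed.
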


\begin{proof} 
We will first show that if $\cS$ is strongly regular then $\chi_{\tilde{\mu}_h}(\sigma)$, the Lyapunov exponent of the unique ergodic shift invariant Gibbs state $\tilde{\mu}_h$,  is finite. Since $\cS$ is strongly regular there exists some $t>0$ such that $P(t) \in (0,\infty)$. It follows then, for example by \cite[Proposition 7.5]{CTU}, that $P$ is continuous and decreasing on $[t,+\infty)$. Hence, there exists some $\eta>0$ such that $P(h-\eta)<\infty$. Thus, \eqref{presz1} implies that $Z_1(h-\eta)<\infty$. Equivalently, 
\begin{equation}
\label{zheta}
\sum_{e \in E} \|D \f_e\|_{\infty}^{h-\eta}<\infty.
\end{equation}
Observe that for all but finitely many $e \in E$,
\begin{equation}
\label{dfelog}
\|D \f_e\|_{\infty}^{-\eta} \geq -\log(\|D \f_e\|_{\infty}).
\end{equation}
Indeed, if \eqref{dfelog} was false we could find infinitely many $(e_n)_{n \in \N}, e_n \in E,$ such that 
$$\|D \f_{e_n}\|_{\infty}^{\eta} \log (\|D \f_{e_n}\|_{\infty}^{-1})>1.$$
Nevertheless, this is impossible because by \cite[Lemma 4.18]{CTU} we know that $ \|D \f_{e_n}\|_{\infty} \ra 0$ as $n \ra \infty$, while $\lim_{x \ra 0^+} x^{\eta} \log(1/x)=0$. Therefore,
\begin{equation}
\label{zheta2}
\sum_{e \in E} -\log(\|D \f_e\|_{\infty}) \|D \f_e\|_{\infty}^{h}\overset{\eqref{zheta}\wedge \eqref{dfelog}}{<}\infty.
\end{equation}

We can now estimate $\chi_{\tilde{\mu}_h}(\sigma)$:
\begin{equation*}
\begin{split}
\chi_{\tilde{\mu}_h}(\sigma)&=-\int_{E_A^\N}\log\|D\phi_{\om_1}(\pi(\sg(\om))\| d\tilde{\mu}_h(\om)\\
&=\sum_{e \in E} \int_{[e]}-\log\|D\phi_{\om_1}(\pi(\sg(\om))\| d\tilde{\mu}_h(\om)\\
&\overset{\eqref{bdp}}{\leq} \log K\, \sum_{e \in E} \tilde{\mu}_h([e])+ \sum_{e \in E} (-\log(\|D \f_e\|_{\infty}) \tilde{\mu}_h([e])\\
&=\log K \, \tilde{\mu}_h(E_A^\N)+\sum_{e \in E} (-\log(\|D \f_e\|_{\infty}) \tilde{\mu}_h([e]).
\end{split}
\end{equation*}
Recall that $\tilde{\mu}_h$ is probability measure on $E_A^\N$ (as a Gibbs state), hence in order to show that $\chi_{\tilde{\mu}_h}(\sigma)<\infty$ it suffices to show that 
\begin{equation}
\label{zheta3}
\sum_{e \in E} (-\log(\|D \f_e\|_{\infty}) \tilde{\mu}_h([e])<\infty.
\end{equation}
Since $\cS$ is strongly regular, and thus regular by \eqref{cofimpliesreg}, it is well known, see e.g. \cite[Equation 7.18]{CTU}, that there exists some constant $c_h \geq 1$ such that for all $\om \in E_A^\ast$,
\begin{equation}
\label{zheta4}c_h^{-1} \|D \f_\om\|_\infty \leq \tilde{\mu}_h([\om]) \leq c_h \|D \f_\om\|_\infty.
\end{equation}
Thus, \eqref{zheta3} follows by \eqref{zheta2} and \eqref{zheta4}.

As we showed that $\chi_{\tilde{\mu}_h}(\sigma)<\infty$,  \ref{mpch1} follows from Theorem \ref{meanporthm}  and the fact that $\tilde{\mu}_h$ is globally equivalent to the conformal measure $\tilde{m}_h$. Finally \ref{mpch2} follows from \ref{mpch1} and \cite[Theorem 10.1]{CTU}.
\end{proof}

\subsection{Directed porosity}
\label{sec:dirpor}
We now turn our attention to directed porosity, whose formal definition we provide below. Given $v \in S^{n-1}$ we will denote by $l_v$ the line in $\Rn$ containing the origin and  $v$. For $E \subset \R^n$, $x \in \Rn$ and $r \in (0,\diam(E))$ we let
\begin{equation}
\label{porvexr}
\por_v (E,x,r)=\sup \{c \geq 0: B(y,c r) \subset B(x,r)\stm E \mbox{ for some }y\in x+l_v\}.
\end{equation}
\begin{defn}
Let $E \subset \Rn$ be a bounded set. 
Given $v \in S^{n-1}, c\in (0,1)$ and $x \in \Rn$, we say that $E$ is:
\begin{enumerate}[label=(\roman*)]
\item   {\em $v$-directed $c$-porous at} $x$ if there exists some $r_0>0$ such that $\por_v (E,x,r) \geq c $ for every $r \in (0, r_0)$, 
\item {\em$v$-directed porous at} $x$ if there exists some $c \in (0,1)$ such that $E$ is $v$-directed $c$-porous at $x$,
\item $E$ is  {\em $v$-directed $c$-porous} if there exists some $r_0>0$ such that $\por_v (E,x,r) \geq c$ for every $x \in E$ and $r \in (0, r_0
)$,
\item {\em $v$-directed porous} if it is  $v$-directed $c$-porous for some $c\in (0,1)$.
\end{enumerate}
\end{defn}

Our first theorem in this section provides a sufficient condition for a  finite and irreducible conformal GDMS to be directed porous at every direction.
 \begin{thm}\label{dirporlimset} Let $\cS=\{\f_{e}\}_{e \in E}$ be a finite and irreducible conformal GDMS in $\R^n$ such that
\begin{equation}
\label{lsawayfrombdry}
\dist(\partial X, J_{\cS} ):=R>0.
\end{equation}
Then $J_{\cS}$ is $v$-directed porous for every $v \in S^{n-1}$.
 \end{thm}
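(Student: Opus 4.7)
Set $R=\dist(\partial X,J_{\cS})>0$. The plan is to establish a directional hole at a base scale using the $R$-condition, and then to transport it to arbitrary small scales via the IFS and the Egg Yolk Principle. For the base scale, fix $u\in V$, $y\in J_{\cS}\cap X_u$, and $w\in S^{n-1}$. Since $\dist(y,\partial X_u)\ge R$, the open ball $B(y,R)$ lies in $\Int X_u$, so the trajectory $\{y+tw:t\ge 0\}$ remains in $X_u$ until a first exit time $t_0\ge R$ at $\partial X_u$. The continuous function $t\mapsto \dist(y+tw,\partial X_u)$ equals at least $R$ at $t=0$ and vanishes at $t=t_0$, so by the intermediate value theorem there is $t^*\in(0,t_0)$ with $\dist(y+t^*w,\partial X_u)=R/2$. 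Setting $z=y+t^*w$, connectedness forces $B(z,R/2)\subset \Int X_u$, and every point of this ball has distance less than $R$ from $\partial X_u$ and therefore cannot belong to $J_{\cS}$; thus $B(z,R/2)\subset \Int X_u\setminus J_{\cS}$. After replacing $R$ by $\min(R,\eta_\cS)$ if necessary, we may also assume $R/2<\eta_\cS$.

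For an arbitrary $x\in J_{\cS}$ and small $r>0$, I choose a coding $\tau\in E_A^{\mathbb N}$ of $x$ and let $\omega=\tau|_n$ with $n$ maximal such that $\f_\omega(X_{t(\omega)})\subset B(x,r)$; bounded distortion together with the finiteness of $E$ gives $c_1 r\le \|D\f_\omega\|_\infty\le c_2 r$ for constants $c_1,c_2$ depending only on $\cS$. Set $y=\pi(\sigma^n\tau)\in J_{\cS}\cap X_{t(\omega)}$, decompose $D\f_\omega(y)=\|D\f_\omega(y)\|\,O_\omega$ with $O_\omega\in O(n)$, and define the source direction $w=O_\omega^{-1}(v)$. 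The base-scale construction applied at $(t(\omega),y,w)$ supplies $B(z,R/2)\subset \Int X_{t(\omega)}\setminus J_{\cS}$ with $z\in y+l_w$, and the argument in the proof of Lemma \ref{lemmax1} \ref{lemmax12}---which uses only the inclusion $B(z,R/2)\subset \Int X_{t(\omega)}$, the open set condition, and injectivity of $\f_\omega$---then yields $\f_\omega(B(z,R/2))\cap J_{\cS}=\emptyset$. By the Egg Yolk Principle (Lemma \ref{eyp}), this image contains a $J_{\cS}$-free ball $B(\f_\omega(z),c_3 r)$ with $c_3=K^{-1}c_1 R/2$, and this ball lies in $B(x,r)$ by the choice of $\omega$.

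The remaining step---and the main technical difficulty---is to shift the center of this ball onto the line $x+l_v$. By the choice of $w$, $D\f_\omega(y)(w)=\|D\f_\omega(y)\|v$, so the first-order Taylor approximation of $\f_\omega$ at $y$ puts $\f_\omega(z)$ exactly on $x+l_v$; the deviation from this line is a second-order error bounded by a constant multiple of $|z-y|^2\|D\f_\omega\|_\infty\le C'r$, using the estimate $\|D^2\f_\omega\|\le \mathrm{const}\cdot \|D\f_\omega\|$ available for conformal maps on domains separated from their singularities (a consequence of bounded distortion). Taking $z^*\in x+l_v$ nearest to $\f_\omega(z)$ then gives $B(z^*,(c_3-C')r)\subset B(x,r)\setminus J_{\cS}$, which has positive radius once all constants are balanced so that $c_3>C'$. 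In dimension $n=1$ this is automatic since conformal maps preserve line directions up to sign, so the image of $y+l_w$ under $\f_\omega$ already coincides with $x+l_v$; in higher dimensions the main work is in this balancing, using bounded distortion to dominate the curvature of the image curve $\f_\omega(y+l_w)$.
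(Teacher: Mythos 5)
Your base-scale construction, the choice of the cylinder $\omega=\tau|_n$ with $\|D\f_\omega\|_\infty\asymp r$, and the push-forward via the Egg Yolk Principle are all correct, and the appeal to the argument of Lemma \ref{lemmax1}~\ref{lemmax12} to conclude $\f_\omega(B(z,R/2))\cap J_{\cS}=\emptyset$ is legitimate. The gap is in the final ``balancing'' step, and it is a real one, not just a technical detail left to the reader. You need $c_3>C'$, where $c_3=K^{-1}c_1R/2$ is the hole-radius constant and $C'$ is the constant controlling the deviation $\dist(\f_\omega(z),\,x+l_v)\le C'r$ coming from the second-order Taylor remainder. But $|z-y|=t^*$ is only bounded by $\diam(X)$ (the intermediate-value point $z$ at distance $R/2$ from $\partial X_u$ can be far from $y$, e.g.\ when $w$ runs nearly parallel to the boundary), so $C'\asymp\diam(X)^2\cdot\sup\|D^2\f_\omega\|/\|D\f_\omega\|_\infty$, while $c_3\asymp m_0R/K^2$. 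These two constants involve independent data of the system $\cS$, and there is no inequality forcing $c_3>C'$; indeed one can make $m_0$ small or $K$ large while keeping $\diam(X)$ and $R$ fixed. So the nearest point $z^*$ on $x+l_v$ may lie outside the $J_{\cS}$-free ball $B(\f_\omega(z),c_3 r)$, and the shifted ball $B(z^*,(c_3-C')r)$ may have negative radius. The construction therefore does not produce a directional hole in general.

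The underlying reason the approach runs into trouble is that you insist on a \emph{straight} segment $y+tw$ in the domain and then try to control the curvature of its image $\f_\omega(y+[0,t^*]w)$, which is a curve passing near $x+l_v$ only to first order at $x$. The paper sidesteps this entirely by reversing the direction of transport: it takes the straight segment $I_v\subset x+l_v$ \emph{in the image} $\f_{\om|_k}(X_{t(\om_k)})$, running from $\pi(\om)$ in direction $v$ out to $\partial\f_{\om|_k}(X_{t(\om_k)})$, and pulls it back to a (curved) path $\gamma_v=\f_{\om|_k}^{-1}(I_v)$ in $X_{t(\om_k)}$ connecting $\pi(\sigma^k\om)$ to $\partial X$. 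The intermediate value argument is then applied along $\gamma_v$ to locate $z\in\gamma_v$ with $B(z,R/4)\subset X\setminus J_{\cS}$. Pushing forward, the center $\f_{\om|_k}(z)$ lies \emph{exactly} on $I_v\subset x+l_v$ — no deviation, no balancing required — and the Egg Yolk Principle gives the $cr$-sized ball. That exact placement on the line is what your forward construction cannot secure. To repair your argument along its own lines you would, in effect, be forced to replace $y+tw$ by $\gamma_v$, which is the paper's proof.
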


 \begin{proof} Let $v \in S^{n-1}$, $\om \in E_A^\N$ and $r>0$. Let $k$ be the smallest integer such that 
 \begin{equation}
 \label{smallestk}
 \f_{\om|_k} (X_{t(\om_k)}) \subset B(\pi(\om),r).
 \end{equation} 
 Now let $I_{v} \subset \f_{\om|_k} (X_{t(\om_k)})$ be a line segment at direction $v$, joining $\pi(\om)$ to $\partial \f_{\om|_k} (X_{t(\om_k)})$. Then there exists some $y \in \partial X_{t(\om_k)}$ such that
 \begin{equation*}
 \text{length}(I_{v}) \geq \dist (\pi(\om), \partial \f_{\om|_k} (X_{t(\om_k)}) )= |\pi(\om)-\f_{\om|_k}(y)|.
 \end{equation*}
 Hence, by \cite[Lemma 4.14]{CTU} there exists some constant $c>0$ such that
 \begin{equation*}
 \begin{split}
\text{length}(I_{v}) &\geq |\f_{\om|_k}(\pi( \sigma^k(\om))-\f_{\om|_k}(y)| \\
& \geq c \| D \f_{\om|_k}\|_{\infty} |\pi( \sigma^k(\om))-y| \geq c \| D \f_{\om|_k}\|_{\infty} R>0.
\end{split}
\end{equation*}
Note that  $\gamma_v:=\f_{\om|_k}^{-1} (I_v) \subset X_{t(\om_k)}$ is a smooth curve joining $\pi( \sigma^k(\om))$ to $\partial X$. In particular $\gamma_v$ joins $\pi( \sigma^k(\om))$ to $\partial X_{t(\om_k)}.$ Therefore there exists $z \in \gamma_v$ such that 
\begin{equation}
\label{pointinthevoid}
B(z, R/4) \subset X_{t(\om_k)} \subset X \mbox{ and }B(z, R/4) \cap  J_{\cS}=\emptyset,
\end{equation}
and consequently
\begin{equation}
\label{jsvoid}
\f_{\om|_k}(B(z, R/4)) \cap J_{\cS}=\emptyset.
\end{equation}
Let $R':=\min\{R/4,\eta_{\cS}/2\}$, where $\eta_{\cS}$ was defined in  \eqref{itaS}. Thus
 \begin{equation}
 \label{ballvoid}
 \begin{split}
B(\f_{\om|_k}(z), K^{-1} \|D\f_{\om|_k} \|_\infty R') &\overset{(\ref{4.1.8})}{\subset} \f_{\om|_k}(B(z, R'))  \\
&\overset{(\ref{pointinthevoid})}{\subset} \f_{\om|_k}(X_{t(\om_k)}) \overset{(\ref{smallestk})}{\subset} B(\pi(\om),r).
\end{split}
\end{equation}
Therefore, 
 \begin{equation}
 \label{seppor}
J_{\cS} \cap B(\f_{\om|_k}(z), K^{-1} \|D\f_{\om|_k} \|_\infty R')\overset{(\ref{jsvoid})\wedge(\ref{ballvoid})}{=}\emptyset.
\end{equation}
We also record that 
\begin{equation}
\label{pointinthedir}
\f_{\om|_k}(z) \in I_v \subset   \f_{\om|_k} (X_{t(\om_k)}) \subset B(\pi(\om),r).
\end{equation}
Without loss of generality we can assume that $\diam(X)=1$. So if $m_0=\min \{ \| D \f_e\|_\infty: e \in E\}$ we have that
\begin{equation}
\begin{split}
\label{rbdd} 
r \leq \diam (\f_{\om|_{k-1}}(X_{t(\om_{k-1})}))  & \overset{(\ref{boundbyder})}{\leq}  \frac{1}{m_0} \|D \f_{\om|_{k-1}}\|_\infty \|D \f_{\om_k}\|_\infty  \overset{(\ref{quasi-multiplicativity1})}{\leq} \frac{ K }{m_0}  \|D \f_{\om|_{k}}\|_\infty.
 \end{split}
 \end{equation} 
 Hence
  \begin{equation}
 \label{seppor2}
J_{\cS} \cap B(\f_{\om|_k}(z), m_0 K ^{-2} R' \, r)\overset{(\ref{seppor})\wedge(\ref{rbdd})}{=}\emptyset.
\end{equation}
Combining \eqref{seppor2} and \eqref{pointinthedir} we deduce that $J_{\cS}$ is $v$-directed $m_0 K ^{-2} R'$-porous at $\pi(\om).$ The proof is complete.
 \end{proof}
 

  

We will now see how we can use Theorem \ref{dirporlimset}   to show that if a finite and irreducible conformal GDMS satisfies the strong separation condition (recall Remark \ref{ssc}) then it is directed porous at every direction. Before doing so, we recall the notion of \textit{equivalent} graph directed Markov systems which was introduced in \cite{CTU}. 
\begin{definition}
\label{equivgdms}
\index{GDMS!equivalent}
Two  conformal GDMS $\cS$ and $\cS'$ are called {\it equivalent} if:
\begin{itemize}
\item[(i)] they share the same associated directed multigraph $(E,V)$,
\item[(ii)] they have the same incidence matrix $A$ and the same functions $i,t:E \ra V$,
\item[(iii)]  they are defined by the same set of conformal maps $\{\f_e : W_{t(e)} \ra W_{i(e)}\}$, where $W_v$ are open connected sets, and for every $v \in V$, $X_v \cup X'_v \subset W_v$.
\end{itemize}
\end{definition}
\begin{lm} 
\label{sscbdry}Let $\cS=\{V,E,A,t,i, \{X_v\}_{v \in V}, \{\f_e\}_{e \in E} \}$ be a finite conformal GDMS  which satisfies the strong separation condition. Then there exists a finite conformal $\cS'=\{V,E,A,t,i, \{X'_v\}_{v \in V}, \{\f_e\}_{e \in E} \}$ which is equivalent to $\cS$ and 
$$\dist(\partial X', \cup_{e \in E} \f_e (X'_{t(e)})):=R>0,$$
where $X'=\cup_{v \in V} X'_v.$
\end{lm}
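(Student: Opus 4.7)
The idea is to enlarge each $X_v$ to a slightly larger closed set $X'_v$ by thickening it, without changing the maps $\phi_e$ or the combinatorial data $(V,E,A,t,i)$. Specifically, for a sufficiently small $\delta>0$ I set
$$
X'_v := \overline{N_\delta(X_v)},
$$
where $N_\delta$ denotes the open Euclidean $\delta$-neighborhood. The key observation is that since each $\phi_e$ is a contraction of ratio at most $s<1$ on its domain, enlarging the source by $\delta$ enlarges the image by at most $s\delta$, while the boundary of $X'_{i(e)}$ recedes from $X_{i(e)}$ by $\delta$. This automatically yields a gap of at least $(1-s)\delta$ between $\phi_e(X'_{t(e)})$ and $\partial X'_{i(e)}$.

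To make this precise, I would first exploit finiteness of $V$ and $E$ to choose $\delta>0$ so small that simultaneously: (i) $\overline{N_\delta(X_v)}\subset \Int(S_v)\subset W_v$ for each $v\in V$, possible because $X_v\subset \Int(S_v)$ is compact; (ii) by continuity of $D\phi_e$ together with the original bound $\|D\phi_e\|_\infty\le s$ on $X_{t(e)}$, there exists a common $s'\in(s,1)$ with $\|D\phi_e\|_\infty\le s'$ on $\overline{N_\delta(X_{t(e)})}$ for every $e\in E$; and (iii) $\phi_a(X'_{t(a)})\cap\phi_b(X'_{t(b)})=\emptyset$ for $a\neq b$ in $E$. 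Condition (iii) is where SSC enters: the compact sets $\phi_a(X_{t(a)})$ and $\phi_b(X_{t(b)})$ are disjoint by SSC, hence at positive distance, so after enlarging each by at most $s'\delta$ (a uniform quantity) they remain disjoint for $\delta$ small.

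Once $\delta$ is fixed, I verify that $\cS'=\{V,E,A,t,i,\{X'_v\},\{\phi_e\}\}$ is a conformal GDMS equivalent to $\cS$ in the sense of Definition~\ref{equivgdms}. Compactness, connectedness, and the property $X'_v=\overline{\Int(X'_v)}$ follow from standard facts about closed $\delta$-neighborhoods of connected sets having the same property. The maps $\phi_e$ are already $C^1$ conformal diffeomorphisms on $W_{t(e)}\supset X'_{t(e)}$ with Lipschitz constant at most $s'<1$ on $X'_{t(e)}$ by (ii). The graph directed inclusion
$$
\phi_e(X'_{t(e)})\subset N_{s'\delta}(\phi_e(X_{t(e)}))\subset N_{s'\delta}(X_{i(e)})\subset N_\delta(X_{i(e)})\subset X'_{i(e)}
$$
holds, and OSC follows from (iii). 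Also, the sets $X'_v$ remain pairwise disjoint in $\R^n$ because each $X'_v$ sits inside the disjoint set $S_v$, so $X'=\bigsqcup_{v\in V}X'_v$ and $\partial X'=\bigsqcup_{v\in V}\partial X'_v$.

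Finally, for the separation conclusion, I fix $e\in E$ and a boundary component $\partial X'_v$. If $v=i(e)$, every point of $\phi_e(X'_{t(e)})$ lies within distance $s'\delta$ of $X_{i(e)}$, whereas every point of $\partial X'_{i(e)}$ lies at distance exactly $\delta$ from $X_{i(e)}$ (for $\delta$ sufficiently small), giving $\dist(\phi_e(X'_{t(e)}),\partial X'_{i(e)})\ge (1-s')\delta>0$. If $v\neq i(e)$, then $\phi_e(X'_{t(e)})\subset S_{i(e)}$ and $\partial X'_v\subset S_v$, which are disjoint compact subsets of $\R^n$, hence at positive distance. Taking the minimum over the finitely many pairs $(v,e)$ yields $R:=\dist(\partial X',\cup_e\phi_e(X'_{t(e)}))>0$. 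The main obstacle is really just the bookkeeping of ensuring all constraints on $\delta$ can be met simultaneously and that the axioms of a conformal GDMS genuinely survive the enlargement; the role of SSC is confined to the single step (iii) where it upgrades to OSC for $\cS'$.
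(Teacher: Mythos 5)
Your proposal is correct and takes essentially the same approach as the paper: set $X'_v$ to be the closure of a small $\delta$-neighborhood of $X_v$, use the uniform contraction bound $s'<1$ on the enlarged domains to show each image $\phi_e(X'_{t(e)})$ sits inside $\overline{N_{s'\delta}(X_{i(e)})}$, invoke the SSC to keep the enlarged images disjoint (hence OSC for $\cS'$), and read off the gap $(1-s')\delta$ to the boundary. The only difference is that you spell out the verification of the GDMS axioms and the $v\neq i(e)$ case more explicitly, which the paper leaves implicit.
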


\begin{proof} Since $\cS$ satisfies the strong separation condition there exists $\ve <\eta_{\cS}$ such that
\begin{equation}
\label{oscnew}
\f_{a} (X_{t(a)} (\ve)) \cap \f_{b} (X_{t(b)}(\ve))=\emptyset
\end{equation}
for all distinct $a,b \in E$, where $A(\ve)=\{x: d(x, A)<\ve\}$ for $A \subset \R^n$. We can also assume that 
$$\max \{ \|D \f_e\|_\infty:e \in E\} =s'<1.$$ We now let
$$X_v'=\overline {X_v(\ve)}.$$
We will show that for all $e \in E$,
\begin{equation}
\label{shrink}
\f_e(X'_{t(e)}) \subset \overline{X_{i(e)} (s' \ve)}.
\end{equation}
For all $p \in X'_{t(e)}$ there exists some $q \in X_{t(e)}$ such that 
$d(p,q)\leq \ve$. Hence, by the mean value theorem, if $\f_e(p) \notin X_{i(e)}$,
$$d(X_{i(e)}, \f_e(p)) \leq d(\f_e(X_{t(e)}), \f_e(p)) \leq |\f_e(q)- \f_e(p)| \leq \|D \f_e \|_\infty |p-q| \leq s' \ve.$$
Therefore, \eqref{shrink} follows.

Hence, \eqref{oscnew} and \eqref{shrink} imply that $\cS'=\{V,E,A,t,i, \{X'_v\}_{v \in V}, \{\f_e\}_{e \in E} \}$ is a conformal GDMS and moreover  \eqref{shrink} implies that
$$\dist(\partial X', \cup_{e \in E} \f_e (X'_{t(e)})) \geq (1-s') \ve >0.$$
The proof is complete.
\end{proof}

 \begin{thm}
 \label{dirporssc}
 Let $\cS=\{\f_{e}\}_{e \in E}$ be a finite and irreducible conformal GDMS in $\R^n$ which satisfies the strong separation condition. Then $J_{\cS}$ is $v$-directed porous for every $v \in S^{n-1}$.
\end{thm}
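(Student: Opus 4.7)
The plan is to reduce Theorem \ref{dirporssc} to Theorem \ref{dirporlimset} via Lemma \ref{sscbdry}. The key observation is that directed porosity is a geometric property of the limit set alone, and the notion of equivalent GDMS introduced in Definition \ref{equivgdms} is designed precisely so that two equivalent systems share the same limit set. Thus, if we can pass from $\cS$ to an equivalent system whose ``seed'' sets are slightly enlarged so that the hypothesis \eqref{lsawayfrombdry} of Theorem \ref{dirporlimset} holds, we are done.

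First I would apply Lemma \ref{sscbdry} to the given system $\cS$ to obtain an equivalent conformal GDMS $\cS' = \{V,E,A,t,i, \{X'_v\}_{v \in V}, \{\f_e\}_{e \in E}\}$ with the same generators $\{\f_e\}_{e \in E}$ and the same incidence data, but with seed sets $X'_v$ that are slight closed thickenings of $X_v$, satisfying
$$
\dist\bigl(\partial X', \bigcup_{e \in E} \f_e (X'_{t(e)})\bigr) > 0,
$$
where $X' = \cup_{v \in V} X'_v$. Since $\cS'$ is again finite and irreducible (the combinatorics $(V,E,A,t,i)$ are unchanged) and, as established in the proof of Lemma \ref{sscbdry}, is indeed a conformal GDMS (all the containments $\f_e(X'_{t(e)}) \subset X'_{i(e)}$ are verified there), all hypotheses concerning $\cS$ transfer to $\cS'$.

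Next I would observe that $J_{\cS'} = J_{\cS}$. This is immediate from the definition of the coding map $\pi$ in \eqref{picoding}: the projection depends only on the maps $\f_e$ and the admissible sequences in $E_A^{\N}$, both of which are identical for $\cS$ and $\cS'$. Alternatively, the nested intersections $\bigcap_{n} \f_{\om|_n}(X_{t(\om_n)})$ and $\bigcap_{n} \f_{\om|_n}(X'_{t(\om_n)})$ yield the same singleton because $\diam(\f_{\om|_n}(X'_{t(\om_n)})) \to 0$ and the two decreasing sequences eventually coincide in the limit. Since $J_{\cS'} \subset \bigcup_{e \in E} \f_e(X'_{t(e)})$, Lemma \ref{sscbdry} gives $\dist(\partial X', J_{\cS'}) > 0$, which is precisely the hypothesis \eqref{lsawayfrombdry} for the system $\cS'$.

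Finally, Theorem \ref{dirporlimset} applied to $\cS'$ yields that $J_{\cS'}$ is $v$-directed porous for every $v \in S^{n-1}$; since $J_{\cS'} = J_{\cS}$, the same holds for $J_{\cS}$. There is no real obstacle in this argument — the content has been entirely absorbed into Theorem \ref{dirporlimset} and Lemma \ref{sscbdry}; the only minor point to verify is that the thickened system $\cS'$ is still irreducible (trivially true as the combinatorial data is preserved) and that the conformal extensions $\f_e : W_{t(e)} \to W_{i(e)}$ still make sense, which follows from the fact that the thickening in Lemma \ref{sscbdry} is by $\ve < \eta_{\cS}$, hence $X'_v \subset W_v$.
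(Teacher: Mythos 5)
Your proposal is correct and follows essentially the same route as the paper: apply Lemma \ref{sscbdry} to pass to an equivalent system $\cS'$, note that equivalent GDMS share the same limit set so that $J_{\cS'}=J_{\cS}$, observe that $J_{\cS'}\subset\bigcup_{e\in E}\f_e(X'_{t(e)})$ gives the separation hypothesis \eqref{lsawayfrombdry} for $\cS'$, and conclude by Theorem \ref{dirporlimset}. Your write-up is simply more explicit than the paper's one-line argument, but the content and structure are identical.
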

\begin{proof}
Note that if two conformal GDMS are equivalent then they generate the same limit set. Therefore Theorem \ref{dirporssc} follows by Theorem \ref{dirporlimset} and Lemma \ref{sscbdry}.
\end{proof}
In the next theorem we consider systems where \eqref{lsawayfrombdry}  does not necessarily hold. We prove that if an IFS consists of rotation free similarities and there exist directions $v \in S^{n-1}$ such that the lines $l_v+x$ miss all the first iterations in the interior of the set $X$, then the limit set is $v$-directed porous.  

\begin{thm}
\label{rotfreess}
 Let $\cS=\{\f_e:X \ra X\}_{e \in E}$ be a finite CIFS  consisting of rotation free similarities. Suppose that there exists a set $V \subset S^{n-1}$ such that
\begin{equation}
\label{linecond}
(\Int(X) \cap (l_v+x)) \stm \bigcup_{e \in E} \f_e(X) \neq \emptyset
\end{equation}
for every  $ x \in \bigcup_{e \in E} \f_e(X)$ and $v \in S^{n-1} \stm V$. Then $J_{\cS}$ is $v$-directed porous for every $v \in S^{n-1} \stm V$ and every $x \in J_{\cS}$.
\end{thm}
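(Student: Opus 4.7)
The plan is to follow the architecture of the proof of Theorem \ref{dirporlimset}, using the rotation-free hypothesis to guarantee that the direction $v$ is preserved by every map $\f_{\om|_k}$. The essential new ingredient, compared with Theorem \ref{dirporlimset}, is that the hole in direction $v$ can no longer be extracted from the complement of $X$ but must be produced inside $\Int(X)\setminus K$, where
$$
K:=\bigcup_{e\in E}\f_e(X),
$$
and this construction must be uniform in the base point $y\in K$.

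First I would fix $v\in S^{n-1}\setminus V$ and define, for $y\in K$,
$$
f_v(y):=\sup\big\{c>0:\exists\, z\in (l_v+y)\cap \Int(X)\text{ with }B(z,c)\subset \Int(X)\setminus K\big\}.
$$
Assumption \eqref{linecond} together with the closedness of $K$ and of $\R^n\setminus\Int(X)$ gives $f_v(y)>0$ for every $y\in K$. Next I would show $f_v$ is lower semicontinuous on $K$: given $y_n\to y$ in $K$ and a witness $z\in(l_v+y)\cap \Int(X)$ for some $c<f_v(y)$, the translates $z_n:=z+(y_n-y)$ satisfy $z_n\in l_v+y_n$ and $z_n\to z$, so for any $c'<c$ and all large $n$ one gets $B(z_n,c')\subset B(z,c)\subset \Int(X)\setminus K$, whence $f_v(y_n)\ge c'$. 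Since $K$ is compact, this furnishes a uniform constant $c_v:=\inf_{y\in K}f_v(y)>0$.

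Now fix $x=\pi(\om)\in J_{\cS}$ and small $r>0$, and let $k$ be the smallest integer with $\f_{\om|_k}(X)\subset B(x,r)$. Arguing exactly as in \eqref{rbdd}, minimality of $k$ forces
$$
\|D\f_{\om|_k}\|_\infty \ge \frac{m_0\, r}{\diam(X)},\qquad m_0:=\min_{e\in E}\|D\f_e\|_\infty.
$$
Set $y:=\pi(\sigma^k\om)=\f_{\om|_k}^{-1}(x)\in \f_{\om_{k+1}}(X)\subset K$ and pick $z\in (l_v+y)\cap \Int(X)$ with $B(z,c_v)\subset \Int(X)\setminus K$. Because $\f_{\om|_k}$ is a rotation-free similarity with ratio $\|D\f_{\om|_k}\|_\infty$, the image $\f_{\om|_k}(z)$ lies on $l_v+x$, and
$$
\f_{\om|_k}\big(B(z,c_v)\big)=B\big(\f_{\om|_k}(z),\, \|D\f_{\om|_k}\|_\infty\, c_v\big)\subset \f_{\om|_k}(X)\subset B(x,r).
$$

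It remains to verify that this image ball avoids $J_{\cS}$. Suppose, for a contradiction, that $p=\f_{\om|_k}(q)$ with $q\in B(z,c_v)$ belongs to $J_{\cS}=\pi(E^{\N})$; then $p\in \f_{\om|_k}(\Int X)$ and also $p\in \f_{\tau|_k}(X)$ for any coding sequence $\tau$ of $p$, so the open set condition (used as in the proof of Lemma \ref{lemmax1} \ref{lemmax12}) forces $\tau|_k=\om|_k$, yielding $q=\pi(\sigma^k\tau)\in J_{\cS}\subset K$ and contradicting $q\notin K$. Combining this with the previous display gives $\por_v(J_{\cS},x,r)\ge m_0\, c_v/\diam(X)$, uniformly in $x\in J_{\cS}$ and small $r$. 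The main technical hurdle is the uniformity $c_v>0$, established through the lower semicontinuity of $f_v$ on the compact set $K$; everything else is a direct transcription of the proof of Theorem \ref{dirporlimset}, with rotation-freeness ensuring that the direction $v$ is preserved by all the maps involved.
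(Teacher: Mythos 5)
Your proof is correct and follows essentially the same strategy as the paper's: obtain a uniform hole radius by compactness, pull the line $l_v+x$ back to the cylinder level via rotation-freeness, push the hole forward and use minimality of $k$ to compare scales, and invoke the open set condition to rule out intersections with $J_{\cS}$. The only cosmetic difference is that you package the uniformity step as lower semicontinuity of $f_v$ on the compact set $K=\bigcup_e\f_e(X)$ rather than the paper's direct sequential-contradiction argument, and you phrase the hole as avoiding $K$ rather than $J_{\cS}$ (which, since $J_{\cS}\subset K$, is exactly the stronger form the paper implicitly constructs as well).
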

\begin{proof} Let $v \in S^{n-1} \stm V$. We will first show that there exists some $\eta>0$ such that for all $x \in \cup_{e \in E } \f_e(X)$ there exists some $z_x \in l_v+x$ such that
\begin{equation}
\label{dirgap0}
B(z_x, \eta)  \subset \Int(X) \stm  J_{\cS}.
\end{equation}
If $x \in \cup_{e \in E } \f_e(X)$ let  
$$\eta_x=\sup \{ \theta \geq 0: \exists z \in l_v+x\mbox{ such that }B(z, \theta) \subset \Int(X) \stm J_{\cS}\}.$$
In order to establish \eqref{dirgap0} it suffices to show that 
$$\inf \{\eta_x:x \in \cup_{e \in E } \f_e(X)\}>0.$$
By way of contradiction assume that $\inf \{\eta_x:x \in \cup_{e \in E } \f_e(X)\}=0$. So there exists a positive sequence  $(\eta_n)_{n \in \N}$ such that $\eta_n \ra 0$ and a sequence $(x_n)_{n \in \N}$ in $\cup_{e \in E} \f_e (X)$ such that if $z \in (l_{v}+x_n) \cap \Int(X)$  satisfies $B(z,\theta) \subset \Int(X) \stm  J_{\cS}$, then $\theta \leq \eta_n$.
By compactness, passing to subsequences if necessary, there exists $x \in \cup_{e \in E} \f_e (X)$ such that $x_n \ra x$. By \eqref{linecond} there exists $y_x \in (\Int(X) \cap (l_{v}+x)) \stm \bigcup_{e \in E} \f_e(X)$ and $R>0$ such that 
\begin{equation}
\label{byxr}
B(y_x,R) \subset \Int(X)\stm \bigcup_{e \in E} \f_e(X).
\end{equation}
For every $n \in \N$ let $y_n$ be the unique point in $(l_{v}^{\perp}+y_x) \cap (l_{v}+x_n)$, see Figure \ref{fig:par}. Let $n_0 \in \N$ big enough such that $|x_n-x|<R/4$ for all $n \geq n_0$. Notice that
$$|y_n-y_x| \leq |x_n -x| <R/4$$
for all $n \geq n_0$. Hence  $B(y_n, R/2) \subset B(y_x,R)$ for all $n \geq n_0$. 
Therefore by \eqref{byxr},
$$B(y_n, R/2) \subset \Int(X)\stm \bigcup_{e \in E} \f_e(X), \mbox{ for all }n \geq n_0.$$
Hence, we have found $y_n \in  X \cap (l_{v}+x_n)$ such that
$$B(y_n, R/2) \subset \Int(X) \stm J_{\cS}.$$
But this is a contradiction because $\eta_n \ra 0$ and \eqref{dirgap0} has been proven.

Now let $x= \pi (\om)$ for some $\om \in E^\N$ and fix some $r>0$. Let $n$ be the smallest integer such that 
$$\f_{\om|_n} (X) \subset B(\pi(\om), r).$$
Denote $l_{\om}:=l_v+x$ and let $l_0:=\f_{\om|_n}^{-1} (l_{\om})$. Since $\cS$ consists of rotation free similarities, $l_0$ is a line parallel to $l_{\om}$. Observe that
\begin{equation}
\label{3pisglo}
\pi (\sg^n(\om)) \in l_0 \cap \f_{\om_{n+1}}(X).
\end{equation}
This follows because $\pi(\om)=\f_{\om|_n}(\pi(\sg^n(\om)))$, hence $\pi(\sg^n(\om)) \in \f_{\om|_n}^{-1} (l_{\om})=l_0$. Also by definition of the projection $\pi$,
$$\pi(\sg^n(\om))=\bigcap_{m=1}^\infty \f_{\sg^n(\om)|_{m}}(X) \subset \f_{\om_{n+1}}(X).$$
Hence $l_0=\pi (\sg^n(\om))+l_v$ and by \eqref{dirgap0} there exists some $z \in l_0$ such that
\begin{equation}
\label{etajs}
B(z,\eta) \subset \Int(X) \stm J_{\cS}.
\end{equation}
Without loss of generality we can assume that $\eta < \eta_{\cS}$. Hence
\begin{equation}
\label{bdpdirpor}
B(\f_{\om|_n}(z), K^{-1} \eta \|D \f_{\om|_n}\|_\infty) \overset{(\ref{4.1.8})}{\subset} \f_{\om|_n}(B(z,\eta)).
\end{equation}
We will now show that
\begin{equation}
\label{gap}
\f_{\om|_n}(B(z,\eta)) \cap J_{\cS}=\emptyset.
\end{equation}
By way of contradiction assume that there exists some $y \in B(z, \eta)$ such that $\f_{\om|_n}(y) \in J_{\cS}$. Hence there exists some $\tau \in E_A^\N$ such that 
\begin{equation}
\label{dirporeqstar}
\f_{\om|_n}(y)=\pi(\tau)=\f_{\tau|_n}(\pi (\sg^n(\tau))).
\end{equation}
If $\om|_n=\tau|_n$ then $y=\pi (\sg^n(\tau))$. Hence, $y \in J_{\cS}$ and this contradicts \eqref{etajs}. If $\om|_n \neq \tau|_n$ let $m \leq n$ be the smallest integer such that $\om_m \neq \tau_m$. By the open set condition and the fact that the maps $\f_e$ are homeomorphisms for every $e \in E$, we deduce that
$$\f_{\om_m}(\Int (X)) \cap \f_{\tau_m}(X) = \emptyset,$$
and consequently
$$\f_{\om|_m}(\Int (X)) \cap \f_{\tau|_m}(X) = \emptyset.$$
But this contradicts \eqref{dirporeqstar} because $\f_{\om|_n}(y) \in \f_{\om|_m}(\Int (X))$ and $\pi(\tau) \in \f_{\tau|_m}(X)$. Therefore we have established \eqref{gap}. 

We then have 
\begin{equation}
\label{jsdirporgap}B(\f_{\om|_n}(z), K^{-1} \eta \|D \f_{\om|_n}\|_\infty) \cap J_{\cS}\overset{(\ref{bdpdirpor} \wedge (\ref{gap})}{=}\emptyset.
\end{equation}
Since $z \in l_0 \cap X$ it follows that $\f_{\om|_n}(z) \in l_\om \cap \f_{\om|_n}(X) \subset B(\pi(\om), r)$. Without loss of generality we can assume that $\diam(X) \leq 1$. So, by the choice of $n$, we have that
\begin{equation}
\label{redercomp}
r \leq \diam (\f_{\om|_{n-1}}(X))  \overset{(\ref{boundbyder})}{\leq}  \frac{1}{m_0} \|D \f_{\om|_{n-1}}\|_\infty \|D \f_{\om_n}\|_\infty  \overset{(\ref{quasi-multiplicativity1})}{\leq} \frac{ K }{m_0}  \|D \f_{\om|_{n}}\|_\infty,
 \end{equation}
 where $m_0=\min \{ \| D \f_e\|_\infty: e \in E\}$. Therefore 
 \begin{equation*}
B(\f_{\om|_n}(z),   \frac{ m_0\, \eta}{K^2 }  \, r) \cap J_{\cS}\overset{(\ref{jsdirporgap}) \wedge (\ref{redercomp})}{=}\emptyset.
\end{equation*}
Thus, $J_{\cS}$ is $v$-directed $\frac{ m_0\, \eta}{K^2 }$-porous at   $x$. The proof is complete.
\end{proof} 

\subsection{Non-porosity}
\label{sec:nonpor}
So far, we have been investigating porosity properties of conformal GDMS. Nevertheless, limit sets of conformal GDMS very frequently are non-porous. In the following, we will prove that if the limit set of a finitely irreducible conformal GDMS is not porous at a single point, then it is not porous in a set of full measure.


\begin{thm} 
\label{notporoae} 
Let $\cS=\{\f_e\}_{e \in E}$ be a finitely irreducible conformal GDMS such that $J_{\cS}$ is not porous, or $J_{\cS}$ is not porous at some $\zeta \in \overline{J_{\cS}}$. If $\tilde{\mu}$ is a shift invariant ergodic probability measure on $E_A^\N$  with full topological support, then $J_{\cS}$ is not porous at $\tilde{\mu} \circ \pi^{-1}$ a.e. $x \in J_{\cS}$. 
\end{thm}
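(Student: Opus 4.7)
My plan is to reduce the hypothesis to non-porosity at a single point $\zeta\in\overline{J_\cS}$, then, via ergodicity and the full topological support of $\tilde\mu$, show that $\tilde\mu$-a.e.\ $\om$ has a subsequence of iterates $\sigma^{k_n}(\om)$ whose projections approach $\zeta$, and finally transfer non-porosity at $\zeta$ to $\pi(\om)$ by pushing forward through the conformal maps $\f_{\om|_{k_n}}$.

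First, for the reduction: if $J_\cS$ is globally non-porous, then for each $n$ there are $x_n\in J_\cS$ and $r_n\in(0,1/n)$ with $\por(J_\cS,x_n,r_n)<1/n$. Extracting a subsequence $x_n\to\zeta\in\overline{J_\cS}$ by compactness, a routine triangle-inequality check shows that for any fixed $c>0$ and large $n$, the ball $B(\zeta,r_n/2)$ admits no hole of relative size $\ge c$: any such hole would, after a harmless shift, become a hole at $x_n$ of relative size exceeding $1/n$, contradicting low porosity there. Thus $\por(J_\cS,\zeta,r_n/2)<c$ for infinitely many $n$, i.e.\ $J_\cS$ is not porous at $\zeta$. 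So we may, and shall, assume from the start the existence of such a $\zeta$.

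Next I would establish the recurrence. Since $\zeta\in\overline{J_\cS}$, pick $\om^{(j)}\in E_A^\N$ with $\pi(\om^{(j)})\to\zeta$. By full topological support of $\tilde\mu$, each cylinder $[\om^{(j)}|_N]$ has positive measure, so Birkhoff's ergodic theorem applied to its indicator yields, for $\tilde\mu$-a.e.\ $\om$, infinitely many $k$ with $\sigma^k(\om)\in[\om^{(j)}|_N]$. Any such $k$ forces $\pi(\sigma^k(\om))\in\f_{\om^{(j)}|_N}(X_{t(\om^{(j)}_N)})$, a set of diameter $\le s^N\diam(X)$ containing $\pi(\om^{(j)})$. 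Intersecting the relevant full-measure sets over $j,N$ and diagonalizing furnishes, for $\tilde\mu$-a.e.\ $\om$, a sequence $k_n\to\infty$ with $\pi(\sigma^{k_n}(\om))\to\zeta$; the uniform contraction $\|D\f_{\om|_{k_n}}\|_\infty\le s^{k_n}$ simultaneously drives the derivative norm to zero.

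For the final transfer, fix such an $\om$ and any $c>0$. Setting $c^\ast:=c/(2K^2)$, non-porosity at $\zeta$ yields arbitrarily small $\rho^\ast$ with $\por(J_\cS,\zeta,\rho^\ast)<c^\ast$; put $\rho:=\rho^\ast/(2K)$. From the subsequence above, pick $k$ so large that $|\pi(\sigma^k(\om))-\zeta|<\rho/100$ and $\|D\f_{\om|_k}\|_\infty\rho$ is below any prescribed threshold, and set $\tilde r:=\|D\f_{\om|_k}\|_\infty\rho$. Suppose, toward contradiction, there exists a hole $B(y,c\tilde r)\subset B(\pi(\om),\tilde r)\setminus J_\cS$. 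Pulling back through $\f_{\om|_k}^{-1}$ — defined on a neighborhood of $\pi(\om)$ inside $\f_{\om|_k}(S_{t(\om_k)})$ once $\rho$ is small enough — the Egg Yolk Principle together with the Bounded Distortion Property produces a ball of radius at least $c\rho/K$ sitting inside $B(\pi(\sigma^k(\om)),K\rho)\subset B(\zeta,\rho^\ast)$ and disjoint from $J_\cS\cap X_{t(\om_k)}$. The pairwise disjointness of the $S_v$'s together with $J_\cS\subset\bigcup_v X_v$ then upgrades this to disjointness from $J_\cS$ outright, giving $\por(J_\cS,\zeta,\rho^\ast)\ge c/(2K^2)=c^\ast$, a contradiction. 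The main obstacle is precisely this last step: juggling the distortion factor $K$ on both sides so that the pulled-back hole lands in a ball at $\zeta$ of the pre-selected scale where low porosity is known, and invoking OSC plus disjointness of the $S_v$'s to ensure that the pull-back of $J_\cS$ locally agrees with $J_\cS$.
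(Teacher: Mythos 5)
Your overall strategy --- Birkhoff recurrence of a.e.\ orbit to a neighborhood of a ``bad'' point followed by transporting low porosity along the inverse branches $\f_{\om|_k}$ with bounded distortion --- is the same as the paper's, and the recurrence/transfer parts are sound in spirit (up to some technicalities about applying the Egg Yolk Principle at points near, rather than in, $X_{t(\om_k)}$, which the paper also has to navigate). However, there is a genuine gap in your opening reduction.

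You claim that global non-porosity of $J_\cS$ forces the existence of a single $\zeta\in\overline{J_\cS}$ at which $J_\cS$ is not porous, by extracting a convergent subsequence from the witnesses $x_n$ and ``shifting'' a hole at $\zeta$ to a hole at $x_n$. This shift is not harmless: you need $|x_n-\zeta|$ to be small \emph{relative to} $r_n$, say $|x_n-\zeta|\le \tfrac{1-c}{2}\,r_n$, in order for a ball $B(y, c\,r_n/2)\subset B(\zeta,r_n/2)$ to land inside $B(x_n,r_n)$ at relative size comparable to $c$. But the hypothesis gives no control of $|x_n-\zeta|$ against $r_n$; the radii $r_n$ could go to zero so fast that $B(\zeta,r_n/2)$ and $B(x_n,r_n)$ are eventually disjoint (e.g.\ $|x_n-\zeta|\sim 1/n$, $r_n\sim 4^{-n}$). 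So the ``routine triangle-inequality check'' does not go through, and in fact a set can be porous at every point of its closure while still failing to be (uniformly) porous, so the qualitative reduction you are aiming for is not available either.

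The paper sidesteps this entirely: it never reduces to a single base point. Instead, for each $\ve>0$ it takes the ad hoc witness pair $(\xi_\ve,r_\ve)$ guaranteed by non-porosity, runs Birkhoff to produce a full-measure set $A_\ve$ on which $J_\cS$ is not $2K\ve$-porous, and then intersects the $A_{1/n}$. You should adopt this structure: drop the reduction, parametrize your entire argument by $\ve$ (using $\xi_\ve$ in place of a fixed $\zeta$), and intersect the resulting full-measure sets at the end. With that change, the rest of your recurrence and pull-back argument is essentially the paper's proof read in contrapositive form.
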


\begin{proof} We will first assume that  $J_{\cS}$ is not porous. Let $\ve \in (0,1)$. Since  $J_{\cS}$ is not $\ve$-porous then  there exists $r:=r_\ve \in (0, \eta_{\cS}/2)$ (recall  \eqref{itaS}) and $\xi:=\xi_\ve \in J_{\cS}$ such that $\por(J_{\cS}, \xi, r)< \ve$. Hence, for all $x \in B(\xi, r)$
\begin{equation}
\label{notporo2}
B(x, \ve r) \cap J_{\cS} \neq \emptyset.
\end{equation}

The measure $\tilde{\mu}$ has full topological support hence $
\tilde{\mu}(\pi^{-1} (B(\xi, s)))>0,$ for every $s>0$. Moreover, since $\tilde{\mu}$ is ergodic and Birkhoff's Ergodic Theorem implies that if 
$$Y_{\ve}:=\{\om \in E_A^\N: \sigma^n(\om) \in \pi^{-1}(B(\xi, (4 K )^{-1} r))\mbox{ for infinitely many }n \in \N\},$$
then $\tilde{\mu}(Y_{\ve})=1$.

Let $\om \in Y_{\ve}$ and $n \in \N$ such that $\sigma^n(\om) \in \pi^{-1}(B(\xi, (4  K )^{-1} r))$. Since $\xi \in J_{\cS}$ there exists some $v \in V$ such that $\xi \in X_v$. 
On the other hand $\pi(\sigma^n(\om)) \in X_{t(\om_n)}$ and 
\begin{equation}
\label{neweq}
d(\pi(\sigma^n(\om)), \xi) \leq \frac {r}{4  K} < \frac{\eta_{\cS}}{8  K}.
\end{equation}
By the definition of $\eta_{\cS}$ and the fact that the sets $X_v, v \in V,$ are pairwise disjoint we deduce that
$\xi \in X_{t(\om_n)}$. 
Hence, Lemma \ref{eyp} implies that
\begin{equation}
\label{xiinc}
B(\f_{\om|_n}(\xi), K^{-1} \|D \f_{\om|_n}\|_\infty r) \subset \f_{\om|_n}(B(\xi,r)) \subset   B(\f_{\om|_n}(\xi),  \|D \f_{\om|_n}\|_\infty r).
\end{equation}
Moreover \begin{equation}
\begin{split}
\label{dpomxi}
d(\pi(\om), \f_{\om|_n}(\xi))&=d(\f_{\om|_n}(\pi(\sigma^n(\om))),  \f_{\om|_n}(\xi)) \\
& \overset{\eqref{boundbyderlip}}{\leq} \|D \f_{\om|_n}\|_\infty d (\pi(\sigma^n(\om)), \xi) \overset{\eqref{neweq}}{\leq} \frac{ \|D \f_{\om|_n}\|_\infty r}{8 K}.
\end{split}
\end{equation}
Hence
\begin{equation}
\begin{split}
\label{bpoball}
B(\pi(\om), (2  \, K)^{-1}\|D \f_{\om|_n}\|_\infty r) &\overset{\eqref{dpomxi}}{\subset} B(\f_{\om|_n}(\xi), K^{-1} \|D \f_{\om|_n}\|_\infty r) \\&\overset{\eqref{xiinc}}{\subset} \f_{\om|_n}(B(\xi,r)).
\end{split}
\end{equation}

Let $\om \in Y_{\ve}$. We will show that $J_{\cS}$ is not $\ve'$-porous at $\pi(\om)$ where $\ve'=2  K  \ve$. Let $\delta'>0$ and choose $n \in \N$ big enough such that  
$(2  \, K)^{-1}\|D \f_{\om|_n}\|_\infty r<\delta'$; this is possible for example by \cite[Lemma 4.18]{CTU}. 
Now take $z \in B(\pi(\om), (2  \, K)^{-1}\|D \f_{\om|_n}\|_\infty r)$ and note that by \eqref{bpoball}, $z =\f_{\om|_n}(b)$  for some $b \in B(\xi, r)$. Therefore, \eqref{notporo2} implies that $B(b, \ve r) \cap J_{\cS} \neq \emptyset$. Let $y \in B(b, \ve r) \cap J_{\cS}$. Then, 
we have that $d(y, b) < \ve r< \eta_{\cS}/2 $
and
\begin{equation*}
\begin{split}
d(\f_{\om|_n}(y),  \f_{\om|_n}(b)) \overset{\eqref{boundbyderlip}}{\leq}  \|D \f_{\om|_n}\|_\infty d(y,b) <  \|D \f_{\om|_n}\|_\infty \ve r.
\end{split}
\end{equation*}
Therefore
\begin{equation*}
J_{\cS} \cap B(z,   \|D \f_{\om|_n}\|_\infty\ve r ) \neq \emptyset.
\end{equation*}
Hence $J_{\cS}$ is not $\ve'$-porous at $\pi(\om)$. 

Let $\mu=\tilde{\mu} \circ \pi^{-1}.$ We have shown that for every $\ve \in (0,1)$ there exists $A_\ve:=\pi(Y_{\ve}) \subset J_{\cS}$ such that $\mu (A_\ve)=1$ and $J_{\cS}$ is not $2 K \ve$-porous for all $z \in A_\ve$. Therefore there exist sets $A_n \subset J_{\cS}$ such that $\mu (A_n)=1$ and $J_{\cS}$ is not $2 K n^{-1}$-porous for all $z \in A_n$. Let $A:=\cap_{n \in \N} A_n$. Then $\mu (A)=1$ and we will show that $J_{\cS}$ is not porous at every $z \in A$. Let $z \in A$ and suppose by contradiction that $J_{\cS}$ is $d$-porous at $z$ for some $d>0$. Choose $n \in \N$ such that $2 K n^{-1}<d$. Since $z \in A_n$ for every $\delta>0$ there exists some $r_\delta \in (0, \delta)$ such that if $y \in B(z,r_\delta)$ then $J_{\cS} \cap B(y, 2K n^{-1} r_\delta) \neq \emptyset$. Hence, $J_{\cS} \cap B(y, d r_\delta) \neq \emptyset$ for all $y \in B(z,r_\delta)$ and we have reached a contradiction since we had assumed that $J_{\cS}$ is $d$-porous at $z$. So the proof is complete when $J_{\cS}$ is not porous.

If $J_{\cS}$ is not porous at some $\zeta \in \overline{J_{\cS}}$ the proof is almost identical. In that case for all $\ve>0$ there exists $r:=r_\ve \in (0, \frac{\eta_{\cS}}{2})$ such that $\por(J_{\cS}, \zeta, r)< \ve$. Hence, for all $x \in B(\zeta, r)$
\begin{equation*}
B(x, \ve r) \cap J_{\cS} \neq \emptyset.
\end{equation*}
The rest of the proof follows exactly in the same manner as in the previous case. The proof is complete.
\end{proof}

We are now ready to see how Theorem \ref{notporoaeconfint} follows from Theorem \ref{notporoae}. For the concepts appearing in the following corollary, recall  Theorem \ref{thm-conformal-invariant} and the related discussion in the end of Section \ref{sec:prelim}. 
\begin{coro}
\label{notporoaeconf} 
Let $\cS=\{\f_e\}_{e \in E}$ be a  finitely irreducible conformal GDMS such that $J_{\cS}$ is not porous, or $J_{\cS}$ is not porous at some $\zeta \in \overline{J_{\cS}}$. If $t \in \Fin(\cS)$ then $J_{\cS}$ is not porous at $m_{t}$-a.e. $x\in J_{\cS}$. In particular $J_{\cS}$ is not porous at $m_{h}$-a.e. $x\in J_{\cS}$ where $h=\dim_{\cH}(J_{\cS})$. 

\end{coro}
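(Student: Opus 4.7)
The plan is to deduce Corollary \ref{notporoaeconf} from Theorem \ref{notporoae} by exhibiting, for each $t\in\Fin(\cS)$, a shift-invariant, ergodic probability measure on $E_A^\N$ of full topological support whose projection under $\pi$ is equivalent to $m_t$. The natural candidate is the shift-invariant Gibbs state $\tilde\mu_t$ supplied by Theorem \ref{thm-conformal-invariant}.

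First I would fix $t\in\Fin(\cS)$ and invoke Theorem \ref{thm-conformal-invariant} to obtain the eigenmeasure $\tilde m_t$ of $\mathcal L_t^\ast$ and the unique shift-invariant Gibbs state $\tilde\mu_t$ for the potential $t\zeta$; by that theorem $\tilde\mu_t$ is ergodic and globally equivalent to $\tilde m_t$. Next I would verify that $\tilde\mu_t$ has full topological support on $E_A^\N$. Since $\tilde\mu_t$ is a Gibbs state, there is a constant $C=C(t)>0$ such that
$$
\tilde\mu_t([\om])\ge C\,e^{-|\om|P(t)}\|D\phi_\om\|_\infty^t
$$
for every $\om\in E_A^\ast$; as $\|D\phi_\om\|_\infty>0$, every cylinder set has strictly positive $\tilde\mu_t$-measure, and since cylinders form a basis for the topology on $E_A^\N$, the measure $\tilde\mu_t$ has full topological support.

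With these properties in hand, I would apply Theorem \ref{notporoae} to $\tilde\mu=\tilde\mu_t$: under the standing hypothesis that $J_\cS$ is not porous (or fails to be porous at some $\zeta\in\overline{J_\cS}$), the conclusion is that $J_\cS$ is not porous at $\tilde\mu_t\circ\pi^{-1}$-a.e.\ $x\in J_\cS$. Because $\tilde\mu_t$ and $\tilde m_t$ are globally equivalent, their pushforwards $\mu_t=\tilde\mu_t\circ\pi^{-1}$ and $m_t=\tilde m_t\circ\pi^{-1}$ are equivalent Borel probability measures on $J_\cS$, so the same null set for $\mu_t$ is null for $m_t$. Therefore $J_\cS$ is not porous at $m_t$-a.e.\ $x\in J_\cS$.

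For the final assertion, I would simply take $t=h:=\dim_\cH(J_\cS)$, recalling the remark immediately after \eqref{mutdef} that $h\in\Fin(\cS)$, and apply the result just established. There is no real obstacle here: the only substantive ingredient beyond Theorem \ref{notporoae} is the full-support property of $\tilde\mu_t$, which is an immediate consequence of the Gibbs inequality, and the global equivalence of $\tilde\mu_t$ with $\tilde m_t$ which is part of Theorem \ref{thm-conformal-invariant}.
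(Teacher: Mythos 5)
Your proof is correct and follows essentially the same route as the paper: it applies Theorem \ref{notporoae} to the unique shift-invariant Gibbs state $\tilde\mu_t$ after checking that $\tilde\mu_t$ has full topological support via the Gibbs inequality on cylinders, and then transfers the conclusion to $m_t$ using the global equivalence of $\tilde\mu_t$ and $\tilde m_t$ from Theorem \ref{thm-conformal-invariant}. The specialization to $t=h$ at the end is likewise the paper's argument.
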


\begin{proof} Let $\tilde{\mu}_t$ be the unique shift invariant ergodic measure on $E_A^\N$ which is a Gibbs state for the potential $t \zeta$, see Theorem \ref{thm-conformal-invariant}. It follows by the definition of Gibbs states, see e.g. \cite[Section 6.2]{CTU}, that  $\tilde{\mu}_t([\om])>0$ for all $\om \in E_A^\ast$. Since the collection of cylinders forms a countable base for the topology of $E_A^\N$ (recall \eqref{d-alpha}) it follows that $\tilde{\mu}_t$ has full topological support. Hence the conclusion follows by Theorem \ref{notporoae} and Theorem \ref{thm-conformal-invariant}  because the unique eigenmeasure $\tilde m_t$ of the conjugate Perron-Frobenius operator $\mathcal{L}^\ast_t$ is globally equivalent to $\tilde{\mu}_t$.
\end{proof}

We record that since $\mathcal{\cH}^{h}\lfloor_{J_{\cS}} \ll m_h$ (see e.g.  \cite[Theorem 10.1]{CTU}), Corollary \ref{notporoaeconf} implies that if  $J_{\cS}$ is not porous, or $J_{\cS}$ is not porous at some $\zeta \in \overline{J_{\cS}}$, then $J_{\cS}$ is not porous at $\mathcal{\cH}^{h}$-a.e. $x\in J_{\cS}$. Nevertheless this statement is vacuous when $\mathcal{\cH}^{h} (J_{\cS})=0$.

It is known that if $\cS$ is a finitely irreducible GDMS which consists of finitely many conformal maps and $\Int (X) \stm J_{\cS} \neq \emptyset$  then $J_{\cS}$ is porous, see e.g. \cite[Theorem 4.6.4]{MUGDMS} and \cite[Theorem 2.5]{urbpor}. According to our next Theorem the situation is very different when the alphabet is infinite. For every $h$ less than the dimension of the ambient space we construct an infinite IFS consisting of similarities whose limit set is not porous almost everywhere. 
\begin{thm} 
\label{hnotpor}For every $h \in (0, n)$  there exists a CIFS  $\cS_h$ consisting of similarities  in $\Rn$ such that $\dim_{\cH} (J_{\cS_h})=h$ and $J_{\cS_h}$ is not porous at $m_h$-a.e. $x \in J_{\cS_h}$, where $m_{h}$ is the $h$-conformal measure of $\cS$.
\end{thm}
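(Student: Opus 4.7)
The plan is to reduce the theorem, via Corollary~\ref{notporoaeconf}, to the problem of constructing, for each $h \in (0,n)$, a regular infinite conformal IFS $\cS_h$ of similarities in $\Rn$ whose limit set has Hausdorff dimension $h$ and which fails to be porous at a single point of $\overline{J_{\cS_h}}$. Once such a system is exhibited, Corollary~\ref{notporoaeconf} immediately yields the conclusion that $J_{\cS_h}$ is not porous at $m_h$-a.e.\ $x \in J_{\cS_h}$.

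For the construction, I would take $X := \bar B(0,1) \subset \Rn$ and designate $0 \in X$ as the candidate point of non-porosity. Fix parameters $\alpha \in \bigl(0, h/(n-h)\bigr)$ and $\lambda \in (0, 1/4)$, and set $\epsilon_j := 2^{-\alpha j}$ for $j \geq 1$. Inside the spherical shell $S_j := \bar B(0, 2^{-j}) \setminus B(0, 2^{-j-1})$ (slightly shrunk to leave a buffer at its boundary), place a lattice $\Lambda_j \subset S_j$ of mesh $\epsilon_j 2^{-j}$, and for each $a \in \Lambda_j$ define the similarity $\phi_{j,a}(x) := r_j x + a$ with contraction ratio $r_j := \lambda \epsilon_j 2^{-j}$. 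The alphabet is $E_h := \bigcup_{j \geq 1} \{(j,a) : a \in \Lambda_j\}$ and $\cS_h := \{\phi_{j,a}\}_{(j,a) \in E_h}$. Since $\phi_{j,a}(X) = \bar B(a, r_j)$ and the lattice mesh $\epsilon_j 2^{-j} = r_j/\lambda$ dwarfs $2r_j$, the images are pairwise disjoint subsets of $\Int(X)$, so the OSC holds. The counting estimate $|\Lambda_j| \asymp \epsilon_j^{-n}$ yields
$$\sum_{(j,a) \in E_h} r_j^h \;\asymp\; \lambda^h \sum_{j \geq 1} 2^{j(\alpha(n-h) - h)},$$
a convergent geometric series by the choice of $\alpha$. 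Pushing $\alpha$ toward $h/(n-h)$ inflates this series without bound, so one may take $\lambda$ as small as desired and still satisfy $\sum r_j^h = 1$; this renders $\cS_h$ regular with Bowen's parameter $h$, whence $\dim_{\cH}(J_{\cS_h}) = h$ by Theorem~\ref{721}. The fixed points $a/(1 - r_j) \to 0$ ensure $0 \in \overline{J_{\cS_h}}$.

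The geometric core of the argument is the non-porosity of $J_{\cS_h}$ at $0$. Given any $c > 0$, I would show $\por(J_{\cS_h}, 0, 2^{-j}) < c$ for all sufficiently large $j$. Take $y \in B(0, 2^{-j})$ and let $j' \geq j$ be such that $y \in S_{j'}$; then the nearest lattice point $a \in \Lambda_{j'}$ satisfies $|y - a| \leq \tfrac{\sqrt n}{2}\epsilon_{j'} 2^{-j'}$, so the image $\phi_{j',a}(X) = \bar B(a, r_{j'})$ meets $B(y, c \cdot 2^{-j})$ provided
$$c \cdot 2^{-j} \;\geq\; \bigl(\tfrac{\sqrt n}{2} + \lambda\bigr)\, \epsilon_{j'} 2^{-j'}.$$
Since $\epsilon_{j'} 2^{-j'} \leq 2 \epsilon_j \cdot 2^{-j}$ (from $j' \geq j$ and monotonicity of $\epsilon$), this inequality holds as soon as $\epsilon_j < c/(2(\tfrac{\sqrt n}{2} + \lambda))$, which is automatic for large $j$. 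Thus $\por(J_{\cS_h}, 0, 2^{-j}) \to 0$, so $J_{\cS_h}$ fails to be porous at $0 \in \overline{J_{\cS_h}}$, and Corollary~\ref{notporoaeconf} finishes the proof. The main obstacle I anticipate is the coupled calibration of OSC and the regularity normalization $\sum r_j^h = 1$: both parameters $\alpha$ and $\lambda$ must be tuned simultaneously so that the images tile each shell with disjoint interiors while the resulting Poincaré series sums to exactly $1$ at the target exponent. This is the only real bookkeeping in the argument.
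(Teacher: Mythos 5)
Your proposal is correct and reaches the conclusion by the same overall reduction — construct a similarity IFS in $\Rn$ with limit set of dimension exactly $h$ that fails to be porous at a single point of $\overline{J_{\cS_h}}$, then invoke Corollary~\ref{notporoaeconf} — but the construction itself is genuinely different from the paper's.

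The paper builds the alphabet from a countable set $A=\bigcup_{j\ge 4}\Sigma_j$, where $\Sigma_j$ is a maximal $j^{-2}$-separated family on the sphere $\partial B(0,1/j)$, accumulating at $\xi$, and the crucial design requirement is that \emph{any} selection $z_k\in \bar B(a_k,\varepsilon_k)$ remains non-porous at $\xi$. The contraction ratios $r_k$ are then chosen \emph{much smaller} than the spacing scales $\varepsilon_k$, which decouples the non-porosity requirement (governed by the $\varepsilon_k$) from the dimension requirement (governed by the $r_k$). This makes $\sum r_k^h<1$ essentially free, and the paper then closes the gap by appending finitely many extra similarities $\psi_m$ placed in $B(0,1)\setminus\overline{J_\cS}$ to force $\sum r_k^h+\sum\|D\psi_m\|_\infty^h=1$, thereby achieving regularity and Bowen parameter exactly $h$.

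Your construction instead uses lattices of mesh $\varepsilon_j 2^{-j}$ in dyadic shells $S_j$ with contraction ratios \emph{tied} to the mesh, $r_j=\lambda\varepsilon_j2^{-j}$, and you obtain $\sum r_j^h=1$ by jointly tuning the two parameters $\alpha$ and $\lambda$, rather than by augmentation. Both work; the paper's version is somewhat more robust (the point $\xi$ is arbitrary, and the ratios $r_k$ are unconstrained below $\varepsilon_k$, so the Poincar\'e series can be made to converge at any prescribed exponent essentially for free), while yours is more self-contained, avoiding the auxiliary augmentation step. Your non-porosity estimate — that for any $c>0$ and all large $j$, every $B(y,c\,2^{-j})\subset B(0,2^{-j})$ must meet some image $\bar B(a,r_{j'})$ because $\varepsilon_{j'}2^{-j'}\le\varepsilon_j2^{-j}=o(2^{-j})$ — is correct (and in fact you even get $\varepsilon_{j'}2^{-j'}\le\varepsilon_j2^{-j}$ directly without the factor of $2$). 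The only soft spots are minor bookkeeping ones that you flag yourself: the boundary buffer when the lattice is placed in the (shrunk) shell, the fact that $\lambda$ is determined by the normalization once $\alpha$ is fixed (so ``take $\lambda$ as small as desired'' should read ``$\lambda=S(\alpha)^{-1/h}$, which is $<1/2$ once $\alpha$ is chosen close enough to $h/(n-h)$''), and that for small $j$ the mesh may exceed the shell width, so one should begin the construction at a sufficiently large index. None of these affect the substance of the argument.
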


\begin{proof} We start by fixing some $\xi \in \Rn$ and $h \in (0,n)$. By Corollary \ref{notporoaeconf} it suffices to construct a CIFS $\cS_h$ consisting of similarities such that $\dim_{\cH} (J_{\cS_h})=h$, $ \xi \in \overline{J_{\cS_h}}$ and $J_{\cS_h}$ is not porous at $\xi$. 

Let $A=(a_k)_{k \in \N}$ such that,
\begin{enumerate}[label=(\roman*)]
\item $A \subset \overline{B}(0,1/2)$,
\item every $p \in A$ is an isolated point of $A$,
\item $\xi \in \bar{A} \stm A$,
\item there exists a sequence $(\ve_k)_{k\in \N}$, with $\ve_k \in (0,10^{-1})$ such that if $z_k \in \overline{B}(a_k,\ve_k)$ then $(z_k)_{k \in \N}$ is not porous at $\xi$.
\end{enumerate} 
It is not difficult to produce countable sets with the above properties. For example, start with 
$A= \cup_{j=4}^\infty \Sigma_j$ where $\Sigma_j$ is a maximal family in $\partial B(0,1/j)$ such that for all $p,q \in \Sigma_j$, $p \neq q$, $d(p,q) \geq j^{-2}$. 

Pick some $r_1 \in (0,1)$ such that $r_1^{h}<1$ and $r_1<\ve_1$. Proceeding inductively we obtain a sequence $(r_k)_{k \in \N}$ such that $0<r_k < \ve_k$ and 
\begin{enumerate}[label=(\roman*)]
\item $\sum_{k=1}^l r_k^h <1$ for all $l \in \N$,
\item $\overline{B} (a_k, r_k) \cap (\bar{A} \stm \{a_k\}) = \emptyset$,
\item the balls $\overline{B} (a_k, r_k)$ are pairwise disjoint.
\end{enumerate} 
Of course in that  case
\begin{equation}
\label{sumhle1}
\sum_{k=1}^\infty r_k^h \leq 1.
\end{equation}
For $k \in \N$ let $\f_k : \R^n \ra \R^n$ defined by 
$$\f_k= \tau_{a_k} \circ \delta_{r_k},$$
where $\tau_q(p)=q + p$ and $\delta_r(p)=rp$. Note that $\f_k (\overline{B}(0,1)) =\overline{B} (a_k, r_k)$, hence the sequence $(\f_k)_{k \in \N}$ defines a system of similitudes $\cS$ which satisfies the SSC, recall Remark \ref{ssc}. Note also that since for all $k \in \N$,
$$J_{\cS} \cap \overline{B} (a_k, r_k) \neq \emptyset,$$
we deduce that $\xi \in \overline{J_{\cS}}$. By the choices of $a_k$ and $r_k$ we also have that if $x_k \in J_{\cS} \cap B(a_k, r_k)$ for  $k \in \N$, then the set $\{x_k\}_{k \in \N}$ is not porous at $\xi$. Therefore $J_{\cS}$ is not porous at $\xi$.

If $\sum_{k \in \N} r_k^h=1$, then by \cite[Corollary 7.22]{CTU} we have that $\dim_{\cH} (J_{\cS})=h$ and we are done. So we only have to check the case when $\sum_{k \in \N} r_k^h<1$. Since 
$$J_{\cS} \subset \bigcup_{k=1}^\infty \overline{B}(a_k, r_k),$$
we deduce that $B(0,1) \stm \overline{J_{\cS}} \neq \emptyset$. Let 
$$\psi_m: B(0,1) \ra B_m, \, m=1,\dots, n_0,$$
be  a finite set of similarities such that the open balls $B_m$ are pairwise disjoint, $B_m \subset B(0,1) \stm \overline{J_{\cS}}$, and 
$$\sum_{n \in \N} r_n^h+\sum_{m=1}^{n_0}\|D \psi_m\|^h_\infty=1.$$
Note that this is possible because the radius of each $B_m$ is $\|D \psi_m\|_\infty$ and we can choose $a_k$ and $r_k$  such that $\cup_{k=1}^\infty \overline{B}(a_k, r_k) \subset B(0,1/2)$. Hence if $\cS_h=\cS \cup \{\psi_m\}_{m=1}^{n_0}$, $J_{\cS_h}$ is not porous at $\xi$ and $\dim_{\cH} (J_{\cS_h})=h$. Hence, Corollary \ref{notporoaeconf} implies that $J_{\cS_h}$ is not porous at $m_h$-a.e. $x \in J_{\cS_h}$.The proof is complete.
\end{proof}

\section{Porosity for complex continued fractions}
\label{sec:ccf}
In this section we explore porosity in the setting of {\it complex continued fractions}. We denote by 
$$
E:=\{m+ni:(m,n) \in \N \times \Z \}
$$ 
the Gaussian integers with positive real part. We also let
$$X:=\overline{B}(1/2,1/2) \  \  \mbox{ and } \   \  W=B(1/2,3/4).$$
For $e \in E$ the maps $\f_e: W \ra W$ are defined by 
\begin{equation*}
\f_e(z):=\frac{1}{e+z}.
\end{equation*}
The following proposition gathers some properties of the maps $\f_e$. It's proof can be found in \cite{MU1}.
\begin{propo}
\label{ccfprop} 
For every $e \in E$
\begin{enumerate}[label=(\roman*)]
\item $\f_e(W) \subset B(0, 4 |e|^{-1})$,
\item\label{derfe} $4^{-1} |e|^{-2} \leq |\f'_e(z)|\leq 4 |e|^{-2}$ for all $z \in W$,
\item \label{derestcf}$4^{-1} |e|^{-2} \leq \diam (\f_e (W)) \leq 4 |e|^{-2}$.
\end{enumerate}
\end{propo}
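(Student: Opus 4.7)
My plan is to reduce all three estimates to sharp two-sided bounds on $|e+z|$ in terms of $|e|$, valid uniformly in $z \in W$, and then apply the explicit formulas
$$
\f_e(z) = \frac{1}{e+z}, \qquad \f'_e(z) = -\frac{1}{(e+z)^2}.
$$
The starting point is to write $z = 1/2 + \zeta$ with $|\zeta| < 3/4$, so that $e + z = (e + 1/2) + \zeta$. The triangle inequality gives
$$
|e+1/2| - 3/4 \;\le\; |e+z| \;\le\; |e+1/2| + 3/4.
$$
A direct expansion shows $|e+1/2|^2 = |e|^2 + m + 1/4$, and combined with $m \ge 1$ this yields the sandwich $|e| \le |e+1/2| \le |e| + 1/2$. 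Because every $e \in E$ has $|e| \ge 1$, these two displays combine to give constants $c_1,c_2>0$ with $c_1 |e| \le |e+z| \le c_2 |e|$ for every $z\in W$. Substituting into the formulas for $\f_e$ and $\f'_e$ immediately gives statements (i) and (ii), with the constants obtained from $c_1,c_2$.

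For statement (iii), I would exploit the fact that $\f_e$ is a M\"obius transformation and that $-e \notin \overline{W}$ (easily verified, since points of $\overline{W}$ have real part $\ge -1/4$ while $\Re(-e) \le -1$). Consequently $\f_e(\overline{W})$ is again a genuine Euclidean disk, and its two diametrically opposite points are the images of the two extremal points of $\overline{W}$ on the line through $-e$ and $1/2$, located at distance $3/4$ from the center $1/2$. Both images lie on the ray through $0$ and $\f_e(1/2)$, so
$$
\diam\bigl(\f_e(W)\bigr)
= \frac{1}{|e+1/2|-3/4} - \frac{1}{|e+1/2|+3/4}
= \frac{3/2}{|e+1/2|^2 - 9/16}.
$$
Inserting the bounds $|e| \le |e+1/2| \le |e|+1/2$ established above, together with $|e|\ge 1$, then yields (iii).

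The proof is essentially a routine computation; the main (and only) technical point is extracting the precise constants $4$ and $4^{-1}$ in the stated inequalities. For this one may need to refine the crude triangle-inequality bound on $|e+z|$ by expanding $|e+z|^2 = |e|^2 + 2\Re(\bar e z) + |z|^2$ and using $\Re(\bar e z) \ge m/2 - 3|e|/4$ (which follows from $z - 1/2$ having modulus less than $3/4$), or alternatively by separating the case $|e|=1$ from $|e|\ge 2$ and checking the finitely many small cases directly. No conceptual obstacle arises.
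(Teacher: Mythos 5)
Your plan---reduce all three items to two-sided bounds on $|e+z|$, and exploit the M\"obius geometry for the diameter---is the natural approach, and the exact identity
$$
\diam\bigl(\f_e(W)\bigr)=\frac{3/2}{|e+1/2|^2-9/16},
$$
obtained from the two extremal boundary points of $W$ on the line through $-e$, is a clean way to handle (iii). But you should not wave away the constant-chasing at the end: there \emph{is} a genuine obstruction, and it sits exactly in the ``finitely many small cases'' you propose to check at the end. For $e=1$ the lower bound in (ii) is in fact false on $W=B(1/2,3/4)$: as $z\to 5/4$ inside $W$ one has $|1+z|\to |e+1/2|+3/4=9/4$, so $|\f_1'(z)|=|1+z|^{-2}$ drops below $16/81\approx 0.198<1/4$. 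More generally, your crude estimate $|e+z|<|e+1/2|+3/4\le |e|+5/4$ yields $|e+z|<2|e|$ only when $|e|\ge 5/4$, i.e.\ for every letter except $e=1$; the advertised small-$|e|$ check therefore produces a counterexample, not a confirmation, so ``no conceptual obstacle arises'' is overstated. The constant $4^{-1}$ in (ii) simply cannot be extracted over all of $W$.

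By contrast, (i) and (iii) do hold with the constant $4$ on $W$, and the extra input you already wrote down, $|e+1/2|^2=|e|^2+m+1/4\ge|e|^2+5/4$, is precisely what closes them: it forces $|e+1/2|>|e|/2+3/4$ (since $12|e|^2-12|e|+11>0$ identically), hence $|e+z|>|e|/2$, which gives both (i) and the \emph{upper} bound in (ii); and it gives $|e+1/2|^2-9/16\le |e|^2+|e|\le 2|e|^2$, which gives the lower bound in (iii). The failure of (ii) at $e=1$ is harmless for the paper's purposes---downstream only multiplicative comparability $|\f_e'|\asymp |e|^{-2}$ is used, and the compositions are only ever evaluated on $X=\overline{B}(1/2,1/2)$, where $|1+z|\in[1,2]$ and (ii) does hold with constant $4$---but as written your proposal would not establish the proposition verbatim, and your final paragraph should be replaced by the explicit check that identifies and resolves the $e=1$ case (either by shrinking the domain to $X$ or by enlarging the constant).
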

It is easy to check that $\f_e(\Int (X)) \cap \f_a(\Int (X))=\emptyset$ for $e,a \in E, e \neq a$. Hence, $\{\f_e \}_{e \in E}$ satisfies the OSC but formally $\{\f_e \}_{e \in E}$ is {\em not} a conformal IFS because $\f'_1(0)=1$. Nevertheless, the family $\{ \f_e \circ \f_j:(e,j) \in E \times E\}$ is indeed a conformal IFS. For $I \subset E$, slightly abusing notation, we will treat the families $\mathcal{C F}_I:=\{\f_e\}_{e \in I}$ as  conformal IFS and we will call them, {\em complex continued fractions} IFS. Moreover we will denote their corresponding limit sets by $J_I$. 
With regard to the bounded distortion properties of $\mathcal{C F}_I$ we record that the best distortion constant is  $K=4$, see \cite[Remark 6.7]{MU1}. 

We will also use repeatedly the following version of Koebe's distortion theorem.
\begin{thm}
\label{kdt}
There exists a nonnegative, continuous, and increasing function $t \ra K_t, t \in [0,1)$ with $K_0=1$, such that if $p \in \C, r>0$ and $f: B(p,r) \ra \C$ is a holomorphic function, then for every $t \in [0,1)$ and for all $w,z \in B(p,tr)$,
$$\frac{|f'(w)|}{|f'(z)|} \leq K_t.$$
Consequently, if $w,z \in B(p,tr)$ and $B(w,s) \subset B(p,tr)$ and for some $s > 0$ then
$$B(f(w), K^{-1}_t |f'(z)|s) \subset f(B(w,s)) \subset B(f(w), K_t |f'(z)|s).$$
In addition,
$$f(B(p,r)) \supset B(f(p), 4^{-1} |f'(p)|r).$$
\end{thm}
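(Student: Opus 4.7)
The plan is a standard reduction to the classical Koebe theory for normalized univalent functions on the unit disk $\mathbb{D}$, applied under the tacit understanding that $f$ is univalent (which is the regime in which this statement is used throughout the paper, since the maps $\f_e$ are conformal diffeomorphisms and hence injective on $W$).

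First I would normalize: given $f:B(p,r)\to\C$ univalent with $f'(p)\ne 0$, set
$$g(\zeta) := \frac{f(p+r\zeta)-f(p)}{r\,f'(p)}, \qquad \zeta\in\mathbb{D},$$
so $g$ is univalent on $\mathbb{D}$ with $g(0)=0$ and $g'(0)=1$. The classical Koebe derivative estimates, which are standard consequences of Bieberbach's coefficient bound $|a_2|\le 2$ via the usual transformation argument, give
$$\frac{1-|\zeta|}{(1+|\zeta|)^3} \;\le\; |g'(\zeta)| \;\le\; \frac{1+|\zeta|}{(1-|\zeta|)^3}, \qquad \zeta\in\mathbb{D}.$$
Taking the ratio of these two bounds at points $\zeta,\eta$ with $|\zeta|,|\eta|\le t$ yields $|g'(\eta)|/|g'(\zeta)| \le K_t$ with
$$K_t := \left(\frac{1+t}{1-t}\right)^4,$$
which is continuous, increasing on $[0,1)$, and satisfies $K_0=1$. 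The identity $g'(\zeta) = f'(p+r\zeta)/f'(p)$ then transfers this directly back to $|f'(w)|/|f'(z)|\le K_t$ for all $w,z\in B(p,tr)$, proving the first claim of the theorem.

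For the ball-inclusion consequence, fix $w,z\in B(p,tr)$ and $s>0$ with $B(w,s)\subset B(p,tr)$. For any $a\in B(w,s)$ the straight segment from $w$ to $a$ lies in $B(p,tr)$, so integrating the distortion bound along it gives $|f(a)-f(w)|\le K_t|f'(z)|s$, proving $f(B(w,s))\subset B(f(w),K_t|f'(z)|s)$. Applying the same estimate to a univalent inverse branch of $f$ on a neighborhood of $f(w)$ (which exists since $f$ is conformal) yields the reverse inclusion $f(B(w,s))\supset B(f(w),K_t^{-1}|f'(z)|s)$. The final assertion $f(B(p,r))\supset B(f(p),4^{-1}|f'(p)|r)$ is simply the Koebe $1/4$-theorem $g(\mathbb{D})\supset B(0,1/4)$ reinterpreted via the normalization. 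The only genuine input is classical univalent-function theory (Bieberbach's inequality and Koebe's $1/4$-theorem), which I would simply quote from Pommerenke's \emph{Univalent Functions} or Duren's book of the same name; beyond that the proof is routine affine bookkeeping, and no step is a serious obstacle.
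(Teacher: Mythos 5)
The paper supplies no proof of this statement: it is introduced with the words ``We will also use repeatedly the following version of Koebe's distortion theorem'' and then cited throughout, so there is no in-paper argument to compare against. What you have written is the standard reduction to classical univalent-function theory, which is exactly how one would justify it, and it is essentially correct. You are right to flag that the hypothesis should read \emph{univalent} holomorphic rather than merely holomorphic: the Bieberbach/Koebe machinery you invoke requires schlichtness, and without it the quantity $|f'(w)|/|f'(z)|$ need not even be bounded in terms of $t$ alone (e.g. $f(\zeta)=e^{N\zeta}$ on $\mathbb D$, with $N$ large). In the paper's applications $f$ is always an inverse branch of an iterate of a meromorphic map, or a composition of the conformal generators $\phi_e$, hence injective, so the omission is harmless but worth noting. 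Your normalization $g(\zeta)=(f(p+r\zeta)-f(p))/(rf'(p))$, the Koebe derivative bounds, the resulting constant $K_t=\bigl((1+t)/(1-t)\bigr)^4$, the segment-integration argument for the upper inclusion $f(B(w,s))\subset B(f(w),K_t|f'(z)|s)$, and the appeal to the $1/4$-theorem for the final display are all correct.

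The one place that should be spelled out is the lower inclusion $B(f(w),K_t^{-1}|f'(z)|s)\subset f(B(w,s))$. Saying ``apply the same estimate to a univalent inverse branch'' elides the point that $f^{-1}$ is a priori defined only on $f(B(w,s))$, which is not a disk, so the straight-segment integration from the upper bound does not transfer verbatim; and applying the Koebe $1/4$-theorem on $B(w,s)$ only yields the weaker radius $\tfrac14 K_t^{-1}|f'(z)|s$. To get the constant $K_t^{-1}$ exactly one should use a path-lifting argument: set $\rho=\dist\bigl(f(w),\partial f(B(w,s))\bigr)$, choose $a\in\partial B(w,s)$ with $|f(a)-f(w)|=\rho$, lift the segment $[f(w),f(a)]\subset\overline{B(f(w),\rho)}\subset f(\overline{B(w,s)})$ under $f^{-1}$ to a curve $\gamma$ from $w$ to $a$ inside $\overline{B(w,s)}\subset\overline{B(p,tr)}$, and estimate
\[
s\le \operatorname{length}(\gamma)=\int_{[f(w),f(a)]}\frac{|dy|}{|f'(f^{-1}(y))|}\le K_t|f'(z)|^{-1}\rho,
\]
using the same derivative lower bound $|f'|\ge K_t^{-1}|f'(z)|$ on $\overline{B(p,tr)}$. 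This gives $\rho\ge K_t^{-1}|f'(z)|s$ and hence the asserted inclusion. It is a standard argument, but your sketch should include it if you intend the proof to be self-contained.
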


It follows by \cite[Theorem 2.5]{urbpor}, or \cite[Theorem 2.6]{jjm}, that if $F \subset E$ is finite then $J_{F}$ is porous. Nevertheless, using Theorem \ref{dirporlimset} we can show something much stronger; as it turns out $J_{F}$ is directed porous at all directions. We thus establish Theorem \ref{poroccfintro} \ref{poroccfintro1}, which we restate and prove below.

\begin{thm} Let $F\subset E$ be finite and let  $J_{F}$ be the limit set associated to the complex continued fractions system $\mathcal{CF}_{F}$. Then $J_{F}$ is $v$-directed porous for all $v \in S^1$.
\end{thm}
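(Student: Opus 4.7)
The approach is to apply Theorem \ref{dirporlimset}, after reducing to a bona fide conformal IFS. Although $\mathcal{CF}_F$ itself may fail to be one when $1\in F$ (since $|\phi_1'(0)|=1$), its second-iterate family $\mathcal{CF}_F^{(2)}:=\{\phi_e\circ\phi_j:(e,j)\in F\times F\}$ is a finite conformal IFS (automatically irreducible by Remark \ref{ifsosc}) whose limit set coincides with $J_F$, so it suffices to verify the single hypothesis
$$
\dist(\partial X,J_F)>0. \qquad (\ast)
$$

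For $(\ast)$, the plan is to use the inclusion $J_F\subset\bigcup_{(e,j)\in F\times F}\phi_e(\phi_j(X))$ and show that each of the finitely many sets $\phi_e(\phi_j(X))$ is a compact subset of $\Int(X)$. The key geometric observation is that, because each $\phi_e$ is a M\"obius transformation, $\phi_e(X)$ is a closed Euclidean disk contained in $X$, and an elementary calculation (intersecting $\partial X$ with $\phi_e(\partial X)$, or equivalently solving $|z-1/2|=|1/z-e-1/2|=1/2$) shows that
$$
\phi_e(X)\cap\partial X\subset \{1/e\}=\{\phi_e(0)\},
$$
with internal tangency at $1/e$ exactly when $\operatorname{Re}(e)=1$ and empty intersection otherwise.

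The remainder is immediate: since $\phi_j(z)=1/(j+z)$ never vanishes on $X$, we have $0\notin \phi_j(X)$, and therefore $1/e=\phi_e(0)\notin\phi_e(\phi_j(X))$. Combining this with the previous display gives $\phi_e(\phi_j(X))\subset \phi_e(X)\setminus\{1/e\}\subset \Int(X)$. As each $\phi_e(\phi_j(X))$ is compact and contained in the open set $\Int(X)$, it lies at positive distance from $\partial X$; taking the minimum over the finite set $F\times F$ yields $(\ast)$. Theorem \ref{dirporlimset} applied to $\mathcal{CF}_F^{(2)}$ then delivers the $v$-directed porosity of $J_F$ for every $v\in S^1$.

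The only genuine verification in the argument is the M\"obius-geometric claim $\phi_e(X)\cap\partial X\subset\{1/e\}$; everything else is a direct application of the previously proved Theorem \ref{dirporlimset}. I do not expect any serious obstacle, since the tangency point is forced to be the image of the unique preimage of $\partial X$ that lies on $\partial X$, namely $0$, and that point is not in the range of any $\phi_j$.
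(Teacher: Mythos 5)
Your proposal is correct, and the core geometric input is exactly the paper's: the computation that $\phi_e(X)\cap\partial X\subset\{\phi_e(0)\}=\{1/e\}$, with tangency precisely when $\Rea(e)=1$, followed by the observation that the point $0$ is not hit again, so that $J_F$ stays away from $\partial X$ and Theorem \ref{dirporlimset} applies. Where you differ is in the bookkeeping surrounding that input. You explicitly pass to the second-iterate system $\mathcal{CF}_F^{(2)}$ to obtain a genuine finite conformal IFS, whereas the paper relies on the convention announced just after Proposition \ref{ccfprop} of treating $\mathcal{CF}_I$ as a CIFS ``by abuse of notation''; your route makes the invocation of Theorem \ref{dirporlimset} literally legitimate at the cost of a slightly more elaborate set-up. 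You also derive $\dist(\partial X,J_F)>0$ directly, by showing $J_F$ is contained in a finite union of compacta $\phi_e(\phi_j(X))\subset\Int(X)$ and taking a minimum, while the paper argues by contradiction: assuming $x\in J_F\cap\partial X$, it writes $x=\phi_{\omega_1}(\pi(\sigma\omega))$, identifies $x=b_e$ as the unique boundary point of $\phi_e(X)$, and deduces $\pi(\sigma\omega)=\phi_e^{-1}(x)=0\in J_F$, a contradiction since $0\notin J_F$. Both formulations package the same fact — that $0$ lies outside every $\phi_j(X)$ and hence outside $J_F$ — and both deliver the theorem; yours is marginally more self-contained, the paper's marginally shorter.
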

\begin{proof} We will first show that $\f_e(z) \in \partial X$ if and only if $\Rea (e)=1$ and $z=0$.  Note that $\f_e(x) \in \partial X$ means that
$$|\f_e(z)-1/2|=\left| \frac{1}{z+e}-\frac{1}{2}\right|=1/2,$$
or equivalently
\begin{equation}
\label{dirporcf1}
|2-(z+e)|=|z+e|.
\end{equation}
Developing \eqref{dirporcf1} we see that $\f_e(z) \in \partial X$ if and only if $\Rea (z+e)=1$, which holds exactly when $\Rea (e)=1$ and $z=0$, since $e \in E$ and $z\in\bar{B}(1/2,1/2)$. 

For $e \in E$ with $\Rea(e)=1$ let $\{b_e\}=\partial X \cap \f_e(X)$ and note that $\f_e^{-1}(b_e)=0$. We want to apply Theorem \ref{dirporlimset} hence we need to verify that
\begin{equation}
\label{jfpartial}
\dist(J_F, \partial X)>0.
\end{equation}
Therefore, it suffices to show that $J_{F} \cap \partial X =\emptyset$. Suppose by way of contradiction that there exists some  $x \in J_{F} \cap \partial X$. Then $x=\pi(\om)$ for some $\om \in E^{\N}$. Then $x=\f_{\om_1}(\pi(\sigma(\om)))$ and we let $e=\om_1$. Therefore  $x \in J_{F} \cap \f_e(X) \cap \partial X$. By the previous discussion we then deduce that $x= b_e \in J_{F}$. In that case we would have that 
$$0=\f_e^{-1}(b_e)=\f_e^{-1}(x)=\pi(\sigma(\om)) \in J_{F},$$
and this is contradiction, since $0 \notin J_F$. The proof is complete.
\end{proof}

Our first main theorem in this section, with many implications in the following, provides a characterization of porous limit sets of complex continued fractions with alphabet $I \subset E$. One interesting aspect of our characterization is that everything reduces to properties of the alphabet $I$. Hence, given $I \subset E$ one can check if $J_I$ is porous by solely examining a certain property of the alphabet $I$. 

\begin{thm}
\label{poroccf}
Let $I$ be an infinite subset of $E$. Then $\overline{J_I}$ is porous if and only if there exist $\theta \in (0,1), \kappa \in (0,1)$ and  $\rho>0$, such that for every $i \in I$ and every $R \in [\rho, \kappa |i| ]$, there exists some $y_{i, R} \in B(i, R)$ such that
$$E \cap B(y_{i,R}, \theta R) \subset E \setminus I.$$
\end{thm}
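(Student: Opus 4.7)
The plan is to establish a two-way dictionary between geometric holes of $\overline{J_I}$ near points of the form $1/i$ and combinatorial holes of $I$ inside $E$ near $i$, mediated by the inversion $z\mapsto 1/z$ built into $\f_e(z)=1/(e+z)$; Koebe distortion (Theorem~\ref{kdt}) and Proposition~\ref{ccfprop} convert between the two pictures. For the reverse direction, fix $p\in\overline{J_I}$ and $r>0$ small. Reducing to $p=\pi(\om)$ for $\om\in I^\N$ by density, I would choose $n$ so that $\|D\f_{\om|_n}\|\asymp r|\om_{n+1}|^2$; with this choice $\f_{\om|_n}(X)\supset B(p,Cr)$ and $\diam\f_{\om|_{n+1}}(X)\asymp r$. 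When $|\om_{n+1}|\ge\rho/\kappa$, I would invoke the alphabet condition at $i=\om_{n+1}$ with a scale $R\asymp\rho$; the disk $B(y_{i,R},\theta R)\subset B(\om_{n+1},R)$ inverts under $z\mapsto 1/z$ to a disk $D\subset X$ of radius $\asymp\theta\rho/|\om_{n+1}|^2$ near $1/y_{i,R}$, whose intersection with $\{1/j:j\in I\}$ is empty. Pushing $D$ forward by $\f_{\om|_n}$ then produces, via Koebe, a ball in $B(p,Cr)$ of radius $\asymp r\theta\rho$ disjoint from $\overline{J_I}$.

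For the forward direction, assume $\overline{J_I}$ is $c$-porous below scale $r_0$. Fix $i\in I$ with $|i|$ sufficiently large and $R\in[\rho,\kappa|i|]$, and set $r=R/|i|^2$, where $\rho$ and $\kappa$ are chosen so that $r<r_0$ and $z\mapsto 1/z$ is a near-similarity with factor $\approx|i|^2$ on $B(1/i,r)$. Since $I$ is infinite, $0\in\overline{J_I}$, hence $1/i=\f_i(0)\in\overline{J_I}$; porosity at $1/i$ yields $y\in B(1/i,r)$ with $B(y,cr)\cap\overline{J_I}=\emptyset$. Up to bounded distortion, $z\mapsto 1/z$ maps $B(y,cr)$ to a disk $B(1/y,\theta R)$ with $\theta=\theta(c,\kappa)$ contained in a fixed enlargement of $B(i,R)$; every $j\in E\cap B(1/y,\theta R)$ then has $1/j\in B(y,cr)$, so $1/j\notin\overline{J_I}$, which forces $j\notin I$ by the same accumulation argument that gave $1/i\in\overline{J_I}$. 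Setting $y_{i,R}:=1/y$ delivers the alphabet condition.

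The hard part will be the reverse direction. Verifying that $\f_{\om|_n}(D)$ misses \emph{all} of $\overline{J_I}$, and not merely $\{\f_{\om|_n}(1/j):j\in I\}$, requires excluding intersections with each sub-disk $\f_{\om|_n}(\f_j(X))$ for $j\in I$ whose modulus is far from $|\om_{n+1}|$; for $|j|\gg|\om_{n+1}|$ this follows because $\f_j(X)$ is smaller than the lattice gap exploited by the alphabet condition, and for $|j|\ll|\om_{n+1}|$ the reciprocal $1/j$ sits too far from $1/\om_{n+1}$, both checks resting on the diameter bounds of Proposition~\ref{ccfprop}. One must also handle the case $|\om_{n+1}|<\rho/\kappa$, where the alphabet hypothesis is vacuous, either by descending in the coding to a deeper letter of large modulus or, failing that, by falling back to the finite sub-alphabet arguments of Subsection~\ref{sec:dirpor}. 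This case analysis, together with accommodating points of $\overline{J_I}\setminus J_I$ such as the accumulation point $0$, is what makes the proof both technical and lengthy.
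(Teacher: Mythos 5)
Your forward direction is essentially the paper's argument: since $I$ is infinite, $0\in\overline{J_I}$, hence $i^{-1}=\phi_i(0)\in\overline{J_I}$; porosity at $i^{-1}$ hands you a hole, which inversion and Koebe transport to a lattice hole near $i$. One small misattribution: you motivate $\rho$ by ``choosing $\rho$ and $\kappa$ so that $r<r_0$,'' but the forward direction works with $\rho=0$ (the paper records this explicitly in Theorem~\ref{poroccf1v2}); the genuine role of $\rho>0$ surfaces only in the reverse direction, as I now explain.

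The reverse direction has a real gap at exactly the place you flag. Your case split handles $|j|\gg|\om_{n+1}|$ and $|j|\ll|\om_{n+1}|$ but leaves the crucial middle range $|j|\asymp|\om_{n+1}|$ unresolved, and the lattice hole $E\cap B(y_{i,R},\theta R)\subset E\setminus I$ does \emph{not} by itself forbid $\phi_j(X)$ from touching $D$ for such $j$, because $\phi_j(X)$ is a set of diameter $\asymp|j|^{-2}$, not a point. The key observation the paper uses is that $M(\phi_j(X))=\overline B(j+\tfrac12,\tfrac12)\subset\overline B(j,1)$: after inversion each first-generation piece is a \emph{unit} ball about $j$, so if $\phi_j(X)\cap D\ne\emptyset$, then $|j-y_{i,R}|\le 1+(\text{radius of }M(D))$. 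To turn this into $j\in B(y_{i,R},\theta R)$ (a contradiction) one needs $\theta R\ge 2$, and this is precisely why $\rho$ is in the statement: $R\ge\rho\ge 2\theta^{-1}$ absorbs the unit-ball slack. Two further points you wave at but do not resolve also require real work. First, scales $r\gtrsim|i|^{-1}$ up to a fixed $\xi$ (the paper's Claim~\ref{claim2}) cannot be handled by ``descending the coding''; the paper instead examines the annulus $B(i^{-1},6r)\setminus B(i^{-1},2r)$ and, if it contains some $\phi_a(X)$, transfers the argument to $a$, which has $|a|\asymp r^{-1}$ and hence puts $r$ back in the admissible range for Claim~\ref{claim1}. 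Second, the paper does not run a point-by-point construction at all: it verifies the hypotheses of the black-box criterion Theorem~\ref{marcond2} from \cite{urbpor}, which packages the propagation from first-generation pieces to all scales and avoids entirely your worry about points of $\overline{J_I}\setminus J_I$ and small letters $|\om_{n+1}|<\rho/\kappa$. Your direct approach could in principle be made to work, but it would need the unit-ball/$\rho$ mechanism above, the annulus argument, and a careful treatment of non-coded boundary points.
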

Before giving the proof of Theorem \ref{poroccf} we state a general characterization of porous limit sets of infinite CIFS which we will employ in our proof.
\begin{thm}[\cite{urbpor}]
\label{marcond2}
Let $\cS=\{\f_i\}_{i \in I}$ be an infinite CIFS. Then $\overline{J_I}$ is porous if there exists a cofinite set $F \subset I$ and parameters $\eta \geq 1, c>0, \xi >0, \beta \geq  1,$
such that for all $i \in I \stm F$ and every $r \in [\beta \diam (\f_i(X)), \xi)$, there exist $x_i \in B(\f_i(X), \eta r) \cap X$ such that
$$B(x_i,cr) \cap J_I=\emptyset.$$
\end{thm}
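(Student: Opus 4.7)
The plan is to establish porosity of $\overline{J_I}$ by producing, at each $x \in \overline{J_I}$ and each sufficiently small scale $r$, an explicit ball $B(z, c'r) \subset B(x,r)$ disjoint from $J_I$; porosity of the closure then follows by shrinking the hole slightly. The mechanism is to use the hypothesis to extract a macroscopic hole adjacent to $\f_i(X)$ for some exceptional index $i \in I \stm F$, and then to transport this hole down to scale $r$ through a carefully chosen composition $\f_{\om|_n}$, where $\pi(\om)$ is a nearby approximation to $x$. Fix once and for all an $i_0 \in I \stm F$; this set, being the complement of a cofinite one, is finite, and we assume it nonempty since otherwise the hypothesis is vacuous.

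\textbf{Scale matching.} For $x$ and $r$ as above, select $y = \pi(\om) \in J_I$ with $|x - y| < r/10$, and choose $n$ to be the largest integer with $\|D\f_{\om|_n}\|_\infty \geq r/M_0$ for a constant $M_0$ depending on $\beta, \eta, \xi, K$ and $\diam(\f_{i_0}(X))$. The bounded distortion property \eqref{bdp} combined with the insertion of $i_0$ after position $n$ (admissible by Remark \ref{ifsosc}, at the cost of moving $y$ by at most $\|D\f_{\om|_n}\|_\infty\diam(X) \lesssim r$) places the scale $s := r/(K\|D\f_{\om|_n}\|_\infty)$ inside the admissible interval $[\beta \diam(\f_{i_0}(X)), \xi)$ provided $r$ is below a fixed threshold $r_0$. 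The difficulty, absent in finite systems, is that $\|D\f_{\om_{n+1}}\|_\infty$ may be arbitrarily small, so the sequence $\|D\f_{\om|_n}\|_\infty$ can jump well past the target band in a single step; the width of the interval $[\beta\diam(\f_{i_0}(X)),\xi)$ is exactly what is designed to absorb this irregularity.

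\textbf{Transporting the hole.} The hypothesis applied to $i_0$ at scale $s$ supplies $x_{i_0} \in B(\f_{i_0}(X), \eta s) \cap X$ with $B(x_{i_0}, cs) \cap J_I = \emptyset$; a further bounded shrinking of $c$ arranges $B(x_{i_0}, cs) \subset \Int X$. By Koebe's theorem (Theorem \ref{kdt}), the image $\f_{\om|_n}(B(x_{i_0}, cs))$ contains a ball of radius comparable to $\|D\f_{\om|_n}\|_\infty \cdot cs \gtrsim cr/K$, centered within distance $\lesssim r$ of $y$ and hence of $x$, so that the image ball lies inside $B(x,r)$ once $M_0$ is chosen sufficiently large.

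\textbf{Disjointness and main obstacle.} The decisive step is showing $\f_{\om|_n}(B(x_{i_0}, cs)) \cap J_I = \emptyset$. If $z = \pi(\tau) \in J_I$ lies in $\f_{\om|_n}(\Int X)$, then tracing the first index $j \leq n$ where $\tau$ and $\om$ diverge, the open set condition places $z$ in $\f_{\om|_j}(\Int X) \cap \f_{\tau|_j}(X) \setminus \f_{\tau|_j}(\Int X)$, that is, in the boundary set $\f_{\om|_j}(\Int X) \cap \f_{\tau|_j}(\partial X)$; once this boundary case is excluded, one concludes $\tau|_n = \om|_n$, whence $\f_{\om|_n}^{-1}(z) = \pi(\sigma^n \tau) \in B(x_{i_0}, cs) \cap J_I = \emptyset$, a contradiction. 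The main obstacle is precisely this boundary case, as OSC alone does not rule it out; it is handled by a further bounded shrinking of $c$ so that the transported ball keeps a definite distance from every boundary piece $\f_{\tau|_j}(\partial X)$ that could cut into $\f_{\om|_j}(\Int X)$ at the relevant scale, a finite collection at each generation whose influence can be quantified uniformly using the distortion constant $K$. With these adjustments one obtains a hole of radius $\asymp r$ inside $B(x,r) \setminus J_I$, and hence porosity of $\overline{J_I}$ with a constant depending only on $c, \eta, \beta, \xi, K, i_0$ and $X$.
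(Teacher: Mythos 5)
The paper itself does not prove Theorem \ref{marcond2}; it is imported from \cite{urbpor}, so there is no internal proof to compare against, and your proposal must be judged on its own terms. It has a gap that is structural rather than technical: you invoke the hypothesis only for a single fixed index $i_0$, whereas it must be invoked for the letter $\om_{n+1}$ that actually occurs in the coding of the point under consideration. Concretely, with $n$ the largest integer such that $\|D\f_{\om|_n}\|_\infty\ge r/M_0$, nothing prevents $\|D\f_{\om|_n}\|_\infty$ from being of order $1$ while $\|D\f_{\om|_{n+1}}\|_\infty<r/M_0$ (take $n=0$ and $\om_1$ a generator with $\|D\f_{\om_1}\|_\infty\ll r$); this one-step jump is precisely what separates infinite from finite alphabets. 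In that case your scale $s=r/(K\|D\f_{\om|_n}\|_\infty)$ is of order $r$, hence for small $r$ it lies \emph{below} the fixed lower endpoint $\beta\diam(\f_{i_0}(X))$ of the admissible interval, so the hypothesis cannot be applied to $i_0$ at scale $s$; simultaneously $\f_{\om|_n}(X)$ has diameter of order $1$, so the transported hole has no reason to land in $B(x,r)$. The width of the interval $[\beta\diam(\f_{i_0}(X)),\xi)$ does not absorb this, because the failure is at the lower endpoint, which is pinned to $\diam(\f_{i_0}(X))$, and shrinking $r$ only makes it worse. The cure is to apply the hypothesis to $i=\om_{n+1}$: the maximality of $n$ forces $s\ge\beta\diam(\f_{\om_{n+1}}(X))$ (up to distortion constants), and the resulting hole sits within $\eta s$ of $\f_{\om_{n+1}}(X)\ni\pi(\sigma^n\om)$, so its image under $\f_{\om|_n}$ lands within $O(\eta r)$ of $x$. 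This is exactly why the hypothesis must hold for all indices outside a finite set; the statement's ``for all $i\in I\sms F$'' with $F$ cofinite is a slip (compare how Claim \ref{claim2} is fed into the theorem in the proof of Theorem \ref{poroccf}), and reading it literally, as you do, would make the theorem false outright: take $F=I$, so the condition is vacuous, yet $J_E$ for the full system $\cf_E$ is not porous.

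Two further points. The case where $\om_{n+1}$ belongs to the finite exceptional set, and the range of radii $r\ge\xi$, still require their own (easier) treatment, which the proposal does not supply. More importantly, what you single out as the ``main obstacle'' is in fact the routine step: since $X=\overline{\Int(X)}$ and each $\f_e$ is a homeomorphism onto its image, $\f_{\tau|_j}(X)=\overline{\f_{\tau|_j}(\Int X)}$, so the open set $\f_{\om|_j}(\Int X)$, being disjoint from $\f_{\tau|_j}(\Int X)$ by the OSC, is automatically disjoint from its closure $\f_{\tau|_j}(X)$; no shrinking of $c$ and no quantitative control of boundary pieces is needed (this is precisely the argument in the proof of Lemma \ref{lemmax1} \ref{lemmax12}). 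The genuine delicacy sits one step earlier: the hypothesis only guarantees $x_i\in X$, and if $x_i\in\partial X$ then, absent additional regularity of $X$, no ball of radius comparable to $cs$ centred near $x_i$ need lie in $\Int X$, so the assertion that ``a further bounded shrinking of $c$ arranges $B(x_{i_0},cs)\subset\Int X$'' is itself unjustified as stated.
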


\begin{proof}[Proof of Theorem \ref{poroccf}] Throughout the proof we are going to denote $M(z)=z^{-1}$ for $z \in \C \setminus \{0\}$. We will first show that if $\overline{J_I}$ is porous then there exist $\theta \in (0,1), \kappa \in (0,1)$ and  $\rho>0$, such that for every $i \in I$ and every $R \in [\rho, \kappa |i| ]$, there exists some $y_{i, R} \in B(i, R)$ such that
$E \cap B(y_{i,R}, \theta R) \subset E \setminus I.$ 

Since $\overline{J_I}$ is porous there exists $\alpha \in (0,1)$ such that for all $z \in \overline{J_I}$ and $r \in (0,1)$ there exists some $w:=w_{z,r} \in \C$ such that
$$B(w, \alpha r) \subset B(z,r) \setminus \overline{J_I}.$$
Fix $\kappa$ so small so that $K_{\kappa} \, \kappa \leq 1$, where $K_{\kappa}$ is as in Theorem \ref{kdt}. Note that this is possible because by Theorem \ref{kdt} the function $t \ra K_t, t \in [0,1)$ is continuous and $\lim_{t \ra 0} K_t=1$. Theorem \ref{kdt} implies that for every $i \in I$ and every $R \in (0, \kappa |i|]$ 
\begin{equation}
\begin{split}
B(i^{-1}, K_{\kappa}^{-1}R |i|^{-2}) &\subset M(B(i,R)) \subset B(i^{-1}, K_{\kappa}R |i|^{-2})\\
&\subset B(i^{-1}, K_{\kappa} \kappa |i|^{-1})\subset B(i^{-1},|i|^{-1}).
\end{split}
\end{equation}

Note that if $i \in I$ then $i^{-1} \in \overline{J_I}$. Indeed, it is easy to see that, since $I$ is infinite, for every $\ve>0$ there exists some $\om_\ve \in I^\N$ such that $|\pi(\om_\ve)|<\ve$. Therefore 
$$|\pi(i \om)-i^{-1}|=|\f_i(\pi(\om_\ve))-\f_i(0)| \overset{\eqref{boundbyderlip}}{\leq} \|\f_i'\|_{\infty}\ve,$$
and our claim follows by choosing $\ve$ appropriately small.

Thus, since $\overline{J_I}$ is porous, there exists some $w \in \C$ such that 
\begin{equation}
\label{porofirstdir}
B:=B(w, \alpha r' )\subset B(i^{-1}, r') \stm \overline{J_I},
\end{equation}
where $r'=\frac{\kappa R}{K_{\kappa} |i|^2}$.
Note that $0 \notin B(i^{-1}, r')$ because $r'<|i|^{-1}$ and observe  that
\begin{equation}
\label{firstdir1}
I \cap M^{-1}(B)=\emptyset.
\end{equation}
Suppose that \eqref{firstdir1} is false, then there exists some $a \in I \cap M^{-1}(B)$. So $a^{-1} =M(a) \in B$, and
since $a^{-1} \in \overline{J_I}$, we deduce that $B \cap  \overline{J_I} \neq \emptyset$. This contradicts \eqref{porofirstdir}.

We will now show that 
\begin{equation}
\label{firstdir2}
M^{-1}(B) \supset B \left(w^{-1},  \frac{ \alpha \kappa R}{ 4 K_{\kappa}^{2}} \right).
\end{equation}
By Theorem \ref{kdt},
\begin{equation}
\label{firstdir3}
M^{-1}(B) \supset B(w^{-1}, 4^{-1} \alpha r' |w|^{-2})=B\left(w^{-1}, \frac{\alpha \, \kappa\, R}{4 K_{\kappa} |i|^2 |w|^2}\right).
\end{equation}
Since $w \in B\left(i^{-1}, \kappa \, \frac{R}{K_{\kappa} |i|^2}\right)$, Theorem \ref{kdt} implies that
\begin{equation}
\label{firstdir4}
|i|^{2}=|M'(i^{-1})| \leq K_{\kappa}|M'(w)|=K_{\kappa}|w|^{-2}.
\end{equation}
Therefore \eqref{firstdir2} follows by \eqref{firstdir3} and \eqref{firstdir4}. 

Hence,
\begin{equation}
\label{firstdir5}
I \cap B \left(w^{-1},  \frac{ \alpha \kappa R}{ 4 K_{\kappa}^{2}} \right)\overset{\eqref{firstdir1} \wedge \eqref{firstdir2}}{=}\emptyset.
\end{equation}
Since $w \in B\left(i^{-1}, \kappa \, \frac{R}{K_{\kappa} |i|^2}\right)$, Theorem \ref{kdt} implies that
\begin{equation}
\label{firstdir6}
|w^{-1}-i|=|M(w)-M(i^{-1})| \leq K_{\kappa}|i|^2  \, \frac{\kappa R}{K_{\kappa} |i|^2}= \kappa R<R.
\end{equation}
The proof of this implication follows by \eqref{firstdir5} and \eqref{firstdir6} after choosing 
$$y_{i,R}=w^{-1}, \, \theta=4^{-1}\alpha \kappa K_{\kappa}^{-2}, \mbox{ and }\rho=\kappa/2.$$
Actually, it follows by the proof that we could even choose $\rho=0$, see Theorem \ref{poroccf1v2}.

We will now prove the other direction.  Assume that there exist $\theta \in (0,1), \kappa \in (0,1)$ and  $\rho>0$, such that for every $i \in I$ and every $R \in [\rho, \kappa |i| ]$, there exists some $y_{i, R} \in B(i, R)$ such that
$$E \cap B(y_{i,R}, \theta R) \subset E \setminus I.$$ Note that we can assume that
\begin{align}
\label{inipar}\kappa &\leq (4 K_{1/2})^{-1},\\ 
\label{inipar2} \theta^{-1} &\leq \frac{\rho}{2}. 
\end{align}
Under this assumption we will show that $\overline{J_I}$ is porous. 

We will start by proving the following claim.
\begin{claim}
\label{claim1} Let
\begin{align*}
\eta_0&:=\frac{1}{4} K_{1/2} \kappa (1+\theta), \\
c_0 &:=(8 K_{1/2})^{-1} \theta \kappa (1+ \kappa)^{-2},\\
\beta_0&:=20 \rho \, \kappa^{-1}, \\
\end{align*}
Then for all $i \in I$ and $r \in [\beta_0 \diam (\f_i(X)), 4/|i|]$, there exists $x_i \in B(\f_i(X), \eta_0\, r) \cap X$ such that
$$B(x_i, c_0\,r) \cap \overline{J_I}=\emptyset.$$
\end{claim}
\begin{proof} First notice that for $i \in I$ and $r>0$ as in the claim, by Proposition \ref{ccfprop} \ref{derestcf}  we have that,
$$r \geq \beta_0 \diam \f_i (X)=\frac{20 \rho}{ \kappa}\diam \f_i (X)\geq \frac{20 \,\rho}{4  \kappa} \, |i|^{-2}.$$\
Hence, recalling the assumption for $i$, 
\begin{equation}
\label{ccf1+2}
\frac{\rho}{ \kappa} \frac{1}{|i|^2}<\frac{r}{4} \leq \frac{1}{|i|}.
\end{equation}
Then $0 \notin \overline{B} (\frac{1}{i}, \frac{r}{4})$ and Theorem \ref{kdt} implies that 
$$M\left(B\left(\frac{1}{i},\frac{r}{8}\right)\right) \subset B\left(i, K_{1/2}^{-1}\,|i|^2\,\frac{r}{8}\right).$$
Let $R:=\frac{\kappa}{4} r |i|^2$ and note that by \eqref{ccf1+2},
$$\frac{\kappa}{4} r |i|^2 \leq \kappa \frac{1}{|i|} |i|^2= \kappa |i| \quad\mbox{
and }\quad\frac{\kappa}{4} r |i|^2 >\rho.$$
Hence, $R \in [\rho, \kappa |i|]$ and by our assumption there exists some $y_{i,R} \in B(i,R)$ such that
\begin{equation}
\label{ccf4}
I \cap B(y_{i,R}, \theta R)=\emptyset.
\end{equation}
Note that
\begin{equation}
\label{ccf5}
\frac{1}{4} \theta \kappa r |i|^2+R \leq \frac{|i|}{2}.
\end{equation}
This is easy to check:
$$\frac{1}{4} \theta \kappa r |i|^2+R=\frac{1}{4} \theta \kappa r |i|^2+\frac{1}{4}  \kappa r |i|^2=(1+\theta) \kappa \frac{r}{4}|i|^2 \overset{\eqref{ccf1+2}}{\leq} (1+\theta) \kappa |i| \overset{\eqref{inipar}}{\leq} \frac{|i|}{2}.$$
Since $y_{i,R} \in B(i,R)$,
\begin{equation}
\label{ccf5.5}
B(y_{i,R},{4}^{-1} \theta \kappa r |i|^2) \subset B(i, {4}^{-1} \theta \kappa r |i|^2+R)\overset{\eqref{ccf5}}{\subset}B\left(i, \frac{|i|}{2}\right),
\end{equation}
hence Theorem \ref{kdt} implies that
\begin{equation}
\label{ccf6}
B\left(y_{i,R}^{-1}, (4 K_{1/2})^{-1} \theta \kappa r\right) \subset M(B(y_{i,R}, {4}^{-1} \theta \kappa r |i|^2)).
\end{equation}
Again by Theorem \ref{kdt} and \eqref{ccf5.5}
\begin{equation}
\label{ccf7old}
\begin{split}
M(B(y_{i,R},4^{-1} \theta \kappa r |i|^2)) &\subset M(B(i,4^{-1} \theta \kappa r |i|^2+R)) \\
& \subset B(i^{-1}, 4^{-1}K_{1/2}\theta \kappa r+K_{1/2} R |i|^{-2})\\
&=B(i^{-1}, 4^{-1} K_{1/2} \kappa (1+\theta)r).
\end{split}
\end{equation}
Therefore
\begin{equation}
\label{ccf7}
M(B(y_{i,R},4^{-1} \theta \kappa r |i|^2)) \subset B(\f_i(X), 4^{-1} K_{1/2} \kappa (1+\theta)r).
\end{equation}

Now assume that Claim \ref{claim1} is false. Then there exists some $i \in I$ and some 
$$\beta_0  \diam (\f_i(X)) \leq r \leq 4/|i|$$ such that for all $y \in B(\f_i(X), \eta_0\, r) \cap X$ it holds that
$$B(y, c_0\,r) \cap \overline{J_I} \neq\emptyset.$$
By \eqref{ccf7} we know that $y_{i,R}^{-1}\in B(\f_i(X), \eta_0\, r)$, where as before $R=\frac{\kappa}{4} r |i|^2$. Hence, there exists some $j \in I$ such that
$$B(y_{i,R}^{-1},c_0\,r) \cap \f_j(X)\neq \emptyset.$$
Note that
$$M(\f_j(X))=M \circ M \circ \tau_j(\overline{B}(1/2,1/2))= \tau_j(\overline{B}(1/2,1/2))=\overline{B}(j+1/2,1/2)\subset \overline{B}(j,1),$$
where $t_j(x)=j+x$. So, we conclude that
\begin{equation}
\label{ccf8}
M(B(y_{i,R}^{-1},c_0 r)) \cap \overline{B}(j,1) \neq \emptyset.
\end{equation}
Moreover,
\begin{equation*}
\begin{split}
B\left(y_{i,R}^{-1}, (4 K_{1/2})^{-1} \theta \kappa r\right) &\overset{\eqref{ccf6} \wedge \eqref{ccf7old}}{\subset} B(i^{-1}, 4^{-1} K_{1/2} \kappa (1+\theta)r)\\
&\overset{\eqref{ccf1+2}}{\subset}B (i^{-1}, K_{1/2} \kappa |i|^{-1}) \overset{\eqref{inipar}}{\subset} B(i^{-1}, (2 |i|)^{-1}).
\end{split}
\end{equation*}
Therefore since $c_0< (4 K_{1/2})^{-1} \theta \kappa$, $y_{i,R} \in B(i,R)$, and $R \leq \kappa |i|$, Theorem \ref{kdt} implies that
\begin{equation}
\label{ccf9}
M(B(y_{i,R}^{-1},c_0 \,r)) \subset B(y_{i,R}, K_{1/2} \,c_0 \, |y_{i,R}|^2 r) \subset B(y_{i,R}, K_{1/2} \,c_0 \, (1+ \kappa)^2 |i|^2 r).
\end{equation}
Consequently by  \eqref{ccf8} and \eqref{ccf9} we conclude that
\begin{equation*}
B(y_{i,R}, K_{1/2} \,c_0 \, (1+ \kappa)^2 |i|^2 r)\cap \overline{B}(j,1) \neq \emptyset,
\end{equation*}
and
\begin{equation}
\label{ccf10}
|y_{i,R}-j| \leq 1+ K_{1/2} \,c_0 \, (1+ \kappa)^2 |i|^2 r.
\end{equation}
Note also that
\begin{equation}
\label{ccf11}
2 \theta^{-1} \overset{\eqref{inipar2}}{\leq} \rho \overset{\eqref{ccf1+2}}{<} \frac{\kappa}{4} r |i|^2.
\end{equation}
So, recalling the definition of $c_0$,
\begin{equation}
\label{ccf11'}
|y_{i,R}-j| \overset{\eqref{ccf10} \wedge \eqref{ccf11}}{<} \frac{r}{8} \theta \kappa |i|^2 r + K_{1/2} \,c_0 \, (1+ \kappa)^2 |i|^2 r \leq \frac{r}{4} \theta \kappa |i|^2=\theta R.
\end{equation}
Now notice that \eqref{ccf11'} contradicts \eqref{ccf4}. The proof of Claim \ref{claim1} is complete.
\end{proof}
We now continue with the proof of the theorem. Note that since $0 \notin \f_e(X)$ for all $e \in E$, we have that
\begin{equation}
\label{r0def}
r_0:=\dist \left(\bigcup_{i \in I: |i| \leq 8} \f_i(X),0\right)>0.
\end{equation}
We will prove the following claim.
\begin{claim}
\label{claim2} Let $F= \{i \in I: |i| > 4/r_0\}$, and
\begin{align}
\label{eta}\eta&:=\max \left\{ 5,\frac{1}{4} K_{1/2} \kappa (1+\theta)+6+\frac{\kappa}{25 \rho} \right\},\\
\label{c}c &:= c_0= (8 K_{1/2})^{-1} \theta \kappa (1+ \kappa)^{-2} ,\\
\label{beta}\beta&:=\beta_0=\frac{20 \rho}{ \kappa}, \\
\label{xi}\xi&:=\min\left\{\frac{1}{8}, \frac{r_0}{24}, \frac{\kappa}{10^5 \rho}\right\}.
\end{align}
Then for all $i \in I \stm F$ and $r \in [\beta \diam (\f_i(X)), \xi]$, there exists $x_i \in B(\f_i(X), \eta\, r) \cap X$ such that
$$B(x_i, c\,r) \cap \overline{J_I}=\emptyset.$$
\end{claim}
\begin{proof} Let $i \in I \stm F$ and $r \in [\beta \diam (\f_i(X)),4 /|i|]$. In that case, since $\eta \geq \eta_0, c = c_0$ and $\beta=\beta_0$, Claim \ref{claim2} follows by Claim \ref{claim1}. Therefore we can assume that $i \in I \stm F$ and 
\begin{equation}
\label{ccf12}
\max\{\beta \diam \f_i(X), 4/|i|\} \leq r \leq \xi.
\end{equation}
We are going to distinguish two cases. First assume that
\begin{equation}
\label{ccf13}
(B(i^{-1},6r) \stm B(i^{-1}, 2r)) \cap \bigcup_{a \in I} \f_a(X)=\emptyset.
\end{equation}
It is easy to see that for every $e \in E$, 
$$e^{-1} \in X=\overline{B}(1/2,1/2).$$
Moreover by the choice of $\xi$ we have that $4r \leq 1/2$, hence $X \cap \partial B(i^{-1},4r) \neq \emptyset$. Let 
$$x_i \in X \cap \partial B(i^{-1},4r).$$
Since $x_i \in \partial B(i^{-1},4r)$ we have that $B(x_i,  r) \subset B(i^{-1},6r) \stm B(i^{-1}, 2r)$, therefore \eqref{ccf13} implies that $B(x_i, r) \cap \overline{J_I}=\emptyset$. Since $x_i \in B(\f_i(X), 5r)$  the claim has been proven in that case.

We are left with the case when
$(B(i^{-1},6r) \stm B(i^{-1}, 2r)) \cap \bigcup_{a \in I} \f_a(X) \neq \emptyset.$
Hence there exists some $a \in I$ such that
\begin{equation}
\label{ccf14} 
(B(i^{-1},6r) \stm B(i^{-1}, 2r)) \cap \f_a(X) \neq \emptyset.
\end{equation} Therefore, 
\begin{equation*}
\begin{split}
|a|^{-1} &\geq |a^{-1}-i^{-1}|-|i|^{-1} \overset{\eqref{ccf12} \wedge \eqref{ccf14}}{\geq}2r-\diam( \f_a(X))-\frac{r}{4} \\
&\geq  \frac{7r}{4}-4|a|^{-2} \geq \frac{7r}{4}-4|a|^{-1},
\end{split}
\end{equation*}
where in the third inequality we used  Proposition \ref{ccfprop} \ref{derestcf}. Hence,
\begin{equation}
\label{ccf15}
|a|^{-1} \geq \frac{7 r}{20} >\frac{r}{4}.
\end{equation}

Observe that 
\begin{equation}
\label{a8}
|a| > 8.
\end{equation} To see this, assume by contradiction that $|a| \leq 8$. Then \eqref{r0def} implies that 
\begin{equation}
\label{distfa}
\dist( \f_a(X),0)\geq r_0.
\end{equation}
Moreover, by \eqref{ccf14} there exists $z \in (B(i^{-1},6r) \stm B(i^{-1}, 2r)) \cap \f_a(X)$. Since $i \in F$ and $r \leq \xi$ we deduce that
$$|z| \leq |i|^{-1}+6r < \frac{r_0}{4} +\frac{r_0}{4}=\frac{r_0}{2}.$$
And this contradicts \eqref{distfa}, establishing \eqref{a8}. 

We record again that \eqref{ccf14} implies that
\begin{equation}
\label{ccf15+}
|i^{-1}-a^{-1}| \leq 6r+\diam (\f_a(X)).
\end{equation}
We now have,
\begin{equation}
\begin{split}
\label{ccf16}
|a|^{-1} &\leq |i|^{-1}+|i^{-1}-a^{-1}|  \overset{\eqref{ccf12}\wedge \eqref{ccf15+}}{\leq} 7r+\diam (\f_a(X))\\
&\leq 7r+4 |a|^{-2} \overset{\eqref{a8}}{\leq} 7r+\frac{1}{2} |a|^{-1},
\end{split}
\end{equation}
where in the third inequality we used  Proposition \ref{ccfprop} \ref{derestcf}.
Hence
\begin{equation}
\label{ccf16.5}
\begin{split}
|a|^{-2} &\overset{\eqref{ccf16}}{\leq} 14^2 r \leq  14^2 r^2 \xi \leq  \overset{\eqref{xi}}{\leq} r \frac{\kappa}{10^2 \rho }.
\end{split}
\end{equation}
Consequently, Proposition \ref{ccfprop} \ref{derestcf} implies that
\begin{equation}
\label{ccflast}
\frac{25 \rho }{\kappa}\diam (\f_a(X))\leq \frac{100 \, \rho }{\kappa} |a|^{-2} \overset{\eqref{ccf16.5}}{\leq} r.
\end{equation}
Since $\frac{25 \rho }{\kappa} > \beta_0$, \eqref{ccf15} and \eqref{ccflast} allows us to use Claim \ref{claim1} and obtain some $x_a \in B(a^{-1}, \eta_0 r)$ such that 
\begin{equation}
\label{xaxi}
B(x_a, c_0 r) \cap \overline{J_I}=\emptyset.
\end{equation} Notice then, that 
\begin{equation}
\label{etafin}
\begin{split}
|x_a-i^{-1}| &\leq |x_a-a^{-1}|+|a^{-1}-i^{-1}|\overset{\eqref{ccf15+}}{\leq} \eta_0 r+6r+\diam (\f_a(X))  \\
& \overset{\eqref{ccflast}}{\leq} \left( \eta_0 +6+\frac{\kappa}{25 \rho} \right) r  \overset{\eqref{eta}}{\leq} \eta r.
\end{split}
\end{equation}
Hence Claim \ref{claim2} follows by \eqref{xaxi} and \eqref{etafin} after choosing $x_i:=x_a$.
\end{proof}
Notice that Claim \ref{claim2} and Theorem \ref{marcond2} imply that $\overline{J_I}$ is porous. The proof is complete.
\end{proof}
Observe that the proof of the first implication of Theorem \ref{poroccf} implies that if $\overline{J_I}$ is porous we can choose the parameter $\rho=0$. While, what we prove in the second implication of Theorem \ref{poroccf} is stronger than assuming that $\rho=0$. Therefore we have also shown the following.
\begin{thm}
\label{poroccf1v2}
Let $I$ be an infinite subset of $E$. Then $\overline{J_I}$ is porous if and only if there exist $\theta \in (0,1)$ and  $\kappa \in (0,1)$ such that for every $i \in I$ and every $R \in [0, \kappa |i| ]$, there exists some $y_{i, R} \in B(i, R)$ such that
$$E \cap B(y_{i,R}, \theta R) \subset E \setminus I.$$
\end{thm}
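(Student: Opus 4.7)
The plan is to observe that Theorem \ref{poroccf1v2} is essentially a refinement of Theorem \ref{poroccf}, and both implications follow by inspecting the argument already given for that theorem. The key point is that the forward direction in Theorem \ref{poroccf} was proved with an argument that never genuinely required a positive lower bound $\rho$ on $R$, while the reverse direction of Theorem \ref{poroccf1v2} has a stronger hypothesis than that of Theorem \ref{poroccf}.

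For the forward direction (porosity implies the dispersion condition with $\rho=0$), I would revisit the first half of the proof of Theorem \ref{poroccf} and track how $R$ enters. Given $i \in I$ and any $R \in (0,\kappa|i|]$, the argument applies porosity of $\overline{J_I}$ at the point $i^{-1} \in \overline{J_I}$ at the scale
\[
r' := \frac{\kappa R}{K_\kappa |i|^2}>0,
\]
to extract $w \in \C$ with $B(w,\alpha r') \subset B(i^{-1},r') \setminus \overline{J_I}$. The Koebe distortion estimates applied to the inversion $M(z)=z^{-1}$ then yield
\[
I \cap B\!\left(w^{-1},\frac{\alpha\kappa R}{4K_\kappa^{2}}\right) = \emptyset
\quad \text{and} \quad
|w^{-1}-i| \le \kappa R < R,
\]
so setting $y_{i,R}:=w^{-1}$ and $\theta:=4^{-1}\alpha\kappa K_\kappa^{-2}$ does the job. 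Nothing in this chain of inequalities deteriorates as $R \searrow 0$; only the strict positivity of $r'$ is used, which holds for any $R>0$. The case $R=0$ is vacuously true. This upgrades the forward statement of Theorem \ref{poroccf} from $R\in[\rho,\kappa|i|]$ to $R\in[0,\kappa|i|]$.

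For the reverse direction, I would argue by simple implication. Suppose $\theta,\kappa\in(0,1)$ are such that for every $i\in I$ and every $R\in[0,\kappa|i|]$ there is $y_{i,R}\in B(i,R)$ with $E\cap B(y_{i,R},\theta R)\subset E\setminus I$. Then trivially for any choice of $\rho>0$ (say $\rho=1$) and every $R\in[\rho,\kappa|i|]$ the same $y_{i,R}$ witnesses the hypothesis of Theorem \ref{poroccf} with parameters $\theta,\kappa,\rho$. Applying Theorem \ref{poroccf} immediately gives that $\overline{J_I}$ is porous.

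There is no real obstacle here beyond being careful with the quantifiers and verifying that the estimates in the forward direction of Theorem \ref{poroccf} are uniform down to $R=0$. In particular, the choice $\rho=\kappa/2$ that appears at the end of the forward direction of Theorem \ref{poroccf} is made only for aesthetic purposes (to pair cleanly with $\kappa|i|$); the proof itself produces a $y_{i,R}$ for every $R>0$. Hence Theorem \ref{poroccf1v2} follows by combining these two observations.
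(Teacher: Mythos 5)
Your proof is correct and takes essentially the same approach as the paper: the paper proves Theorem~\ref{poroccf1v2} by remarking, immediately after the proof of Theorem~\ref{poroccf}, that the parameter $\rho$ is never genuinely used in the forward direction (so one may take $\rho=0$), while the hypothesis with $R\in[0,\kappa|i|]$ is stronger than that with $R\in[\rho,\kappa|i|]$ and so the sufficiency direction of Theorem~\ref{poroccf} applies directly.
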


Since the (full) complex continued fractions IFS $\mathcal{C F}_E$ satisfies condition \eqref{xx1}, as an immediate consequence of this theorem and Theorem~\ref{thmporfix} (i), we get the following.  

\begin{thm}
If $\mathcal{C F}_E$ is a (full) complex continued fractions IFS, then its limit set $J_E$ is not porous but it is porous at a dense set of its points.
\end{thm}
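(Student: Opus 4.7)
The plan is to handle the two halves of the statement separately, leaning entirely on results already assembled in the excerpt.

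\emph{Porosity on a dense set.} The sentence immediately preceding the statement records that $\mathcal{CF}_E$ satisfies condition \eqref{xx1}, and since $\mathcal{CF}_E$ is an IFS (single vertex, all transitions admissible) it is trivially finitely irreducible. Theorem \ref{thmporfix}(i) then applies and delivers porosity of $J_E$ at every fixed point of the system. The fixed points are dense in $J_E$: for each $\om \in E^*$ finite irreducibility supplies some $\rho \in E^*$ with $\om\rho\om \in E^*$, and the fixed point of $\f_{\om\rho}$ then lies in $\f_\om(X)$. Letting $|\om|\to\infty$ produces fixed points arbitrarily close to every $\pi(\tau)$, $\tau\in E^{\N}$.

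\emph{Non-porosity.} I would argue by contradiction via the ``only if'' direction of Theorem \ref{poroccf1v2} applied with $I = E$. Note first that porosity of $J_E$ and of $\overline{J_E}$ are equivalent (up to a factor of $1/2$ in the porosity constant), so the characterization, stated for $\overline{J_I}$, applies to $J_E$ as well. Suppose $J_E$ is porous. Then there exist $\theta,\kappa \in (0,1)$ such that for every $i \in E$ and every $R \in [0,\kappa|i|]$ there is $y_{i,R}\in B(i,R)$ with
\[
E \cap B(y_{i,R},\theta R) \,\subset\, E\setminus E \,=\, \emptyset.
\]
The plan is to violate this by choosing $i = N$ real and large and $R = cN$ with $c := \min\{\kappa,\,(4(1+\theta))^{-1}\}\in(0,\kappa]$. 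Then $B(y_{i,R},\theta R)\subset B(N,(1+\theta)cN) \subset \{\Rea z \ge N(1-c(1+\theta))\}$, which lies inside $\{\Rea z \ge 1\}$ for all $N$ sufficiently large. Consequently every Gaussian integer in $B(y_{i,R},\theta c N)$ has positive real part and therefore belongs to $E$. Since every planar disk of radius at least $\sqrt{2}/2$ meets $\mathbb{Z}[i]$ and $\theta c N \to \infty$, the disk $B(y_{i,R},\theta c N)$ must contain a point of $E$ for all sufficiently large $N$, contradicting the displayed emptiness.

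The main (and really only) obstacle is the calibration of $R$ just carried out: one must pick $R$ comparable to $|i|$ so that the forbidden radius $\theta R$ grows without bound, while simultaneously confining the sub-disk to the right half-plane so that ``Gaussian integer'' may be upgraded to ``element of $E$''. Beyond this cosmetic step the argument is an immediate application of Theorem \ref{poroccf1v2}, Theorem \ref{thmporfix}(i), and the elementary unit-scale density of $\mathbb{Z}[i]$ in $\C$.
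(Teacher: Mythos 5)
Your proposal is correct and follows essentially the same route as the paper: porosity on a dense set via Theorem \ref{thmporfix}(i) using condition \eqref{xx1}, and non-porosity via the characterization of Theorem \ref{poroccf1v2} applied with $I=E$, where $E\setminus I=\emptyset$ forces arbitrarily large Gaussian-integer-free disks in the right half-plane, which is impossible. The paper labels this an ``immediate consequence'' and omits the calculation; you have merely written out the calibration of $i$, $R$, and the lattice-density argument explicitly, together with the routine observation that porosity of $J_E$ and $\overline{J_E}$ are equivalent.
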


\begin{remark}
\label{norms} The balls appearing in Theorems \ref{poroccf1v2} and \ref{poroccf} are taken with respect to the Euclidean norm. Nevertheless, Theorems \ref{poroccf1v2} and \ref{poroccf} hold true (with different constants $\theta$, $\kappa$ and $\rho$) if the balls are taken with respect to any norm in $\C$, since all norms in finite dimensional spaces are bi-Lipschitz equivalent.
\end{remark}
\begin{remark} 
\label{remk:marporchar} It is interesting to compare Theorem \ref{poroccf} with \cite[Theorem 3.3]{urbpor}. The latter concerns real continued fractions and, strictly speaking, it
 is entirely independent of Theorem \ref{poroccf}. Indeed, porosity is not an intrinsic property of a subset of a metric space but does depend on the (ambient) metric space as well.  For example, the interval $[0,1]$ is trivially porous in $\C$, hence every limit set of real continued fractions, as considered in \cite{urbpor}, is porous in $\C$. However, as it was proven in \cite{urbpor}, there exist many such limit sets which are not porous as subsets of $\R$.
 
Nevertheless, if we carry on the proof of Theorem \ref{poroccf} with $E \subset \N$ and $\C$ replaced by $\R$, we will obtain a characterization of limit sets $J_{E}$ that are porous in $\R$, analogous to Theorem \ref{poroccf} (in that case $y_{i,R} \in \R$ and the balls $B(i,R)$ and $B(y_{i,R}, \theta R)$  are taken with respect to the usual topology of $\R$). It is not hard to see that such a characterization is equivalent to that of \cite[Theorem 3.3]{urbpor}.
\end{remark}

\begin{defn}
\label{dens}For any $I \subset E=\{m+ni: m\in \N, n \in \Z\}$ we define the {\it upper and lower densities of $I$ in $E$} as
$$\overline{\rho}_{E} (I):=\limsup_{R \ra +\infty} \frac{\sharp (I \cap \overline{B}_\infty(1,R))}{\sharp (E \cap \overline{B}_\infty(1,R))} \quad\mbox{ and }\quad\underline{\rho}_{E} (I):=\liminf_{R \ra +\infty} \frac{\sharp (I \cap \overline{B}_\infty(1,R))}{\sharp (E \cap \overline{B}_\infty(1,R))},$$
where $B_\infty (z,r)$ denotes the ball centered at $z$ with radius r, with respect to the norm $\|w\|_\infty=\max \{|\Rea(w)|, |\Img(w)|\}$.
\end{defn}
Slightly abusing notation, we will call a set $I \subset E$ {\it porous} in $E$ if $\overline{J_I}$ is porous in $\C$. The following proposition relates porosity to the notion of upper density which we just defined. The proof is based on Theorem \ref{poroccf1v2}.

\begin{propo}
\label{densle1} If $I \subset E$ is porous then $\overline{\rho}_{E} (I)<1$.
\end{propo}
\begin{proof} According to Theorem \ref{poroccf1v2} and Remark \ref{norms} there exist constants $\theta \in (0,1)$ and  $\kappa \in (0,1)$ such that for every $i \in I$ and every $R \in [0, \kappa \|i\|_\infty ]$, there exists some $y_{i, R} \in B_\infty(i, R)$ such that
$$E \cap B_\infty(y_{i,R}, \theta R) \subset E \setminus I.$$
Let $R> 100\,\theta^{-1}\, \kappa^{-1}.$ We will estimate the quantity
$$\frac{\sharp (I \cap \overline{B}_\infty(1,R))}{\sharp (E \cap \overline{B}_\infty(1,R))}.$$
Notice that since $\sharp (S \cap \overline{B}_\infty(1,R))=\sharp (S \cap B_{\infty}(1,\floor*R))$ for every $S \subset E$  and $R>0$ we can assume that $R \in \N$.

We distinguish two cases. First assume that 
\begin{equation}
\label{emptyanulus}
I \cap A_\infty (1, R /4, R/2)=\emptyset,
\end{equation}
where $A_\infty (z,r,s)=\{w \in \C: r \leq \|w-z\|_\infty \leq s\}$. Notice that \eqref{emptyanulus} implies that 
\begin{equation}
\label{cardan}
\sharp (I \cap \overline{B}_\infty(1,R))< \sharp (E \cap \overline{B}_\infty(1,R))-\floor*{R/4}^2<\sharp (E \cap \overline{B}_\infty(1,R))-(R/8)^2.
\end{equation}
Now assume that \eqref{emptyanulus} fails. Then, there exists some 
$b \in I \cap A_\infty (1, R /4, R/2).$ Let $R'=\kappa \,R/4$. Then $R' \leq \kappa \,\|b\|_\infty$. Hence there exists some $y_{b,R'} \in B_\infty (b, R')$ such that
$$I \cap B_{\infty}(y_{b,R'}, \theta R')=\emptyset.$$
Note that
\begin{equation}
\label{cardan1}
\begin{split}
\sharp (I \cap \overline{B}_\infty(1,R)) &\leq \sharp (E \cap \overline{B}_\infty(1,R))-\sharp(E \cap B_{\infty}(y_{b,R'}, \theta R'))\\
& \leq \sharp (E \cap \overline{B}_\infty(1,R))-( \theta R')^2 \\
&=\sharp (E \cap \overline{B}_\infty(1,R))-(4^{-1} \theta \, \kappa R )^2.
\end{split}
\end{equation}
Let
$$c=\min \left\{\frac{1}{64}, \frac{\kappa^2 \theta^2}{16 }\right\}.$$
Then, by \eqref{cardan} and \eqref{cardan1} we deduce that for every $R>100\,\theta^{-1}\, \kappa^{-1}$, we have that
$$\sharp (I \cap \overline{B}_\infty(1,R)) \leq \sharp (E \cap \overline{B}_\infty(1,R))-c R^2.$$
Observe that  $\sharp (E \cap \overline{B}_\infty(1,R))=(R+1) \cdot (2R+1)$, hence
$$\frac{\sharp (I \cap \overline{B}_\infty(1,R))}{\sharp (E \cap \overline{B}_\infty(1,R))} \leq 1-\frac{c R^2}{2R^2+3R+1},$$
and consequently
$$\limsup_{R \ra \infty} \frac{\sharp (I \cap \overline{B}_\infty(1,R))}{\sharp (E \cap \overline{B}_\infty(1,R))} \leq 1-c/2<1.$$
The proof is complete.
\end{proof}
We are now ready to prove Theorem \ref{poroccfintro} \ref{poroccfintro2}, which improves and extends significantly \cite[Theorem 4.2]{urbpor}.  As the reader can check, the proof will follow easily from Proposition \ref{densle1}, Corollary \ref{notporoaeconf} and Corollary \ref{meanporoconfhaus}.
\begin{thm}
\label{cofsubofccf}  
Let $I$ be any co-finite subset of $E$, let  $J_{I}$ be the limit set associated to the complex continued fractions system $\mathcal{CF}_I$ and let $h_I=\dim_{\cH}(J_I)$. Let also $m_{h_I}$ be the $h_I$-conformal measure of $\mathcal{CF}_I$. Then:
\begin{enumerate}[label=(\roman*)]
\item \label{cofsubofccf1}  The limit set $J_I$ is not porous at $m_{h_I}$-a.e. $x \in J_I$.
\item  \label{cofsubofccf2} There exists a constant $c_I$ such that $J_I$ is $c_I$-mean porous at $m_{h_I}$-a.e. $x \in J_I$.
\end{enumerate}
\end{thm}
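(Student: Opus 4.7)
The plan is to chain together the three results stated just before this theorem, plus some bookkeeping about the cofiniteness of $I$.

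For part \ref{cofsubofccf1}, the first step is to compute the upper density $\overline{\rho}_{E}(I)$. Since $E\setminus I$ is finite and $\sharp(E\cap \overline{B}_\infty(1,R))\sim 2R^2$, the cardinalities $\sharp(I\cap \overline{B}_\infty(1,R))$ and $\sharp(E\cap \overline{B}_\infty(1,R))$ differ by at most the constant $\sharp(E\setminus I)$, hence $\overline{\rho}_{E}(I)=1$. By the contrapositive of Proposition~\ref{densle1}, $\overline{J_I}$ is not porous. I would then note the elementary fact that, because an open ball $B(y,cr)$ disjoint from $J_I$ is automatically disjoint from $\overline{J_I}$, a small compactness-type argument (approximating any $\zeta\in\overline{J_I}$ by nearby points of $J_I$) shows that $J_I$ being $c$-porous forces $\overline{J_I}$ to be, say, $c/2$-porous. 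Consequently $\overline{J_I}$ not porous implies $J_I$ not porous. Corollary~\ref{notporoaeconf} then delivers \ref{cofsubofccf1} immediately.

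For part \ref{cofsubofccf2}, the plan is to invoke Corollary~\ref{meanporoconfhaus}, so I need to verify its three hypotheses for $\mathcal{CF}_I$. Finite irreducibility is automatic since $\mathcal{CF}_I$ is an IFS. Condition \eqref{xx1} holds because $\bigcup_{e\in I}\f_e(X)\subset B(0,4/|e_{\min}|)\cup\bigcup_{e:|e|\geq N}B(0,4/N)$ accumulates only at $0$, while neighborhoods of the point $1\in X=\bar B(1/2,1/2)$ are missed by every $\f_e(X)$ with $|e|$ large, leaving a nonempty open subset of $X$ outside $\overline{\bigcup_e\f_e(X)}$. Strong regularity is the only nontrivial point: using Proposition~\ref{ccfprop}\ref{derfe}, $Z_1(\mathcal{CF}_I,t)\asymp\sum_{e\in I}|e|^{-2t}$ converges precisely when $t>1$, so $\theta(\mathcal{CF}_I)=1$. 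On the other hand, by Theorem~\ref{721}, $h_I=\sup\{\dim_{\mathcal H}(J_F): F\subset I \text{ finite}\}$, and since $I$ is cofinite in $E$ it contains arbitrarily large finite subsets, whose limit sets have Hausdorff dimension strictly exceeding $1$ (this is classical for $\mathcal{CF}_E$, e.g.\ via the explicit finite subsystems used in \cite{MU1}). Hence $h_I>1=\theta(\mathcal{CF}_I)$, so there exists $t\in(1,h_I)$ with $P(t)\in(0,\infty)$, proving strong regularity. Corollary~\ref{meanporoconfhaus}\ref{mpch1} then supplies a constant $c_I:=\alpha_{\mathcal{CF}_I}$ and a positivity $p_I>0$ so that $J_I$ is $(c_I,p_I)$-mean porous at $m_{h_I}$-a.e.\ point; taking $c_I$ to be this pair of constants proves \ref{cofsubofccf2}.

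The only genuinely delicate step is the passage between porosity of $J_I$ and porosity of $\overline{J_I}$, which could in principle fail because Corollary~\ref{notporoaeconf} is stated for $J_\cS$ and not its closure. Fortunately the elementary open-set argument above handles this, and the rest is mechanical verification of hypotheses. Everything else is a citation.
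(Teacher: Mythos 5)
Your proposal is correct and follows the same basic route as the paper: Proposition~\ref{densle1} plus Corollary~\ref{notporoaeconf} for~\ref{cofsubofccf1}, and verification of the hypotheses of Corollary~\ref{meanporoconfhaus} for~\ref{cofsubofccf2}. Two points are worth flagging. First, the ``passage'' you worry about in part~\ref{cofsubofccf1} is genuinely needed (Proposition~\ref{densle1} is phrased in terms of $\overline{J_I}$ while Corollary~\ref{notporoaeconf} takes $J_\cS$ as input, a gap the paper glosses over), but it can be stated more crisply: for any set $A$, any $x$, and any $r$, one has $\por(A,x,r)=\por(\overline A,x,r)$, because an open ball disjoint from $A$ is automatically disjoint from $\overline A$; hence ``$\overline{J_I}$ not porous at some $\zeta$'' is literally the same as ``$J_I$ not porous at some $\zeta\in\overline{J_I}$,'' which is exactly the second disjunct in the hypothesis of Corollary~\ref{notporoaeconf} --- no halving of constants required. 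Second, your strong-regularity argument in part~\ref{cofsubofccf2} is logically valid but relies on the step ``$I$ contains a finite $F$ with $h_F>1$,'' which you support only by appeal to a ``classical'' fact; strictly speaking one must argue that finite subsystems of $\mathcal{CF}_E$ with dimension near $h_E$ are abundant enough to dodge the finitely many excluded indices, and you don't. The paper instead chains co-finite regularity: $\mathcal{CF}_E$ is co-finitely regular by \cite[Proposition~6.1]{MU1}, cofinite subsystems inherit this by \cite[Lemma~3.10]{CLU}, and \eqref{cofimpliesreg} then gives strong regularity. Your computation that $\theta(\mathcal{CF}_I)=1$ already contains the cleaner route: the same $Z_1$-estimate shows $\sum_{e\in I}|e|^{-2}=\infty$ for any cofinite $I$, hence $Z_1(1)=\infty$ and $P_I(\theta_I)=+\infty$ by \eqref{presz1}, giving co-finite regularity directly and making the detour through $h_I>1$ unnecessary. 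Finally, your verification of~\eqref{xx1} is heuristic --- the assertion about neighborhoods of $1$ doesn't follow from the large-$|e|$ observation you make, since small-$|e|$ maps (e.g.\ $\f_1$, whose image contains $1$) do cluster there; the robust argument is that $\sum_{e\in I}|e|^{-4}<\infty$ so $\bigcup_e\f_e(X)$ has finite area strictly less than that of $X$, and its closure adds only $\{0\}$, leaving a nonempty complement. The paper also states~\eqref{xx1} for $\mathcal{CF}_E$ without proof, so this is consistent with the source, but it's a gap in a fully self-contained write-up.
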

\begin{proof} 

As an immediate corollary of Proposition \ref{densle1} we deduce that if $I \subset E$ is co-finite then the limit set $J_{I}$ is not porous. Hence \ref{cofsubofccf1} follows by Corollary \ref{notporoaeconf}. 

Moreover, it follows by \cite[Proposition 6.1]{MU1} that  $\mathcal{CF}_E$ is co-finitely regular. Since $I$ is co-finite \cite[Lemma 3.10]{CLU} implies that  $\mathcal{CF}_I$ is co-finitely regular. Hence, $\mathcal{CF}_I$ is strongly regular by \eqref{cofimpliesreg}. 
Therefore \ref{cofsubofccf2} follows by Corollary \ref{meanporoconfhaus}.
\end{proof}


\begin{defn}
\label{dens}For any $I \subset \N$ we define the {\it upper and lower densities of $I$ in $\N$} as
$$\overline{\rho}_{\N} (I):=\limsup_{n \ra +\infty} \frac{\sharp (I \cap [1,n])}{n}\quad\mbox{ and }\quad\underline{\rho}_{\N} (I):=\liminf_{n \ra +\infty} \frac{\sharp (I \cap [1,n])}{n}.$$
In a similar manner if $I \subset \Z$ we define the {\it upper and lower densities of $I$ in $\Z$} by

$$\overline{\rho}_{\Z} (I):=\limsup_{n \ra +\infty} \frac{\sharp (I \cap [-n,n])}{2n+1}\quad\mbox{ and }\quad\underline{\rho}_{\Z} (I):=\liminf_{n \ra +\infty} \frac{\sharp (I \cap [-n,n])}{2n+1}.$$
\end{defn}

\begin{defn}
\label{nzpor}We say that $I \subset \N$ is {\it $\N$-porous} if the second condition of Theorem \ref{poroccf} holds with
$$y_{i,R} \in (\N \times \{0\}) \cap B(i,R), \, i \in I \times \{0\}.$$
In the same manner, we say that $I \subset \Z$ is {\it $\Z$-porous} if the second condition of Theorem \ref{poroccf} holds with
$$y_{i,R} \in (1 \times \Z) \cap B(i,R), \, i \in \{1\} \times I.$$
\end{defn}
The proof of the following proposition is straightforward and we leave it to the reader.
\begin{propo}
\label{propoprod} Let $I_1 \subset \N$ and $I_2 \subset \Z$.
\begin{enumerate}[label=(\roman*)]
\item If $\overline{\rho}_{\N} (I_1)<1$ and $\overline{\rho}_{\N} (I_2)<1$ then $\overline{\rho}_{E} (I_1 \times I_2)<1.$
\item \label{poroprod} If $I_1$ is $\N$-porous and $I_2$ is $\Z$-porous then $I_1 \times I_2$ is porous.
\end{enumerate}
\end{propo}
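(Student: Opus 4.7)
The plan is to prove both parts by reducing to one-dimensional data: part (i) via the factorization of $\overline{B}_\infty(1,R) \cap E$ as a rectangle, and part (ii) via a case split on whether the real or the imaginary part of $i \in I_1 \times I_2$ carries most of the magnitude.

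For (i), since $\sharp(S \cap \overline{B}_\infty(1,R))$ depends only on $\lfloor R \rfloor$ when $S \subset E$, we may restrict to $R \in \N$. A direct count gives $\sharp(E \cap \overline{B}_\infty(1,R)) = (R+1)(2R+1)$, and because $\overline{B}_\infty(1,R) \cap E = \{1,\dots,R+1\} \times \{-R,\dots,R\}$, we also have $\sharp((I_1 \times I_2) \cap \overline{B}_\infty(1,R)) = \sharp(I_1 \cap [1,R+1]) \cdot \sharp(I_2 \cap [-R,R])$. We write the ratio as $a_R b_R$ with $a_R, b_R \in [0,1]$, note that $\limsup a_R = \overline{\rho}_\N(I_1) < 1$, and conclude that $\limsup a_R b_R \leq \limsup a_R < 1$. (The hypothesis on $I_2$ appears to be a typo for $\overline{\rho}_\Z(I_2) < 1$; at any rate the argument uses only $b_R \leq 1$.)

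For (ii), we invoke Theorem \ref{poroccf}: it suffices to exhibit $\theta, \kappa \in (0,1)$ and $\rho > 0$ so that for every $i = m + ki \in I_1 \times I_2$ and every $R \in [\rho, \kappa|i|]$, some $y \in B(i,R)$ satisfies $E \cap B(y,\theta R) \subset E \setminus (I_1 \times I_2)$. Let $(\theta_j, \kappa_j, \rho_j)$, $j=1,2$, be the parameters furnished by the $\N$-porosity of $I_1$ and the $\Z$-porosity of $I_2$, and set $\theta = \min(\theta_1,\theta_2)$, $\kappa = \min(\kappa_1,\kappa_2)/\sqrt 2$, and $\rho = \max(\rho_1,\rho_2)$. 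For $i = m + ki$ at least one of $m \geq |i|/\sqrt 2$ or $|k| \geq |i|/\sqrt 2$ holds. In the first case $\kappa_1 m \geq \kappa_1 |i|/\sqrt 2 \geq R \geq \rho_1$, so $\N$-porosity of $I_1$ yields $y_1 \in \N$ with $|y_1 - m| \leq R$ and $(y_1 - \theta_1 R,\, y_1 + \theta_1 R) \cap I_1 = \emptyset$. Taking $y = y_1 + ki \in E$ gives $|y - i| \leq R$, and any $z = a + bi \in B(y,\theta R) \cap (I_1 \times I_2)$ would have $a \in I_1$ with $|a - y_1| \leq |z - y| < \theta_1 R$, contradicting the hole. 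The second case is handled symmetrically, using $\sqrt{1+k^2} \geq |k| \geq |i|/\sqrt 2$ to apply the $\Z$-porosity of $I_2$ at $1 + ki \in \{1\} \times I_2$, producing $k' \in \Z$ with $|k' - k| \leq R$ and $(k' - \theta_2 R, k' + \theta_2 R) \cap I_2 = \emptyset$; then $y = m + k' i$ works.

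No step is a real obstacle; the only bookkeeping is the verification that the chosen $\kappa$ lets the hypothesis $R \leq \kappa|i|$ be absorbed into the one-dimensional thresholds $\kappa_1 m$ or $\kappa_2\sqrt{1+k^2}$. The conceptual content is that a one-dimensional hole on the real (resp.\ vertical) line lifts automatically to a two-dimensional gap for $I_1 \times I_2$, since membership of $z = a + bi$ in $I_1 \times I_2$ forces the real (resp.\ imaginary) coordinate into $I_1$ (resp.\ $I_2$), and the small ball $B(y,\theta R)$ confines that coordinate to the forbidden interval.
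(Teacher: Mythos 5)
The paper states that the proof of this proposition is ``straightforward and \ldots\ left to the reader,'' so there is no argument to compare against; your write-up correctly supplies the omitted proof. Part (i) is a clean reduction to the one-dimensional densities via the factorization of $E\cap\overline B_\infty(1,R)$ into a product of intervals, and part (ii) correctly feeds the one-dimensional gaps furnished by $\N$- and $\Z$-porosity into the criterion of Theorem~\ref{poroccf} by splitting on whether $\Rea(i)$ or $|\Img(i)|$ dominates $|i|$; the choice $\kappa=\min(\kappa_1,\kappa_2)/\sqrt2$ is exactly what is needed to absorb $R\le\kappa|i|$ into the thresholds $\kappa_1 m$ and $\kappa_2\sqrt{1+k^2}$. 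Your observation that the hypothesis on $I_2$ in (i) should read $\overline\rho_\Z(I_2)<1$ (since $I_2\subset\Z$) and is in fact not used by the argument is also correct.
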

It follows by \cite[Theorem 3.15]{urbpor} that if $a \geq 2$ then the set $I_{a}:=\{\a^n\}_{n \in \N}$ is $\N$-porous. Hence, the following corollary follows by Proposition \ref{propoprod} \ref{poroprod}.
\begin{coro} Let $a,b,c \geq 2$. If $I_1 \subset I_a$ and $I_2 \subset I_b \cup (-I_c)$ then the set $I_1 \times I_2$ is porous.
\end{coro}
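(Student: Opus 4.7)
The plan is to reduce the corollary to an application of Proposition \ref{propoprod} \ref{poroprod}, which upgrades $\N$-porosity of $I_1$ and $\Z$-porosity of $I_2$ to porosity of the product $I_1 \times I_2$. Thus, under the hypotheses of the corollary, it suffices to verify that $I_1$ is $\N$-porous and that $I_2$ is $\Z$-porous.

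First I would record a trivial but crucial monotonicity observation: both $\N$-porosity and $\Z$-porosity are preserved under passing to subsets, with the same constants, since a ball disjoint from the larger set is certainly disjoint from the smaller one. Combined with the fact (recalled just above the corollary) that $I_a$ is $\N$-porous for $a \geq 2$, this immediately yields that $I_1 \subset I_a$ is $\N$-porous. Hence only the $\Z$-porosity of $I_2$ remains, and by the same monotonicity it suffices to prove that the larger set $I_b \cup (-I_c)$ is $\Z$-porous.

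This is the main step. The idea is to exploit that $I_b$ sits in the positive integers while $-I_c$ sits in the negative integers, and that the $\N$-porosity of $I_b$ and $I_c$ produces holes which, at sufficiently small relative scale, remain confined to one side of zero and hence automatically miss the opposite component. Concretely, for $k \in I_b$ and an admissible $R$, I would invoke the $\N$-porosity of $I_b$ at the scalar $k$ to obtain some $y_1 \in \N$ with $|y_1 - k| \leq R$ and no element of $I_b$ within $\theta R$ of $y_1$; choosing the $\Z$-porosity parameter $\kappa$ small enough relative to $\theta$ forces the integer interval $[y_1 - \theta R, y_1 + \theta R]$ strictly into $\Z_{>0}$, so it also misses $-I_c$, thereby giving the required hole centered at $(1, y_1)$. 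The case $k \in -I_c$ is handled symmetrically: apply the $\N$-porosity of $I_c$ to $|k|$ and reflect across zero; the same smallness of $\kappa$ pushes the resulting hole into $\Z_{<0}$, where it misses $I_b$.

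The only real bookkeeping, which I expect to be the main (mild) obstacle, is ensuring that the admissible scale range $R \in [\rho, \kappa \sqrt{1+k^2}]$ for $\Z$-porosity of $I_b \cup (-I_c)$ fits inside the ranges $[\rho_b, \kappa_b |k|]$ and $[\rho_c, \kappa_c |k|]$ furnished by the $\N$-porosities of $I_b$ and $I_c$, and that the smallness condition needed to keep the hole on one side of zero is compatible with these. This is routine because $\sqrt{1+k^2}$ and $|k|$ are comparable once $\rho$ is taken large enough that the admissible range is nonempty only for large $|k|$. With $I_b \cup (-I_c)$ shown to be $\Z$-porous and $I_2$ inheriting the property, Proposition \ref{propoprod} \ref{poroprod} concludes the proof.
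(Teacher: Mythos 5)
Your proposal is correct and takes essentially the same route as the paper: invoke Proposition \ref{propoprod} \ref{poroprod}, using that $\N$- and $\Z$-porosity pass to subsets, that $I_a$ is $\N$-porous, and that $I_b\cup(-I_c)$ is $\Z$-porous. The paper asserts the conclusion with no further detail; your sign-splitting verification of the $\Z$-porosity of $I_b\cup(-I_c)$ (apply the $\N$-porosity of $I_b$ or $I_c$ at the relevant scale and shrink $\kappa$ so the resulting hole stays on one side of the origin, hence automatically misses the other component) correctly supplies the one step the paper leaves implicit.
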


For $z \in \C$ and $r>0$ we are going to denote by $Q(z,r)$ the closed filled square centered at $z$ with sides parallel to the axis and sidelength  $\ell(Q)=r$, i.e. $Q(z,r)=\overline{B}_{\infty}(z,r/2)$. Moreover for $r>0$ we will denote
$$\Delta(r)=\{Q(z,r): z \in \C\}.$$
We will also use the notation 
$$
\mathcal{D}=\bigcup_{r>0} \Delta(r)
$$ 
for the collection of all closed squares with sides parallel to the axis. Being motivated by the concept of upper density dimension for subsets of positive integers, introduced in Section 3 of \cite{urbpor}, we propose the following analogous but improved definition for subsets of $\Z^2$.

\begin{defn}
\label{bddefn} If $I \subset \Z^2$ the \textit{upper box dimension} of $I$ is defined as 
$$\bd (I)=\overline{\lim}_{R \ra +\infty} \, \sup \left \{ \frac{\log\sharp (I \cap Q)}{\log R}: Q \in \Delta(R)\right\}.$$
\end{defn}
\label{boxdimdef}
Note that if $I \subset \Z^2$ then $\sharp (I \cap Q) \leq (R+1)^2$ for every $R>0$ and every $Q \in \Delta(R)$. Therefore
\begin{equation}
\label{bdleq2}
\bd (I) \leq 2 \  \  \mbox{ for all } \  I \subset \Z^2.
\end{equation}
In our next theorem we show that if $I \subset \N \times \Z=E$ is porous then the inequality in \eqref{bdleq2} is strict.
\begin{thm}
\label{bdporous} If $I \subset E$ is porous then $\bd(I)<2$.
\end{thm}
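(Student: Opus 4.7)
My plan is to reduce the problem to the classical box-counting estimate for planar porous sets, via a dyadic decomposition that converts the scale-restricted porosity supplied by Theorem~\ref{poroccf1v2} into genuine unrestricted porosity on each piece. First I would invoke Theorem~\ref{poroccf1v2} together with Remark~\ref{norms} to extract constants $\theta,\kappa \in (0,1)$ such that for every $i \in I$ and every $r \in [0,\kappa\|i\|_\infty]$ there is some $y_{i,r} \in B_\infty(i,r)$ with $B_\infty(y_{i,r},\theta r) \cap I = \emptyset$. Inspecting the proof of Theorem~\ref{poroccf} shows that, after a further shrinking of $\kappa$ if necessary, we may assume in addition that $B_\infty(y_{i,r},\theta r) \subset B_\infty(i,r)$, so that the hole lies honestly inside the covering ball.

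For each integer $k \geq 0$ I would form the dyadic ring
\[
I_k := I \cap \{z \in \C : 2^{k-1} \leq \|z\|_\infty < 2^k\}.
\]
Every $i \in I_k$ satisfies $\|i\|_\infty \geq 2^{k-1}$, so the porosity statement above supplies, for every such $i$ and every $r \leq \kappa 2^{k-1}$, a hole $B_\infty(y,\theta r) \subset B_\infty(i,r) \setminus I_k$. Thus $I_k$ is classically $\theta$-porous in $\R^2$ at all scales up to $\kappa 2^{k-1}$. I would then run the standard iteration for porous planar sets: fixing $M := \lceil 10/\theta \rceil$, any $\infty$-hole of side $2\theta r$ inside $B_\infty(x,r)$ contains at least $(\theta M - 1)^2 \geq c(\theta) M^2$ of the $M^2$ aligned sub-squares of side $2r/M$, which upgrades to the covering inequality $N(I_k, s/M) \leq (1 - c(\theta)) M^2 \, N(I_k, s)$ for every $s \leq 2\kappa 2^{k-1}$, where $N(\cdot,\rho)$ denotes the minimal number of axis-aligned squares of side $\rho$ needed to cover a set. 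Iterating down to unit scale, and using that $I_k \subset B_\infty(0, 2^k)$ is covered by $O(\kappa^{-2})$ squares of side $\kappa 2^{k-1}$, I obtain
\[
\sharp I_k \;\leq\; C \cdot 2^{k(2-\ve)}, \qquad \ve := 2 - \log_M\!\bigl((1-c(\theta))M^2\bigr) > 0.
\]

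Finally I would fix $Q \in \Delta(R)$ with $R$ large and sum the annulus contributions. Only those $I_k$ with $2^k \lesssim \|z_Q\|_\infty + R$ can meet $Q$. When $\|z_Q\|_\infty \leq R$ the geometric sum gives $\sharp(I \cap Q) \leq \sum_{k \lesssim \log_2 R} \sharp I_k \leq C' R^{2-\ve}$. When $\|z_Q\|_\infty > R$ only $O(\log R)$ annuli intersect $Q$, and each of them satisfies $\kappa 2^{k-1} \gtrsim R$; I would then apply the same porous-set iteration directly to $I_k \cap Q$---which sits in the ball $Q$ of radius $\asymp R$ and is $\theta$-porous at every scale up to $\gtrsim R$---to get $\sharp(I_k \cap Q) \leq C R^{2-\ve}$, and summing over the $O(\log R)$ rings picks up at most a logarithmic factor. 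Either way $\sharp(I \cap Q) \leq C'' R^{2-\ve/2}$ for every sufficiently large $R$ and every $Q \in \Delta(R)$, whence $\bd(I) \leq 2-\ve/2 < 2$.

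The hard part will be the iterated covering inequality $N(A,s/M) \leq (1-c(\theta))M^2 N(A,s)$ for $\theta$-porous $A$, whose verification requires the porosity hole to lie entirely inside the covering ball; this is precisely the reason for the preliminary shrinking of $\kappa$ in the first step. The annulus decomposition is the device that converts the relative porosity of Theorem~\ref{poroccf1v2}---in which the admissible range of scales grows with $\|i\|_\infty$---into honest planar porosity at every relevant scale on each piece $I_k$.
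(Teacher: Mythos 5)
Your proposal is correct in outline and reaches the right conclusion, but the route is genuinely different from the paper's. You handle the scale constraint ($r\le\kappa\|i\|_\infty$ in Theorem~\ref{poroccf1v2}) by a dyadic annulus decomposition $I=\bigcup_k I_k$, which turns each piece into an honestly porous planar set at all scales up to $\sim 2^k$, and then you invoke the standard covering-number iteration (essentially Salli's theorem, \cite{salli}) on each annulus and sum. The paper instead works inside a single square $Q_0\in\Delta(R)$ and handles the scale constraint geometrically: because $\Rea(Q_0)\subset[0,\infty)$ and the witness point $\xi$ is chosen in the \emph{central ninth} of the current square $Q$, one has $\ell(Q)\le 3\|\xi\|_\infty$ automatically, so $L=\kappa\ell(Q)/9$ is always admissible. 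This lets the paper run a single recursive refinement and, crucially, measure progress by \emph{area} rather than by covering number --- removing a fixed fraction of area at each level ($\eta\in(0,1)$) and then counting lattice points by area at the last level where all squares still have side $\ge 1$. The area argument completely sidesteps the alignment bookkeeping that your covering iteration $N(I_k,s/M)\le(1-c)M^2N(I_k,s)$ requires (holes centered at $x\in I_k$ are not aligned with the grid of sub-squares, so some factor-of-$9$ overcounting with neighbors has to be absorbed), which you yourself flag as ``the hard part.'' You also correctly identify, and correctly repair, the gap that Theorem~\ref{poroccf1v2} does not a priori place the hole $B_\infty(y_{i,R},\theta R)$ inside $B_\infty(i,R)$: either rescale by $1/(1+\theta)$ or, as the paper's proof of the forward implication of Theorem~\ref{poroccf} shows (inequality \eqref{firstdir6}), $|y_{i,R}-i|\le\kappa R$, so taking $\kappa\le 1/2$ suffices. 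Your two-case analysis of the position of $Q$ relative to the origin is also sound; in particular when $\|z_Q\|_\infty>R$ every relevant annulus index $k$ does satisfy $2^{k-1}>R/4$ because every point of $Q$ has $\|\cdot\|_\infty$-norm $>R/2$. In short: what your approach buys is modularity (it reduces cleanly to a known theorem about planar porous sets); what the paper's approach buys is elementarity and sharper constants, since the area bookkeeping is exact and no annulus splitting or logarithmic slack is needed.
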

\begin{proof}Fix some $R\geq 2$ and consider a square $Q_0 \in \Delta(R)$ such that  
$$\Rea(Q):=\{\Rea(z):z\in Q\}\subset [0,+\infty).$$ We are going to construct inductively a finite number of families of squares from $\mathcal{D}$ with mutually disjoint interiors, whose union contains $Q_0 \cap I$. 

The first family contains only $Q_0$ and we denote it by $\Sigma_1=\{Q_0\}$. Now suppose that $\Sigma_n$ has been defined. Passing to the inductive step we start with any 
$$
Q:=Q(w, \ell(Q)) \in \Sigma_n.
$$
We then decompose $Q$ into $81$ squares from $\Delta(\frac{1}{9} \ell(Q))$ with mutually disjoint interiors. We call $Q_{c}$ the square from  $\Delta(\frac{1}{9} \ell(Q))$ which shares the same center with $Q$. 

If $Q_{c} \cap I =\emptyset$, we remove $Q_{c}$ and we denote by $\Sigma^1_{n+1}(Q)$  the family of the remaining $80$ squares from $\Delta(\frac{1}{9} \ell(Q))$ whose union is $Q \stm \Int(Q_{c})$. Note also that
\begin{equation}
\label{areacenter}
\mbox{Area}\left(\bigcup_{Q'\in \Sigma^1_{n+1}(Q)}Q\right)=\ell(Q)^2-\frac{1}{81} \ell(Q)^2=\frac{80}{81}\ell(Q)^2= \frac{80}{81} \mbox{Area}(Q).
\end{equation}

If $Q_{c} \cap I \neq \emptyset$ we pick some $\xi \in Q_{c} \cap I$. According to Theorem \ref{poroccf1v2} and Remark \ref{norms}, since $I$ is porous there exist constants $\theta \in (0,1)$ and  $\kappa \in (0,1)$ such that for every $i \in I$ and every $R \in [0, \kappa \|i\|_\infty ]$, there exists some $y_{i, R} \in B_\infty(i, R)$ such that $E \cap B_\infty(y_{i,R}, \theta R) \subset E \setminus I.$ Notice that $\ell(Q) \leq 3 \|\xi\|_{\infty}$, since $\Rea(Q)\subset [0,+\infty)$. In particular, if we choose
\begin{equation}
\label{defl}
L=\frac{\kappa}{9} \ell(Q),
\end{equation}
we have that $L < \kappa \|\xi\|_{\infty}$. Therefore there exists a point $z \in Q(\xi, L)$ such that
$$I \cap Q(z, \theta L)=\emptyset.$$
Since we can assume that $\theta<1/9$, we have that
\begin{equation}
\begin{split}
Q(z, \theta L)& \subset Q\left(w, \theta L+L+\frac{1}{9} \ell(Q)\right)\overset{\eqref{defl}}{\subset} Q\left(w,(1+\theta)\frac{1}{9} \ell(Q)+\frac{1}{9} \ell(Q)\right) \\
& \subset Q(w, \ell(Q))=Q.
\end{split}
\end{equation}

Let $k$ be the smallest natural number such that $2^{-k} \ell(Q) \leq \frac{\theta}{9}L$, or equivalently the smallest natural number such that $2^{-k} \leq \frac{\theta \kappa}{81}$, and decompose $Q$ into elements of $\Delta(2^{-k} \ell(Q))$.  We record that by the definition of $k$,
\begin{equation}
\label{mink}
2^{-k} \ell(Q) > \frac{\theta}{18} L.
\end{equation}
Let $P \in \Delta(2^{-k} \ell(Q))$ such that 
$$P \cap Q\left(z, \frac{\theta}{9} L\right) \neq \emptyset.$$
Then
$$P \subset  Q\left(z, \frac{\theta}{9} L+2 \,2^{-k} \ell(Q)\right) = Q\left(z, \frac{\theta}{3} L\right).$$
We remove $P$ and we denote by $\Sigma^2_{n+1}(Q)$ the family of the remaining $2^{2k}-1$ squares from $\Delta(2^{-k} \ell(Q))$ whose union is $Q \stm \Int (P)$. We also have that
 \begin{equation}
 \begin{split}
\label{areadeep}
\mbox{Area}\left(\bigcup_{Q'\in \Sigma^2_{n+1}(Q)}Q\right)&=\ell(Q)^2-2^{-2k} \ell(Q)^2 \overset{\eqref{mink}}{<} \left(1-\left( \frac{\theta\, L}{18 \ell(Q)}\right)^2 \right) \ell(Q)^2 \\
&\overset{\eqref{defl}}{=}  \left(1-\left( \frac{\theta \kappa}{162} \right)^2 \right) \ell(Q)^2:=\eta \, \ell(Q)^2=\eta \, \mbox{Area}(Q),
\end{split}
\end{equation}
and we record that $\eta \in (0,1)$.

So we can now complete the inductive step. For any $Q \in \Sigma_n$ we let
$$\Sigma_{n+1}(Q)=\begin{cases}\Sigma^1_{n+1}(Q):\mbox{ if } Q_{c} \cap I =\emptyset\\
\Sigma^2_{n+1}(Q):\mbox{ if } Q_{c} \cap I \neq \emptyset,
\end{cases}$$
and we define 
$$\Sigma_{n+1}:= \{ \Sigma_{n+1}(Q): Q \in \Sigma_n \}.$$
The process terminates at level $N \in \N$ if at least one element of $\Sigma_N$ contains a square with sidelength less than $1$. 

Let $C_n=\bigcup_{Q \in \Sigma_n} Q$ and notice that for all $n=1,\dots,N-1,$
\begin{equation*}
\begin{split}
\mbox{Area} (C_{n+1})&=\sum_{Q \in \Sigma_{n}} \mbox{Area}\left(\bigcup_{Q'\in \Sigma_{n+1}(Q)}Q'\right)  \overset{\eqref{areacenter} \wedge \eqref{areadeep}}{\leq} \eta \sum_{Q \in \Sigma_{n}} \mbox{Area}(Q)= \eta \, \mbox{Area}(C_n),
\end{split}
\end{equation*}
and
\begin{equation}
\label{limitarea}
\mbox{Area} (C_{n}) \leq \eta^{n-1} \mbox{Area} (C_{1})= \eta^{n-1}\mbox{Area}(Q_0)=\eta^{n-1}R^2.
\end{equation}
Moreover, notice that for all $Q \in \Sigma_{N},$
\begin{equation}
\label{limitlength}
\ell(Q) \overset{\eqref{mink}}{\geq} \left(\frac{\theta \kappa}{162}\right)^{N-1} \ell(Q_0):= \alpha^{N-1} R,
\end{equation}
and $\alpha \in (0,1)$. Observe also that 
$$I \cap Q_0=I \cap C_n$$
for all $n=1,\dots,N$.

Since the process terminates at level $N$ there exists at least one $Q \in \Sigma_{N}$ such that $\ell(Q)<1$. Therefore \eqref{limitlength} implies that $(N-1) \log \alpha+\log R < 0$, or equivallently
\begin{equation}
\label{logeqn}
(N-1)\frac{\log \alpha}{ \log R}<-1.
\end{equation}
Let $Q=Q(w, \ell(Q)) \in \Sigma_{N-1}$ such that $I \cap Q \neq \emptyset$. Note that since $I \subset \N + \Z i$, if $i,j \in  I \cap Q, i\neq j,$ then
$$\Int(Q(i,1)) \cap \Int(Q(j,1))=\emptyset.$$
Therefore, since $\ell(Q) \geq 1$,
\begin{equation}
\label{sharpiq}
\sharp (I \cap Q)= \mbox{Area} \left( \bigcup_{i \in I \cap Q} Q(i,1) \right) \subset \mbox{Area}(Q(w, 2 \ell(Q)))=4\,\mbox{Area}(Q),
\end{equation}
and
\begin{equation}
\label{sharpiq0}
\sharp (I \cap Q_0)\leq\sum_{Q \in \Sigma_{N-1}} \sharp (I \cap Q) \overset{\eqref{sharpiq}}{\leq}4\, \sum_{Q \in \Sigma_{N-1}}\mbox{Area}(Q)=4\, \mbox{Area}(C_{N-1}).
\end{equation}

From now on we will assume that $R \geq \alpha^{-2}$. Notice that this implies that $N \geq 3$. Hence, using that $\eta, \alpha \in (0,1)$, we get the following estimate
\begin{equation}
\label{finalbd}
\begin{split}
\frac{\log \sharp (I \cap Q_0)}{\log R} &\overset{\eqref{sharpiq0}}{\leq} \frac{\log\big(\mbox{Area}(C_{N-1})\big) }{\log R}+\frac{\log 4}{\log R} \\
&\overset{\eqref{limitarea}}{\leq} \frac{(N-2) \log \eta+2 \log R}{ \log R}+ \frac{\log 4}{\log R}\\
&=2+\frac{N-2}{N-1} \, (N-1) \frac{\log \a}{\log R} \cdot \frac{\log \eta}{\log \alpha}+ \frac{\log 4}{\log R} \\
&\overset{\eqref{logeqn}}{<} 2-\frac{N-2}{N-1}\cdot \frac{\log \eta}{\log \alpha}+ \frac{\log 4}{\log R}\\
&\overset{N \geq 3}{\leq} 2-\frac{1}{2} \frac{\log \eta}{\log \alpha}+ \frac{\log 4}{\log R}.
\end{split}
\end{equation}
Finally notice that since $I \subset \N \times \Z$,
$$
\bd (I)=\overline{\lim}_{R \ra +\infty} \, \sup \left \{ \frac{\log \sharp (I \cap Q)}{\log R}: Q \in \Delta(R) \mbox{ and } \Rea(Q) \subset [0,+\infty)\right\}
$$
Therefore, \eqref{finalbd} implies that $\bd (I) \leq 2-\frac{1}{2} \frac{\log \eta}{\log \alpha}<2$. The proof is complete.
\end{proof}

\begin{rem}
Theorem \ref{bdporous} corresponds to Theorem~3.5 in \cite{urbpor}. Although, logically speaking, both theorems are independent as one of them concerns subsets of $\N$ while the other concerns Gaussian integers, their assertions are analogous and the proofs are related. However, the proof provided in the current paper is clearer, simpler, and better describes the key ideas. One could easily adopt it to give a better proof of Theorem~3.5 in \cite{urbpor}.
\end{rem}

We will apply Theorem \ref{bdporous} to the set of \textit{Gaussian primes} which has been studied extensively in number theory. Recall that $\Z[i]$, the set of Gaussian integers, has exactly four \textit{units} \,:  $1,-1,i$ and $-i$. These are the only elements of $\Z[i]$ whose Euclidean norm is equal to $1$. Multiplying any $z \in \Z[i]$ by the units of $\Z[i]$ we obtain its associates, i.e. the \textit{associates} of $z$ are $z, -z,iz$ and $-iz$. A Gaussian integer $z \in \Z[i]$ is called \textit{prime} if $|z|>1$ and it is divisible only by the units and its associates. There are many good sources of information for the basic divisibility properties of Gaussian integers and in particular about Gaussian primes, see e.g. \cite{conrad,stillwell}. See also \cite{hecke} for a treatment of several analytic topics related to Gaussian primes. 

If $-\pi \leq a<b \leq \pi$ we denote
$$GP_{a,b}=\{w \in E: w \mbox{ is a Gaussian prime and }\arg w \in [a,b)\},$$
and recall that $E$ is the set of all Gaussian integers with positive real part.
\begin{lm}
\label{gpreg}
If $-\pi/2\leq a<b \leq \pi/2$ then the complex continued fractions system $\mathcal{CF}_{GP_{a,b}}$ is co-finitely regular.
\end{lm}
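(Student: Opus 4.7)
The plan is to show directly that $\theta(\mathcal{CF}_{GP_{a,b}}) = 1$ and that the series $Z_1(1)$ diverges; by \eqref{presz1} this immediately yields $P(\theta) = +\infty$, which is precisely co-finite regularity. First I would reduce the estimate for $Z_1(t)$ to a sum involving only the moduli of the alphabet letters. Indeed, by Proposition \ref{ccfprop}\ref{derfe} and the bounded distortion property \eqref{bdp}, there is a constant $C \geq 1$ so that
\[
C^{-1}\sum_{e \in GP_{a,b}} |e|^{-2t} \;\le\; Z_1(\mathcal{CF}_{GP_{a,b}}, t) \;=\; \sum_{e\in GP_{a,b}}\|D\phi_e\|_\infty^t \;\le\; C\sum_{e \in GP_{a,b}} |e|^{-2t}
\]
for every $t \ge 0$. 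Therefore the problem is entirely reduced to controlling a weighted counting function of Gaussian primes lying in the angular sector $[a,b)$.

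Next I would invoke Hecke's prime number theorem for Gaussian primes with angular restriction (this is the tool we referenced in the introduction before stating Theorem \ref{poroccfintro}\ref{poroccfintro3}); it says that for $-\pi/2 \le a < b \le \pi/2$ there is a constant $c = c(a,b) > 0$ such that
\[
\#\{\pi \in GP_{a,b} : |\pi| \le R\} \;\sim\; c\,\frac{R^2}{\log R} \quad \text{as } R \to +\infty.
\]
Splitting the sum $\sum_{e \in GP_{a,b}} |e|^{-2t}$ along the dyadic shells $A_k := \{e \in GP_{a,b} : 2^k \le |e| < 2^{k+1}\}$ and applying Hecke's asymptotic to each shell gives
\[
\sum_{e \in A_k} |e|^{-2t} \;\asymp\; \#A_k \cdot 2^{-2kt} \;\asymp\; \frac{4^k}{k}\,4^{-kt} \;=\; \frac{4^{k(1-t)}}{k}
\]
for all sufficiently large $k$. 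Summing in $k$, the series $Z_1(t)$ converges if and only if $t > 1$, and at the critical value $t = 1$ the tail reduces, up to multiplicative constants, to the harmonic series $\sum_k 1/k$, which diverges.

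Consequently $\theta(\mathcal{CF}_{GP_{a,b}}) = 1$ and $Z_1(1) = +\infty$, so by \eqref{presz1} we get $P(\theta) = P(1) = +\infty$, proving co-finite regularity. The main obstacle is really just the invocation of Hecke's equidistribution result for Gaussian primes in angular sectors; once this is in hand, the divergence at $t = 1$ is driven by a harmonic-type series and is routine. A minor subtlety worth checking is that the assumption $-\pi/2 \le a < b \le \pi/2$ ensures that $GP_{a,b}$ lies in the right half-plane, so the sector contains no Gaussian prime together with its negative (hence no double counting of associates), and the density constant $c(a,b)$ is genuinely positive and proportional to $b-a$.
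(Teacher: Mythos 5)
Your proposal is correct and follows essentially the same route as the paper: reduce $Z_1(t)$ to $\sum_{e\in GP_{a,b}}|e|^{-2t}$ via Proposition \ref{ccfprop}\ref{derfe}, invoke Hecke's prime number theorem for angular sectors, decompose dyadically to obtain $Z_1(t)\asymp\sum_k 4^{k(1-t)}/k$, and conclude $\theta=1$ together with $Z_1(1)=+\infty$. The only minor detail the paper spells out that you elide is the endpoint case $a=-\pi/2$, where Hecke's $\pi_{a,b}$ counts purely imaginary primes not lying in $E$; since that discrepancy is $O(R)$ it does not affect the asymptotics, and the argument goes through as you describe.
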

\begin{proof} By Hecke's Prime Number Theorem, see e.g. \cite[Theorem 4, pages 134-135]{hecke}, if 
$$\pi_{a,b} (x)=\sharp \{ w: w \mbox{ is a Gaussian prime and } a \leq \arg w< b, |w|^2 \leq x \} $$ then
\begin{equation}
\label{heckepntang}
\pi_{a,b}(x) \sim \frac{2}{\pi}(b-a) \frac{x}{ \log x}.
\end{equation}
Note that if $a \in (-\pi/2, \pi/2)$ then for $R>0$
$$\sharp (GP_{a,b} \cap \bar{B}(0,R))=\pi_{a,b}(R^2).$$
While, if $a=-\pi/2$ then
$$\sharp (GP_{a,b} \cap \bar{B}(0,R))+\sharp (N \cap \bar{B}(0,R))=\pi_{a,b}(R^2),$$
where 
$$N:=\{w \mbox{ is a Gaussian prime such that} \Rea w=0 \mbox{ and }\Img w \leq 0\}.$$
Note that $\sharp (N \cap \bar{B}(0,R)) \leq R$ therefore, if $a=-\pi/2$ then
$$\sharp (GP_{a,b} \cap \bar{B}(0,R)) \geq \pi_{a,b}(R^2) -R.$$

Hence, using \eqref{heckepntang} it is not difficult to show that if $-\pi/2\leq a<b \leq \pi/2$ then there exists some $R_0>0$ such that for all $R \geq R_0$,
\begin{equation}
\label{gprimeann}
\frac{1}{16\,\pi} (b-a) \frac{R^2}{\log R}\leq \sharp (GP_{a,b} \cap \bar{B}(0,2R) \stm \bar{B}(0,R)) \leq \frac{8}{\pi} (b-a) \frac{R^2}{\log R}.
\end{equation}
By \eqref{zn} and Proposition \ref{ccfprop} \ref{derfe}, for $t\geq 0$
\begin{equation}
\label{z1ccf}
Z_1(\mathcal{CF}_{GP_{a,b}},t):=Z_1(t) \approx \sum_{e \in GP_{a,b}}|e|^{-2t}.
\end{equation}
Now let $A_n= \bar{B}(0,2^{n+1}R_0)\stm  \bar{B}(0,2^{n}R_0) \cap GP_{a,b}, n \in \N,$ and note that
\begin{equation}
\label{z1est}
\begin{split}
\sum_{e \in GP_{a,b},\, |e| \geq 2 R_0} |e|^{-2t}&=\sum_{n=1}^\infty \sum_{e \in A_n} |e|^{-2t} \approx \sum_{n=1}^\infty \sharp A_n \cdot 2^{-2nt} \\
&\overset{\eqref{gprimeann}}{\approx} \sum_{n=1}^\infty \frac{(2^n R_0)^2}{\log(2^n \cdot R_0)} 2^{-2nt} \approx  \sum_{n=1}^\infty \frac{2^{2n(1-t)}}{n}.
\end{split}
\end{equation}
Since 
$$Z_1(t) \overset{\eqref{z1est}}{\approx}\sum_{e \in GP_{a,b},\, |e| < 2 R_0} |e|^{-2t}+\sum_{n=1}^\infty \frac{2^{2n(1-t)}}{n},$$
\eqref{thetaz} implies that $\theta(\mathcal{CF}_{GP_{a,b}})=1$ and $Z_1(1)=+\infty$. Therefore, recalling Definition \ref{regulardef} and \eqref{presz1}, we deduce that $\mathcal{CF}_{GP_{a,b}}$ is co-finitely regular. The proof is complete.
\end{proof}
We will conclude this section with the proof of Theorem \ref{poroccfintro} \ref{poroccfintro3}.
\begin{thm} 
\label{gaussianprimes}
Let $-\pi/2\leq a<b \leq \pi/2$ and let $I$ be any co-finite subset of $GP_{a,b}$. Let  $J_{I}$ be the limit set associated to the complex continued fractions system $\mathcal{CF}_{I}$ and let $h_{I}=\dim_{\cH}(J_{I})$. Then:
\begin{enumerate}[label=(\roman*)]
\item \label{gpsubofccf1}  The limit set $J_I$ is not porous at $m_{h_I}$-a.e. $x \in J_I$.
\item  \label{gpsubofccf2} There exists a constant $c_I$ such that $J_I$ is $c_I$-mean porous at $m_{h_I}$-a.e. $x \in J_I$.
\end{enumerate}

\end{thm}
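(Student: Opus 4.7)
The plan is to closely parallel the proof of Theorem~\ref{cofsubofccf}, replacing Proposition~\ref{densle1} by the box-dimension criterion Theorem~\ref{bdporous}, and replacing the co-finite regularity of $\mathcal{CF}_E$ by Lemma~\ref{gpreg}.

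For part (i), the key step is to verify that $\bd(I)=2$. Because $I$ is co-finite in $GP_{a,b}$, the Hecke-type lower bound \eqref{gprimeann} gives
$$
\sharp\big(I\cap\big(\bar{B}(0,2R)\setminus\bar{B}(0,R)\big)\big)\gtrsim \frac{R^{2}}{\log R}
$$
for all sufficiently large $R$, and the entire annulus is contained in a single square $Q\in\Delta(4R)$. Consequently $\log\sharp(I\cap Q)/\log(4R)\to 2$ as $R\to\infty$, so $\bd(I)\ge 2$, and \eqref{bdleq2} forces equality. The contrapositive of Theorem~\ref{bdporous} then implies that $\overline{J_I}$ is not porous. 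Since any open ball disjoint from $J_I$ is automatically disjoint from $\overline{J_I}$, $c$-porosity of $J_I$ readily yields $(2c/3)$-porosity of $\overline{J_I}$ after a harmless radius inflation; contrapositively, $\overline{J_I}$ not porous forces $J_I$ not porous. Corollary~\ref{notporoaeconf} then yields that $J_I$ is not porous at $m_{h_I}$-a.e.\ $x\in J_I$, proving (i).

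For part (ii), the plan is to show that $\mathcal{CF}_I$ is strongly regular and then invoke Corollary~\ref{meanporoconfhaus}. By Lemma~\ref{gpreg}, $\mathcal{CF}_{GP_{a,b}}$ is co-finitely regular, and since $I$ is obtained from $GP_{a,b}$ by removing a finite set, \cite[Lemma 3.10]{CLU} implies that $\mathcal{CF}_I$ is co-finitely regular as well; by the implications in \eqref{cofimpliesreg} it is therefore strongly regular. Condition \eqref{xx1} is trivially satisfied because $X\setminus\overline{\bigcup_{e\in I}\f_e(X)}$ contains a neighborhood of the point $1$ (since $\f_e(X)\subset B(0,4/|e|)$ by Proposition~\ref{ccfprop} and only finitely many $e\in I$ have $|e|$ small). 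Corollary~\ref{meanporoconfhaus} then provides a constant $c_I:=\a_{\mathcal{CF}_I}$ such that $J_I$ is $c_I$-mean porous at $m_{h_I}$-a.e.\ $x\in J_I$, proving (ii).

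The main (and essentially the only substantive) obstacle is verifying $\bd(I)=2$: one must leverage Hecke's Prime Number Theorem despite the $1/\log R$ loss to push the box dimension all the way up to the ambient value $2$. Everything else is plug-and-play from the general machinery developed earlier in the paper.
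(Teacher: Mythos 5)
Your proposal is correct and follows essentially the same route as the paper: Hecke's theorem (via \eqref{gprimeann}) forces $\bd(I)=2$, Theorem~\ref{bdporous} then shows $\overline{J_I}$ (and hence $J_I$) is not porous so that Corollary~\ref{notporoaeconf} applies, and for part (ii) co-finite regularity of $\mathcal{CF}_{GP_{a,b}}$ (Lemma~\ref{gpreg}) transfers to $\mathcal{CF}_I$ and feeds into Corollary~\ref{meanporoconfhaus} exactly as in Theorem~\ref{cofsubofccf}. The only cosmetic differences are that you count in annuli rather than squares, and that you spell out the $J_I$ versus $\overline{J_I}$ reduction and the verification of condition \eqref{xx1}, both of which the paper leaves implicit.
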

\begin{proof} 
Using \eqref{gprimeann}, we deduce that there exits some $c \in (0,1)$ such that if $R$ is large enough
\begin{equation}
\label{heckepntapp}
\sharp (I \cap Q(0,R)) \geq \frac{c\,R^2}{\log R}.
\end{equation}
Hence, for $R$ large enough
\begin{equation}
\label{logpnt}
\frac{\log \sharp (I \cap Q(0,R))}{\log R} \overset{\eqref{heckepntapp}}{\geq}2+\frac{\log c}{\log R}-\frac{\log \log R}{\log R},
\end{equation}
and consequently  
\begin{equation}
\label{logpnt2}\sup \left \{ \frac{\log\sharp (I \cap Q)}{\log R}: Q \in \Delta(R)\right\} \overset{\eqref{logpnt}}{\geq} 2+\frac{\log c }{\log R}-\frac{\log \log R}{\log R}.
\end{equation}
Recalling Definition \ref{bddefn} we see that \eqref{logpnt2} implies that $\bd(I) \geq 2$. Hence by \eqref{bdleq2} we deduce that $\bd(I) = 2$, and Theorem \ref{bdporous} implies that $I$ is not porous, i.e. the set $J_{I}$ is not porous. Therefore Corollary \ref{notporoaeconf} implies that $J_{I}$ is not porous at $m_{h_{I}}$-a.e. $x \in J_{I}$.


By Lemma \ref{gpreg} the system $\mathcal{CF}_{GP_{a,b}}$ is co-finitely regular. Now the proof of \ref{gpsubofccf2} follows exactly as in the proof of Theorem \ref{cofsubofccf}  \ref{cofsubofccf1}. The proof is complete.
\end{proof}

Taking $a=-\pi/2$ and $b=\pi/2$ in  Theorem \ref{gaussianprimes} we obtain the following corollary involving  the complex continued fractions system whose alphabet is the set of Gaussian primes with positive real part, see also Figure \ref{gausprimes}.
\begin{coro}
Let $GP^{+}$ be the set of Gaussian primes with positive real part. Let  $J_{GP^{+}}$ be the limit set associated to the complex continued fractions system $\mathcal{CF}_{GP^{+}}$ and let $h_{GP^{+}}=\dim_{\cH}(J_{GP^{+}})$. Then:
\begin{enumerate}[label=(\roman*)]
\item  The limit set $J_{GP^{+}}$ is not porous at $m_{h_{GP^{+}}}$-a.e. $x \in J_{GP^{+}}$.
\item   There exists a constant $c_{GP^{+}}$ such that $J_{GP^{+}}$ is $c_{GP^{+}}$-mean porous at $m_{h_{GP^{+}}}$-a.e. $x \in J_{GP^{+}}$.
\end{enumerate}

\end{coro}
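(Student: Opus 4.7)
The plan is to reduce this corollary to a direct application of Theorem \ref{gaussianprimes} by identifying the alphabet $GP^{+}$ with $GP_{-\pi/2,\pi/2}$, so that both parts follow immediately with $a=-\pi/2$ and $b=\pi/2$.

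First I would unpack the definitions to verify the identification. Since $E=\{m+ni:(m,n)\in\N\times\Z\}$, every element of $E$ has real part a positive integer, hence argument strictly inside $(-\pi/2,\pi/2)\subset[-\pi/2,\pi/2)$. Conversely, any Gaussian prime $w$ with $\Rea(w)>0$ satisfies $\Rea(w)\in\N$ and $\Img(w)\in\Z$, so $w\in E$. Combining these observations,
$$
GP^{+}=\{w\in E:w\text{ is a Gaussian prime}\}=GP_{-\pi/2,\pi/2}.
$$

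Next, I would set $a=-\pi/2$, $b=\pi/2$, and $I:=GP^{+}$. Then $I$ is (trivially) a co-finite subset of $GP_{-\pi/2,\pi/2}$, so Theorem \ref{gaussianprimes} applies directly to $\mathcal{CF}_{GP^{+}}$, yielding that $J_{GP^{+}}$ is not porous at $m_{h_{GP^{+}}}$-a.e.\ $x\in J_{GP^{+}}$, and that there is a constant $c_{GP^{+}}$ such that $J_{GP^{+}}$ is $c_{GP^{+}}$-mean porous at $m_{h_{GP^{+}}}$-a.e.\ $x\in J_{GP^{+}}$. This establishes both (i) and (ii).

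There is no substantive obstacle here; the whole content of the corollary is a specialization of Theorem \ref{gaussianprimes}. The only point worth flagging in the write-up is the (brief) verification that $\arg w\in(-\pi/2,\pi/2)$ for every $w\in E$, which is what makes the sector $[-\pi/2,\pi/2)$ capture all of $GP^{+}$.
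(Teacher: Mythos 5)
Your proposal matches the paper's argument exactly: identify $GP^{+}$ with $GP_{-\pi/2,\pi/2}$ (which is the full, hence co-finite, radial sector) and apply Theorem \ref{gaussianprimes} with $a=-\pi/2$, $b=\pi/2$. The brief verification that every $w\in E$ has $\arg w\in(-\pi/2,\pi/2)$ is a reasonable thing to flag, and the proof is correct.
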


\section{Porosity for meromorphic functions}\label{pmf}

In this short section we deal with some quite general classes of meromorphic (either rational functions or transcendental) functions from $\C$ to $\hat{\C}$. A very powerful tool of meromorphic dynamics is the concept of a \textit{nice set}. Roughly speaking a nice set of a meromorphic function is a set such that the holomorphic inverse branches of the first return map to this set form a conformal IFS. By means of nice sets we will apply our results on mean porosity of conformal IFSs to the realm of some large classes of meromorphic functions from $\C$ to $\hat{\C}$. 

More precisely, let $f:\C\to\oc$ be a meromorphic function. Let $\Sing(f^{-1})$ be the set of all \textit{singular points} of $f^{-1}$, i. e. the set of all points $w\in\oc$ such that if $W$ is any open connected neighborhood of $w$, then there exists a connected component $U$ of $f^{-1}(W)$ such that the map $f:U\to W$ is not bijective. Of course, if $f$ is a rational function, then $\Sing(f^{-1})=f(\Crit(f))$, where 
$$
\Crit(f):=\big\{w \in \C: f'(w)=0\big\}.
$$
We also define
$$
\PS(f):=\bu_{n=0}^\infty f^n(\Sing(f^{-1})).
$$

We are now going to recall the definitions of Fatou and Julia sets of meromorphic functions.
\begin{defn}
Let $f: \C \ra \hat{\C}$ be a meromorphic function. The {\it Fatou set} $F(f)$ of the function $f$ is the set of all points $z \in \C$ for which there exists an open neighborhood $U_z$ of $z$ such that all iterates $f^n|_{U_z}, n \in \N,$ are well defined and form a normal family in the sense of Montel. 

We also define the {\it Julia set} of $f$, as
$$J(f):=\hat{\C} \stm F(f).$$
\end{defn} 
Following \cite{PU_tame} and \cite{SkU} a meromorphic function $f:\C\to\oc$ is called \textit{tame} if and only if
$$
J(f)\sms \ov{\PS(f)}\ne\es.
$$

\begin{rem}
\label{tamerem}
Tameness is a very mild hypothesis which is satisfied by many natural classes of maps.  These include:
\begin{enumerate}
\item Quadratic maps $\oc\ni z \mapsto z^2 + c\in\oc$ for which $c\in\R$ and the Julia set is not contained in the real line;
\item Rational maps for which the restriction to the Julia set is expansive which includes the case of expanding rational functions; and 
\item Misiurewicz maps, where the critical point is not recurrent. 

\item Dynamically regular meromorphic functions introduced and considered in \cite{MayerU1} and \cite{MayerU2}.
\end{enumerate}
\end{rem}

In this paper the main advantage of dealing with tame functions is that these admit \textit{nice sets}. Before giving their formal definition (included in the next theorem) we record that Rivera-Letelier \cite{Riv07} introduced the concept of nice sets in the realm of  dynamics of
rational maps of the Riemann sphere. In \cite{Dob11} Dobbs proved
their existence for tame meromorphic functions from $\C$ to $\oc$; see also \cite{KU1} for an extended treatment of nice sets. Before quoting Dobbs' theorem, given a tame meromorphic function $f:\C\to\oc$ a set $F\sbt\oc$ and an integer $n\ge 0$, we denote by
$
\cC_{F,f}(n):=\cC_{F}(n)
$ 
the collection of all connected components of $f^{-n}(F)$.

\begin{thm}\label{prop:1}
Let $f:\C\to\oc$ be a tame meromorphic function. Fix a non-periodic point $z\in J(f)\sms
\ov{\PS(f)}$, and two parameters $\kappa>1$, and $K>1$. Then for all $L>1$ and 
for all $r>0$ sufficiently small there exists
an open connected simply connected set $V=V(z,r)\sbt\C\sms\ov{\PS(f)}$, called a nice set, such that
  \begin{enumerate}[label=(\roman*)] 
  \item \label{prop:1a} If $U\in \cC_V(n)$ and $U\cap V\neq \emptyset$, then 
    $U\subseteq V$.
  \item If $U\in \cC_V(n)$ \  \  and  \  \  $U\cap V\neq \emptyset$,
    then, for all $w,w'\in U,$ 
    \begin{displaymath}
      |(f^n)'(w)|\ge L 
\  \  \   \textrm{ and } \  \  \ 
       \frac{|(f^n)'(w)|}{|(f^n)'(w')|}\le K. 
    \end{displaymath}
  \item $\overline{B(z,r)}\subset V\subset B(z,\kappa r)\sbt B(z,2\kappa r)\sbt \C\sms\ov{\PS(f)}$. 
\end{enumerate}
\end{thm}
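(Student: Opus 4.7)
The plan is to construct $V$ as a Markov enlargement of the disk $B(z,r)$, following the approach of Rivera-Letelier \cite{Riv07} and Dobbs \cite{Dob11}. Since $z\in J(f)\sms\ov{\PS(f)}$ and $\ov{\PS(f)}$ is closed, there exists $r_1>0$ such that $B(z,2\kappa r_1)\sbt \C\sms\ov{\PS(f)}$. Consequently, for every $r\in(0,r_1)$, every $n\ge 1$, and every $U\in\cC_{B(z,2\kappa r)}(n)$, the restriction $f^n\colon U\to B(z,2\kappa r)$ is a biholomorphism, and Koebe's distortion theorem (Theorem \ref{kdt}) applied to the holomorphic inverse branch yields a distortion constant on the sub-disk $B(z,\kappa r)$ that can be made smaller than any prescribed $K>1$ by shrinking the relative radius; this secures the distortion bound in (ii).

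Next, I would establish the required expansion. Since $z$ is a non-periodic point of $J(f)$ lying outside $\ov{\PS(f)}$, a standard Ma\~n\'e-type argument---combined with the fact that all pullbacks of $B(z,2\kappa r)$ are conformal with uniformly bounded distortion by the previous step---gives
\[
\inf_{w\in U}|(f^n)'(w)| \longrightarrow \infty \quad \text{as } n\to\infty,
\]
uniformly over all $U\in\cC_{B(z,r)}(n)$ with $U\cap B(z,r)\ne\es$, provided $r$ is small enough. In particular, given any $L>1$, there exists $r_2\in(0,r_1)$ such that every such component satisfies $|(f^n)'|\ge L$ throughout $U$. Non-periodicity of $z$ is used here to rule out spurious short-index returns that would obstruct the expansion from taking hold. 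One then sets
\[
V:=B(z,r)\cup\bu\bigl\{U\in\cC_{B(z,r)}(n):n\ge 1,\ U\cap B(z,r)\ne\es\bigr\}.
\]
Property \ref{prop:1a} is essentially built into the definition: any $W\in\cC_V(m)$ meeting $V$ contains a component of $f^{-m}(B(z,r))$ that meets $B(z,r)$, so $W\sbt V$ by construction.

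The main obstacle is verifying (iii): the inclusion $V\sbt B(z,\kappa r)$ together with the simple connectedness of $V$. For the inclusion, Koebe gives a diameter estimate $\diam(U)\le C\,L_U^{-1}\,r$, where $L_U:=\inf_{U}|(f^n)'|\to\infty$; summing over all components intersecting $B(z,r)$---whose count at level $n$ is controlled via a conformal-measure argument on the conformal iterated function system formed by the inverse branches---shows that the cumulative excursion of $V$ outside $B(z,r)$ is dominated by a geometric series that vanishes as $r\to 0$, so the union fits inside $B(z,\kappa r)$ for $r$ sufficiently small. Simple connectedness is then obtained by passing, if necessary, to the simply-connected hull of $V$---the complement in $\oc$ of the unbounded component of $\oc\sms V$---which preserves \ref{prop:1a} since $f$ is open, preserves (ii) by the maximum principle applied to the inverse branches, and remains contained in $B(z,\kappa r)$. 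This produces the desired $V$.
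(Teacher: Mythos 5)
The paper does not prove this theorem; it is quoted verbatim from Dobbs \cite{Dob11} (building on Rivera-Letelier \cite{Riv07}), and the paper's contribution begins only afterwards, namely in turning a given nice set into a conformal IFS via Lemma~\ref{l220190618}. So there is no in-paper proof to compare against, and your sketch stands or falls on its own.

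As written, it has a genuine gap at the central step, the Markov property~\ref{prop:1a}. With your definition
$V := B(z,r)\cup\bigcup\bigl\{U\in\cC_{B(z,r)}(n):n\ge 1,\ U\cap B(z,r)\ne\es\bigr\}$,
take $W\in\cC_V(m)$ with $W\cap V\ne\es$. Since all pullbacks under consideration are conformal, $W=\bigcup_P(f^m|_W)^{-1}(P)$ over the pullbacks $P$ of $B(z,r)$ that make up $V$. The hypothesis $W\cap V\ne\es$ only says that some piece $(f^m|_W)^{-1}(P)$ meets some pullback $Q$ of $B(z,r)$ with $Q\cap B(z,r)\ne\es$; it does \emph{not} follow that $(f^m|_W)^{-1}(P)$ itself meets $B(z,r)$, and it says nothing at all about the other pieces of $W$. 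Two pullbacks of $B(z,r)$ at different levels that both meet $B(z,r)$ are in general neither nested nor disjoint, so the one-shot union is not a Markov family and $W\subseteq V$ does not follow. This is precisely why Rivera-Letelier's and Dobbs' constructions are iterative (adjoin pullbacks in stages, controlling the diameters of each successive enlargement so the process stabilizes) rather than a single union; the Markov property is engineered, not read off. Two further issues compound this: (a) invoking ``a conformal-measure argument on the conformal iterated function system formed by the inverse branches'' to bound the excursion of $V$ is circular, since that IFS is only defined once a nice set is in hand (and in any case no counting is needed: uniform diameter control on each adjoined pullback already yields $V\subset B(z,\kappa r)$); and (b) replacing $V$ by its simply-connected hull changes the family $\cC_V(n)$, so there is no a priori reason~\ref{prop:1a} survives the filling of holes — the published arguments obtain a topological disk directly from diameter estimates rather than by post hoc filling.
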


\fr Each nice set of a tame meromorphic function canonically gives rise to a countable alphabet conformal iterated function system in the sense considered in the previous sections of the present paper. Namely, let $V$ be a nice set of a tame meromorphic function $f:\C\to\oc$, and put 
$$
\cC_V^*=\bu_{n=1}^\infty\cC_V(n).
$$
=== It is easy to see (comp. \cite{KU1} or \cite{SkU} for more details) that for every $U\in \cC_V^*$ let $\tau_V(U)\ge 1$ the unique integer $n\ge 1$
such that $U\in \cC_V(n)$. Since $V \subset \C$ is open, connected, simply connected and disjoint from $\ov{\PS(f)}$, using the Inverse Function Theorem and the Monodromy Theorem in the standard way (see \cite{Conway}), we see that there exists 
$$
f_U^{-\tau_V(U)}:B(z,2\kappa r)\to \C
$$
a unique holomorphic branch of $f^{-\tau_V(U)}$ such that 
$$
f_U^{-\tau_V(U)}(V)=U.
$$
Denote
$$
\phi_U:=f_U^{-\tau_V(U)}
$$
and keep in mind that
$$
\phi_U(V)=U.
$$
Denote by $E_V$ the collection of all elements $U$ of $\cC_V^*$ such that 

\begin{itemize}

\item[(a)] $\phi_U(V)\sbt V$, 
\item[(b)] $f^k(U)\cap V=\es$ \, for all \, $k=1,2,\ldots,\tau_V(U)-1$.
\end{itemize}
Of course (a) yields
\begin{itemize}
\item[(a')]$\phi_U(\ov V)\sbt \ov V.$
\end{itemize}
We note that the collection $E_V$ is not empty and the details can be found in the proof of Lemma \ref{l220190618}.

We now form a conformal iterated function system as follows. Let
$$
X:=\ov V 
\  \  \  {\rm and} \  \  \
W:=B(z,2\kappa r).
$$
Assume now that the parameter $L$ from Theorem \ref{prop:1} is so large that $2L^{-1}<1$. Take any $w \in W=B(z, 2 \kappa r)$ and let $w'$ be the point in $[z,w] \cap \overline{B(z,r)}$ closest to $w$, where $[z,w]$ denotes the line segment with endpoints $z$ and $w$. Then for every $U \in E_V$ we have that $\f_U(w') \in \overline{V}$, and
$$|\f_{U}(w)-\f_{U}(w')|\leq L^{-1}|w-w'| \leq L^{-1} 2 \kappa r.$$
Hence, 
$$|\f_U(w)-z| \leq |\f_{U}(w)-\f_{U}(w')|+|\f_{U}(w')-z| \leq L^{-1} 2 \kappa r + \kappa r<2 \kappa r.$$
Therefore, $\f_{U}(w) \in B(z,2 \kappa r)$ and consequently $\f_{U}(W) \subset W$.

The collection of maps 
\begin{equation}\label{520190614}
\cS_V:=\big\{\phi_U:W\to W\big\}_{U\in E_V}
\end{equation}
is therefore well defined and, by (a'),
$$
\phi_U(X)\sbt X
$$
for all $U\in E_V$. We claim that $\cS_V$ is a conformal IFS satisfying the 
Open Set Condition. We only mention that all the sets $\{U=\phi_U(V):U\in E_V\}$ are mutually disjoint by their very definition which is given by means of continuous inverse branches of positive iterates of a single meromorphic map. Therefore, the Open Set Condition follows. Furthermore, uniform contraction of the elements of the system $\cS_V$ follows immediately from item (b) of Theorem~\ref{prop:1}.

In other words the elements of $\cS_V$ are determined by all holomorphic inverse branches of the first return map $F_V:V\to V$. In particular, $\tau_V(U)$ is the first return time of all points in $U=\phi_U(V)$ to $V$.

It is easily seen from this construction, and we provide a proof, that the following holds.

\begin{lm}\label{l220190618}
Let $f:\C\to\oc$ be a tame meromorphic function. If $V$ is a nice set produced in Theorem~\ref{prop:1} and $\cS_V$ is the corresponding conformal IFS produced in formula \eqref{520190614}, then the limit set $J_V$ of the system $\cS_V$ contains all transitive points of the map $f:J(f)\to J(f)$ lying in $V$, i.e. the set of points in $V\cap J(f)$ whose orbit under iterates of $f$ is dense in $J(f)$. In particular:
$$
\ov{J_V}=\ov V\cap J(f).
$$
In addition, if $J(f)\ne\C$ (this in fact means that $J(f)$ is nowhere dense in $\C$), then condition \eqref{xx1} holds for the IFS $\cS_V$.
\end{lm}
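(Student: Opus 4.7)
The proof proceeds in three steps corresponding to the three assertions, each building on the previous.

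\textit{Step 1 (transitive points lie in $J_V$, and $E_V\ne\es$).} Topological transitivity of $f|_{J(f)}$, which holds for tame meromorphic $f$, gives a dense $G_\delta$ of transitive points in $J(f)$; since $V$ is open and meets $J(f)$ at $z$, we can fix a transitive point $x\in V\cap J(f)$. Its forward orbit visits $V$ infinitely often; enumerate the visit times as $0=n_0<n_1<n_2<\cdots$, and for each $k\ge 1$ let $U_k$ be the connected component of $f^{-(n_k-n_{k-1})}(V)$ containing $f^{n_{k-1}}(x)$. Since $U_k\cap V\ne\es$, part \ref{prop:1a} of Theorem~\ref{prop:1} forces $U_k\sbt V$, while minimality of the first-return time $n_k-n_{k-1}$ combined with the same niceness property ensures $f^j(U_k)\cap V=\es$ for $1\le j<n_k-n_{k-1}$, so $U_k\in E_V$ (which in particular shows $E_V\ne\es$). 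Then $\phi_{U_k}(f^{n_k}(x))=f^{n_{k-1}}(x)$, and uniform contraction of the $\phi_{U_j}$ (each with sup-norm at most $1/L<1$) forces the diameters of $\phi_{U_1}\circ\cdots\circ\phi_{U_k}(X)$ to shrink to $0$; as all these sets contain $x$, we conclude $x=\pi((U_k)_{k=1}^\infty)\in J_V$.

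\textit{Step 2 (closure equality $\ov{J_V}=\ov V\cap J(f)$).} The inclusion $J_V\sbt\ov V$ is immediate from $X=\ov V$. For $J_V\sbt J(f)$, fix $v_0=z\in V\cap J(f)$ and observe that for each $\om=(U_k)_{k\ge 1}\in E_V^\N$, the composition $\phi_{U_1}\circ\cdots\circ\phi_{U_k}$ is an inverse branch of $f^{N_k}$ with $N_k=\sum_{j=1}^k\tau_V(U_j)$, so $\phi_{U_1}\circ\cdots\circ\phi_{U_k}(v_0)$ is a preimage of $v_0$ under $f^{N_k}$ and therefore lies in $J(f)$ by complete invariance; passing to the limit puts $\pi(\om)$ in the closed set $J(f)$. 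For the reverse, Step 1 shows that transitive points in $V$ lie in $J_V$, and they are dense in $V\cap J(f)$ since $V\cap J(f)$ is relatively open in $J(f)$; hence $\ov{V\cap J(f)}\sbt\ov{J_V}$. To upgrade this to $\ov V\cap J(f)\sbt\ov{J_V}$, apply Montel's theorem at any $y\in\ov V\cap J(f)$: the family $\{f^n\}$ fails to be normal on any neighborhood $B(y,\ve)\cap V$, so its forward iterates cover $\oc$ up to at most two exceptional points, producing a point $w\in V\cap B(y,\ve)$ with some $f^n(w)\in J(f)$, and hence $w\in J(f)$ by backward invariance. Thus $V\cap J(f)$ is dense in $\ov V\cap J(f)$.

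\textit{Step 3 (non-degeneracy condition \eqref{xx1}).} Assume $J(f)\ne\C$, so $J(f)$ is nowhere dense and $F(f)\cap V$ is open and dense in $V$, while $\dist(\ov V,\ov{\PS(f)})>0$ by the choice of $V$. The plan is to produce an open set $W\sbt V$ whose \emph{entire} forward orbit under $f$ avoids $V$, i.e.\ $f^n(W)\cap V=\es$ for every $n\ge 1$; once this is done, $W\cap\bu_{U\in E_V}U=\es$, and since $\bu_{U\in E_V}U$ is open its closure also misses the open set $W$, yielding $W\sbt X\sms\ov{\bu_{U\in E_V}\phi_U(X)}$, which is precisely \eqref{xx1}. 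To produce $W$, select an attracting periodic point $q\in\ov{\PS(f)}$ and a preimage $q_n\in V$ of $q$ under a sufficiently high iterate $f^n$ whose intermediate images $f(q_n),\dots,f^{n-1}(q_n)$ all lie outside $V$; this is possible because backward orbits of the non-exceptional point $q$ are dense in $\oc$, and the bounded open set $V$ occupies only a small portion of the sphere, so branches of preimages with intermediate iterates missing $V$ can be extracted. Since $f^n(q_n)=q$ and subsequent iterates remain in the periodic cycle of $q\in\ov{\PS(f)}$ at positive distance from $\ov V$, also $f^j(q_n)\notin V$ for all $j\ge n$, and continuous dependence of the orbit on the initial point provides an open neighborhood $W$ of $q_n$ with the same non-returning property. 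The main obstacle is this third step, specifically the selection of a preimage whose \emph{entire} forward orbit (not only its tail) avoids $V$.
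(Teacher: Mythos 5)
Your Steps 1 and 2 track the paper's own argument: you code a transitive point's visits to $V$ and verify $J_V\subset J(f)$ via complete invariance, a sound variant of the paper's repelling-periodic-point observation. One detail in Step 2 deserves care: when $y\in\partial V$ the set $B(y,\ve)\cap V$ is \emph{not} a neighborhood of $y$, so non-normality of $\{f^n\}$ at $y$ does not by itself give non-normality on $B(y,\ve)\cap V$; as stated, the Montel paragraph does not actually establish density of $V\cap J(f)$ in $\ov V\cap J(f)$ (a point which the paper also leaves implicit).

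The genuine gap, which you flag yourself, is Step 3. Your plan --- produce an open $W\subset V$ with $f^n(W)\cap V=\es$ for all $n\ge 1$ and deduce \eqref{xx1} from it --- is exactly right, but the construction of $W$ has two unresolved problems. First, a tame meromorphic function need not have any attracting periodic point; the Fatou set can consist entirely of parabolic basins, Baker domains, or wandering domains. Second, even when an attracting cycle $q$ does exist, density of backward orbits of $q$ only supplies a preimage $q_n$ near $z$ and says nothing about whether $f(q_n),\dots,f^{n-1}(q_n)$ avoid $V$; the claim that a branch with that property can be ``extracted'' is not justified. A robust replacement is a Baire category argument. Since $V$ is connected, meets $J(f)$, and avoids $\ov{\PS(f)}$, it cannot meet any Siegel disk or Herman ring (it would otherwise meet the boundary of one, which lies in $\ov{\PS(f)}$); hence for every $y\in V\cap F(f)$ the $\omega$-limit set of $y$ lies in $\ov{\PS(f)}\cup\{\infty\}$, disjoint from $\ov V$, so there is $N(y)$ with $f^n(y)\notin V$ for all $n\ge N(y)$. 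The sets $E_N:=\{y\in V\cap F(f): f^n(y)\notin V\ \ \forall\, n\ge N\}$ are closed in $V\cap F(f)$ and cover it, so by Baire some $E_{N_0}$ contains an open ball $B$. Let $n_0<N_0$ be the largest integer with $f^{n_0}(B)\cap V\ne\es$; after shrinking $B$ so that $f^{n_0}(B)\subset V$, the open set $W:=f^{n_0}(B)$ lies in $V$ and satisfies $f^n(W)\cap V=\es$ for every $n\ge 1$.
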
 
\begin{proof} 
The inclusion $\ov{J_V}\sbt \ov V$ is obvious. The inclusion $\ov{J_V}\sbt J(f)$ follows since $\ov{J_V}$ is the closure of all fixed points of all elements $\phi_\om$, $\om\in E_V^*$, and these are repelling periodic points of $f$ which are all in the closed set $J(f)$. Thus,
$$
\ov{J_V}\sbt \ov V\cap J(f).
$$
In order to see the opposite inclusion, recall first that the set $V\cap J(f)$ contains transitive points, see e.g. \cite{Bergweiler,KU1}. Take any transitive point $w\in V\cap J(f)$ and let et $(n_j)_{j=1}^\infty$ be the unbounded increasing sequence of all consecutive visits of $w$ in $V$ under the action of $f$. In other words
$$
f^{n_j}(w)\in V
$$
for all $j\ge 1$ and 
$$
f^k(w)\notin V
$$
if $k\ne n_j$ for all $j\ge 1$. Let $m_j=n_j-n_{j-1}, j \in \N$ and let $n_0=0$. Then, using Theorem~\ref{prop:1}, we see that for every $j\ge 1$ there exists a unique holomorphic branch of $f^{-m_j}$ defined on $W$ and mapping $V$ into $V$, and furthermore, that this branch belongs to $\cS_V$. Hence, this branch is equal to $\phi_{U_j}$ for some $U_j\in E_V$. Note that for all $k \in \N$,
\begin{equation}
\label{fcompinv}
\phi_{U_1} \circ \phi_{U_2} \circ \dots \circ \phi_{U_k} \circ f^{m_1 }\circ f^{m_2} \circ \dots f^{m_k} (w)=w.
\end{equation}
Since $f^{m_1 }\circ f^{m_2} \circ \dots f^{m_k} (w)=f^{n_k}(w) \in V$, \eqref{fcompinv} implies that $w \in \phi_{U_1} \circ \phi_{U_2} \circ \dots \circ \phi_{U_k} (V)$.
Therefore,
$$
w=\pi (U_1U_2U_3\ldots)\in J_V
$$
and we are done. 
\end{proof}

As an almost immediate consequence of this lemma and Theorem~\ref{thmporfix} (i), we get the following.  

\begin{thm}
If $f:\C\to\oc$ be a tame meromorphic function such that $J(f)\ne\C$, then the Julia set $J(f)$ is porous at a dense set of its points.
\end{thm}

\begin{proof}
Since $J(f)\ne\C$, it is nowhere dense in $\C$, and therefore the conformal IFS $\cS_V$ produced in formula \eqref{520190614}, satisfies condition \eqref{xx1}. Hence, the proof concludes by a direct application of Theorem~\ref{thmporfix} (i) and Lemma~\ref{l220190618}.
\end{proof}

As a fairly direct consequence of Lemma~\ref{l220190618}, Theorem~\ref{prop:1}, and Theorem~\ref{meanporthm}, we shall prove the following.

\begin{thm}\label{t120190618}
Let $f:\C\to\oc$ be a tame meromorphic function such that $J(f)\ne\C$. If $\mu$ is a Borel probability $f$-invariant ergodic measure on $J(f)$ with full topological support and with finite Lyapunov exponent  
$\chi_{\mu}(f):=\int_{J(f)}\log|f'|\,d\mu$, then there exist some $\a_{f}, p_f \in (0,1]$ such that set $J(f)$ is $(\a_f,p_f)$--mean porous 
at $\mu$-a.e. $x\in J(f)$.
\end{thm}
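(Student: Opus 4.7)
The plan is to reduce to the conformal IFS setting developed in Section~\ref{sec:porogdms} via a nice set, and then transfer mean porosity back to $J(f)$ using the dynamics. First I would fix a nice set $V=V(z,r)$ as in Theorem~\ref{prop:1} and form the associated conformal IFS $\cS_V=\{\phi_U\}_{U\in E_V}$ from~\eqref{520190614}. By Lemma~\ref{l220190618}, since $J(f)\ne\C$, the system $\cS_V$ satisfies the non-degeneracy condition~\eqref{xx1} and its limit set obeys $\overline{J_V}=\overline V\cap J(f)$. Because $\mu$ has full topological support and $V$ is open, $\mu(V)>0$, so I can set $\mu_V:=\mu(V)^{-1}\mu|_V$. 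The first return map $F_V:V\to V$ -- whose inverse branches are precisely the generators of $\cS_V$ -- preserves $\mu_V$ ergodically by a standard tower argument, and Kac's identity applied to $g=\log|f'|\in L^1(\mu)$ gives
\begin{equation*}
\chi_{\mu_V}(F_V)=\int\log|F_V'|\,d\mu_V=\frac{\chi_\mu(f)}{\mu(V)}\in(0,\infty),
\end{equation*}
positivity being guaranteed by $|F_V'|\ge L>1$ from Theorem~\ref{prop:1}.

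Next I would lift $\mu_V$ to a $\sigma$-invariant ergodic probability measure $\tilde\mu$ on $E_V^\N$. By Poincar\'e recurrence together with Birkhoff's theorem, $\mu_V$-a.e. $x\in V$ returns to $V$ infinitely often and therefore admits a unique symbolic coding $\omega(x)\in E_V^\N$ with $\pi(\omega(x))=x$; the push-forward $\tilde\mu:=\pi^{-1}_*\mu_V$ is then $\sigma$-invariant ergodic, and satisfies $\chi_{\tilde\mu}(\sigma)=\chi_{\mu_V}(F_V)<\infty$ via the chain rule $D\phi_{U_1(x)}(F_V(x))=1/F_V'(x)$. Applying Theorem~\ref{meanporthm} to $\cS_V$ and $\tilde\mu$ produces constants $\alpha_V\in(0,1/2)$ and any $p_V<\log 2/\chi_{\tilde\mu}(\sigma)$ such that $J_V$ is $(\alpha_V,p_V)$-mean porous at $\mu_V$-a.e. point of $V$.

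The critical step is to observe that the holes exhibited in the proof of Theorem~\ref{meanporthm} are in fact holes in $J(f)$, not merely in the smaller set $J_V$. These holes are contained in iterates $\phi_{\omega|_j}(B_{t(\omega_j)})$ of the balls $B_v$ furnished by Lemma~\ref{lemmax1}, which by construction lie in the open set $\Int X_v\setminus\overline{X_v^{m_v}}$ and are therefore disjoint from the closed set $\overline{X_v^{m_v}}\supset\overline{J_V}=\overline V\cap J(f)$; in particular $B_v\cap J(f)=\emptyset$. Since each $\phi_U$ is a branch of an inverse iterate of $f$ and $J(f)$ is forward invariant, $\phi_{\omega|_j}(B_{t(\omega_j)})\cap J(f)=\emptyset$ as well. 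Consequently $J(f)$ itself is $(\alpha_V,p_V)$-mean porous at $\mu$-a.e. $x\in V$.

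Finally, I would propagate mean porosity from $V$ to $\mu$-a.e. point of $J(f)$ using ergodicity and the conformal invariance of Lemma~\ref{porconfinv}. By Birkhoff's theorem, for $\mu$-a.e. $x\in J(f)$ there is a smallest $N(x)\ge 0$ with $f^{N(x)}(x)\in V$; since $V\cap\overline{\PS(f)}=\emptyset$, no intermediate iterate $f^k(x)$, $0\le k<N(x)$, can be critical (otherwise $f^{N(x)}(x)\in\PS(f)$), and hence $f^{N(x)}$ admits a conformal inverse branch on a neighbourhood of $f^{N(x)}(x)$ sending it to $x$. Because $J(f)$ is backward invariant, Lemma~\ref{porconfinv} transfers the $(\alpha_V,p_V)$-mean porosity at $f^{N(x)}(x)$ to $x$, yielding the theorem with $\alpha_f:=\alpha_V$ and any $p_f\in(0,\min(1,\log 2/\chi_{\tilde\mu}(\sigma)))$. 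The main technical hurdle is the previous paragraph -- certifying that the IFS-based holes genuinely miss all of $J(f)$, not just $J_V$; everything else is bookkeeping combining the nice-set construction, Kac's identity, and standard symbolic coding.
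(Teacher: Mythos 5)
Your approach matches the paper's: pass via a nice set $V$ to the conformal IFS $\cS_V$, lift $\mu$ to a $\sigma$-invariant ergodic measure $\tilde\mu$ on $E_V^\N$, control its Lyapunov exponent by Kac's identity, apply Theorem~\ref{meanporthm}, and propagate from $V$ to $\mu$-a.e.\ point of $J(f)$ via Lemma~\ref{porconfinv} and ergodicity. Your ``critical step''---verifying that the IFS-holes miss all of $J(f)$ and not merely $J_V$---addresses a real point that the published proof disposes of only with the terse remark that $\overline{J_V}\supset J(f)\cap V$; your argument via backward/forward invariance of $J(f)$ is correct, and an alternative closer in spirit to the paper is simply that the holes are open balls contained in $V$ and disjoint from $J_V$, hence disjoint from $\overline{J_V}=\overline V\cap J(f)$.

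The one genuine gap is the unique-coding step. You write that $\mu_V$-a.e.\ $x\in V$ ``returns to $V$ infinitely often and therefore admits a unique symbolic coding.'' Recurrence gives some coding but does not by itself give uniqueness: a point lying on $\bigcup_{n\ge 0}f^{-n}(\partial V)$ may sit on the common boundary of two cylinders $\phi_{\omega|_k}(\overline V)$ and $\phi_{\tau|_k}(\overline V)$ with $\omega_k\ne\tau_k$, and then both $\omega$ and $\tau$ code it, so your push-forward $\tilde\mu$ is not well-defined without more work. To close the gap you must show that $\mu\bigl(\bigcup_{n\ge 0}f^{-n}(\partial V)\bigr)=0$. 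The paper does this by first proving the nice-set feature $V\cap\bigcup_{n\ge 1}f^n(\partial V)=\emptyset$ from Theorem~\ref{prop:1}(i) and the openness of $f$, then deducing $\mu\bigl(\bigcup_{n\ge 1}f^n(\partial V)\bigr)=0$ by ergodicity (that union is forward invariant and has measure strictly less than $1$ since $\mu(V)>0$), and finally $\mu(\partial V)=0$ and $\mu\bigl(\bigcup_{n\ge 0}f^{-n}(\partial V)\bigr)=0$ by $f$-invariance of $\mu$. Once this is inserted, your proof is complete and essentially identical to the paper's.
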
 

\begin{proof} Let $V$ be a nice set as in Lemma \ref{l220190618}. Notice then, that 
\begin{equation}
\label{vbdemptyint}
V \cap \bigcup_{n=1}^\infty f^{n}(\partial V)=\emptyset.
\end{equation} 
To see this, suppose by contradiction that there exists $x \in \partial V$ such that $f^{n}(x) \in V$ for some $n \in \N$. Then there exists a holomorphic branch $\phi:V \ra \C$ of $f^{-n}$ such that $\phi (f^n(x))=x$. Hence, $\phi (V) \in \cC_V(n)$ and since non-constant meromorphic functions are open, $\phi (V)$ is open. Therefore, $\phi (V) \cap V^{c} \neq \emptyset$, because $\phi(V) \cap \partial V \neq \emptyset$. But this contradicts Theorem \ref{prop:1} \ref{prop:1a}, and \eqref{vbdemptyint} has been proven.

Since $\spt (\mu)=J(f)$, we have that $\mu(V)>0$. Hence, $\mu(\cup_{n=1}^\infty f^{n}(\partial V))<1$. Since the set 
$$
\bigcup_{n=1}^\infty f^{n}(\partial V)
$$ 
is $f$-forward invariant, that is 
$$
f\left(\bigcup_{n=1}^\infty f^{n}(\partial V)\right) \subset \bigcup_{n=1}^\infty f^{n}(\partial V),
$$
its complement is $f$-backwards invariant, that is 
$$
f^{-1}\left(\oc \stm \bigcup_{n=1}^\infty f^{n}(\partial V)\right) \subset \oc \stm \bigcup_{n=1}^\infty f^{n}(\partial V).
$$
Hence, the ergodicity of the measure $\mu$ implies that 
\begin{equation}
\label{mfthmeq1}
\mu \left(\bigcup_{n=1}^\infty f^{n}(\partial V)\right)=0.
\end{equation}
Since $\partial V \subset f^{-1}(f(\partial V))$ and the measure $\mu$ is $f$-invariant, this implies that $\mu(\partial V)=0$. So, again by the $f$-invariance of $\mu$, we get that
\begin{equation}
\label{mfthmeq2}
\mu \left(\bigcup_{n=0}^\infty f^{-n}(\partial V)\right)=0.
\end{equation}
In particular, if $\mu_{J_V}$ is the conditional measure on $J_V$, i.e. 
$$\mu_{J_{V}}(F)=\frac{\mu(F)}{\mu(J_V)}, \quad F \subset J_{V} \mbox{ Borel},$$
then
\begin{equation}
\label{mfthmeq3}
\mu_{J_V} \left(J_V\cap \bigcup_{n=0}^\infty f^{-n}(\partial V)\right)=0.
\end{equation}
We will now show that if $\pi:E_V^{\N} \ra J_{V}$ is the projection map generated by the IFS $\cS_{V}$, then 
\begin{equation}
\label{aeuniquecode}
\mu\big(\{z \in J_{V}: \pi^{-1}(z) \mbox{ is a singleton}\}\big)=1.
\end{equation}
Note first that \eqref{aeuniquecode} will follow from \eqref{mfthmeq3} if we prove that every point
$$
z \in J_{V} \stm\bigcup_{n=0}^\infty f^{-n}(\partial V)
$$ 
has a unique code. Suppose that this is not the case, then there exists 
$$
z \in J_{V} \stm\bigcup_{n=0}^\infty f^{-n}(\partial V)
$$ 
and two distinct words $\om, \tau \in E_V^\N$ such that $\pi(\om)=\pi(\tau)=z$. Let $k \in \N$ be the first instance such that $\om_k \neq \tau_k$. Then 
$$z \in \phi_{\om|_k}(\overline{V}) \cap \phi_{\tau|_k}(\overline{V}).$$

Hence, the formula
\begin{equation}
\label{mfthmeq3}
\tilde{\mu}(A):=\mu_{J_{V}}(\pi_{V}(A))\quad A \subset E_V^\N \mbox{ Borel},
\end{equation}
defines a Borel probability measure on $E^\N$. Also then, $\mu_{J_{V}}=\tilde{\mu} \circ \pi_{V}^{-1}$ and the following diagram commutes
\begin{equation*}
\begin{tikzcd}
  E^{\N} \arrow[r, "\sigma"] \arrow[d, "\pi_{V}"]
    & E^{\N} \arrow[d, "\pi_{V}"] \\
  J_{V} \arrow[r,  "f_{V}" ]
&J_{V}\end{tikzcd},
\end{equation*}
where $f_{V}:J_{V} \ra J_{V}$ is the first return map from $J_{V}$ to $J_{V}$ defined (by the Poincar\'e Recurrence Theorem) $\mu_{J_{V}}$-a.e. Furthermore, the projection map $\pi_{V}: (E^{\N}, \tilde{\mu}) \ra (J_{V}, \mu_{V})$ is a metric isomorphism. Since, the measure $\mu$ is $f$-invariant and ergodic, the measure $\mu_{J_{V}}$ is $f_V$-invariant and ergodic. Consequently, the measure $\tilde{\mu}$ is $\sigma-$ invariant and ergodic. Moreover, by Kac's Lemma, 
$$\chi_{\tilde{\mu}}(\sigma)=\int_{J_{V}} \log |f_V'|\, d \mu_{J_{V}}=\frac{\chi_\mu(f)}{\mu(J_{V})}<+\infty.$$
Therefore, an application of Theorem \ref{meanporthm} gives that the set $J_{V}$ is 
$$
\left(\alpha_{\cS_{V}}, \frac{\mu(J_{V})\, \log 2}{2\chi_{\tilde{\mu}}(f)}\right)-{\rm mean \ porous}
$$
at $\mu_{J_V}$-a.e. $x \in J_{V}$. Since, by Lemma \ref{l220190618}, $\overline{J_V} \supset J (f) \cap V$, the set $J(f)$ is  $$
\left(\alpha_{\cS_{V}}, \frac{\mu(J_{V})\, \log 2}{2\chi_{\tilde{\mu}}(f)}\right)-{\rm mean \ porous}
$$ at $\mu$-a.e. $x \in J(f) \cap V$.
Note that apart from critical points and poles of $f$ (which is a countable set and so of $\mu$ measure $0$), the property of being $(\alpha,p)$-mean porous is $f$-invariant because of Lemma~\ref{porconfinv}. Indeed, if $p$ is not a critical point or a pole of $f$ and $W$ is a neighborhood of $p$ such that $f$ is injective on $W$ then $J(f) \cap f(W)=f(J(f)\cap W)$. Thus, we can apply Lemma~\ref{porconfinv} with $W$ as above and $X=Y=J(f)$. Hence, the ergodicity of $\mu$ implies that the set $J(f)$ is 
$$
\left(\alpha_{\cS_{V}}, \frac{\mu(J_{V})\, \log 2}{2\chi_{\tilde{\mu}}(f)}\right)-{\rm mean \ porous}
$$
at $\mu$-a.e. point of $J(f)$. The proof is complete.
\end{proof}

\begin{rem}\label{r220190621}
We would like to note that using Pesin's theory, as developed in \cite{PUbook}, we could remove the tameness hypothesis from Theorem~\ref{t120190618} if the Lyapunov exponent $\chi_{\mu}(f)$ is positive. 
\end{rem}

In essentially the same way (only simpler) as Theorem~\ref{notporoae}, we can prove the following. Alternatively, we could deduce it from Theorem~\ref{notporoae}, Theorem~\ref{prop:1}, and Lemma~\ref{l220190618}, in a similar was as we deduced Theorem~\ref{t120190618}.

\begin{thm} 
\label{t320190618}Let $f:\C\to\oc$ be a tame meromorphic function such that $J(f)\ne\C$ and $J(f)$ is not porous in $\C$. If $\mu$ is a Borel probability $f$-invariant ergodic measure on $J(f)$ with full topological support, then $J(f)$ is not porous at $\mu$-a.e. $x \in J(f)$.
\end{thm}

\section{Porosity for elliptic functions}
\label{sec:elliptic}
In the last section of our paper we prove that the Julia sets of non-constant elliptic functions are \textit{not} porous. We first recall the definition of elliptic functions.
\begin{defn}
\label{defell}
A meromorphic function $f: \C \ra \hat{\C}$ is called {\it elliptic} if it is doubly periodic; i.e. if there exist two complex numbers $w_1,w_2 \in \C \stm \{0\}$ such that $\Img (w_2/w_1) \neq 0$ and 
$$f(z+w_1)=f(z+w_2)=f(z)$$
for all $z \in \C$. We also define the {\it basic fundamental parallelogram } of $f$ by
$$\mathcal{R}_f:=\{t_1 w_1+t_2 w_2: t_1,t_2 \in [0,1]\}.$$
and we denote by $\Lambda_f$ the lattice generated by $w_1$ and $w_2$, i.e.
$$
\Lambda_f:=\{n_1w_1+n_2w_2:n_1, n_2\in\Z\}
$$
\end{defn}

In the following we collect some basic well-known facts about elliptic functions which are going to be used in the following, see \cite{KU1} for more information. 
If $G$ is an open subset of $\C$ and $b$ is a pole of a meromorphic function $g:G \ra \hat{\C}$, we denote  by $q_b \geq 1$ the order of $g$ at $b$. The following proposition presents some elementary facts about poles in the context we will make use of them. 

\begin{propo}
If $f: \C \ra \hat{\C}$ is a non-constant meromorphic function, then 
for all $n \in \N$ and for every $b \in f^{-n}(\infty)$ there exists some $R_1:=R_1(n,b)>0$ and some $A:=A(n,b) \geq 1$ such that for all $z \in B(b, R_1)$
\begin{equation}
\label{fact1eq1}
A^{-1}|z-b|^{-q_b} \leq |f^n(z)| \leq A|z-b|^{-q_b},
\end{equation}
and
\begin{equation}
\label{fact1eq2}
A^{-1}|z-b|^{-(q_b+1)} \leq |(f^n)'(z)| \leq A|z-b|^{-(q_b+1)}.
\end{equation}
where, we recall, $q_b\ge 1$ is the order of $b$ as the pole of $f^n$.
\end{propo}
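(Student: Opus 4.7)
The plan is to work directly from the Laurent expansion of $f^n$ at the pole $b$. Since $b\in f^{-n}(\infty)$ is a pole of order $q_b\ge 1$ of the meromorphic function $g:=f^n$, there exists an open neighborhood $U$ of $b$ and a holomorphic function $h:U\to\C$ with $h(b)\ne 0$ such that
$$
g(z)=\frac{h(z)}{(z-b)^{q_b}} \qquad \text{for all } z\in U\sms\{b\}.
$$
This is the first and main ingredient; everything else is straightforward continuity near $b$.

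For the bounds on $|f^n(z)|$, I would use that $h$ is continuous at $b$ with $h(b)\ne 0$ to choose $R_1'\in(0,\dist(b,\partial U))$ so small that
$$
\tfrac12|h(b)|\le |h(z)|\le 2|h(b)| \qquad \text{for all } z\in B(b,R_1').
$$
Together with the Laurent formula this immediately produces
$$
\tfrac12|h(b)|\,|z-b|^{-q_b}\le |g(z)|\le 2|h(b)|\,|z-b|^{-q_b}
$$
on $B(b,R_1')$, giving the first inequality with $A_1:=\max\{2|h(b)|,\,2|h(b)|^{-1}\}$.

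For the derivative bounds I would differentiate the Laurent formula to obtain
$$
g'(z)=\frac{(z-b)h'(z)-q_b h(z)}{(z-b)^{q_b+1}}.
$$
The numerator $H(z):=(z-b)h'(z)-q_b h(z)$ is holomorphic on $U$ and satisfies $H(b)=-q_b h(b)\ne 0$; by continuity there is some $R_1''\in(0,R_1')$ on which $|H|$ is pinched between $\tfrac12 q_b|h(b)|$ and $2q_b|h(b)|$. Taking $R_1:=R_1''$ and $A:=\max\{A_1,\,2q_b|h(b)|,\,2(q_b|h(b)|)^{-1}\}$ delivers both pairs of estimates simultaneously.

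There is essentially no obstacle here: the only thing to be a little careful about is making sure the same $R_1$ works for both inequalities, which is arranged by intersecting the two neighborhoods produced above, and recording that the constants depend on $n$ and $b$ through $h$, $h'$, and $q_b$. No properties specific to elliptic functions are used, which is why the proposition is stated for arbitrary non-constant meromorphic $f$.
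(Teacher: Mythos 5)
The paper states this proposition without proof, treating it as a standard elementary fact about meromorphic functions; your Laurent-expansion argument is the canonical way to establish it and is correct (the only implicit step, that any $b\in f^{-n}(\infty)$ is indeed a pole of $f^n$ rather than some worse singularity, holds because $f^{k}(b)$ is finite for $k<n$ so $f^n$ is a composition of maps meromorphic near the relevant points). Your bookkeeping of the constants and the intersection of the two neighborhoods to get a single $R_1$ is fine, so there is nothing to add.
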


\begin{propo}\label{p1120180817}
If $f:\C \ra \hat{\C}$ is a non-constant elliptic function, then 
there exists some $R_2>0$ such that for all $w \in \C$,
\begin{equation}
\label{fact1eq3}
f(B(w, R_2))=\hat{\C}.
\end{equation}
\end{propo}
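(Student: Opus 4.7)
The plan is to exploit the double periodicity of $f$ together with the classical surjectivity of non-constant elliptic functions on any fundamental parallelogram. Once surjectivity on $\mathcal{R}_f$ is in place, the bound on $R_2$ will come directly from the diameter of $\mathcal{R}_f$.

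First I would prove that $f(\mathcal{R}_f)=\hat{\C}$. To see that $\infty \in f(\mathcal{R}_f)$, note that otherwise $f$ would be entire; being doubly periodic it would then be bounded on the compact set $\mathcal{R}_f$ and hence bounded on all of $\C$, so Liouville's theorem would force $f$ to be constant, contradicting the hypothesis. To see that every finite value $c\in\C$ belongs to $f(\mathcal{R}_f)$, I would argue by contradiction: if $c \notin f(\mathcal{R}_f)$, then by periodicity $c\notin f(\C)$, so $g:=1/(f-c)$ is well defined and entire. Since $g$ is doubly periodic with periods $w_1,w_2$, it is bounded on $\C$, hence constant by Liouville, which forces $f$ to be constant, a contradiction. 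Therefore $f(\mathcal{R}_f) = \hat{\C}$.

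Next, having established surjectivity on $\mathcal{R}_f$, I would set
$$
R_2 := |w_1|+|w_2|+1,
$$
which strictly exceeds $\mathrm{diam}(\mathcal{R}_f)$. Given any $w\in\C$, the family of translates $\{\lambda+\mathcal{R}_f:\lambda\in\Lambda_f\}$ covers $\C$, so I would pick $\lambda\in\Lambda_f$ with $w\in\lambda+\mathcal{R}_f$. Every point of $\lambda+\mathcal{R}_f$ then lies within distance $\mathrm{diam}(\mathcal{R}_f)<R_2$ of $w$, which gives $\lambda+\mathcal{R}_f \subset B(w,R_2)$. Applying the double periodicity of $f$ yields
$$
f(B(w,R_2)) \supset f(\lambda+\mathcal{R}_f)=f(\mathcal{R}_f)=\hat{\C},
$$
and the reverse inclusion is trivial, so $f(B(w,R_2))=\hat{\C}$ as desired.

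The main (and really the only) obstacle is the elliptic-function version of Liouville's theorem used in the first paragraph, but this is a standard ingredient and the auxiliary function $1/(f-c)$ makes the argument entirely formal. Everything else is a direct geometric application of the periodicity of $f$.
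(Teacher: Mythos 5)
Your proof is correct and takes essentially the same route as the paper, which simply observes that one may take $R_2$ large enough so that every ball $B(w,R_2)$ contains a translate of $\mathcal{R}_f$ by an element of $\Lambda_f$. You additionally spell out the standard fact that a non-constant elliptic function is surjective onto $\hat{\C}$ on a fundamental parallelogram, which the paper leaves implicit.
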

 
\begin{proof}
Just take $R_2>0$ so large that each ball $B(w, R_2)$ contains a congruent, mod $\Lambda_f$, copy of the fundamental parallelogram $\mathcal{R}_f$. 
\end{proof}

\begin{remark}\label{r1120180817}
It is immediate from Montel's Theorem that if $f$ is a non-constant elliptic function and $U$ is an open set such that all iterates $f^n|_{U}, n \in \N,$ are well defined (in fact $f^{-1}(\infty)\cap\bigcup_{n\ge 0}f^n(U)=\emptyset$ and remember that $f^{-1}(\infty)$ is an infinite set, so in particular it contains three different points), then they form a normal family in the sense of Montel. 
\end{remark}
The following proposition gathers some properties of the Julia set of elliptic functions.
\begin{propo}
\label{factfatou}
Let $f: \C \ra \hat{\C}$ be a non-constant elliptic function. Then
\begin{equation}
\label{fact2eq1}
J(f)=\overline{\cup_{n=1}^\infty f^{-n}(\infty)} \neq \emptyset.
\end{equation}
and
\begin{equation}
\label{fact2eq2}f(J(f) \stm \{\infty\})=J(f)=f^{-1}(J(f)) \cup \{\infty\}.
\end{equation}
\end{propo}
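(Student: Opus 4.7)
The plan is to prove \eqref{fact2eq1} first, then derive \eqref{fact2eq2} from the explicit characterization of $J(f)$ it provides. Non-emptiness is immediate: a doubly periodic entire function would be bounded on the fundamental parallelogram $\mathcal{R}_f$, hence bounded on $\C$, hence constant by Liouville, so $f^{-1}(\infty)\ne\es$. For the inclusion $\overline{\bu_{n\ge 1}f^{-n}(\infty)}\sbt J(f)$, I observe that any $z\in f^{-n}(\infty)$ satisfies $f^n(z)=\infty$, at which $f$ is not defined, so the iterate $f^{n+1}$ fails to be well defined at $z$; thus $z\notin F(f)$. Taking closures, using that $F(f)\sbt\C$ is open, produces the inclusion.

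For the reverse inclusion, given $z\in J(f)\cap\C$ and any open neighborhood $U$ of $z$, I argue contrapositively: if $U\cap f^{-n}(\infty)=\es$ for every $n\ge 1$, then all iterates $f^n|_U$ are well defined, so Remark~\ref{r1120180817} forces the family $\{f^n|_U\}$ to be normal, contradicting $z\in J(f)$. Hence $U\cap\bu_n f^{-n}(\infty)\ne\es$, which gives $z\in\overline{\bu_n f^{-n}(\infty)}$. To handle the point $\infty$, which belongs to $J(f)$ automatically since $F(f)\sbt\C$, I will use double periodicity: the set $f^{-1}(\infty)$ is $\Lambda_f$-invariant and nonempty, hence unbounded in $\C$, so $\infty$ lies in its closure in $\oc$.

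For \eqref{fact2eq2} I lean entirely on \eqref{fact2eq1}. To show $f^{-1}(J(f))\cup\{\infty\}\sbt J(f)$, take $z\in\C$ with $f(z)\in J(f)$ and any neighborhood $U$ of $z$; since $f$ is an open map, $f(U)$ is a neighborhood of $f(z)$ and so meets $f^{-m}(\infty)$ for some $m\ge 1$ by \eqref{fact2eq1}, and any preimage in $U$ of such a witness lies in $f^{-(m+1)}(\infty)\cap U$, so $z\in J(f)$. For the converse $J(f)\sbt f^{-1}(J(f))\cup\{\infty\}$, write $z\in J(f)\cap\C$ as a limit of points $z_k\in f^{-n_k}(\infty)$; either $z$ is itself a pole (so $f(z)=\infty\in J(f)$) or $f$ is holomorphic at $z$, and then continuity together with $f(z_k)\in f^{-(n_k-1)}(\infty)\cup\{\infty\}\sbt J(f)$ and closedness of $J(f)$ force $f(z)\in J(f)$. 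Finally, $J(f)\sbt f(J(f)\sms\{\infty\})$ follows from surjectivity of the elliptic function $f:\C\to\oc$ (immediate from Proposition~\ref{p1120180817}) combined with the inclusion $f^{-1}(J(f))\sbt J(f)$ just established. There is no serious technical obstacle; the only care required is to track consistently the role of $\infty$---which always belongs to $J(f)$ but never to $\bu_{n\ge 1}f^{-n}(\infty)$ viewed in the chart $\C$---and to invoke Remark~\ref{r1120180817} and Proposition~\ref{p1120180817} at the right junctures.
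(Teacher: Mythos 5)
Your proof is correct, and it is worth noting that the paper itself provides almost no argument: it dispatches \eqref{fact2eq1} by pointing to Remark~\ref{r1120180817} and dispatches \eqref{fact2eq2} by citing Bergweiler's survey. Your treatment of \eqref{fact2eq1} is exactly the argument the paper has in mind (Liouville for nonemptiness, Montel via Remark~\ref{r1120180817} for the reverse inclusion, openness of $F(f)\subset\C$ and $\Lambda_f$-invariance of the pole set to handle $\infty$), so that half is the same route, just written out.

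For \eqref{fact2eq2} your route genuinely differs. Bergweiler's Lemma~2 proves complete invariance of the Julia set for an arbitrary meromorphic function directly from the normal-families definition, making no appeal to poles at all. You instead derive \eqref{fact2eq2} \emph{from} \eqref{fact2eq1} using only continuity, openness of nonconstant holomorphic maps, and surjectivity of $f$ (the last immediate from Proposition~\ref{p1120180817}). Concretely: $f^{-1}(J(f))\subset J(f)$ comes from pulling pre-poles back through the open map $f$; $J(f)\setminus\{\infty\}\subset f^{-1}(J(f))$ comes from pushing a sequence of pre-poles forward by continuity (noting that for $z$ not a pole, the nearby pre-poles $z_k$ eventually have $n_k\ge 2$, since poles are isolated); and $J(f)\subset f(J(f)\setminus\{\infty\})$ comes from surjectivity plus the first inclusion. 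The student route buys self-containedness within the elliptic setting, at the cost of being special to that setting: for a general meromorphic function with at most two poles, $J(f)$ need not be the closure of the pre-poles, so your derivation of \eqref{fact2eq2} would not apply, whereas Bergweiler's argument does. One small point to make explicit when writing this up: that $J(f)\subset f^{-1}(J(f))\cup\{\infty\}$ is literally the statement $f(J(f)\setminus\{\infty\})\subset J(f)$, so the forward inclusion in the first equality of \eqref{fact2eq2} is not a separate item to prove.
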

\begin{proof}
The formula \eqref{fact2eq1} is an immediate consequence of Remark~\ref{r1120180817} while \eqref{fact2eq2} is one of the most basic facts in the theory of iteration of meromorphic functions and  for example it is formulated in  \cite[Lemma~2]{Bergweiler}. 
\end{proof}

Notice that if $f$ is a non-constant elliptic function then for all $w \in \C$
\begin{equation}
\label{4ef3}
J(f) \cap B(w,R_2) \neq \emptyset.
\end{equation}
This follows because
$$J(f) \cap  f(B(w,R_2)) \overset{\eqref{fact1eq3}\wedge\eqref{fact2eq1}}{ \neq}\emptyset.$$
Hence there exists some 
$$z \in f^{-1}(J(f)) \cap B(w, R_2)\overset{\eqref{fact2eq2}}{ \subset} J(f) \cap B(w, R_2),$$
and \eqref{4ef3} follows.

We are now ready to prove Theorem \ref{nonporoellipintro} which we restate for the convenience of the reader.
\begin{thm}\label{t120190621}
Let $f:\C \ra \hat{\C}$ be a non-constant elliptic function. Then:
\begin{enumerate}[label=(\roman*)]
\item \label{elnonpor1} The Julia set $J(f)$ is not porous at a dense set of its points, in particular it is not porous at any point of the set 
$$
P_f:=\bigcup_{n=1}^{\infty} f^{-n}(\infty).
$$

\item \label{elnonpor2} For every $b \in P_f$ and for all $\kappa \in (0,1)$ there exists $R(b,\kappa)>0$ such that
$$
\por(J(f),b,r)\le \kappa
$$
for all $r\in(0,R(b,\kappa))$.
\item\label{elpor} If in addition $J(f)\ne\C$, then $J(f)$ is porous at a dense set of its points; the repelling periodic points of $f$.
\end{enumerate}
\end{thm}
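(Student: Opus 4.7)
Parts (i) and (ii) will be established together: the bound in (ii) forces $\limsup_{r\to 0^+}\por(J(f),b,r)=0$, so $J(f)$ is not porous at any $b\in P_f$, and density of $P_f$ in $J(f)$ is just \eqref{fact2eq1}. The engine for (ii) is the inclusion $f^{-n}(J(f))\cap\C\subset J(f)$ from \eqref{fact2eq2} together with the $R_2$-density of $J(f)$ in $\C$ from \eqref{4ef3}: fixing $b\in f^{-n}(\infty)$, it will suffice to show that, given $\kappa\in(0,1)$ and $c\in(\kappa,1]$, whenever $r>0$ is sufficiently small and $B(y,cr)\subset B(b,r)$, the image $f^n(B(y,cr))$ contains some Euclidean disc of radius $\ge R_2$; for then by \eqref{4ef3} it meets $J(f)\cap\C$ at some $w^*$, and any $z^*\in B(y,cr)$ with $f^n(z^*)=w^*$ lies in $J(f)$. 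Write $q:=q_b$ for the pole order of $f^n$ at $b$; by discreteness of $f^{-n}(\infty)$, I shall take $r$ so small that $b$ is the only pole of $f^n$ in $B(b,r)\subset B(b,R_1)$, where $R_1$ and $A$ come from \eqref{fact1eq1}--\eqref{fact1eq2}.

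The argument will split on the location of $b$ relative to $B(y,cr)$. If $|y-b|<cr$, put $\rho:=cr-|y-b|>0$, so $B(b,\rho)\subset B(y,cr)$. The function $\psi:=1/f^n$ is holomorphic near $b$ with a zero of order $q$ at $b$, and by \eqref{fact1eq1} satisfies $|\psi|\ge A^{-1}\rho^{q}$ on $\partial B(b,\rho)$; the argument principle will then give $\psi(B(b,\rho))\supset B(0,A^{-1}\rho^{q})$, equivalently $f^n(B(b,\rho)\sms\{b\})\supset\{w\in\C:|w|>A\rho^{-q}\}$, which is unbounded. If instead $|y-b|\ge cr$, then $b\notin B(y,cr)$ and $f^n$ is holomorphic on $B(y,cr)$; from \eqref{fact1eq2} and $|y-b|\le r$ one has $|(f^n)'(y)|\ge A^{-1}r^{-(q+1)}$, and I shall apply Bloch's theorem to $f^n$ on $B(y,cr)$ to produce a disc of radius at least $B_0\,cr\,|(f^n)'(y)|\ge B_0A^{-1}\kappa\,r^{-q}$ inside $f^n(B(y,cr))$, with $B_0$ the absolute Bloch constant. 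This exceeds $R_2$ once $r\le(B_0A^{-1}\kappa/R_2)^{1/q}$. Taking $R(b,\kappa)$ to be the minimum of $R_1$ and this threshold will prove (ii), and hence (i).

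For (iii), fix a repelling periodic point $p$ of minimal period $m$ with multiplier $\lambda:=(f^m)'(p)$, $|\lambda|>1$. Koenigs' linearization theorem supplies $\delta>0$ and, for every $N\ge 1$, a univalent holomorphic inverse branch $g_N:B(p,\delta)\to B(p,\delta)$ of $f^{mN}$ with $g_N(p)=p$, $|g_N'(p)|=|\lambda|^{-N}$, and Koebe distortion uniformly bounded in $N$. Since $J(f)\ne\C$ is nowhere dense, I shall pick $z_0\in F(f)\cap B(p,\delta/2)$ and $\epsilon_0>0$ with $B(z_0,\epsilon_0)\subset F(f)$. Complete invariance of $F(f)$ under iterates then yields $g_N(B(z_0,\epsilon_0))\subset F(f)$, and Koebe's theorem shows that this image contains a disc of radius $\asymp\epsilon_0|\lambda|^{-N}$ centered at $g_N(z_0)$, with $|g_N(z_0)-p|\asymp|z_0-p||\lambda|^{-N}$. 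Given small $r>0$, choosing $N$ with $|\lambda|^{-N}\asymp r$ will exhibit, inside some $B(p,Cr)$, a disc in $F(f)=\C\sms J(f)$ of radius proportional to $r$; this is porosity of $J(f)$ at $p$ with constant depending only on $\epsilon_0,\delta,|z_0-p|,|\lambda|$. Density of repelling periodic points in $J(f)$, a classical fact for meromorphic dynamics, will finish (iii).

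The principal technical concern I anticipate is in Case B of (ii): I must confirm that the derivative lower bound and the smallness of $r$ are uniform as $y$ varies in $B(b,r)$, and that Bloch's theorem can be used even though $f^n$ may carry critical points in $B(y,cr)$. Both are manageable --- the estimate $|(f^n)'(y)|\gtrsim|y-b|^{-(q+1)}\ge r^{-(q+1)}$ is monotone in $|y-b|$ with worst case at $|y-b|=r$, and Bloch's theorem requires only holomorphy, not univalence --- but it is precisely this lack-of-univalence issue that rules out the more convenient Koebe theorem in that step, making Bloch the key tool.
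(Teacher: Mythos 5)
Your proof is correct, and for part (ii) it follows a genuinely different route from the paper.

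\textbf{Part (ii).} The paper's argument is ``push forward, then pull back by an inverse branch.'' Fixing $z\in B(b,r)\sms\{b\}$, the pole expansion shows $|f^n(z)|\ge 2R_2+R_3$, and since $f^n(\Crit(f^n))\subset B(0,R_3)$ (a bounded set, thanks to ellipticity), the ball $B(f^n(z),2R_2)$ avoids all critical values of $f^n$. Hence a univalent holomorphic branch $f_z^{-n}$ exists on that ball, and Koebe distortion for $f_z^{-n}$ — where univalence \emph{is} available — shows that pulling back a Julia point in $B(f^n(z),R_2)$ lands it within $\kappa r$ of $z$. You instead work directly with a candidate pore ball $B(y,cr)$, observe that $f^n$ is holomorphic on it (once $b$ is the only pole of $f^n$ in $B(b,r)$ and $b\notin B(y,cr)$), and invoke Landau/Bloch to produce a disc of radius $\gtrsim cr|(f^n)'(y)|\gtrsim \kappa r^{-q_b}$ in $f^n(B(y,cr))$, large enough to catch a Julia point by the $R_2$-density \eqref{4ef3}. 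Your method avoids any discussion of critical values — Bloch/Landau does not care about them — and is in that sense a bit leaner, whereas the paper's inverse-branch argument is sharper in that it locates a Julia point near \emph{every} $z$ in $B(b,r)$, not merely inside a pre-selected pore ball. Note also that your Case A (the candidate ball contains $b$) can be dispatched immediately from $b\in P_f\subset J(f)$; the argument-principle detour is valid but unnecessary. Two small omissions in your threshold $R(b,\kappa)$: you should include the constraint that $b$ is the only pole of $f^n$ in $B(b,r)$ (discreteness gives some $R_0>0$ for this), and you should also impose $r<R_1$, both of which you announce but do not fold into the final minimum.

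\textbf{Part (iii).} Your Koenigs-linearization-plus-Fatou-hole argument is in the same spirit as the paper's remark that this is proved as in Theorem~\ref{thmporfix}(i): contract a hole in the complement of the Julia set down to scale $r$ near a repelling periodic point using inverse branches with uniform Koebe distortion. The paper's cross-reference points at the GDMS fixed-point argument, but since tameness (and hence nice sets) is not assumed here, the version you wrote — via Koenigs and backward invariance of $F(f)$ — is essentially what is required.

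\textbf{Part (i).} Deduced from (ii) and the density of $P_f$ in $J(f)$, exactly as in the paper.
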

\begin{proof} Without loss of generality we can assume that $f:\hat{\C} \ra \hat{\C}$ and $f(\infty)=\infty$. of course \ref{elnonpor1} follows from \ref{elnonpor2}. So for the proof of \ref{elnonpor2} let 
$$
b \in \bigcup_{n=1}^{\infty} f^{-n}(\infty),
$$
and fix some $n \in \N$ such that 
$$
b \in f^{-n}(\infty).
$$
We will show that for all $\kappa \in (0,1)$ and all $r \in (0,1)$ small enough, we have that
\begin{equation}
\label{1ef3}
J(f) \cap B(z,\kappa r) \neq \emptyset,
\end{equation}
for all $z \in B(b,r) \stm \{b\}$.
 
Note that since $f'$ is a non-constant elliptic function with the same basic fundamental parallelogram as $f$, the set $\Crit(f) \cap \mathcal{R}_f$ is finite. Hence $f(\Crit (f))=f(\Crit(f) \cap \mathcal{R}_f)$ is finite as well. As a consequence the set $\bigcup_{k=1}^n f^k (\Crit (f))$ is also finite. Therefore there exists some $R_3>0$ (depending on $b$) such that
\begin{equation}
\label{2ef3a}
\left(\bigcup_{k=1}^n f^k (\Crit (f)) \right) \cap B(0,R_3)^c =\emptyset.
\end{equation}
Observe that \eqref{2ef3a} implies that \begin{equation}
\label{2ef3}
f^n (\Crit (f^n)) \cap B(0,R_3)^c =\emptyset.
\end{equation}
To verify \eqref{2ef3} let $\xi \in \Crit (f^n)$. Then there exists some $l=0,\dots,n-1$ such that $f^{l}(\xi) \in \Crit (f)$, hence  
$$f^{n}(\xi) \in f^{n-l} (\Crit (f)).$$
Thus $f^n (\Crit (f^n)) \subset \bigcup_{k=1}^n f^k (\Crit (f))$, and \eqref{2ef3} follows from \eqref{2ef3a}. Arguing in the same way it actually follows that $f^n (\Crit (f^n))=\bigcup_{k=1}^n f^k (\Crit (f)$.

Now let $\kappa \in (0,1)$ and let
\begin{equation}
\label{rrange}
0 <r< \min\{R_1, R_2, \kappa (K_{1/2} A R_2)^{-1/q_b}, (A(2 R_2+R_3))^{-1/q_b}\}:=R(b,\kappa),
\end{equation}
where $q_b \geq 1$ is the order of $b$ as pole of $f^n$. In the following, $z \in B(b,r)\stm \{b\}$. Note then that
\begin{equation}
\label{fnzbound}
|f^n(z)| \overset{\eqref{fact1eq1}}{\geq}A^{-1} |z-b|^{-q_b} \geq A^{-1} r^{-q_b}  \overset{\eqref{rrange}}{\geq} 2 R_2+R_3.
\end{equation}
Therefore \eqref{2ef3}, \eqref{fnzbound}, and the Monodromy Theorem, imply that there exists a unique holomorphic inverse branch of $f^n$,
$$f_z^{-n}(B(f_n(z),2 R_2)) \ra \C,$$
such that $f_z^{-n}(f_n(z))=z$. By Theorem \ref{kdt} we have that,
\begin{equation}
\begin{split}
\label{kdel}
f_z^{-n}(B(f^n(z),R_2)) &\subset B(z, K_{1/2} |(f^n)'(z)|^{-1} R_2)
\overset{\eqref{fact1eq2}}{\subset} B(z, K_{1/2} R_2 A |z-b|^{q_b+1}) \\
& \subset B(z, K_{1/2} R_2 A r^{q_b+1})\overset{\eqref{rrange}}{\subset} B(z, \kappa r).
\end{split}
\end{equation}
Recalling \eqref{4ef3} we have that $J(f) \cap B(f^n(z), R_2) \neq \emptyset$. Hence, there exists some $x \in f_z^{-n}(J(f)) \cap f_z^{-n}(B(f^n(z),R_2))$. Since \eqref{fact2eq2} implies that $f_z^{-n} (J(f)) \subset J(f)$ we deduce that 
\begin{equation}
\label{jfinters}
J(f) \cap f_z^{-n}(B(f^{n}(z),R_2)) \neq \emptyset.
\end{equation}
Combing \eqref{kdel} and \eqref{jfinters} we deduce that 
$$J(f) \cap B(z, \kappa r) \neq \emptyset.$$

The proof of \ref{elpor} is essentially the same as the proof of Theorem~\ref{thmporfix} (i).
The proof of Theorem~\ref{t120190621} is complete.
\end{proof}

\begin{rem}\label{r320190621}
Theorem  \ref{t120190618} implies that if $f$ is a tame elliptic function such that $J(f)\ne\C$ and $\mu$ is a Borel probability $f$-invariant ergodic measure on $J(f)$ with full topological support and with finite Lyapunov exponent, then there exist some $\a_{f}, p_f \in (0,1]$ such that $J(f)$ is $(\a_{f},p_f)$-mean porous at $\mu$-a.e. $x\in J(f)$. As in Remark~\ref{r220190621}, we note that tameness of the elliptic function $f$ is in fact not needed if the Lyapunov exponent $\chi_{\mu}(f)$ is positive. 
\end{rem}

 \begin{bibdiv}
\begin{biblist}
 \bib{smibel}{article}{
      AUTHOR = {Beliaev, D.},
      Author={Smirnov, S.},
     TITLE = {On dimension of porous measures},
   JOURNAL = {Math. Ann.},
  FJOURNAL = {Mathematische Annalen},
    VOLUME = {323},
      YEAR = {2002},
    NUMBER = {1},
     PAGES = {123--141},
}

 \bib{smibeletal}{article}{
    AUTHOR = {Beliaev, D.},
    Author= {J\"{a}rvenp\"{a}\"{a}, E.},
    Author={J\"{a}rvenp\"{a}\"{a}, M.} 
    Author={K\"{a}enm\"{a}ki, A.} 
    Author={Rajala, T.}, 
    Author={Smirnov, S.},
    Author={Suomala, V.},
     TITLE = {Packing dimension of mean porous measures},
   JOURNAL = {J. Lond. Math. Soc. (2)},
  FJOURNAL = {Journal of the London Mathematical Society. Second Series},
    VOLUME = {80},
      YEAR = {2009},
    NUMBER = {2},
     PAGES = {514--530},
      ISSN = {0024-6107},
   MRCLASS = {28A75 (28A80)},
  MRNUMBER = {2545265},
MRREVIEWER = {Zhi-Ying Wen},
       URL = {https://doi.org/10.1112/jlms/jdp040},
}

 \bib{Bergweiler}{article}{
	Author = {Bergweiler, W.},
	Journal = {Bull. Amer. Math. Soc.},
	Pages = {151--188},
	Title = {Iteration of meromorphic functions},
	Volume = {29},
	Year = {1993}}
	

 \bib{bruna}{article}{
      author={{Bruna}, J.},
     TITLE = {Boundary interpolation sets for holomorphic functions smooth to the boundary and BMO},
   JOURNAL = {Trans. Amer. Math. Soc.},
    VOLUME = {264},
      YEAR = {1981},
     PAGES = {393--409},
  }

 \bib{dirpor}{article}{
      author={{Chousionis}, V.},
     TITLE = {Directed porosity on conformal iterated function systems and
              weak convergence of singular integrals},
   JOURNAL = {Ann. Acad. Sci. Fenn. Math.},
  FJOURNAL = {Annales Academi\ae  Scientiarum Fennic\ae . Mathematica},
    VOLUME = {34},
      YEAR = {2009},
    NUMBER = {1},
     PAGES = {215--232},
      ISSN = {1239-629X},
   MRCLASS = {28A80 (42B20)},
  MRNUMBER = {2489023},
MRREVIEWER = {Dmitri B. Beliaev},
}
	
	   \bib{CLU}{article}{
      author={{Chousionis}, V.},
      author={{Leykekhman}, D.},
      author={{Urba{\'n}ski}, M.},
       title={{The dimension spectrum of graph directed Markov systems}},
     journal={Selecta Math, to appear},
}
	
       \bib{CTU}{article}{
 AUTHOR = {Chousionis, V.},
   AUTHOR = {Tyson, J. T.},
      AUTHOR = {Urba\'nski, M.}
       TITLE = {Conformal graph directed Markov systems on Carnot groups},
   JOURNAL = {Mem. Amer. Math. Soc. to appear}
     }

 \bib{Conway}{book}{
    AUTHOR = {Conway, J. B..},
     TITLE = {Functions of One Complex Variable I},
 PUBLISHER = {Springer},
      YEAR = {1978},
     PAGES = {xiv+316},
}  
      
\bib{De}{article}{
    AUTHOR = {Denjoy, A.},
     TITLE = {Sur une propri\' et\' e des s\' eries trigonom\' etriques},
   JOURNAL = {Verlag v.d.G.V. der Wis-en Natuur},
      YEAR = {Afd. 1920}
      }
     
\bib{Dob11}{article}{
   AUTHOR = {Dobbs, N.},
    TITLE = {Nice sets and invariant densities in complex dynamics},
 JOURNAL = {Ergod. Th. \& Dynam. Sys.},  
  Volume = {211},
	Pages = {73--86},
	 Year = {1992}}
      
      \bib{do}{article}{
    AUTHOR = {Dol\v{z}enko, E.},
     TITLE = {Boundary properties of arbitrary functions (in Russian)},
   JOURNAL = {Izv. Akad. Nauk SSSR Ser. Mat.},
   Pages = {3--14},
       NUMBER = {31},
      YEAR = {1967}
      }
      
           \bib{conrad}{unpublished}{
	Author = {Conrad, K.},
	Note = {Available at \url{http://www.math.uconn.edu/~kconrad/blurbs/ugradnumthy/Zinotes.pdf}},
	Title = {The Gaussian Integers },
	Year = {(2016)}}

\bib{ejj}{article}{
   Author= {Eckmann, J. P.},
    Author= {J\"{a}rvenp\"{a}\"{a}, E.},
      Author= {J\"{a}rvenp\"{a}\"{a}, E.},
     TITLE = {Porosities and dimensions of measures},
   JOURNAL = {Nonlinearity},
    VOLUME = {13},
      YEAR = {2000},
     PAGES = {1--18},
      ISSN = {1078-0947},
}

     \bib{edm}{article}{
		Author = {Edgar, G. A.},
		Author={Mauldin, D.}	
	Title = {Multifractal decompositions of digraph recursive fractals},
	Journal = {Proc. Lond. Math. Soc.},
	Number = {65},
	Pages = {604--628},
	Volume = {3},
	Year = {1992}
	}

          \bib{nuss}{article}{
	Author = {Falk, R. S.},
  AUTHOR = {Nussbaum, R. D.},	
     TITLE = {A {N}ew {A}pproach to {N}umerical {C}omputation of {H}ausdorff
              {D}imension of {I}terated {F}unction {S}ystems: {A}pplications
              to {C}omplex {C}ontinued {F}ractions},
   JOURNAL = {Integral Equations Operator Theory},
  FJOURNAL = {Integral Equations and Operator Theory},
    VOLUME = {90},
      YEAR = {2018},
    NUMBER = {5},
     PAGES = {90--61},
      ISSN = {0378-620X},
   MRCLASS = {11K55 (37C30 65D05)},
  MRNUMBER = {3851775},
}	

	\bib{moat}{article}{
 Author= {Gethner, E.},
      Author= {Wagon, S.},
        Author= {Wick, B.},
     TITLE = {A stroll through the Gaussian primes},
   JOURNAL = { American Mathematical Monthly},
    VOLUME = {105},
      YEAR = {1998},

}

	\bib{marpor}{article}{
 
    Author= {Granlund, S.},
      Author= {Linqvist, P.},
        Author= {Martio, O.},
     TITLE = {$F$-harmonic measure in space},
   JOURNAL = {Ann. Acad. Sci. Fenn. Math.},
    VOLUME = {7},
      YEAR = {1982},
     PAGES = {233--247},
}

 \bib{henbook}{book}{
    AUTHOR = {Hensley, D.},
     TITLE = {Continued fractions},
 PUBLISHER = {World Scientific Publishing Co. Pte. Ltd., Hackensack, NJ},
      YEAR = {2006},
     PAGES = {xiv+245},
      ISBN = {981-256-477-2},
   MRCLASS = {11A55 (11J70 28D05 30B70 37A45 37F10 40A15)},
  MRNUMBER = {2351741},
MRREVIEWER = {Oto Strauch},
       URL = {https://doi.org/10.1142/9789812774682},
}
     
     \bib{hecke}{book}{
    AUTHOR = {Hlawka, E.}, 
    AUTHOR = {Schoissengeier, J.},
    AUTHOR = {Taschner, R.},
     TITLE = {Geometric and analytic number theory},
    SERIES = {Universitext},
      NOTE = {Translated from the 1986 German edition by Charles Thomas},
 PUBLISHER = {Springer-Verlag, Berlin},
      YEAR = {1991},
     PAGES = {x+238},
      ISBN = {3-540-52016-3},
   MRCLASS = {11-01 (11Hxx 11Kxx 11Nxx)},
  MRNUMBER = {1123023},
       DOI = {10.1007/978-3-642-75306-0},
       URL = {https://doi.org/10.1007/978-3-642-75306-0},
       }

\bib{HK}{article}{
	AUTHOR = {Hawkins, J.},
  AUTHOR={Koss, L.},
	Journal = { Monatsh. Math.},
	Number = {137},
	Pages = {273--301},
	Title = {Parametrized dynamics of the Weiestrass elliptic function},
	Year = {2002}}       

      \bib{ahur}{article}{
		Author = {Hurwitz, A.},
     TITLE = {\"Uber die Entwicklung complexer Gr\"ossen in Kettenbr\"uche},
   JOURNAL = {Acta Math.},
    VOLUME = {XI},
      YEAR = {1888},
     PAGES = {187--200},

}

        \bib{jhur}{article}{
		Author = {Hurwitz, J.},
     TITLE = {\"Uber eine besondere Art der Kettenbruch- Entwicklung complexer Gr\"ossen,},
   JOURNAL = {Dissertation, University of Halle},
      YEAR = {1895},
}

\bib{jjm}{article}{
    Author= {J\"{a}rvenp\"{a}\"{a}, E.},
      Author= {J\"{a}rvenp\"{a}\"{a}, E.},
    Author={Mauldin, D.} 
     TITLE = {Deterministic and random aspects of porosities},
   JOURNAL = {Discrete Contin. Dyn. Syst.},
  FJOURNAL = {Discrete and Continuous Dynamical Systems. Series A},
    VOLUME = {8},
      YEAR = {2002},
    NUMBER = {1},
     PAGES = {121--136},
      ISSN = {1078-0947},
   MRCLASS = {28A80 (28A12)},
  MRNUMBER = {1877831},
MRREVIEWER = {Robert W. Vallin}
}

\bib{kasu1}{article}{
    Author= {K\"{a}enm\" aki, A.},
      Author= {Suomala, V.},
     TITLE = {Conical upper density theorems and porosity of measures},
   JOURNAL = {Adv. Math. },
    VOLUME = {217},
      YEAR = {2008},
     PAGES = {952--966},
}

\bib{kasu2}{article}{
    Author= {K\"{a}enm\" aki, A.},
      Author= {Suomala, V.},
     TITLE = {Nonsymmetric conical upper density and $k$-porosity},
   JOURNAL = {Adv. Math. },
    VOLUME = {363},
      NUMBER = {3},
      YEAR = {2011},
     PAGES = {1183--1195},
}

	 \bib{kes1}{article}{
	 AUTHOR = {Kesseb\"ohmer, M.}
	 Author = {Kombrink, S.},
     TITLE = {Minkowski content and fractal {E}uler characteristic for
              conformal graph directed systems},
   JOURNAL = {J. Fractal Geom.},
  FJOURNAL = {Journal of Fractal Geometry. Mathematics of Fractals and
              Related Topics},
    VOLUME = {2},
      YEAR = {2015},
    NUMBER = {2},
     PAGES = {171--227},
      ISSN = {2308-1309},
   MRCLASS = {28A80 (28A75 60K05)},
  MRNUMBER = {3353091},
MRREVIEWER = {Steffen Winter},
       URL = {https://doi.org/10.4171/JFG/19},
}

 \bib{kes2}{article}{
	 AUTHOR = {Kesseb\"ohmer, M.}
	 Author = {Kombrink, S.},
    TITLE = {Minkowski measurability of infinite conformal graph directed systems and application to Apollonian packings}
	Journal = {ArXiv  1702.02854},
	}

\bib{kore}{article}{
    AUTHOR = {Koskela, P.},
    Author={Rohde, S.},
     TITLE = {Hausdorff dimension and mean porosity},
   JOURNAL = {Math. Ann.},
  FJOURNAL = {Mathematische Annalen},
    VOLUME = {309},
      YEAR = {1997},
    NUMBER = {4},
     PAGES = {593--609},
      ISSN = {0025-5831},
   MRCLASS = {28A75 (28A78 30C65)},
  MRNUMBER = {1483825},
MRREVIEWER = {Wayne Stewart Smith},}

\bib{KU2}{article}{
	AUTHOR = {Kotus, J.},
  AUTHOR={Urba\'ski, U.},
	Journal = {Bull. London Math. Soc. },
	Number = {35},
	Pages = {269--275},
	Title = {Hausdorff dimension and Hausdorff measures of
Julia sets of elliptic functions},
	Year = {2003}}

\bib{KU3}{article}{
	AUTHOR = {Kotus, J.},
  AUTHOR={Urba\'ski, U.},
	Journal = {J. d'Analyse Math. },
	Number = {93},
	Pages = {35--102},
	Title = {Geometry and ergodic theory of non-recurrent
elliptic functions},
	Year = {2004}}

\bib{KU1}{book}{
    AUTHOR = {Kotus, J.},
     AUTHOR={Urba\'ski, U.},
     TITLE = {Ergodic Theory, Geometric Measure Theory, Conformal Measures and the Dynamics of Elliptic Functions},
 PUBLISHER = {In Preparation},
  
 PAGES = {~630},}
\bib{lpt}{book}{
    AUTHOR = {Lindenstrass, J.},
    AUTHOR={Preiss, D.},
    AUTHOR={Ti\v{s}er, J.}
     TITLE = {Fr\'	echet differentiability of Lipschitz maps and porous sets in Banach spaces},
 PUBLISHER = {Princeton University Press},
      YEAR = {2012},
 PAGES = {x+425},
}

\bib{mavu}{article}{
	Author = {Martio, O.},
  AUTHOR = {Vuorinen, M.},
	Journal = {Exposition. Math.},
	Number = {5},
	Pages = {17--40},
	Title = {Whitney cubes, p-capacity, and Minkowski content},
	Year = {1987}}

\bib{mat}{article}{
	Author = {Mattila, P.},
	Journal = {J. London Math. Soc.},
	Number = {38},
	Pages = {125--132},
	Title = {Distribution of sets and measures along planes},
	Year = {1988}}

     \bib{MU1}{article}{
	Author = {Mauldin, D.},
  AUTHOR = {Urba\'nski, M.},
	Journal = {Proc.\ London Math.\ Soc.\ (3)},
	Number = {1},
	Pages = {105--154},
	Title = {Dimensions and measures in infinite iterated function systems},
	Volume = {73},
	Year = {1996}}
	
	\bib{MUGDMS}{book}{
	Address = {Cambridge},
	Author = {Mauldin, D.},
  AUTHOR = {Urba\'nski, M.},
	Publisher = {Cambridge University Press},
	Series = {Cambridge Tracts in Mathematics},
	Title = {Graph directed {M}arkov systems, Geometry and dynamics of limit sets},
	Volume = {148},
	Year = {2003}}
	
	\bib{MW}{article}{
	Author = {Mauldin, D.},
  AUTHOR = {Williams, S. C.},
	Journal = {Trans. Amer. Math. Soc.},
	Pages = {811--829},
	Title = {Hausdorff dimension in graph directed constructions},
	Volume = {309},
	Year = {1988}}
	
\bib{MayerU1}{article}{
	Author = {Mayer, V.},
  AUTHOR = {Urba\'nski, M.},
	Journal = {Ergod. Th. \& Dynam. Sys.},
	Pages = {915--946},
	Title = {Geometric Thermodynamical Formalism and 
Real Analyticity for Meromorphic Functions of Finite Order},
	Volume = {28},
	Year = {2008}}	
	
\bib{MayerU2}{article}{
	Author = {Mayer, V.},
  AUTHOR = {Urba\'nski, M.},
	Journal = {Memoirs of AMS},
	Title = {Thermodynamical Formalism and Multifractal Analysis for Meromorphic Functions of Finite Order},
	Volume = {203},
	Year = {2010}}

\bib{MayerU3}{article}{
	Author = {Mayer, V.},
  AUTHOR = {Urba\'nski, M.},
	Journal = {Math. Zeit.},
	Pages = {657--683},
	Title = {Gibbs and equilibrium measures for elliptic functions},
	Volume = {250},
	Year = {2005}}

\bib{niem}{article}{
    AUTHOR = {Nieminen, T.},
     TITLE = {Generalized mean porosity and dimension},
   JOURNAL = {Ann. Acad. Sci. Fenn. Math.},
  FJOURNAL = {Annales Academi\ae  Scientiarum Fennic\ae . Mathematica},
    VOLUME = {31},
      YEAR = {2006},
    NUMBER = {1},
     PAGES = {143--172},
      ISSN = {1239-629X},
   MRCLASS = {28A75 (28A80 30C65)},
  MRNUMBER = {2210114},
MRREVIEWER = {Piotr Haj\l asz},
}

 \bib{polur}{article}{
	Author = {Pollicott, M.},
  AUTHOR = {Urba\'nski, M.},
	 JOURNAL = {Mem. Amer. Math. Soc. to appear},
	Title = {Asymptotic Counting in Conformal Dynamical Systems},
	}
	
	\bib{preissza}{article}{
    AUTHOR = {Preiss, D.},
    AUTHOR = {Zaj\'{\i}\v{c}ek, L.},
     TITLE = {Sigma-porous sets in products of metric spaces and
              sigma-directionally porous sets in {B}anach spaces},
   JOURNAL = {Real Anal. Exchange},
  FJOURNAL = {Real Analysis Exchange},
    VOLUME = {24},
      YEAR = {1998/99},
    NUMBER = {1},
     PAGES = {295--313},
      ISSN = {0147-1937},
   MRCLASS = {28A05 (26B05 46G99)},
  MRNUMBER = {1691753},
MRREVIEWER = {\'{E}tienne Matheron},
}

	\bib{preissza2}{article}{
    AUTHOR = {Preiss, D.},
    AUTHOR = {Zaj\'{\i}\v{c}ek, L.},
     TITLE = {Directional derivatives of {L}ipschitz functions},
   JOURNAL = {Israel J. Math.},
  FJOURNAL = {Israel Journal of Mathematics},
    VOLUME = {125},
      YEAR = {2001},
     PAGES = {1--27},
      ISSN = {0021-2172},
   MRCLASS = {46G05 (46T20 49J52)},
  MRNUMBER = {1853802},
MRREVIEWER = {Warren B. Moors},
       URL = {https://doi.org/10.1007/BF02773371},
}

 \bib{pri}{article}{
		Author = {Priyadarshi, A.},
	Title = {Lower bound on the Hausdorff dimension of a set of complex continued fractions},
	Journal = {J. Math. Anal. Appl.},
	Volume = {449},
	Number = {1},
	Pages = {91--95},
	Year = {2017}}
	
\bib{PR1}{article}{
   AUTHOR = {Przytycki, F.},
   AUTHOR = {Rivera-Letelier, J.},
     TITLE = {Statistical properties of topological Collet-Eckmann maps},
  JOURNAL = {Ann. Sci. \'Ecole Norm. Sup.},
    VOLUME = {40},
      YEAR = {2007},
     PAGES = {135--178},
}

\bib{prro}{article}{
   AUTHOR = {Przytycki, F.},
   AUTHOR = {Rohde, S.},
     TITLE = {Porosity of {C}ollet-{E}ckmann {J}ulia sets},
  JOURNAL = {Fundamenta Mathematicae},
    VOLUME = {155},
      YEAR = {1998},
    NUMBER = {2},
     PAGES = {189--199},
      ISSN = {0016-2736},
   MRCLASS = {37F50 (28A78 28A80 30D05 37F15)},
  MRNUMBER = {1606527},
}
	
\bib{PU_tame}{article}{
   AUTHOR = {Przytycki, F.},
   AUTHOR = {Urba\'nski, M.},
    TITLE = {Rigidity of tame rational functions},
 JOURNAL = {Bull. Pol. Acad. Sci., Math.},  
  Volume = {47},
	Number = {2},
	Pages = {163--182},
	 Year = {1999}}

 \bib{PUbook}{book}{
    AUTHOR = {Przytycki, F.},
    AUTHOR = {Urba\'nski, M.},
     TITLE = {Conformal Fractals — Ergodic Theory Methods},
 PUBLISHER = {Cambridge University Press},
      YEAR = {2017},
	 }

\bib{Riv07}{article}{
   AUTHOR = {Rivera-Letelier, J.},
    TITLE = {A connecting lemma for rational maps satisfying a no--growth
             condition},
 JOURNAL = {Ergod. Th. \& Dynam. Sys.},  
  Volume = {27},
	Pages = {595--636},
	 Year = {2007}}

\bib{roy}{article}{
	  AUTHOR = {Roy, M.},
     TITLE = {A new variation of {B}owen's formula for graph directed
              {M}arkov systems},
   JOURNAL = {Discrete Contin. Dyn. Syst.},
  FJOURNAL = {Discrete and Continuous Dynamical Systems. Series A},
    VOLUME = {32},
      YEAR = {2012},
    NUMBER = {7},
     PAGES = {2533--2551},
      ISSN = {1078-0947},
   MRCLASS = {37D35},
  MRNUMBER = {2900559},
MRREVIEWER = {Heber Enrich},
       URL = {https://doi.org/10.3934/dcds.2012.32.2533},
}

	 \bib{salli}{article}{
	  AUTHOR = {Salli, A.},
     TITLE = {On the Minkowski dimension of strongly porous fractal sets in $\Rn$},
   JOURNAL = {Proc. London Math. Soc.},
    VOLUME = {62},
      YEAR = {1991},
    NUMBER = {7},
     PAGES = {353--372},
}

	\bib{shmemp}{article}{
    AUTHOR = {Shmerkin, P.},
     TITLE = {The dimension of weakly mean porous measures: a probabilistic
              approach},
   JOURNAL = {Int. Math. Res. Not. IMRN},
  FJOURNAL = {International Mathematics Research Notices. IMRN},
      YEAR = {2012},
    NUMBER = {9},
     PAGES = {2010--2033},
      ISSN = {1073-7928},
   MRCLASS = {60G46 (28A12 60G42)},
  MRNUMBER = {2920822},
MRREVIEWER = {Ren\'{e} L. Schilling},
       DOI = {10.1093/imrn/rnr085},
       URL = {https://doi.org/10.1093/imrn/rnr085},}

 \bib{SkU}{article}{
	  AUTHOR = {Skorulski, B.},
	  AUTHOR = {Urba\'nski, M.},
     TITLE = {Finer Fractal Geometry for Analytic Families of Conformal 
              Dynamical Systems},
   JOURNAL = {Dynamical Systems},
    VOLUME = {29},
      YEAR = {2014},
     PAGES = {369--398},}

\bib{speight}{book}{
    AUTHOR = {Speight, G.},
     TITLE = {Porosity and differentiability},
      NOTE = {Thesis (Ph.D.)--University of Warwick (United Kingdom)},
 PUBLISHER = {ProQuest LLC, Ann Arbor, MI},
      YEAR = {2013},
     PAGES = {1},
   MRCLASS = {Thesis},
  MRNUMBER = {3389367},
       URL =
              {http://gateway.proquest.com/openurl?url_ver=Z39.88-2004&rft_val_fmt=info:ofi/fmt:kev:mtx:dissertation&res_dat=xri:pqm&rft_dat=xri:pqdiss:U636455},
}
	
	\bib{stillwell}{book}{
    AUTHOR = {Stillwell, J.},
     TITLE = {Elements of number theory},
    SERIES = {Undergraduate Texts in Mathematics},
 PUBLISHER = {Springer-Verlag, New York},
      YEAR = {2003},
     PAGES = {xii+254},
      ISBN = {0-387-95587-9},
   MRCLASS = {11-01 (11D09)},
  MRNUMBER = {1944957},
MRREVIEWER = {Gwynneth G. H. Coogan},
       DOI = {10.1007/978-0-387-21735-2},
       URL = {https://doi.org/10.1007/978-0-387-21735-2},
}
	
	\bib{vai}{article}{
  AUTHOR = {V\" ais\" al\" a, J.},
	Journal = {Trans. Amer. Math. Soc.},
	Number = {299},
	Pages = {525--533},
	Title = {Porous Sets and Quasisymmetric Maps},
	Year = {1987}}

	\bib{urbpor}{article}{
  AUTHOR = {Urba\'nski, M.},
	Journal = {J. Number Th.},
	Number = {2},
	Pages = {283--312},
	Title = {Porosity in Conformal Infinite Iterated Function Systems},
	Volume = {88},
	Year = {2001}}
	
	\bib{za}{article}{
  AUTHOR = {Zaj\' i\v{c}ek , L.},
	Journal = {Real Anal. Exchange},
	Number = {13},
	Pages = {314--350},
	Title = {Porosity and $\sigma$-porosity},
	Year = {1987/88}}
     
     \end{biblist}
\end{bibdiv}
\end{document}